\renewcommand{\tocsection}[3]{%
  \indentlabel{\@ifnotempty{#2}{\bfseries\ignorespaces#1 #2\quad}}\bfseries#3}
\renewcommand{\tocsubsection}[3]{%
  \indentlabel{\@ifnotempty{#2}{\ignorespaces#1 #2\quad}}#3}
\renewcommand{\tocsubsubsection}[3]{%
  \indentlabel{\@ifnotempty{#2}{\ignorespaces#1 #2\quad}}#3}
\newcommand\@dotsep{4.5}
\def\@tocline#1#2#3#4#5#6#7{\relax
  \ifnum #1>\c@tocdepth 
  \else
    \par \addpenalty\@secpenalty\addvspace{#2}%
    \begingroup \hyphenpenalty\@M
    \@ifempty{#4}{%
      \@tempdima\csname r@tocindent\number#1\endcsname\relax
    }{%
      \@tempdima#4\relax
    }%
    \parindent\z@ \leftskip#3\relax \advance\leftskip\@tempdima\relax
    \rightskip\@pnumwidth plus1em \parfillskip-\@pnumwidth
    #5\leavevmode\hskip-\@tempdima{#6}\nobreak
    \leaders\hbox{$\m@th\mkern \@dotsep mu\hbox{ }\mkern \@dotsep mu$}\hfill
    \nobreak
    \hbox to\@pnumwidth{\@tocpagenum{\ifnum#1=1\bfseries\fi#7}}\par
    \nobreak
    \endgroup
  \fi}
\renewcommand\csname r@tocindent0\endcsname{0pt}
\def\l@subsection{\@tocline{2}{0pt}{2.5pc}{5pc}{}}
\def\l@subsubsection{\@tocline{3}{0pt}{4.6pc}{7pc}{}}
\def\subsubsection{\@startsection{paragraph}{4}%
    \z@\z@{-\fontdimen2\font}%
    {\normalfont\bfseries}}
\definecolor{darkblue}{rgb}{0,0,.85}
\definecolor{darkred}{rgb}{0.84,0,0}
\newtheorem{thm}{Theorem}[section]
\newtheorem{prop}[thm]{Proposition}
\newtheorem{thmi}{Theorem}
\newtheorem{conji}{Conjecture}
\newcommand{\greekconji}[1]{%
  \ifcase#1
    \or \alpha\or \beta\or \gamma\or \delta\or \epsilon\or \zeta\or \eta%
    \or \theta\or \iota\or \kappa\or \lambda\or \mu\or \nu\or \xi\or o%
    \or \pi\or \rho\or \sigma\or \tau\or \upsilon\or \phi\or \chi\or \psi%
    \or \omega%
    \else #1%
  \fi
}
\newtheorem{lem}[thm]{Lemma}
\newtheorem{cor}[thm]{Corollary}
\newtheorem{conj}[thm]{Conjecture}
\newtheorem{hypo}[thm]{Hypothesis}
\theoremstyle{definition}
\newtheorem{construction}[thm]{Construction}
\newtheorem{example}[thm]{Example}
\newtheorem{nota}[thm]{Notation}
\newtheorem{defn}[thm]{Definition}
\newtheorem{eg}[thm]{Example}
\newtheorem{rem}[thm]{Remark}
\newtheoremstyle{examplestyle}
  {1em}
  {1em}
  {\addtolength{\@totalleftmargin}{1.0em}
   \addtolength{\linewidth}{-1.0em}
   \parshape 1 1.0em \linewidth}
  {}
  {\bfseries}
  {.}
  {.5em}
  {}
\theoremstyle{examplestyle} 
\newtheorem{remi}{Remark}
\DeclareMathOperator{\Spec}{Spec}
\DeclareMathOperator{\Spf}{Spf}
\renewcommand{\sp}{\mathrm{sp}}
\newcommand{\colim@}[2]{%
  \vtop{\m@th\ialign{##\cr
    \hfil$#1\operator@font colim$\hfil\cr
    \noalign{\nointerlineskip\kern1.5\ex@}#2\cr
    \noalign{\nointerlineskip\kern-\ex@}\cr}}%
}
\newcommand{\colim}{%
  \mathop{\mathpalette\colim@{}}\nmlimits@
}
\def\subsubsection{\@startsection{subsubsection}{3}%
  \z@{.5\linespacing\@plus.7\linespacing}{-.5em}%
  {\normalfont\bfseries}}
\newcommand{\mysetminusD}{\hbox{\tikz{\draw[line width=0.6pt,line cap=round] (3pt,0) -- (0,6pt);}}}
\newcommand{\mysetminusT}{\mysetminusD}
\newcommand{\mysetminusS}{\hbox{\tikz{\draw[line width=0.45pt,line cap=round] (2pt,0) -- (0,4pt);}}}
\newcommand{\mysetminusSS}{\hbox{\tikz{\draw[line width=0.4pt,line cap=round] (1.5pt,0) -- (0,3pt);}}}
\newcommand{\mysetminus}{\mathbin{\mathchoice{\mysetminusD}{\mysetminusT}{\mysetminusS}{\mysetminusSS}}}
\newcommand{\mc}[1]{\mathcal{#1}}
\newcommand{\mf}[1]{\mathfrak{#1}}
\newcommand{\wh}{\widehat}
\newcommand{\mb}[1]{\mathbf{#1}}
\newcommand{\Z}{\mathbb{Z}}
\renewcommand{\ll}{\llbracket}
\newcommand{\rr}{\rrbracket}
\renewcommand{\j}{\jmath}
\newcommand{\globalnotationref}{\hyperref[s:global-notation]{global notation and conventions}}
\renewcommand{\email}[2][]{%
  \ifx\emails\@empty\relax\else{\g@addto@macro\emails{,\space}}\fi%
  \@ifnotempty{#1}{\g@addto@macro\emails{\textrm{(#1)}\space}}%
  \g@addto@macro\emails{#2}%
}
\newcommand{\stacks}[1]{\cite[\href{https://stacks.math.columbia.edu/tag/#1}{Tag~#1}]{StacksProject}}
\newcommand{\cat}[1]{\mathbf{#1}}
\newcommand{\isomorphic}{\xrightarrow{\hspace{0.5mm} \sim \hspace{0.5mm}}}
\DeclareMathAlphabet{\pazocal}{OMS}{zplm}{m}{n}
\newcommand{\RGamma}{\textup{R}\Gamma}
\numberwithin{equation}{section} 
\def\Item(#1){\item[\llap{(}\refstepcounter{enumi}$\bullet$] #1]}
\DeclareMathAccent{\wtilde}{\mathord}{largesymbols}{"65}
\newcommand*\isomto{%
        \xrightarrow{\raisebox{-0.2 em}{\smash{\ensuremath{\sim}}}}%
    }
\newcommand*\isomfrom{%
        \xleftarrow{\raisebox{-0.2 em}{\smash{\ensuremath{\sim}}}}%
    }
\newcommand{\ov}[1]{\overline{#1}}
\newcommand{\Ainf}{\mathrm{A}_\mathrm{inf}}
\newcommand{\Acrys}{\mathrm{A}_\mathrm{crys}}
\newcommand{\defeq}{\vcentcolon=}
\newcommand{\eqdef}{=\vcentcolon}
\newcommand{\be}{\begin{equation*}}
\newcommand{\ee}{\end{equation*}}
\newcommand{\bx}{\begin{equation*}\xymatrix}
\newcommand{\ex}{\end{equation*}}
\DeclareSymbolFontAlphabet{\mathbbl}{bbold}
\newcommand{\Prism}{{\mathlarger{\mathbbl{\Delta}}}}
\newcommand{\prism}{{\mathlarger{\mathbbl{\Delta}}}}
\renewcommand{\inf}{\mathrm{inf}}
\newcommand{\tw}[1]{{\mathsmaller{(#1)}}}
\newcommand{\smallprism}{{{\mathsmaller{\Prism}}}}
\newcommand{\smallan}{{{\mathsmaller{\mr{an}}}}}
\newcommand{\mr}{\mathrm}
\newcommand{\bb}{\mathbb}
\newcommand{\dR}{\mathrm{dR}}
    \def\paragraph{\@startsection{paragraph}{4}%
    \z@\z@{-\fontdimen2\font}%
    {\normalfont\bfseries}}
  \newcommand{\crys}{\mathrm{crys}}
\title{An integral comparison of crystalline and de Rham cohomology}
\author{Abhinandan$^{(1)}$}
\address[1]{\scriptsize IMJ-PRG, Sorbonne Universit\'e,
    4 Place Jussieu, 75252 Paris, France}
\email[1]{\scriptsize abhinandan@imj-prg.fr}
\author{Alex Youcis$^{(2)}$}
\address[2]{Department of Mathematics, National University of Singapore, 
Level 4, Block S17, 10 Lower Kent Ridge Road, Singapore, 119076}
\email[2]{\scriptsize alex.youcis@gmail.com}
\date{\today}
\begin{document}

\begin{abstract}
    Let $\mc{O}_K$ be a mixed characteristic complete DVR with perfect residue field $k$ and fraction field $K$.
    It is a celebrated result of Berthelot and Ogus that for a smooth proper formal scheme $X/\mc{O}_K$ there exists a comparison between the de Rham cohomology groups $\mr{H}^i_\mr{dR}(X/\mc{O}_K)$ and the crystalline cohomology groups $\mr{H}^i_\crys(X_k/W(k))$ of the special fibre, after tensoring with $K$.
    In this article, we use the stacky perspective on prismatic cohomology, due to Drinfeld and Bhatt--Lurie, to give a version of this comparison result with coefficients in a perfect complex of prismatic $F\textrm{-crystals}$ on $X$.
    Our method is of an integral nature and suggests new tools to understand the relationship between torsion in de Rham and crystalline cohomology.
\end{abstract}

\maketitle

\tableofcontents

\newpage

\newpage

\section{Introduction}

Let $k/\bb{F}_p$ be a perfect extension, set $W=W(k)$ and $K_0=\mr{Frac}(W)$, and let $K/K_0$ be a finite totally ramified extension of degree $e$ and with uniformiser $\pi$.
Fix $X$ to be a smooth proper formal $\mc{O}_K\textrm{-scheme}$.
One of the major goals of $p\textrm{-adic}$ Hodge theory is the comparison of various $p\textrm{-adic}$ cohomology theories attached to $X$, and for which one of the initial motivations was the crystalline-de Rham comparison of Berthelot--Ogus.

To explain this, recall that for a smooth proper morphism $f\colon T\to S$ of smooth complex varieties and a sufficiently small disk $D\subseteq S^{\mr{an}}$, the Gauss--Manin connection induces an isomorphism,
\begin{equation}\label{eq:GM-isom}\tag{1}
    \nabla_\mr{GM}\colon \mr{R}^if_\ast\Omega^\bullet_{T^\smallan/S^\smallan}|_D\simeq \mc{O}_D\otimes_\mathbb{C}\mr{H}^i_\mr{dR}(T_{s_0}/\mathbb{C}),
\end{equation}
for any $s_0\in D$.
Analogously, envisioning $\Spf(\mc{O}_K)$ as a small disk around $\Spec(k)$, one anticipates that the de Rham cohomology $\mr{H}^i_\mr{dR}(X/\mc{O}_K)$ depends only on the special fibre $X_k$.
The crystalline cohomology groups $\mr{H}^i_\crys(X_k/W)$ were defined to realise this idea (cf.\ \cite{BerCohcri, BerthelotOgus}).

Using the formalism of crystalline cohomology, the analogue of \eqref{eq:GM-isom} becomes the precise mathematical statement that there is a functorial isomorphism of $\mc{O}_K\textrm{-modules}$,
\begin{equation}\label{eq:crys-dR-comparison-low-ramification}\tag{2}
    \mr{H}^i_\mr{dR}(X/\mc{O}_K) \simeq \mr{H}^i_\crys(X_k/W) \otimes_W \mc{O}_K,\qquad \text{if } e \leqslant p-1.
\end{equation}
Thus, for small $e$, we do indeed see that $\mr{H}^i_\mr{dR}(X/\mc{O}_K)$ depends functorially only on $X_k$.

In their celebrated paper \cite{BerthelotOgusFIsocrystals}, Berthelot and Ogus observed that the analogue of \eqref{eq:crys-dR-comparison-low-ramification} \textit{always holds rationally}, regardless of $e$, i.e.\ there is a functorial isomorphism of $K\textrm{-vector}$ spaces,
\begin{equation}\label{eq:BO-intro}\tag{3}
    \mr{H}^i_\mr{dR}(X_K/K)=\mr{H}^i_\mr{dR}(X/\mc{O}_K)\otimes_{\mc{O}_K}K\simeq \mr{H}^i_\crys(X_k/W)\otimes_W K.
\end{equation}
Thus, the rational de Rham cohomology $\mr{H}^i_\mr{dR}(X_K/K)$ also depend functorially only on $X_k$.

The Berthelot--Ogus isomorphism has become nearly indispensable in arithmetic geometry.
But, this classical perspective is lacking in two significant ways:
\begin{enumerate}[leftmargin=.4in, label=(\Roman*)]
    \item the classical setup of the Berthelot--Ogus comparison does not allow for ``(integral) coefficients'',\label{item:coefficients}
    
    \item the method of proof does not lend itself to relating the \textit{integral structures} $\mr{H}^i_\mr{dR}(X/\mc{O}_K)$ and $\mr{H}^i_\mr{crys}(X_k/W)$, e.g.\ relating their \emph{torsion subgroups}.\label{item:integral}
\end{enumerate}
Our goal in this article is to explain how the recent advances in integral $p\textrm{-adic}$ Hodge theory, in the form of Bhatt and Scholze's prismatic cohomology (see \cite{BhattScholzePrisms}) and its stacky reinterpretation by Drinfeld and Bhatt--Lurie (see \cite{Drinfeld, BhattLurieAbsolute}), helps address these insufficiencies.

For now, we state our analogue of the Berthelot--Ogus isomorphism with coefficients where we will use $\phi$ to denote Frobenius, in several different contexts.

\begin{thmi}\label{thmi:main}
    Let $\mc{E}$ be a perfect complex of prismatic crystals on $X$.
    Then, for $n\geqslant \lceil \log_p(\tfrac{e}{p-1})\rceil$ there exists a natural isomorphism of $n\textrm{-twisted}$ de Rham and crystalline cohomology groups,
    \begin{equation*}
        \iota^{\tw{n}}\colon \mr{H}^{i,\tw{n}}_\mr{dR}(\mc{E})\simeq \phi^{\ast n}\mr{H}^i_\mr{crys}(\mc{E}_k^\crys)\otimes_W \mc{O}_K.
    \end{equation*}
    Moreover, a Frobenius structure $\varphi\colon \phi^\ast\mc{E}[\nicefrac{1}{\mc{I}_\smallprism}]\isomto\mc{E}[\nicefrac{1}{\mc{I}_\smallprism}]$ gives rise to a natural commutative diagram of isomorphisms,
    \begin{equation*}
        \begin{tikzcd}[sep=2em]
            {\mr{H}^{i,\tw{n}}_\mr{dR}(\mc{E})\otimes_{\mc{O}_K}K} & {\phi^{\ast n}\mr{H}^i_\mr{crys}(\mc{E}_k^\crys)\otimes_W K} \\
            {\mr{H}^i_\mr{dR}(X/\mc{O}_K,\mc{E}^\mr{dR})\otimes_{\mc{O}_K}K} & {\mr{H}^i_\crys(X_k/W,\mc{E}_k^\crys)\otimes_W K.}
            \arrow["{\iota^{\tw{n}}}", from=1-1, to=1-2]
            \arrow["\sim"', from=1-1, to=1-2]
            \arrow["{\overline{\varphi}^{\tw{n}}_\mr{dR}}"', from=1-1, to=2-1]
            \arrow["\wr", from=1-1, to=2-1]
            \arrow["{\overline{\varphi}^{\tw{n}}_\crys}", from=1-2, to=2-2]
            \arrow["\wr"', from=1-2, to=2-2]
            \arrow["\sim", from=2-1, to=2-2]
        \end{tikzcd}
    \end{equation*}
    Thus, the cohomology groups $\mr{H}^{i,\tw{n}}_\dR(\mc{E})$ and $\mr{H}^i_\dR(X/\mc{O}_K,\mc{E}^\mr{dR})\otimes_{\mc{O}_K}K$ admit a $W\textrm{-descent}$ and a $K_0\textrm{-descent}$, respectively.
    Moreover, these groups only depend functorially on the pair $(X_k, \mc{E}_k)$.
\end{thmi}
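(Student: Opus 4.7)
The strategy is to realise both sides of the asserted comparison as cohomology of pullbacks of $\mc{E}$ to appropriate substacks of the prismatization of $X$, and to exploit the Frobenius on the prismatization to bridge them. Following Drinfeld and Bhatt--Lurie, write $X^{\smallprism}$ for the prismatization, with de Rham substack $X^{\mr{dR}}$ and crystalline substack of the special fibre $X_k^{\crys}$; a perfect complex of prismatic crystals $\mc{E}$ corresponds to a perfect complex on $X^{\smallprism}$, and $\mc{E}^{\mr{dR}}$ and $\mc{E}_k^{\crys}$ are its pullbacks to $X^{\mr{dR}}$ and $X_k^{\crys}$ respectively. Cohomology is computed as pushforward to the relevant base, and the $n$-twisted variant $\mr{H}^{i,\tw{n}}_{\mr{dR}}(\mc{E})$ arises from precomposing with the $n$-fold Frobenius $\phi^n$ on $X^{\smallprism}$.

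The first step is to construct $\iota^{\tw{n}}$ at the level of stacks: I would prove that, for $n\geqslant \lceil \log_p(e/(p-1))\rceil$, the composite $X^{\mr{dR}}\to X^{\smallprism}\xrightarrow{\phi^n} X^{\smallprism}$ factors canonically through $X_k^{\crys}\to X^{\smallprism}$ after an appropriate base change to $\mc{O}_K$. This uses that, after $n$ applications of Frobenius, the structure map from the de Rham side factors through the divided-power envelope defining the crystalline site, with the threshold on $n$ arising from an explicit estimate on the $p$-adic (divided-power) valuations of $\phi^n$-images. Pulling $\mc{E}$ back through this factorisation and taking cohomology produces $\iota^{\tw{n}}$, with the tensor factor $\otimes_W\mc{O}_K$ originating from the base change.

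The main obstacle is proving that $\iota^{\tw{n}}$ is an isomorphism integrally, since the naive comparison fails when $e>p-1$. My plan is a local calculation: on a small affine chart admitting a smooth lift and compatible Frobenius lift, both sides reduce to explicit complexes --- a Frobenius-twisted de Rham complex on the one hand and a crystalline complex on the other --- viewed as subcomplexes of a common prismatic calculation, e.g.\ over the Breuil--Kisin prism $(W\lbb u\rbb, E(u))$ or a suitable variant. The divisibility from the numerical threshold places the twisted de Rham complex precisely inside the PD-filtered subcomplex, and an elementary Poincaré-type argument identifies the two.

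For the commutative diagram, the vertical maps $\overline{\varphi}^{\tw{n}}$ are obtained by iterating $\varphi$, which becomes a genuine isomorphism after inverting $\mc{I}_{\smallprism}$. Commutativity reduces, by functoriality of the stack-level factorisation, to a statement that can be verified after inverting $p$; there it becomes the classical Berthelot--Ogus comparison (with coefficients) for the rational isocrystals underlying $\mc{E}$ and $\mc{E}_k^{\crys}$. The descent assertions then follow formally: the right-hand entries of the diagram depend only on $(X_k,\mc{E}_k)$, so $\iota^{\tw{n}}$ and the rational bottom isomorphism transport the corresponding $W$- and $K_0$-structures to the left-hand side.
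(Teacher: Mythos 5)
Your overall strategy---working stack-theoretically, using the Frobenius on the prismatization, and exploiting divided-power/Breuil--Kisin estimates to determine the threshold on $n$---is in the spirit of the paper. However, there is a conceptual gap in the middle of your plan.

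You treat ``constructing $\iota^{\tw{n}}$ at the stack level'' and ``proving $\iota^{\tw{n}}$ is an isomorphism integrally'' as separate steps, and propose a local calculation on affine charts of $X$ for the second. In the paper's argument these are not separate. The key statement (Theorem \ref{thm:twisted-crys-dR-comparison}, refined in Proposition \ref{prop:crys-dR-comparison-using-prisms}) lives entirely over the \emph{base}: it asserts a natural $2\textrm{-isomorphism}$ in $\mr{Map}(\Spf(\mc{O}_K),\mc{O}_K^\smallprism)$ between $\rho_\dR^{\tw{n}}$ and the composite $\Spf(\mc{O}_K)\to\Spf(W)\xrightarrow{\rho_\crys^{\tw{n}}}\mc{O}_K^\smallprism$. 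Once this identification of two points of $\mc{O}_K^\smallprism$ is in hand, the integral isomorphism $\iota^{\tw{n}}$ on cohomology is tautological: one pushes $\mc{E}$ forward to $Rf^\smallprism_\ast\mc{E}$ on $\mc{O}_K^\smallprism$ and pulls back along the two $2\textrm{-isomorphic}$ maps. No local chart calculation on $X$ is needed, and no separate verification of integrality arises. The content of the theorem has been entirely moved into the elementary verification---via Cartier--Witt divisors and structure maps, or via the modified Breuil prism $\tilde{S}$---that the two maps into $\mc{O}_K^\smallprism$ agree.

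Your proposed local-chart argument is also risky on its own terms. You frame the comparison as matching a ``Frobenius-twisted de Rham complex'' against a ``crystalline complex'' inside a common prismatic calculation, but for general perfect complexes of prismatic crystals there is no purely crystalline-site description of $\mr{R}\Gamma^{\tw{n}}_\dR(\mc{E})$: the natural map $X_{p=0}^{\tw{n}}\to X_{p=0}$ does not upgrade to a morphism of crystalline topoi (Remark \ref{rem:twisted-dR-cohomology}), and the chart-level identification you sketch would only be straightforward for the structure sheaf. The paper avoids this by never descending to explicit complexes on $X$: it works with the absolute pushforward $Rf^\smallprism_\ast\mc{E}$, which is a crystal on $\mc{O}_K^\smallprism$ by Theorem \ref{thm:GL}, and the whole coefficient problem then reduces to the base-level geometry of $\mc{O}_K^\smallprism$. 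Your account of the Frobenius-untwisting part and of the descent consequences is otherwise consonant with the paper.
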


In the rest of this introduction we shall explain Theorem \ref{thmi:main} more precisely, and indicate how it provides new tools for comparing torsion in de Rham and crystalline cohomologies.

\medskip

\paragraph*{A prismatic analogue of Dwork's trick}

The restriction on $e$ appearing in \eqref{eq:crys-dR-comparison-low-ramification} can be motivated via the isomorphism \eqref{eq:GM-isom} and its setup: for $e\geqslant p$ the `radius' of $\Spf(\mc{O}_K)$, intuitively $p^{-\nicefrac{1}{e}}$, is too large to support a convergent connection.

Now, note that the `Frobenius' $\phi(x)=x^p$ on the open $p\textrm{-adic}$ unit disk $\bb{D}$ is contracting: $\phi^n(\bb{B}(r)) \subseteq \bb{B}(r^{p^n})$, where $\bb{B}(t)=\{|x|\leqslant t\}\subseteq \bb{D}$.
So, for a vector bundle $\mc{F}$ on $\bb{D}$, we have $\phi^{\ast n}(\mc{F}|_{\bb{B}(r)})=\phi^{\ast n}(\mc{F}|_{\bb{B}(r^{p^n})})$.
Thus, twisting by Frobenius increases the radius of triviality, an observation known as \emph{Dwork's trick}.

We would like to apply this idea to the `disk' $\Spf(\mc{O}_K)$.
By \eqref{eq:crys-dR-comparison-low-ramification} it would suffice to twist enough number of times to reduce from radius $p^{-\nicefrac{1}{e}}$ to radius $p^{-\nicefrac{1}{(p-1)}}$, i.e.\ twist $a=\lceil\log_p(\tfrac{e}{p-1})\rceil$ number of times (cf.\@ \cite[Section 3]{KatzTravauxDwork}).
And, indeed, by inspecting the proof of \eqref{eq:BO-intro} reveals the use of a Dwork's-trick-like isomorphism:
\begin{equation}\label{eq:BO-dwork}\tag{4}
    X_{\pi^{\tilde{e}}=0}^{\tw{a}}\simeq X_k^{\tw{a}}\otimes_k \mc{O}_K/p,
\end{equation}
where $\tilde{e} = \lceil\tfrac{e}{p-1}\rceil$ is the ratio of the convergent radius and the radius of $\Spf(\mc{O}_K)$,
and $(-)^{\tw{a}}$ denotes the relevant Frobenius twist of the scheme.
Then, \eqref{eq:BO-intro} follows as the relative Frobenius maps $X_{\pi^{\tilde{e}}=0} \to X_{\pi^{\tilde{e}}=0}^{\tw{a}}$ and $X_k\to X_k^{(a)}$ induce isogenies on crystalline cohomology.

Unfortunately, this idea lacks firm foundations because there is no Frobenius map on $\Spf(\mc{O}_K)$.
While there have been various attempts to address this issue, most notably Ogus' introduction of the convergent site (e.g.\ see \cite{Ogus-F-Isocrystals-II}), they fail to address \ref{item:integral} and (to a lesser extent) \ref{item:coefficients}.

To provide full and firm foundations for an analogue of Dwork's trick, we appeal to the stacky interpretation of prismatic cohomology.
From the morphism $f \colon X \to \Spf(\mc{O}_K)$ one produces a diagram of (formal) stacks,
\begin{equation*}
\begin{tikzcd}[column sep=large, row sep=16pt]
	{X^\mr{dR}} & {X^\smallprism} & {(X_k/W)^\mr{crys}} \\
	{\Spf(\mc{O}_K)} & {\mathcal{O}_K^\smallprism} & {\Spf(W),}
	\arrow["{\rho_{X,\mr{dR}}}", from=1-1, to=1-2]
	\arrow["{f^\mr{dR}}", from=1-1, to=2-1]
	\arrow["{f^\smallprism}", from=1-2, to=2-2]
	\arrow["{\rho_{X,\crys}}"', from=1-3, to=1-2]
	\arrow["{f^\crys}", from=1-3, to=2-3]
	\arrow["{\rho_\mr{dR}}"', from=2-1, to=2-2]
	\arrow["{\rho_\mr{crys}}", from=2-3, to=2-2]
\end{tikzcd}
\end{equation*}
For a perfect complex $\mc{E}$ on $X^\smallprism$, i.e.\ a \emph{perfect complex of prismatic crystals}, we have:
\begin{equation*}
    \rho_\dR^\ast f^\smallprism_\ast\mc{E} \simeq f^\dR_\ast \mc{E}^\dR \eqdef \mr{R}\Gamma_\dR(\mc{E}^\dR),\qquad \rho_\crys^\ast f^\smallprism_\ast \mc{E}\simeq f^\crys_\ast\mc{E}_k^\crys\eqdef \mr{R}\Gamma_\crys(\mc{E}_k^\crys),
\end{equation*}
where we are suppressing derived notation and, by definition, $\mc{E}^\dR = \rho_{\dR,X}^\ast \mc{E}$ and $\mc{E}_k^\crys = \rho_{\crys,X}^\ast\mc{E}$.\footnote{If $\mc{E}$ is a vector bundle, then $\mc{E}^\dR$ is equivalent to a vector bundle with integrable connection on $X$, and $\mc{E}_k^\crys$ to a crystal on $(X_k/W)_\crys$. Moreover, their cohomologies recover the usual de Rham and crystalline cohomologies.}

Unlike $\Spf(\mc{O}_K)$, the stack $\mc{O}_K^\smallprism$ admits a \emph{Frobenius} $F_{\mc{O}_K}$, and so we can form the twisted maps,
\begin{equation*}
    \rho_\dR^{\tw{n}} \defeq F_{\mc{O}_K}^n \circ \rho_\dR,\qquad \rho_\crys^{\tw{n}} \defeq F_{\mc{O}_K}^n \circ \rho_\crys.
\end{equation*}
Then, we define the \emph{$n\textrm{-twisted}$ de Rham and crystalline cohomology complexes},
\begin{equation*}
    \mr{R}\Gamma^{\tw{n}}_\dR(\mc{E}) \defeq \left(\rho_\dR^{\tw{n}}\right)^\ast f^\smallprism_\ast\mc{E},\qquad \mr{R}\Gamma^{\tw{n}}_\crys\left(\mc{E}\right) \defeq \left(\rho_\crys^{\tw{n}}\right) f^\smallprism_\ast\mc{E} \simeq \phi^{\ast n}\mr{R}\Gamma((X_k/W)_\crys, \mc{E}_k^\crys),
\end{equation*}
and the \emph{$n\textrm{-twisted}$ de Rham and crystalline cohomology groups} are their cohomology groups.
Subsequently, we view the following as a prismatic incarnation of Dwork's trick.

\begin{thmi}[{see Theorem \ref{thm:twisted-crys-dR-comparison} and Remark \ref{rem:extending-3.12}}]\label{thmi:twisted-dr-crys-comp}
    For any $n\geqslant a$, the following compositions,
    \begin{equation*}
        \Spf(\mc{O}_K) \xrightarrow{\,\,\rho^{\tw{n}}_\dR\,\,} \mc{O}_K^\smallprism \xleftarrow{\,\,\rho^{\tw{n}}_\crys\,\,} \Spf(W) \xleftarrow{\,\,\mr{str.}\,\,} \Spf(\mc{O}_K),
    \end{equation*}
    (where the rightmost map is the natural one) are naturally identified in $\mr{Map}(\Spf(\mc{O}_K),\mc{O}_K^\smallprism)$.
\end{thmi}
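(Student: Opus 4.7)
The plan is to use the Breuil--Kisin presentation of the prismatic stack $\mc{O}_K^\smallprism$ to reduce the claim to an explicit computation in the style of Dwork's trick. Fix the Breuil--Kisin prism $(\mathfrak{S},E)$, where $\mathfrak{S}=W\ll u\rr$, $E=E(u)$ is the Eisenstein polynomial for $\pi$, and the $\delta$-structure satisfies $\phi(u)=u^p$. Under the Drinfeld--Bhatt--Lurie formalism, $(\mathfrak{S},E)$ provides a cover of $\mc{O}_K^\smallprism$, and the universal Frobenius $F_{\mc{O}_K}$ is induced by $\phi$ on $\mathfrak{S}$. Under this presentation, I would identify $\rho_\dR$ with the map classifying the tautological quotient $\mathfrak{S}\twoheadrightarrow\mathfrak{S}/E\simeq\mc{O}_K$, and $\rho_\crys$ with the crystalline quotient $\mathfrak{S}\twoheadrightarrow\mathfrak{S}/u\simeq W$. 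The structural morphism $\Spf(\mc{O}_K)\to\Spf(W)$ is the natural one induced by $W\hookrightarrow\mc{O}_K$.

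With these identifications, I would write both compositions explicitly. Since $\phi^n$ on $\mathfrak{S}$ sends $u$ to $u^{p^n}$, the $n$-twisted de Rham map $\rho_\dR^{\tw{n}}$ corresponds to the $\delta$-morphism $\mathfrak{S}\to\mc{O}_K$ sending $u\mapsto\pi^{p^n}$, whereas $\rho_\crys^{\tw{n}}\circ\mr{str}$ corresponds to $\mathfrak{S}\to W\hookrightarrow\mc{O}_K$ sending $u\mapsto 0$. Proving the two are naturally identified in $\mr{Map}(\Spf(\mc{O}_K),\mc{O}_K^\smallprism)$ then amounts to exhibiting an explicit $2$-isomorphism between these prismatic data. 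The plausible candidate is to factor both through the PD-envelope of $(\pi^{p^n})\subseteq\mc{O}_K$ over $W$, which should provide a canonical intermediate prism over $\mc{O}_K$ through which both maps factor.

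The key step, which I expect to be the main obstacle, is to show that under the hypothesis $n\geqslant a=\lceil\log_p(\tfrac{e}{p-1})\rceil$, the element $\pi^{p^n}\in\mc{O}_K$ becomes ``negligible'' in the sense that the two $\delta$-maps agree up to the groupoid data of the stack. The threshold $p^n(p-1)\geqslant e$ is exactly the condition under which $\pi^{p^n}$ admits divided powers in $\mc{O}_K$: if $v_\pi$ denotes the normalised $\pi$-adic valuation, then $v_\pi\bigl((\pi^{p^n})^m/m!\bigr)=mp^n-e\cdot v_p(m!)\geqslant m\bigl(p^n-\tfrac{e}{p-1}\bigr)\geqslant 0$. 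This divided-power condition should translate, via the Bhatt--Scholze prismatic-crystalline comparison (which realises crystalline cohomology as a PD-envelope of a prism), into a concrete $\delta$-morphism of prisms identifying the twisted de Rham datum with the twisted crystalline datum. The bulk of the technical work lies in verifying this identification at the level of the Bhatt--Lurie stack $\mc{O}_K^\smallprism$ itself, which should reduce to checking the compatibility on a universal test object such as the absolute Breuil--Kisin prism equipped with its $n$-fold Frobenius twist.
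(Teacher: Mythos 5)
Your threshold computation is exactly right: the condition $n\geqslant a$ is precisely the condition under which $\pi^{p^n}$ admits divided powers in $\mc{O}_K$, and this is indeed the arithmetic heart of the matter. However, the mechanism you propose for turning this into a proof has a genuine gap. The ``PD-envelope of $(\pi^{p^n})\subseteq\mc{O}_K$ over $W$'' is, when $n\geqslant a$, just $\mc{O}_K$ itself; and $\mc{O}_K$ is not a prism (it carries no $\delta$-structure), so it cannot serve as the ``canonical intermediate prism'' you want both maps to factor through. More fundamentally, describing $\rho^{\tw{n}}_\dR$ simply as ``$u\mapsto\pi^{p^n}$'' and $\rho^{\tw{n}}_\crys\circ\mr{str}$ as ``$u\mapsto 0$'' elides the actual data of a point of $\mc{O}_K^\smallprism(R)$: a Cartier--Witt divisor $\alpha\colon I\to\bb{W}(R)$ together with a map $\Spec(\bb{W}(R)/I)\to\Spf(\mc{O}_K)$. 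One must exhibit an isomorphism of \emph{this} data, naturally in $R$ and in the test morphism $\Spec(R)\to\Spf(\mc{O}_K)$. There is also no ``universal test object'': $\Spf(\mf{S})\to\mc{O}_K^\smallprism$ is a flat cover, not an initial object, so checking on it alone ignores descent data.

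What the paper actually does splits into two levels of precision. For the cruder bound $n\geqslant\lceil\log_p(e)\rceil$, the proof of Theorem \ref{thm:twisted-crys-dR-comparison} works directly at the level of the stack: both maps produce the Cartier--Witt divisor $\bb{W}(R)\xrightarrow{p}\bb{W}(R)$, and the two $\mc{O}_K$-structure maps $\mc{O}_K\to\bb{W}(R)/p$ \emph{literally coincide}, because $F^n_{\mc{O}_K/p}$ factors through $k$ precisely when $p^n\geqslant e$ (i.e.\ $\pi^{p^n}=0$ in $\mc{O}_K/p$), combined with \cite[Theorem 1.19]{IKY3} on the $W$-part. Note that this argument genuinely requires $p^n\geqslant e$, not the weaker $p^n\geqslant e/(p-1)$, because the structure maps land in a ring where $p=0$; ``$\pi^{p^n}$ has divided powers'' is strictly weaker than ``$\pi^{p^n}\in p\mc{O}_K$.'' To reach the sharper bound $n\geqslant a$, the paper's Proposition \ref{prop:crys-dR-comparison-using-prisms} factors through the \emph{modified Breuil prism} $(\tilde{S},p)$ with $\tilde{S}=\mf{S}\big[\{u^{m\tilde{e}}/m!\}_m\big]^\wedge_p$: this is the correct intermediate prism (it is essentially the $p$-completed PD-envelope of $(u^{\tilde{e}})$ in $\mf{S}$, endowed with a Frobenius lift), and the divided-power estimate you identify appears there as the statement that $p\mid u^{p^{n+1}}$ in $\tilde{S}$ if and only if $n\geqslant a$ (triangle (1) of Proposition \ref{prop:big-diagram-commutes}). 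So your divided-power observation is the right one, but it must be implemented inside an auxiliary $\delta$-ring over $\mf{S}$ rather than inside $\mc{O}_K$, and the 2-isomorphism of stack-points must then be assembled from the resulting commutative diagram of prisms and the PD-thickening $i\colon\tilde{S}\to\mc{O}_K$ of Proposition \ref{prop:BO-prism-to-O_K}.
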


\begin{remi}
    When $e=1$, and thus $a=0$, Theorem \ref{thmi:twisted-dr-crys-comp} is due to Bhatt--Lurie when $X=\Spf(\bb{Z}_p)$ (see \cite[Proposition 3.6.6]{BhattLurieAbsolute}), and in general due to Imai--Kato--Youcis (see \cite[Theorem 3.19]{IKY3}).
    So, we really do view the main import of Theorem \ref{thmi:twisted-dr-crys-comp} as an analogue of Dwork's trick.
\end{remi}

From Theorem \ref{thmi:twisted-dr-crys-comp}, the first part of Theorem \ref{thmi:main} easily follows.
For the second part, the main observation is that a Frobenius structure $\varphi$ on $\mc{E}$ induces one on $Rf^\smallprism_\ast\mc{E}$, and that pulling it back along $\rho^{\tw{n}}_\dR$ and $\rho^{\tw{n}}_\crys$, induces isogenies labelled $\ov{\varphi}^{\tw{n}}_\dR$ and $\ov{\varphi}^{\tw{n}}_\crys$, as in loc.\@ cit., respectively.

\medskip

\paragraph*{A Breuil--Kisin reinterpretation}

One can make the analogy with Dwork's trick even stronger, and in doing so connect our work to previous incarnations of Dwork's trick appearing in integral $p\textrm{-adic}$ Hodge theory, e.g.\ those appearing in \cite[Section 5]{Hyodo-Kato}, \cite[Section 4.4]{Tsuji-semistable}, \cite[Section 2]{FaltingsRamified}, \cite[Section 6]{Breuil97} and \cite[Section 1.2]{KisinFCrystal}.

Namely, there exists a Frobenius-equivariant flat \emph{Breuil--Kisin} covering $\rho_\mf{S}\colon \Spf(\mf{S})\to \mc{O}_K^\smallprism$ where $\mf{S}\defeq W\ll u\rr$ is equipped with the Witt vector Frobenius on $W$ and $\phi(u)=u^p$.
We think of $\Spf(\mf{S})$ as a closed-times-open bi-disk, where $\Spf(W)$ has radius $p^{-\nicefrac{1}{p}}$.
Let $E$ denote the minimal polynomial for $\pi$ relative to $W$.
Then, as $E$ has degree $e$ we indeed see that the closed embedding,
\begin{equation*}
    \begin{tikzcd}[column sep=large]
    	{\mr{Spf}(\mc{O}_K)} & {\Spf(\mf{S}).}
    	\arrow["{(E=0)}", hook, from=1-1, to=1-2]
    \end{tikzcd}
\end{equation*}
does act as a closed subdisk of radius $p^{-\nicefrac{1}{e}}$.
In the prismatic world, the role of an appropriately-sized closed subdisk (i.e.\ where we expect convergence) is played by a morphism,
\begin{equation*}
    \rho_{\tilde{S}} \colon \Spf(\tilde{S}) \to \mc{O}_K^\smallprism,\qquad \textrm{for } \tilde{S} \defeq \mf{S}\big[\big\{\tfrac{u^{m\tilde{e}}}{m!}\big\}_{m\geqslant 1}\big]^\wedge_p,
\end{equation*}
and where $\tilde{e}=\lceil\tfrac{e}{p-1}\rceil$ is the ratio of the convergent radius and the radius of $\Spf(\mc{O}_K)$.\footnote{The prism $(\tilde{S},(p))$ is a modification (better suited to the ramified situation) of the \emph{Breuil prism}.}

We then have the following diagram, which conveys the full strength of Dwork's trick:

\begin{equation}\label{eq:big-intro-diagram}\tag{5}
    \begin{tikzcd}
    	{\Spf(\mc{O}_K)} && {\mc{O}_K^\smallprism} && {\Spf(W)} \\
    	\\
    	{\Spf(\tilde{S})} && {\Spf(\mf{S})} && {\Spf(\tilde{S}),}
    	\arrow["{\rho^{(n)}_\mr{dR}}"{description}, from=1-1, to=1-3]
    	\arrow["{\mr{str.}}"{description}, curve={height=-20pt}, from=1-1, to=1-5]
    	\arrow["i"{description}, from=1-1, to=3-1]
    	\arrow["{\rho^{(n)}_\crys}"{description}, from=1-5, to=1-3]
    	\arrow["{(u=0)}"{description}, from=1-5, to=3-5]
    	\arrow["{\rho^{(n)}_{\tilde{S}}}"{description}, crossing over, from=3-1, to=1-3]
    	\arrow[from=3-1, to=3-3]
    	\arrow[curve={height=20pt}, equals, from=3-1, to=3-5]
    	\arrow["{\rho^{(n+1)}_\mf{S}}"{description}, from=3-3, to=1-3]
    	\arrow["{\rho^{(n)}_{\tilde{S}}}"{description}, from=3-5, to=1-3]
    	\arrow["s"{description}, shift right=2, curve={height=20pt}, dashed, from=3-5, to=1-5]
    	\arrow[from=3-5, to=3-3]
    \end{tikzcd}
\end{equation}
where $i$ corresponds to the map $\tilde{S} \twoheadrightarrow \mc{O}_K$ sending $u$ to $\pi$, the lower horizontal arrows correspond to the natural inclusion $\mathfrak{S} \hookrightarrow \tilde{S}$, and $s$ comes from the natural map $W \to \tilde{S}$, which induces a map in the absolute prismatic site $(\mc{O}_K)_\smallprism$.

A more refined version of Dwork's trick in the prismatic setting, is the following.
\begin{thmi}[{see Proposition \ref{prop:crys-dR-comparison-using-prisms}}]\label{thmi:big-diagram-commmutes}
    For any uniformiser $\pi$ and any integer $n \geqslant a$, the diagram \eqref{eq:big-intro-diagram} is naturally $2\textrm{-commutative}$.
\end{thmi}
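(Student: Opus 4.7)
The plan is to verify the $2$-commutativity of \eqref{eq:big-intro-diagram} one face at a time, using Theorem \ref{thmi:twisted-dr-crys-comp} to dispose of the outer triangle and then handling each remaining face by an explicit computation in the absolute prismatic site $(\mc{O}_K)_\smallprism$. The starting point is the prismatic dictionary: $\rho_\mf{S}$ classifies the Breuil--Kisin prism $(\mf{S},(E))$, $\rho_{\tilde{S}}$ classifies the modified Breuil prism $(\tilde{S},(p))$, and the $n$-twisted variants $\rho^{\tw{n}}_\star$ are obtained by post-composition with the $n$-fold Frobenius of $\mc{O}_K^\smallprism$. Each face of \eqref{eq:big-intro-diagram} then translates into a statement about a morphism of prisms.

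First I would treat the three lower faces. The triangle $\Spf(\tilde{S}) \to \Spf(\mf{S}) \to \mc{O}_K^\smallprism$, compared with the direct leg $\rho^{\tw{n}}_{\tilde{S}}$, reads: the natural inclusion $\mf{S} \hookrightarrow \tilde{S}$ precomposed with the Frobenius $\phi \colon \mf{S} \to \mf{S}$ defines a map of prisms $(\mf{S},(E)) \to (\tilde{S},(p))$, which is precisely what accounts for the index shift $n \mapsto n+1$ on the leg labelled $\rho^{\tw{n+1}}_\mf{S}$. The key algebraic point is that $\phi(E)/p$ is a well-defined element of $\tilde{S}$: adjoining the divided powers $u^{m\tilde{e}}/m!$ to $\mf{S}$ is exactly what is required to perform this division, since $\tilde{e} = \lceil e/(p-1)\rceil$ is chosen for this purpose. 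The curved identity arrow between the two copies of $\Spf(\tilde{S})$ is trivially compatible with the lower horizontal arrows, both of which are the same inclusion $\mf{S} \hookrightarrow \tilde{S}$.

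Next I would verify the two side triangles. The left triangle asserts $\rho^{\tw{n}}_\dR \simeq \rho^{\tw{n}}_{\tilde{S}} \circ i$, which via the prismatic dictionary says that the $n$-twisted de Rham realisation factors through $\tilde{S}$ along the specialisation $u \mapsto \pi$; the hypothesis $n \geq a$ is used precisely here, to ensure that the $n$-fold Frobenius lands $\pi$ inside the divided-power radius of $\tilde{S}$. Symmetrically, the right triangle factors $\rho^{\tw{n}}_\crys$ through $\tilde{S}$ via $u \mapsto 0$, which is automatic once one recognises $(\tilde{S},(p))$ as a prism over $(W,(p))$; the dashed map $s$, induced by the structure map $W \to \tilde{S}$, witnesses the corresponding compatibility in the absolute prismatic site. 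Combined with Theorem \ref{thmi:twisted-dr-crys-comp} for the outer triangle, these pieces assemble into the full $2$-commutativity of \eqref{eq:big-intro-diagram}.

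The main obstacle, as I see it, is the bookkeeping of Frobenius twists, in particular the simultaneous interaction of the Frobenius of $\mc{O}_K^\smallprism$ with the Witt vector Frobenius on $W$ and the rule $u \mapsto u^p$ on $\mf{S}$. Once this is pinned down carefully enough for the identification $\rho^{\tw{n}}_{\tilde{S}} \simeq \rho^{\tw{n+1}}_\mf{S}\circ(\mf{S}\hookrightarrow\tilde{S})$ to be verified at the level of $\delta$-ring maps, the remaining diagram chases are essentially formal, and the independence of the result from the choice of uniformiser $\pi$ falls out of the intrinsic description of $\mc{O}_K^\smallprism$.
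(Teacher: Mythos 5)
Your proposal misplaces the hypothesis $n\geqslant a$, and this constitutes a genuine gap. You claim $n\geqslant a$ enters in the left triangle $\rho^{\tw{n}}_\dR\simeq\rho^{\tw{n}}_{\tilde S}\circ i$, but that triangle commutes unconditionally for all $n\geqslant 0$: by Proposition \ref{prop:dR-twisted-mfS-identification} and \eqref{eq:BK-twist-identification} one has $\rho^{\tw{n}}_\dR\simeq\rho^{\tw{n+1}}_{\mf S}\circ\mathrm{nat.}$, Proposition \ref{prop:BO-prism-to-O_K} gives the factorisation $\mathrm{nat.}=(\mathrm{incl})\circ i$ with $\mathrm{incl}\colon\Spf(\tilde S)\to\Spf(\mf S)$, and one checks directly (by definition of $r$ in \eqref{eq:RtoStildeR_struct}) that the inclusion $\mf S\hookrightarrow\tilde S$ is a morphism $(\mf S,E^{\tw{n+1}},\ov\phi^{n+1}_{\mf S}\circ\mathrm{nat})\to(\tilde S,p,F^n_{\tilde S/p}\circ r)$ in $(\mc O_K)_\smallprism$ for every $n$: both structure maps send $\pi\mapsto u^{p^{n+1}}$ in $\tilde S/p$. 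Nothing here involves $\tilde e$ or $a$.

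Where $n\geqslant a$ is genuinely needed is the dashed $s$-triangle, which you wave away with ``witnesses the corresponding compatibility.'' The ring map $W\to\tilde S$ induces $s\colon\Spf(\tilde S)\to\Spf(W)$, and for the face $\rho^{\tw{n}}_\crys\circ s\simeq\rho^{\tw{n}}_{\tilde S}$ to commute one needs $W\to\tilde S$ to be a morphism $(W,p,F^{n+1}_k\circ q)\to(\tilde S,p,F^n_{\tilde S/p}\circ r)$ in $(\mc O_K)_\smallprism$. Tracing $\pi$ through the two $\mc O_K$-structure maps: the $W$-side sends $\pi\mapsto 0$, while the $\tilde S$-side sends $\pi\mapsto u^{p^{n+1}}$ in $\tilde S/p$. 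These agree precisely when $p\mid u^{p^{n+1}}$ in $\tilde S$, i.e.\ $p^{n+1}\geqslant p\tilde e$, i.e.\ $n\geqslant a$. This is the computation the paper performs in the proof of Proposition \ref{prop:big-diagram-commutes} and it is not automatic; in fact it fails for $n<a$, so the $s$-face cannot be ``recognised'' as a formal consequence of $(\tilde S,p)$ living over $(W,p)$. Without this check your assembly of the faces does not establish the result, and invoking Theorem \ref{thmi:twisted-dr-crys-comp} for the outer triangle does not repair the gap, since the outer triangle is the pasting of the left triangle and the $s$-face and therefore already presupposes the latter.

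As a secondary point: the inclusion $\mf S\hookrightarrow\tilde S$ is a morphism of prisms $(\mf S,E^{\tw{1}})\to(\tilde S,p)$ (using $E^{\tw 1}=E^p\equiv 0\bmod p\tilde S$), not $(\mf S,(E))\to(\tilde S,(p))$ as you write; the Frobenius appears inside the structure map $r$, not as a precomposition of the ring map itself. This is a harmless imprecision, but worth getting right since the index-shift $n\mapsto n+1$ on the $\mf S$-leg is exactly this phenomenon.
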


To explain the implications of Theorem \eqref{thmi:big-diagram-commmutes}, fix a perfect complex $\mc{E}$ on $X^\smallprism$ and $n \geqslant a$.
For notational simplicitly, we further write $\mf{M}^{\tw{n+1}}\defeq \left(\rho^{\tw{n+1}}_\mf{S}\right)^\ast f^\smallprism_\ast\mc{E}$ and $\tilde{M}^{\tw{n}} \defeq \big(\rho^{\tw{n}}_{\tilde{S}}\big)^\ast f^\smallprism_\ast\mc{E}$.
The commutativity of the left square of \eqref{eq:big-intro-diagram} implies that we have have isomorphisms,
\begin{equation*}
    \left(\rho^{\tw{n}}_\dR\right)^\ast f_\ast^\smallprism\mc{E}\simeq \mf{M}^{\tw{n+1}}/E\simeq i^\ast \tilde{M}^{\tw{n}}.
\end{equation*}
The second identification here is the analogue of the equality $\phi^{\ast n}(\mc{F}|_{\bb{B}(r)})=\phi^{\ast n}(\mc{F}|_{\bb{B}(r^{p^n})})$ from the classical version of Dwork's trick.
The commutativity of the right square gives us identifications,
\begin{equation*}
    \left(\rho^{\tw{n}}_\crys\right)^\ast f^\smallprism_\ast\mc{E}\simeq \mf{M}^{\tw{n+1}}/u\simeq \tilde{M}^{\tw{n}}/u.
\end{equation*}
On the other hand, the commutativity of the right square with the \emph{section} $s$ further gives us that,
\begin{equation}\label{eq:constancy}\tag{6}
    \tilde{M}^{\tw{n}}/u\simeq s^\ast (\rho^{\tw{n}}_\crys)^\ast f^\smallprism_\ast \mc{E}.
\end{equation}
The map $\rho^{\tw{n}}_\crys$ is obtained by applying $(-)^\smallprism$ to the map $\Spec(k)\xrightarrow{\pi=0} \Spf(\mc{O}_K)$ precomposed with the $(n+1)^\text{th}$ power of the Frobenius.
Thus, we view \eqref{eq:constancy} precisely as the expected \emph{constancy} after restricting to the smaller subdisk $\Spf(\tilde{S})$, as suggested by Dwork's trick.

Thus, we can ultimately give a refinement of Theorem \ref{thmi:main}.
Namely, there is: 
\begin{enumerate}[leftmargin=.5in, label=(\alph*)]
    \item a canonical isomorphism of perfect complexes over $\mc{O}_K$:
        \begin{equation}\label{eq:BK-reintepretation}\tag{7}
            \mf{M}^{\tw{n+1}}/E\simeq \mf{M}^{\tw{n+1}}/u\otimes_W \mc{O}_K,
        \end{equation}
    
    \item an identification of the source of \eqref{eq:BK-reintepretation} with $\left(\rho^{\tw{n}}_\dR\right)^\ast f_\ast^\smallprism\mc{E}$,
    
    \item the `constancy' of $\mf{M}^{\tw{n+1}}/u$, with constant value $\left(\rho^{\tw{n}}_\crys\right)^\ast f^\smallprism_\ast \mc{E}$.
\end{enumerate}

\medskip

\paragraph*{Implications for crystalline and de Rham torsion}

The isomorphism \eqref{eq:BO-intro} tells us that the (free) ranks of de Rham and crystalline cohomology must always agree.
A much more subtle question is to determine the relationship between their respective \emph{torsion submodules}.

For clarity, let us write
\begin{equation*}
    \mr{H}^i_\crys(X_k/W)[p^\infty] \simeq \textstyle\bigoplus_{i=1}^r W/p^{a_i},\qquad \mr{H}^i_\mr{dR}(X/\mc{O}_K)[p^\infty] \simeq \textstyle\bigoplus_{j=1}^s\mc{O}_K/\pi^{b_j}.
\end{equation*}
The equality $r=s$ holds (e.g.\ see \cite[Remark 7.8]{CesnaviciusKoshikawa}), but relating $\ell^i_\mr{crys}=\sum_{i=1}^r a_i$ and $\ell^i_\mr{dR}=\sum_{j=1}^sb_j$ seems difficult.
To wit, \cite[Question 7.13]{CesnaviciusKoshikawa} asks if $\ell^i_\mr{dR}\ne e\cdot\ell^i_\crys$ can occur.
We discuss such examples below (see Examples \ref{eg:Li--Petrov} and \ref{eg:BK}) giving an affirmative answer to this question, and we expect that equality does not hold `generically' for large $i$ and $e$. 

But, in fact, Theorem \ref{thmi:main} and the isomorphism \eqref{eq:BK-reintepretation} gives us tools to study the precise relationship.
\begin{conji}[{see Conjecture \ref{conj:main-inequality}}]\label{conji:main-inequality} 
    The inequality $\ell^i_\mr{crys}\leqslant \ell^i_\mr{dR}\leqslant e\cdot\ell^i_\mr{crys}$ always holds.
\end{conji}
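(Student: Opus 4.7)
The plan is to reduce the conjecture (in the trivial-coefficient case $\mc{E}=\mc{O}$) to a structural analysis of the single Breuil--Kisin complex $\mf{M}\defeq\mf{M}^{\tw{n+1}}$ for any fixed $n\geqslant a$, leveraging the integral isomorphism \eqref{eq:BK-reintepretation}. First, by Theorem~\ref{thmi:main} we have an integral isomorphism $H^{i,\tw{n}}_\mr{dR}(X)\simeq\phi^{*n}H^i_\crys(X_k/W)\otimes_W\mc{O}_K$. Using flatness of $\mc{O}_K$ over $W$ together with $p\mc{O}_K=\pi^e\mc{O}_K$, one has $\ell_{\mc{O}_K}(T\otimes_W\mc{O}_K)=e\cdot\ell_W(T)$ for any finite-length $W$-module $T$, and Frobenius pullback $\phi^*$ is length-preserving on $W$-modules, so
\be
    \ell_{\mc{O}_K}\!\left(H^{i,\tw{n}}_\mr{dR}(X)_\mr{tors}\right)=e\cdot\ell^i_\crys.
\ee
Consequently, the conjectured inequalities are equivalent to the assertions that the Frobenius isogeny $\ov{\varphi}^{\tw{n}}_\mr{dR}$ of Theorem~\ref{thmi:main} is surjective on the torsion subgroup (giving $\ell^i_\mr{dR}\leqslant e\cdot\ell^i_\crys$) and that $\ov{\varphi}^{\tw{n}}_\crys$ is injective on the torsion subgroup (giving $\ell^i_\crys\leqslant\ell^i_\mr{dR}$).

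Both isogenies originate from a single map of perfect $\mf{S}$-complexes $\varphi_\mf{M}\colon\phi^\ast\mf{M}\to\mf{M}$ whose cone is killed by a power $E^h$, where $h$ is the height of the prismatic $F$-crystal on $Rf^\smallprism_\ast\mc{O}$ (this uses that $\varphi$ is an isomorphism after inverting $\mc{I}_\smallprism$). The next step is to derive the desired surjectivity and injectivity by reducing $\varphi_\mf{M}$ modulo $E$ and modulo $u$, respectively. The crucial input is the congruence $E\equiv u^e\pmod p$ in $\mf{S}$, which furnishes an identification $\mf{M}/E\otimes^\bb{L}_\mf{S}\mf{S}/p\simeq\mf{M}/u^e\otimes^\bb{L}_\mf{S}\mf{S}/p$ together with a length-$e$ filtration of $\mf{M}/u^e$ (modulo $p$) whose graded pieces are each isomorphic to $\mf{M}/u$ (modulo $p$). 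Iterating this mod-$p$ input modulo $p^n$ via a Bockstein-type argument should control the two torsion subgroups in tandem.

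The main obstacle is that even granted sharp information about $\varphi_\mf{M}$ at the Breuil--Kisin level, passing from a perfect-complex isogeny to precise bounds on torsion submodules of the base changes to $\mc{O}_K$ and $W$ is genuinely subtle, because an isogeny of finitely generated modules over a discrete valuation ring can freely rearrange torsion and free pieces. A direct attack using only the height bound on $\varphi_\mf{M}$ is likely to yield merely a weaker inequality of the shape $\ell^i_\mr{dR}\leqslant e(h+1)\cdot\ell^i_\crys$; obtaining the optimal constant $e$ should emerge from combining the exact equality $\ell^{i,\tw{n}}_\mr{dR}=e\cdot\ell^i_\crys$ at the twisted level with the classification of torsion Breuil--Kisin modules with Frobenius structure due to Liu. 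Since the examples referenced in the excerpt (Examples~\ref{eg:Li--Petrov} and \ref{eg:BK}) show the inequality to be tight at both ends, any successful argument must preserve these sharp constants.
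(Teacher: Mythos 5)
First, be aware that the paper does not prove Conjecture~$\alpha$: it is stated as an open conjecture, reduced (under the additional Hypothesis~\ref{conj:gamma}) to Conjecture~\ref{conj:beta} about the $u^\infty\textrm{-torsion}$ in Breuil--Kisin cohomology, and Conjecture~\ref{conj:beta} itself is only verified in the low-index, low-ramification cases of Theorem~\ref{thm:conj-beta-small-index-and-ramification}. Your proposal therefore aims further than the paper goes, and your own closing paragraph concedes that the argument does not close. That said, your starting point is correct and coincides with the paper's: from the integral isomorphism of Theorem~\ref{thm:integral-BO} one gets $\ell^{i,\tw{n}}_\dR = e\cdot\ell^i_\crys$ for $n \geqslant a$ (cf.\ \eqref{eq:ldR-equals-lcrys-for-large-n}), so the conjecture becomes a comparison of twisted de Rham torsion lengths at different levels of twisting.

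However, the claimed ``equivalence'' with surjectivity/injectivity of Frobenius isogenies on torsion breaks down. In particular, injectivity of $\ov{\varphi}^{\tw{n}}_\crys$ on torsion cannot yield $\ell^i_\crys\leqslant\ell^i_\dR$: this isogeny compares twisted with untwisted crystalline cohomology, and by Proposition~\ref{prop:lcrys-constant} those already have equal torsion length, so one recovers only the tautology $\ell^i_\crys\leqslant\ell^i_\crys$; the lower bound must instead come from the de Rham side (as in the paper's item (L1): $\ell^{i,\tw{a}}_\dR\leqslant e\cdot\ell^i_\dR$). Similarly, the proposed mod-$p$ filtration of $\mf{M}/u^e$ with graded pieces ``each isomorphic to $\mf{M}/u$ modulo $p$'' is false whenever $\mf{M}/p$ has $u\textrm{-torsion}$: the graded piece $u^j\mf{M}/u^{j+1}\mf{M}$ is only a quotient of $\mf{M}/u$, and $u\textrm{-torsion}$ is precisely where the subtlety lives. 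The paper's route is accordingly different in detail: it exploits the exact sequence \eqref{eq:deRham_BK_ses} to reduce (L1) and (L2) to Conjecture~\ref{conj:beta} (on the lengths of $\mf{M}^i[u^\infty]^{\tw{n+1}}[E]$ and $\ov{\mf{M}^i}^{\tw{n+1}}[E]$) together with Hypothesis~\ref{conj:gamma}, and then leans on the structure theory of quasi-filtered Breuil--Kisin modules of Li--Liu and Gabber--Li for the small-case verifications in Appendix~\ref{app:torsion-calculations}, rather than on a cone-of-Frobenius bound, which as you observe would only give a constant of the shape $e(h+1)$.
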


Conjecture \ref{conji:main-inequality} seems difficult even in the simplest non-trivial cases.
But, in Section \ref{ss:conjectural-framework} we explain how Dwork's trick, specifically the isomorphism \eqref{eq:BK-reintepretation}, allows us to transform Conjecture \ref{conji:main-inequality} into questions about $u^\infty\textrm{-torsion}$ in prismatic cohomology, which possesses finer structures given the extensive study in recent years (e.g.\ \cite{li-liu-uinfty, GabberLi}).

In Appendix \ref{app:torsion-calculations}, we give evidence for these conjectural properties of the $u^\infty\textrm{-torsion}$ in Breuil--Kisin cohomology, by verifying that our main such conjecture (see Conjecture \ref{conj:beta}) holds in a small number of non-trivial cases (see Theorem \ref{thm:conj-beta-small-index-and-ramification}).

\medskip

\paragraph*{Acknowledgements} 

The authors thank Piotr Achinger, Hiroki Kato, Tong Liu, and Takeshi Tsuji for helpful discussions.
We especially thank Naoki Imai for discussing ideas related to Conjecture \ref{conji:main-inequality}, Shizhang Li for patiently answering many questions of ours, and Dat Pham for pointing out a mistake in an earlier version of this paper.
This collaboration started while the second named author was hosted at the University of Tokyo by Naoki Imai, and we thank him for his hospitality. 
The work of first named author is partially supported by a Simons Collaboration grant on Perfection, Algebra and Geometry and partially by JSPS KAKENHI grant numbers 22F22711 and 22KF0094.

\medskip

\phantomsection
\label{s:global-notation}
\noindent\textbf{Global notation and conventions}

Throughout this article we fix the following notation.
\begin{multicols}{2}
    \begin{itemize}[label=$\diamond$,leftmargin=.6cm]
        \item $p$ is a prime, 
        \item $k$ is a perfect field of characteristic $p$, 
        \item $F_X$ (resp.\@ $F_R$) is the absolute Frobenius on an $\bb{F}_p\textrm{-scheme}$ $X$ (resp.\@ $\bb{F}_p\textrm{-algebra}$ $R$),
        \item for a ring $R$ and $R\textrm{-module}$ $M$ we write its length as $\ell_R(M)$,
        \item $W \defeq W(k)$ the ring of Witt vectors, 
        \item $K_0=\mr{Frac}(W)$, \item $K$ a finite totally ramified extension of $K_0$,
        \item $\mc{O}_K$ the ring of integers of $K$,
        \item $\pi$ a uniformiser of $K$,
        \item $E_\pi=E\defeq \mr{min.poly}_{\mc{O}_K/W}(\pi)\in W[u]$,
        \item $e=[K\colon K_0]$,
        \item $\tilde{e}=\lceil \tfrac{e}{p-1}\rceil$,
        \item $a=\lceil \log_p\big(\tfrac{e}{p-1}\big) \rceil$.
        \item $\ov{K}$ is an algebraic closure of $K$ and $C=\widehat{\ov{K}}$, 
        \item $\Ainf=\Ainf(\mc{O}_C)$,
        \item $\Acrys=\Acrys(\mc{O}_C)$,
        \item $\pi^\flat \defeq (\pi, \pi^{1/p}, \cdots)$,
        \item $\theta\colon \Ainf\to \mc{O}_C$ is Fontaine's map,
        \item $\theta^{\tw{\text{-}1}}\defeq \theta\circ \phi^{-1}$,
        \item $\xi=p-p^\flat$ and ${\xi}^{\tw{1}}=\phi(\xi)$.
    \end{itemize}
\end{multicols}

We use the phrase \emph{formal stack over $\mathbb{Z}_p$} to mean a stack on the category of $p\textrm{-nilpotent}$ rings, equipped with the fpqc topology.
For a stack $S$ on all rings with the fpqc topology, we denote by $\wh{S}$ its $p\textrm{-adic}$ completion, which means its restriction to the full subcategory of $p\textrm{-nilpotent}$ rings.
A morphism of formal stacks $f \colon X \to Y$ over $\mathbb{Z}_p$ is called a \emph{flat surjection} if for every $p\textrm{-nilpotent}$ ring $R$ and and object $y$ of $Y(R)$, there exists a faithfully flat map $R \to R'$ and an object $x'$ of $X(R')$ such that $f(x')\simeq y|_{R'}$ is in $Y(R')$.

\section{Preliminaries on prismatic theory}

In this section, we give a minimal recollection on parts of the prismatic theory we need in the sequel.
We do this mainly as a means to set notation and to refer the reader to more in-depth discussions (e.g.\ \cite{BhattNotes, BhattLurieAbsolute, BhattLuriePrismatization, BhattScholzePrisms, BhattScholzeCrystals, Drinfeld}) for details.

In addition to \globalnotationref, we use the following notation in this section.

\begin{nota}\label{nota:Witt-vector-stuff}
    Let us fix the following notation:
    \begin{multicols}{2}
        \begin{itemize}[leftmargin=.2in]
            \item $X$ is a quasi-syntomic $p\textrm{-adic}$ formal scheme, 
            \item $\bb{W}$ is the $p$-typical Witt vector group scheme, 
            \item $\delta\colon \bb{W}\to \bb{W}$ the usual $\delta$-structure,
            \item $F\colon \bb{W}\to\bb{W}$ is the usual Frobenius lift,
            \item $V\colon \bb{W}\to \bb{W}$ is the usual Verschiebung map,
            \item $[-]\colon \wh{\bb{A}}^1\to \bb{W}$ the Teichm\"{u}ller lift,
            \item $\gamma_0\colon \bb{W}\to \bb{G}_a$ the zeroth component map.
        \end{itemize}
    \end{multicols}
    Finally, we shall freely use the notion of quasi-ideals $d \colon I \to A$ (which we sometimes write as $[I\to A]$ for clarity) and their quotients $\mr{Cone}(d)$ (where we occasionally write $A/I$) as in \cite{DrinfeldRingGroupoids}.
\end{nota}

\subsection{Prismatisation and prismatic ($F\textrm{-)crystals}$}

We begin by recalling the prismatisation of $X$, a formal stack over $\mathbb{Z}_p$, due to Drinfeld and Bhatt--Lurie.

\begin{defn}[Prismatisation, {\cite[Definition 5.1.6]{BhattNotes}}]
    Let us recall the following:
    \begin{enumerate}
        \item For a $p\textrm{-nilpotent}$ ring $R$ a \emph{Cartier--Witt divisor over $R$} is a map of invertible $\bb{W}(R)\textrm{-modules}$ $\alpha\colon I\to \bb{W}(R)$ such that $\gamma_0(\alpha(I))$ is nilpotent and $\delta(\alpha(I))$ generates $\bb{W}(R)$.
        
        \item The \emph{prismatisation} $X^\smallprism$ of $X$ is the formal stack over $\mathbb{Z}_p$ associating to any $p\textrm{-nilpotent}$ ring $R$ the groupoid of pairs $(\alpha \colon I\to\bb{W}(R), s)$ where $\alpha$ is a Cartier--Witt divisor over $R$, and $s \colon \Spec(\bb{W}(R)/I)\to X$ is a morphism of (derived) schemes.\footnote{Technically, $\bb{W}(R)/I = \mr{cone}(\alpha)$ need not be a discrete ring and, in general, it is a $1\textrm{-truncated}$ animated ring. See \cite[Paragraph 1.3.5]{Drinfeld} for an elementary description of the groupoid of maps $\Spec(\bb{W}(R)/I) \to X$.}
    \end{enumerate}
    If $X=\Spf(R)$, then we write $R^\smallprism$ instead of $X^\smallprism$.
    We write $\mc{O}^\smallprism$ for the structure sheaf of $X^\smallprism$.
\end{defn}

The formal stack $X^\smallprism$ over $\bb{Z}_p$ carries a Frobenius lift denoted $F_X$ (or $F_R$ when $X = \Spf(R)$).\footnote{Technically, this notation is overloaded if $X$ is an $\bb{F}_p\textrm{-scheme}$, as it could also mean the absolute Frobenius there. But, we hope the meaning is clear to the reader from the context.}
For a $p\textrm{-nilpotent}$ ring $R$, the map $F_X \colon X^\smallprism(R) \to X^\smallprism(R)$ associates to the pair $(\alpha\colon I\to \bb{W}(R), s)$ the pair $(F^\ast(\alpha)\colon F^\ast(I)\to \bb{W}(R),F^*(s))$, where $F$ is the Frobenius on $\bb{W}$.
Here $F^\ast(s)$ denotes the composition $\Spec(\bb{W}(R)/F^\ast(I)) \xrightarrow{F} \Spec(\bb{W}(R)/I) \xrightarrow{s} X$.

By a \emph{prismatic crystal} on $X$ we mean a quasi-coherent object of the derived $\infty\textrm{-category}$,
\begin{equation*}
    \mb{D}(X^\smallprism) \defeq \lim_{\Spec(R)\to X^\smallprism}\mb{D}(R),
\end{equation*}
where $\Spec(R) \to X^\smallprism$ travels over morphisms to $X^\smallprism$ from the spectra of $p\textrm{-nilpotent}$ rings.
By a \emph{perfect prismatic crystal} we mean an object of the full $\textrm{(}\infty\textrm{-)subcategory}$,
\begin{equation*}
    \cat{Perf}(X^\smallprism)\defeq \lim_{\Spec(R)\to X^\smallprism}\mb{Perf}(R).
\end{equation*}
We define the category $\cat{Vect}(X^\smallprism)$ of \emph{prismatic crystals in vector bundles} analogously. 

We frequently use another more down-to-earth interpretation of prismatic crystals using objects $(A,I)$ of the absolute prismatic site $X_\smallprism$.
We let $\phi_A$ denote the Frobenius lift on a prism $(A,I)$.

\begin{defn}[Absolute prismatic site, \cite{BhattScholzeCrystals}]\label{defn:absolute-prismatic}
    The \textit{absolute prismatic site} $X_{\smallprism}$ of $X$ is opposite to the category\footnote{As is standard, we will conflate $X_\smallprism$ and its opposite category, often writing morphisms in $X_\smallprism$ as if they were in $X_\smallprism^\mr{op}$. Additionally, we will almost always omit the structure map $s$ from the notation, just writing $(A,I)$.} of bounded prisms $(A,I)$ equipped with a morphism $s \colon \Spf(A/I) \to X$ and endowed with the flat topology.\footnote{More precisely, the topology generated by those collection of morphisms $\{(A,I)\to (A_m,I_m)\}_{m\in M}$ such that either $\{\Spf(A_m)\to \Spf(A)\}$ is a Zariski cover or $M=\{1\}$ and $A\to A_1$ is $(p,I)$-adically faithfully flat.}
    If $X=\Spf(R)$, we write $R_\smallprism$ instead of $X_\smallprism$.
    We denote by $\mc{O}_\smallprism$ the sheaf associating $A$ to $(A,I)$, and let $\phi_\smallprism$ denote the endomorphism restricting to $\phi_A$ on each $(A,I)$.
\end{defn}

For any object $(A,I)$ of $X_\smallprism$, note that one can construct a Frobenius-equivariant morphism $\rho_{(A,I)} \colon \Spf(A) \to X^\smallprism$ as follows:
Consider a morphism $g \colon \Spec(R)\to\Spf(A)$, where $R$ is a $p\textrm{-nilpotent}$ ring.
As $A$ is a $\delta\textrm{-ring}$, therefore, the universal property of Witt vectors gives us a morphism $A \to \bb{W}(R)$. Then, the Cartier--Witt divisor over $R$ corresponding to the composition $\rho_{(A,I)} \circ g$ is the map $\alpha \colon I \otimes_A \bb{W}(R) \to \bb{W}(R)$.
Moreover, as $\mr{cone}(\alpha)$ annihilates $A$, so we get an induced map,
\begin{equation*} 
    \Spec(\mr{cone}(\alpha))\xrightarrow{g} \Spf(A/I)\xrightarrow{s} X.
\end{equation*}
Thus, we have produced an element $\rho_{(A,I)}\circ g$ in $X^\smallprism(R)$ as desired.
This map is functorial in the sense that if $(A,I)\to (B,J)$ is a map in $X_\smallprism$, then the composition $\Spf(B)\to \Spf(A)\xrightarrow{\rho_{(A,I)}}X^\smallprism$ is naturally identified with the map $\rho_{(B,J)}$.

\begin{example}\label{eg:prismatization-qrsp}
    Suppose that $R$ is a quasi-regular semi-perfectoid algebra (e.g.\ perfectoid) in the sense of \cite[Definition 4.20]{BMS-THH}.
    Then, from \cite[Section 7.1]{BhattScholzePrisms} there exists an initial object $(\Prism_R,I_R)$ of $R_\smallprism$ (e.g.\ if $R$ is perfectoid, then $\Prism_R=\Ainf(R)$). 
    Additionally, the natural map $\rho_{(\Prism_R,I_R)} \colon \Spf(\Prism_R) \to R^\smallprism$ is an isomorphism (see \cite[Lemma 6.1]{BhattLurieAbsolute}).
\end{example}

\begin{prop}[{\cite[Theorem 6.5]{BhattLuriePrismatization}}]\label{prop:stack-site-crystals-agree}
    The morphisms $\rho_{(A,I)}$ induce equivalences
    \begin{equation*}
        \mb{D}(X^\smallprism)\simeq \mb{D}(X_\smallprism)\simeq \lim_{(A,I)\in X_\smallprism}\mb{D}(A),
    \end{equation*}
    which restrict to equivalences on the full subcategories of perfect complexes and vector bundles.
\end{prop}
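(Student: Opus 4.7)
The plan is to handle the two equivalences in the chain separately, since they have rather different flavours.

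For the second equivalence $\mb{D}(X_\smallprism)\simeq \lim_{(A,I)\in X_\smallprism}\mb{D}(A)$, I would invoke $\infty$-categorical faithfully flat descent. By Definition \ref{defn:absolute-prismatic}, every generating cover in $X_\smallprism$ is either a Zariski cover of $\Spf(A)$ or a single $(p,I)$-adically faithfully flat morphism of bounded prisms. Lurie's flat descent theorem for the derived $\infty$-category applies in both cases (the $(p,I)$-completeness is compatible with the sheaves in question), so the natural map from $\mb{D}(X_\smallprism)$ to the pointwise limit is an equivalence, reducing to checking descent on generators---a formal exercise.

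The first equivalence $\mb{D}(X^\smallprism)\simeq \mb{D}(X_\smallprism)$ is the real content. First, I would reduce to $X=\Spf(R)$: both sides are sheaves of $\infty$-categories for the Zariski topology on $X$, so it suffices to treat the affine case. Once $X=\Spf(R)$, the task is to exhibit the family of maps $\rho_{(A,I)}\colon \Spf(A)\to R^\smallprism$, as $(A,I)$ varies in $R_\smallprism$, as cofinal among affine test maps in the fpqc topology on $R^\smallprism$. Concretely, given a test point $\Spec(S)\to R^\smallprism$---namely, a Cartier--Witt divisor $\alpha\colon I\to \bb{W}(S)$ over a $p\textrm{-nilpotent}$ ring $S$ together with a map $\Spec(\bb{W}(S)/I)\to \Spf(R)$---I would cover $S$ by a quasi-syntomic map to a quasi-regular semiperfectoid $R\textrm{-algebra}$ $S'$. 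For such $S'$, Example \ref{eg:prismatization-qrsp} produces an honest prism $(\Prism_{S'},I_{S'})$ whose associated map $\rho_{(\Prism_{S'},I_{S'})}$ is an isomorphism of stacks $\Spf(\Prism_{S'})\isomto (S')^\smallprism$, and thereby exhibits the pulled-back test point $\Spec(S')\to R^\smallprism$ as arising from an object of $R_\smallprism$. This cofinality reduces the limit defining $\mb{D}(R^\smallprism)$ to the limit over $R_\smallprism$.

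The main obstacle I anticipate is verifying that this cofinality argument is compatible with the flat topologies on both sides and actually yields an equivalence of $\infty$-categories (not merely their homotopy categories); there is some care needed to promote the pointwise identifications $\rho_{(\Prism_{S'},I_{S'})}\colon \Spf(\Prism_{S'})\isomto(S')^\smallprism$ into a coherent morphism of stacks of $\infty$-categories. This ultimately rests on the existence of initial prisms over quasi-regular semiperfectoid rings, combined with functoriality statements controlling the higher coherences. Once the equivalence is secured on all of $\mb{D}$, the restriction to $\cat{Perf}$ and $\cat{Vect}$ is formal: both perfectness and the vector-bundle property are detected by and preserved under flat pullback, so they transfer cleanly across the equivalence.
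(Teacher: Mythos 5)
The paper gives no proof here: Proposition~\ref{prop:stack-site-crystals-agree} is stated as a direct citation to \cite[Theorem 6.5]{BhattLuriePrismatization}. Your overall strategy --- Zariski reduction to $X=\Spf(R)$, then relating $\mb{D}(R^\smallprism)$ to the limit over $R_\smallprism$ by descent through quasi-regular semiperfectoid (qrsp) covers and Example~\ref{eg:prismatization-qrsp}, with flat descent handling the second equivalence --- has the right shape and matches the Bhatt--Lurie argument. But the central cofinality step is garbled in a way that hides the actual key input.

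You propose, given a test point $\Spec(S)\to R^\smallprism$, to ``cover $S$ by a quasi-syntomic map to a quasi-regular semiperfectoid $R$-algebra $S'$.'' This does not parse: $S$ is a $p$-nilpotent test ring carrying a Cartier--Witt divisor, and it is not an $R$-algebra --- the map to $\Spf(R)$ lives on $\Spec(\bb{W}(S)/I)$, not on $\Spec(S)$ --- while a qrsp $R$-algebra is $p$-complete, so it cannot refine $S$. You are conflating two distinct objects: the $p$-nilpotent ring refining the test point, and the $p$-complete qrsp $R$-algebra whose initial prism feeds into Example~\ref{eg:prismatization-qrsp}. The correct move is to cover $R$ itself by a quasi-syntomic surjection $R\to R'$ with $R'$ qrsp, giving $(R')^\smallprism\simeq\Spf(\Prism_{R'})$. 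The essential lemma you are missing --- and the real content --- is that $(R')^\smallprism\to R^\smallprism$ is then a \emph{flat surjection} of formal stacks; this is \cite[Lemma 6.3]{BhattLuriePrismatization}, the very lemma the paper invokes to prove Proposition~\ref{prop:flat-cover}. It is this flat surjectivity, not a cofinality argument applied to $S$, that lets an arbitrary $\Spec(S)\to R^\smallprism$ be refined by a faithfully flat $S\to S''$ factoring through $\Spf(\Prism_{R'})$. One then descends along the \v{C}ech nerve of $\Spf(\Prism_{R'})\to R^\smallprism$; to land back in a limit over $R_\smallprism$ one must also check --- which you do not --- that the iterated fiber products remain of the form $\Spf(A)$ with $(A,I)\in R_\smallprism$, which holds because $p$-completed tensor powers of $R'$ over $R$ are again qrsp. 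Without identifying the flat-surjection lemma and the stability of the \v{C}ech nerve under $R_\smallprism$, the ``cofinality'' claim is unsupported.
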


There is a natural map $u \colon X^\smallprism \to \cat{Cart}\defeq \big[\wh{\bb{A}}^1/\bb{G}_m\big]$ associating to a pair $(\alpha\colon I\to\bb{W}(R),s)$ the generalised Cartier divisor $I\otimes_{\bb{W}(R),\gamma_0}R\to R$, using the interpretation of the target as in \cite[Section 3.1]{BhattLurieAbsolute}.
We let $\mc{I}^\smallprism \subseteq \mc{O}^\smallprism$ denote the pullback under $u$ of the tautological invertible ideal sheaf on $\cat{Cart}$.
Under the equivalence in Proposition \ref{prop:stack-site-crystals-agree}, the pair $\mc{I}^\smallprism \subseteq \mc{O}^\smallprism$ and endomorphism $F_X$ of $\mc{O}^\smallprism$ correspond to the pair $\mc{I}_\smallprism\subseteq \mc{O}_\smallprism$ and endomorphism $\phi_\smallprism$ of $\mc{O}_\smallprism$, where $\mc{I}_\smallprism(A,I)\defeq I$.

\begin{defn}
    A \emph{perfect prismatic $F\textrm{-crystal}$} on $X^\smallprism$ (resp.\@ $X^\smallprism$) is a perfect complex $\mc{E}$ on $X^\smallprism$ (resp.\@ $X_\smallprism$) and an isomorphism $\varphi_\mc{E} \colon F_X^\ast(\mc{E})[\nicefrac{1}{\mc{I}^\smallprism}] \isomto \mc{E}[\nicefrac{1}{\mc{I}^\smallprism}]$ of $\mc{O}^\smallprism[\nicefrac{1}{\mc{I}^\smallprism}]\textrm{-modules}$ (resp.\@ an isomorphism $\varphi_\mc{E} \colon \phi_\smallprism^\ast(\mc{E})[\nicefrac{1}{\mc{I}_\smallprism}] \to \mc{E}[\nicefrac{1}{\mc{I}_\smallprism}]$ of $\mc{O}_\smallprism[\nicefrac{1}{\mc{I}_\smallprism}]\textrm{-modules}$),\footnote{Observe that as $\mathcal{I}^\smallprism \subseteq \mc{O}^\smallprism$ is a Cartier divisor, therefore, for any morphism $\Spec(R) \to X^\smallprism$, the open subscheme $\Spec(R)-V(\mc{I}^\smallprism(R)) \subseteq \Spec(R)$ is affine (see \stacks{07ZT}) and we denote by $\mc{O}^\smallprism[\nicefrac{1}{\mc{I}^\smallprism}](R)$ its global sections. The sheaf $\mc{O}_\smallprism[\nicefrac{1}{\mc{I}_\smallprism}]$ admits a similar definition.} called a \emph{Frobenius structure}.
    Denote the category of perfect prismatic $F$-crystals on $X^\smallprism$ (resp.\@ $X_\smallprism$) by $\cat{Perf}^\varphi(X^\smallprism)$ (resp.\@ $\cat{Perf}^\varphi(X_\smallprism)$).
\end{defn}

From Proposition \ref{prop:stack-site-crystals-agree}, it easily follows that there are natural identifications,
\begin{equation}\label{eq:perfect-prismatic-F-crystal-identifications}
    \cat{Perf}^\varphi(X^\smallprism)\simeq \cat{Perf}^\varphi(X_\smallprism)\simeq \lim_{(A,I)\in X_\smallprism}\cat{Perf}^\varphi(A,I),
\end{equation}
where $\cat{Perf}^\varphi(A,I)$ denotes the category of perfect complexes of $A$-modules $M$ equipped with an isomorphism of $A$-modules $\varphi_M\colon (\phi_A^\ast M)[\nicefrac{1}{I}]\isomto M[\nicefrac{1}{I}]$.
Due to these equivalences, we shall often implicitly identify the categories in \eqref{eq:perfect-prismatic-F-crystal-identifications}.

\subsection{Absolute pushforwards and relative prismatic cohomology}\label{ss:absolute-pushforwards-and-relative-prismatic-cohomology}

Let $f \colon X \to Y$ be a morphism of quasi-syntomic $p\textrm{-adic}$ formal schemes, and note that we have the following:
\begin{enumerate}
    \item A morphism of formal stacks over $\Z_p$,
        \begin{equation*}
            f^\smallprism \colon X^\smallprism\to Y^\smallprism, \qquad (\alpha \colon I\to \bb{W}(R), s) \mapsto (\alpha \colon I \to \bb{W}(R), f \circ s).
        \end{equation*}
        
    \item A cocontinuous morphism of ringed sites, 
        \begin{equation*}
            f_\smallprism \colon X_\smallprism \to Y_\smallprism, \qquad ((A,I),s) \mapsto ((A,I), f \circ s).
        \end{equation*}
\end{enumerate}
The preceding morphisms thus naturally induce morphisms of $\infty\textrm{-categories}$,
\begin{equation*}
    \mr{R}f^\smallprism_\ast \colon \cat{D}(X^\smallprism) \to \cat{D}(Y^\smallprism), \quad\text{and}\quad \mr{R}(f_\smallprism)_\ast \colon \cat{D}(X_\smallprism,\mc{O}_\smallprism) \to \cat{D}(Y_\smallprism, \mc{O}_\smallprism),
\end{equation*}
These two pushforwards match under the equivalence in Proposition \ref{prop:stack-site-crystals-agree}, and so we shall often implicitly identify them.

Let $(A,I)$ be an object of $Y_\smallprism$ and $\mc{E}$ an object of $\mb{D}(X^\smallprism)$.
Then, we define
\begin{equation*}
    \mr{R}\Gamma(X/(A,I),\mc{E}) \defeq \mr{L}\rho_{(A,I)}^\ast\mr{R}f^\smallprism_\ast\mc{E}.
\end{equation*}
When there is no chance for confusion, we shall shorten this notation to $\mr{R}\Gamma_A(\mc{E})$ and denote its cohomology groups by $\mr{H}^i_A(\mc{E})$.
These cohomology groups may be computed site-theoretically as the cohomology of (the pullback of) $\mc{E}$ on the relative prismatic site $(X/(A,I))_\smallprism$ as in \cite{BhattScholzePrisms}.

This interpretation in terms of relative prismatic cohomology allows one to produce from a Frobenius structure $\varphi_\mc{E}$ on $\mc{E}$ a morphism $F_Y^\ast(\mr{R}f^\smallprism_\ast\mc{E})[\nicefrac{1}{\mc{I}^\smallprism}] \to (\mr{R}f^\smallprism_\ast\mc{E})[\nicefrac{1}{\mc{I}^\smallprism}]$ of $\mc{O}^\smallprism[\nicefrac{1}{\mc{I}^\smallprism}]\textrm{-modules}$ (e.g.\ see \cite[Construction 7.6]{GuoReinecke}).
We abuse notation and denote this map by $\mr{R}f^\smallprism_\ast\varphi_\mc{E}$.

\begin{thm}[{\cite[Corollary 5.16 and Theorem 8.1]{GuoReinecke}}]\label{thm:GL}
    Let $f \colon X\to Y$ be a smooth proper morphism of smooth formal $\mc{O}_K\textrm{-schemes}$.
    Then, $\mr{R}f^\smallprism_\ast\mc{E}$ is a perfect prismatic crystal on $Y^\smallprism$ for any perfect prismatic crystal $\mc{E}$ on $X$.
    Moreover, $\mc{E}$ is equipped with a Frobenius structure $\varphi_\mc{E}$, then $\mr{R}f^\smallprism_\ast\varphi_\mc{E}$ is a Frobenius structure on $\mr{R}f^\smallprism_\ast\mc{E}$.
\end{thm}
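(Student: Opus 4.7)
The plan is to verify both assertions site-theoretically. By Proposition \ref{prop:stack-site-crystals-agree}, to show that $\mr{R}f^\smallprism_\ast\mc{E}$ is a perfect prismatic crystal on $Y^\smallprism$ it suffices to check, for every object $(A,I)$ of $Y_\smallprism$, that $\mr{L}\rho_{(A,I)}^\ast \mr{R}f^\smallprism_\ast\mc{E}$ is a perfect $A$-complex, and that these pullbacks satisfy base change along morphisms in $Y_\smallprism$. Similarly, upgrading the morphism $\mr{R}f^\smallprism_\ast\varphi_\mc{E}$ to a Frobenius structure amounts to checking, after inverting $\mc{I}^\smallprism$, that it is an isomorphism, again checked fiberwise.

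First I would identify $\mr{L}\rho_{(A,I)}^\ast\mr{R}f^\smallprism_\ast\mc{E}$ with the relative prismatic cohomology $\mr{R}\Gamma((X_{A/I}/A)_\smallprism, \mc{E}|_{X_{A/I}})$ of the base change $X_{A/I} \defeq X \times_Y \Spf(A/I)$. This identification is essentially the compatibility between the absolute pushforward $\mr{R}f^\smallprism_\ast$ and the site-theoretic pushforward through $f_\smallprism$, combined with the fact that $\rho_{(A,I)}$ is the tautological morphism representing $(A,I)$. Granting this, the crystal property reduces to base change for relative prismatic cohomology along morphisms of prisms $(A,I) \to (B,J)$ in $Y_\smallprism$ with coefficients in a perfect prismatic crystal, which is the base change theorem of Bhatt-Scholze enhanced to perfect-complex coefficients.

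Next I would prove perfectness of $\mr{R}\Gamma((X_{A/I}/A)_\smallprism, \mc{E}|_{X_{A/I}})$ as an $A$-complex. Working Zariski-locally on $X_{A/I}$ and choosing small affine charts admitting prismatic envelopes, a \v{C}ech-to-derived argument would reduce the problem to an affine computation. The Hodge-Tate comparison then relates the cohomology mod $I$ to Hodge cohomology with coefficients in the perfect complex obtained by reducing $\mc{E}$; perfectness of the latter is classical for smooth proper morphisms. A derived $(p,I)$-complete Nakayama argument, together with the uniform boundedness of relative prismatic cohomology in the smooth proper case, promotes this modulo-$I$ perfectness to perfectness over $A$.

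For the Frobenius structure, the morphism $\mr{R}f^\smallprism_\ast \varphi_\mc{E}$ is an isomorphism after inverting $\mc{I}^\smallprism$ precisely if the induced map on $\mr{R}\Gamma((X_{A/I}/A)_\smallprism, \mc{E}|_{X_{A/I}})$ is an isomorphism after inverting $I$ for each $(A,I) \in Y_\smallprism$; this is a consequence of the Frobenius isomorphism theorem for relative prismatic cohomology of smooth proper morphisms, twisted by $\varphi_\mc{E}$. The main obstacle is the perfectness claim with coefficients: while the coefficient-free case and the formalism of base change are well established, one must carefully control how the \v{C}ech cover and local resolutions of $\mc{E}$ interact with the Hodge-Tate filtration, and verify that the resulting Hodge cohomology has genuinely perfect coefficients and not merely pseudo-coherent ones. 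This is the technical core of the Guo-Reinecke argument.
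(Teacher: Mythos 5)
The paper does not prove this theorem; it is quoted directly from Guo--Reinecke as \cite[Corollary 5.16 and Theorem 8.1]{GuoReinecke}. Your outline---reducing via Proposition~\ref{prop:stack-site-crystals-agree} to fiberwise perfectness and base change on the absolute prismatic site, identifying the fibers with relative prismatic cohomology of $X_{A/I}/(A,I)$, using Hodge--Tate descent together with derived $(p,I)$-complete Nakayama for perfectness, and invoking the relative Frobenius isogeny theorem (twisted by $\varphi_\mc{E}$) for the Frobenius structure---is a faithful sketch of the Guo--Reinecke argument and matches the route the cited reference takes.
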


\subsection{The prismatisation of $\mc{O}_K$}\label{subsec:breuil_kisin_prism}

We now record some points of $\mc{O}_K^\smallprism$ that play a central role in our proof of the twisted crystalline-de Rham comparison (see Theorem \ref{thmi:twisted-dr-crys-comp}).

\subsubsection{The ($n$-twisted) Breuil--Kisin prism}\label{ss:BK-prism}

Let $\mf{S} \defeq W\llbracket u\rrbracket$ and recall that $E = E_{\pi}$ in $\mf{S}$ is the Eisenstein polynomial of a uniformiser $\pi$ of $\mc{O}_K$.
We define the \emph{Breuil--Kisin prism} to be the pair $ (\mf{S}, E)$, where $\mf{S}$ is equipped with the Frobenius lift
\begin{equation*}
    \displaystyle \phi_{\mf{S}} \colon \mf{S} \to \mf{S}, \qquad \textstyle\sum_{k \in \mathbb{N}} a_k u^k \mapsto \sum_{k \in \mathbb{N}} \phi_{W}(a_k)u^{pk}.
\end{equation*}
Observe that the map sending $u$ to $\pi$ induces an isomorphism $\mf{S}/E\isomto \mc{O}_K$, and we denote the \emph{inverse} of this map as $\mr{nat} \colon \mc{O}_K \isomto \mf{S}/E$.
Then, we may view the pair $(\mf{S}, E)$ as an object of $(\mc{O}_K)_\smallprism$ with the map $\Spf(\mf{S}/E) \to \Spf(\mc{O}_K)$ induced by $\mr{nat}$.
Additionally, we shorten the notation $\rho_{(\mf{S}, E)}$ to $\rho_\mf{S}$ or $\rho_\pi$.

\begin{prop}\label{prop:flat-cover}
    The map $\rho_{\mf{S}}\colon \Spf(\mf{S})\to\mc{O}_K^\smallprism$ is a flat surjection.
\end{prop}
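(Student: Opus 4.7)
The plan is to verify the flat-surjection criterion directly.  Let $R$ be a $p$-nilpotent ring and $y = (\alpha\colon I \to \mathbb{W}(R), s)$ an $R$-point of $\mc{O}_K^\smallprism$.  I must produce a faithfully flat cover $R \to R'$ and a lift of $y|_{R'}$ along $\rho_\mf{S}$.  Via Proposition \ref{prop:stack-site-crystals-agree}, it is equivalent (and cleaner) to work site-theoretically: given any object $(A, I) \in (\mc{O}_K)_\smallprism$ one seeks a $(p,I)$-adically faithfully flat morphism $(A,I) \to (B, J)$ in $(\mc{O}_K)_\smallprism$ equipped with a morphism of prisms $(\mf{S}, E) \to (B, J)$ over $\mc{O}_K$.

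The key reinterpretation is that $\mf{S}$ is the $(p, u)$-adic completion of the free $\delta$-$W$-algebra on one $\delta$-trivial generator $u$.  Consequently, giving a morphism $(\mf{S}, E) \to (B, J)$ over $\mc{O}_K$ is the same as exhibiting an element $\tilde{u} \in B$ such that $\delta_B(\tilde{u}) = 0$ and the image of $\tilde{u}$ in $B/J$ equals the image of $\pi$ under the structure map $\mc{O}_K \to B/J$.  The condition $E(\tilde{u}) \in J$ is then automatic because $E(\pi) = 0$ in $\mc{O}_K$.

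The main step is constructing $(B, J, \tilde{u})$.  First, I would pick any lift $u_0 \in A$ of the image of $\pi$ in $A/I$, which exists because $A \to A/I$ is surjective; the obstruction is that $\delta_A(u_0)$ need not vanish.  To rectify this, I would form $B$ as the $(p,I)$-adic completion of the quotient of the free $\delta$-$A$-algebra $A\{x\}$ by the $\delta$-closure of the ideal $(x - u_0)$ together with the relation $\delta(x) = 0$, and set $J = IB$.  An alternative, possibly more transparent route is to reduce to the perfectoid setting: base change $A$ along a $(p,I)$-adically faithfully flat map to a perfectoid $\mc{O}_K$-algebra $A_\infty$ in which $\pi$ admits a compatible system of $p$-power roots $\pi^\flat = (\pi, \pi^{1/p}, \ldots)$ in the tilt.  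Then, by Example \ref{eg:prismatization-qrsp}, we may take $(B, J) = (\Prism_{A_\infty}, I_{A_\infty})$ (e.g.\ $(\Ainf(A_\infty), \ker\theta)$ when $A_\infty$ is perfectoid), and $\tilde{u} \defeq [\pi^\flat]$, whose Teichm\"uller nature forces $\delta(\tilde{u}) = 0$ and whose reduction is $\pi$ by construction.

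The main obstacle is verifying the $(p, I)$-adic faithful flatness of $(A, I) \to (B, J)$.  In the first, more explicit, construction this amounts to a careful Koszul-type analysis of the $\delta$-closure of $(x - u_0)$ in $A\{x\}$, tracking the sequence $\delta^n(x - u_0)$; the payoff is that because $u_0$ is a lift of $\pi$ and $E$ generates the kernel of $W[u] \to \mc{O}_K$, the relevant regular-sequence property holds.  In the perfectoid route, faithful flatness follows from standard perfectoid covering theorems applied to $\mc{O}_K$-algebras, and the whole argument funnels into the known isomorphism $\mc{O}_C^\smallprism \simeq \Spf(\Ainf)$, sidestepping the $\delta$-structural bookkeeping entirely.
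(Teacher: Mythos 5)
Your reformulation --- that a morphism $(\mf{S},E)\to(B,J)$ over $\mc{O}_K$ amounts to a $\delta$-trivial element $\tilde u\in B$ lifting the image of $\pi$ --- is correct, but the paper does not need it. Your second (perfectoid) route is, in the end, essentially the paper's argument repackaged. The paper notes that it suffices to show $\rho_\mf{S}\circ\phi_\mf{S}$ is a flat surjection, factors this through $\alpha_\inf\colon\Spf(\Ainf)\to\Spf(\mf{S})$ using the commutativity of triangle~(2) in Proposition~\ref{prop:big-diagram-commutes}, identifies $\rho_\inf$ with the canonical map $\mc{O}_C^\smallprism\to\mc{O}_K^\smallprism$ via Example~\ref{eg:prismatization-qrsp}, and cites \cite[Lemma~6.3]{BhattLuriePrismatization} together with quasi-syntomicity of $\mc{O}_K\to\mc{O}_C$. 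Your ``standard perfectoid covering theorems'' is precisely this Bhatt--Lurie flat-cover lemma: the isomorphism $\mc{O}_C^\smallprism\simeq\Spf(\Ainf)$ alone does not give flat surjectivity of the map down to $\mc{O}_K^\smallprism$, so you should cite the covering lemma explicitly. Your packaging does sidestep the $\phi_\mf{S}$-precomposition by working with the untwisted $(\Ainf,(\xi))$ rather than $(\Ainf,(\xi^{\tw{1}}))$; this is a valid cosmetic simplification.

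Your first route, however, has a real gap. Quotienting $A\{x\}$ by the $\delta$-ideal generated by both $x-u_0$ and $\delta(x)$ forces $\delta(u_0)=0$ in the quotient. But for a general lift $u_0$ of $\pi$ --- say $u_0=u+E$ when $(A,I)=(\mf{S},E)$ --- the element $\delta(u_0)$ differs from the distinguished unit $\delta(E)$ by an element of $(E)$, and is therefore itself a unit, so the $(p,I)$-completed quotient vanishes. What you actually want is the prismatic envelope $A\{x\}\{\tfrac{x-u_0}{d}\}^\wedge_\delta$ for a generator $d$ of $I$, which only imposes $x\equiv u_0\bmod I$; but then the $(p,I)$-complete faithful flatness of this envelope over $A$ is exactly the non-trivial ``Koszul-type'' input you allude to and have not supplied. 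The paper avoids this bookkeeping entirely by routing through the perfectoid covering result.
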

\begin{proof}
    Note that $\phi_{\mathfrak{S}} \colon \Spf(\mathfrak{S}) \rightarrow \Spf(\mathfrak{S})$ is a flat surjection.
    So, it is enough to show that $\rho_{\mathfrak{S}} \circ \phi_{\mathfrak{S}}$ is a flat surjection.
    But, from the commutativity of triangle (2) in diagram \eqref{eq:big-diagram-commutes} of Proposition \ref{prop:big-diagram-commutes}, it is then sufficient to show that the composition $\Spf(\Ainf) \to \mc{O}_K^\smallprism$ (see Section \ref{subsubsec:ainf_prism} for this map), is a flat surjection.
    By using the identification in Example \ref{eg:prismatization-qrsp}, we are further reduced to showing that the map $\mc{O}_C^\smallprism\to\mc{O}_K^\smallprism$ is a flat surjection.
    The latter follows from \cite[Lemma 6.3]{BhattLuriePrismatization}, as $\mc{O}_K\to\mc{O}_C$ is quasi-syntomic (e.g.\ see \cite[Lemma 1.15]{IKY1}).
\end{proof}

For any $n\geqslant 0$, set $E^{\tw{n}} \defeq E^{\tw{n}}_\pi \defeq \phi_\mf{S}^n(E_\pi)$.
Then, with $\mf{S}$ still equipped with the Frobenius lift $\phi_\mf{S}$, the pair $(\mf{S}, E^{\tw{n}})$ is a prism called the ($n\textit{-twisted}$) \textit{Breuil--Kisin prism}, which coincides with the usual Breuil--Kisin prism when $n=0$.
The $n\textrm{-fold}$ self-composition of the Frobenius $\phi_\mf{S}^n$ induces a map,
\begin{equation*}
        \ov{\phi}^n_{\mf{S}}\colon \mf{S}/E \to \mf{S}/E^{\tw{n}},
    \end{equation*}
and composing this with $\mr{nat}$ endows $(\mf{S}_R,(E^{\tw{n}}))$ with the structure of an object of $(\mc{O}_K)_\smallprism$.
Additonally, we shorten $\rho_{(\mf{S},E^{\tw{n}})}$ to $\rho^{\tw{n}}_\mf{S}$ or $\rho^{\tw{n}}_\pi$, noting the change in index.

It is clear that $\phi^n_\mf{S}\colon\mf{S}\to\mf{S}$ is a flat surjection which induces a morphism $(\mf{S},(E))\to (\mf{S},E^{\tw{n}})$ in $(\mc{O}_K)_\smallprism$.
Using this and the fact that $\rho_\mf{S}$ is Frobenius-equivariant, we have the following identification in $\mr{Map}(\Spf(\mf{S}),\mc{O}_K^\smallprism)$:
\begin{equation}\label{eq:BK-twist-identification}
    \rho^{\tw{n}}_\mf{S}\simeq F_{\mc{O}_K}^n\circ \rho_{\mf{S}}.
\end{equation}

\subsubsection{The product of Breuil--Kisin prisms}\label{sss:mixed-BK-prism}

Breuil--Kisin prisms depend on the choice of $E = E_\pi$, and thus on the choice of the uniformiser $\pi$.
To show the independence of our later constructions of this choice, in this section, we study the product of Breuil--Kisin prisms for different choices of the uniformiser.

Let us fix another uniformiser $\pi'$ of $\mc{O}_K$.
Let $\mf{S}\wh{\otimes}_{\mathbb{Z}_p}\mf{S}$ be shorthand for the $(p,E_\pi,E_{\pi'})\textrm{-adic}$ completion of $\mf{S}\otimes_{\mathbb{Z}_p}\mf{S}$, and set
\begin{equation*}
    J \defeq \ker(\mf{S}\wh{\otimes}_{\mathbb{Z}_p}\mf{S}) \twoheadrightarrow \mc{O}_K,
\end{equation*}
where the surjective map denotes the composition of the multiplication map with $\mr{nat}$.
We define the \emph{mixed Breuil--Kisin prism} to be
\begin{equation*}
    \mf{S}_{\pi,\pi'} = (\mf{S}\wh{\otimes}_{\mathbb{Z}_p}\mf{S})\{\tfrac{J}{E_{\pi}}\}^\wedge_{\delta},
\end{equation*}
where the right-hand side denotes the $(p,E_\pi)\textrm{-adic}$ completion of the $\delta\textrm{-algebra}$ obtained by freely adjoining to $(\mf{S}\wh{\otimes}_{\mathbb{Z}_p}\mf{S})$ the elements $\tfrac{j}{E_\pi}$, for $j$ in $J$.\footnote{The ostensible asymmetry in the definition of $\mf{S}_{\pi,\pi'}$ is remedied by observing that it is naturally equal to $(\mf{S}\wh{\otimes}_{\mathbb{Z}_p}\mf{S})\{\tfrac{J}{E_{\pi'}}\}^\wedge_{\delta}$ as $E_\pi/E_{\pi'}$ is a unit in $\mf{S}_{\pi,\pi'}$ (using \cite[Lemma 2.24]{BhattScholzePrisms} and the fact that $E_{\pi'} = 0 \textrm{ mod } E_{\pi}$).}
Let us set $I_{\pi,\pi'}\subseteq \mf{S}_{\pi,\pi'}$ to be the ideal generated by $E_\pi$, or equivalently, by $E_{\pi'}$. 

Let us observe that there are natural maps of prisms $\mf{S} \xrightarrow{p_1} \mf{S}_{\pi,\pi'} \xleftarrow{p_2} \mf{S}$ coming from projection and, by our setup, the following two compositions coincide:
\begin{equation*}
    \mc{O}_K \xrightarrow{\mr{nat}}\mf{S}_\pi/E_\pi \xrightarrow{p_1} \mf{S}_{\pi,\pi'}/I_{\pi,\pi'} \xleftarrow{p_2} \mf{S}/E_{\pi'} \xleftarrow{\mr{nat}} \mc{O}_K.
\end{equation*}
Thus, $(\mf{S}_{\pi,\pi'}, I_{\pi,\pi'})$ has an unambiguous structure as an element of $(\mc{O}_K)_\smallprism$, and we use $\rho_{\pi,\pi'}$ to denote the corresponding map $\Spf(\mf{S}_{\pi,\pi'}) \to \mc{O}_K^\smallprism$.

\begin{prop} 
    The object $(\mf{S}_{\pi,\pi'}, I_{\pi,\pi'})$ is the product of $(\mf{S}, E_\pi)$ and $(\mf{S}, E_{\pi'})$ in $(\mc{O}_K)_\smallprism$.
    Moreover, the following diagram
    \begin{equation*}
    \begin{tikzcd}
    	{\mf{S}_{\pi,\pi'}} & {\mf{S}} \\
    	{\mf{S}} & {\mc{O}_K^\smallprism,}
    	\arrow["{p_1}", from=1-1, to=1-2]
    	\arrow["{p_2}"', from=1-1, to=2-1]
    	\arrow["{\rho_{\pi,\pi'}}"{description}, from=1-1, to=2-2]
    	\arrow["{\rho_\pi}", from=1-2, to=2-2]
    	\arrow["{\rho_{\pi'}}"', from=2-1, to=2-2]
    \end{tikzcd}
    \end{equation*}
    is $2\textrm{-commutative}$, and its outer square is $2\textrm{-cartesian}$.
\end{prop}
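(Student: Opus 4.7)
The plan splits into three tasks: (a) verify that the diagram is $2$-commutative, (b) verify the claimed product universal property in $(\mc{O}_K)_\smallprism$, and (c) deduce the $2$-cartesian property of the outer square.

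For (a), I would note that commutativity of each inner triangle amounts to showing that $p_1 \colon (\mf{S},E_\pi) \to (\mf{S}_{\pi,\pi'}, I_{\pi,\pi'})$ and $p_2 \colon (\mf{S},E_{\pi'}) \to (\mf{S}_{\pi,\pi'}, I_{\pi,\pi'})$ are morphisms in $(\mc{O}_K)_\smallprism$, which is built into the construction: $J$ was defined as the kernel of the composed multiplication map to $\mc{O}_K$, and $\mf{S}_{\pi,\pi'}/I_{\pi,\pi'} = \mc{O}_K$ via this quotient. Functoriality of $(A,I)\mapsto \rho_{(A,I)}$ (recorded just before Example \ref{eg:prismatization-qrsp}) then gives the $2$-commutativity of the full diagram.

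For (b), I would prove the universal property directly. Let $(C,K)$ be an object of $(\mc{O}_K)_\smallprism$ equipped with morphisms $h_i \colon (\mf{S},E_\pi),(\mf{S},E_{\pi'}) \to (C,K)$ over $\mc{O}_K$. The pair $(h_1,h_2)$ induces a ring map $h \colon \mf{S}\wh{\otimes}_{\Z_p}\mf{S}\to C$ (using that $C$ is $(p, E_\pi, E_{\pi'})$-adically complete, since $E_\pi$ and $E_{\pi'}$ each generate $K$ up to a unit by rigidity of distinguished elements, see \cite[Lemma 3.1]{BhattScholzePrisms}). Compatibility of the $h_i$ with the structure maps to $\mc{O}_K$ forces $h(J)\subseteq K$, and since $E_\pi$ is a non-zero-divisor in $C$ generating $K$, the elements $\tfrac{h(j)}{E_\pi} \in C$ are well-defined. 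The universal property of the $\delta$-envelope $\{-/-\}_\delta^\wedge$ from \cite[Section 2.4]{BhattScholzePrisms}, together with $(p,E_\pi)$-adic completeness of $C$, then produces a unique map of $\delta$-rings $\mf{S}_{\pi,\pi'}\to C$ extending $h$ and sending $I_{\pi,\pi'}$ into $K$. Compatibility with the structure map to $\mc{O}_K$ is automatic since both sides recover the same quotient.

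For (c), I would deduce the $2$-cartesian property from (b) combined with the flat-cover property of $\rho_\mf{S}$ (Proposition \ref{prop:flat-cover}). Concretely, for a $p$-nilpotent test ring $R$ and a pair of points $g_1,g_2 \in \Spf(\mf{S})(R)$ whose images agree in $\mc{O}_K^\smallprism(R)$, one wants a unique lift to $\Spf(\mf{S}_{\pi,\pi'})(R)$. After a faithfully flat base change $R \to R'$ where $g_1,g_2$ factor through maps whose induced Cartier--Witt divisors are trivialised by a common prism $(A,I) \in (\mc{O}_K)_\smallprism$ (producible since $\rho_\mf{S}$ is a flat cover and Cartier--Witt divisors fpqc-locally arise from prisms, cf.\ the proof of Proposition \ref{prop:flat-cover} and \cite[Lemma 6.3]{BhattLuriePrismatization}), the universal property from (b) produces the desired map to $\mf{S}_{\pi,\pi'}$, and fpqc descent gives the point over $R$. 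Uniqueness follows from uniqueness in (b) together with faithfulness of fpqc descent.

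The main obstacle I expect is in part (c): precisely translating the site-theoretic universal property into the stack-theoretic $2$-cartesian statement requires working around the fact that not every point of $\mc{O}_K^\smallprism(R)$ comes from a prism, only fpqc-locally. Carefully tracking Cartier--Witt divisors on covers, and then descending the produced map $\mf{S}_{\pi,\pi'}\to C$ back down, is where most care is needed; everything else is essentially an unpacking of definitions and an application of the universal property of $\{-/-\}_\delta^\wedge$.
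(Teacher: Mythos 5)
Your proof of $2$-commutativity in (a) and the product universal property in (b) are fine; (b) is essentially the direct argument that \cite[Example 3.4]{DLMS} spells out (and which the paper simply cites), built on the universal property of the prismatic envelope $\{J/E_\pi\}_\delta^\wedge$ and rigidity of distinguished elements.

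Where you diverge — and where there is a genuine gap — is in (c). You correctly identify the tension: the universal property you proved in (b) is for objects $(C,K)$ of $(\mc{O}_K)_\smallprism$, while a point of $\mc{O}_K^\smallprism(R)$ for a $p$-nilpotent $R$ is not itself a prism, so you propose to pass to a flat cover $R \to R'$ and produce a prism $(A,I)$ through which everything factors. But the step ``$g_1,g_2$ factor through maps whose induced Cartier--Witt divisors are trivialised by a common prism'' is not something you can extract from the flat-cover property of $\rho_\mf{S}$: you already have lifts $g_1, g_2 \in \Spf(\mf{S})(R)$, and the problem is not to lift the Cartier--Witt divisor on $R'$ to a prism but to get the two different ring maps $\mf{S}\to R'$ to land compatibly in a single prism, which is precisely what is at issue and is not supplied by \cite[Lemma 6.3]{BhattLuriePrismatization} or the argument of Proposition~\ref{prop:flat-cover}.

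The paper's argument avoids the flat cover and descent entirely, by a trick you should internalise: since $\mf{S}$ is a $\delta$-ring, the universal property of $p$-typical Witt vectors canonically lifts each ring map $g_i \colon \mf{S} \to R$ to a $\delta$-ring map $\mf{S} \to \bb{W}(R)$ (this is exactly how $\rho_{(A,I)}$ was constructed for general prisms in the passage before Example~\ref{eg:prismatization-qrsp}). These two lifts assemble to a map $\mf{S}\wh\otimes_{\Z_p}\mf{S}\to\bb{W}(R)$, and the given identification of Cartier--Witt divisors $E_\pi\bb{W}(R)\simeq E_{\pi'}\bb{W}(R)$ over $R$ is precisely what is needed to send $J/E_\pi$ into $\bb{W}(R)$, i.e.\ to extend uniquely to a $\delta$-ring map $\mf{S}_{\pi,\pi'}\to\bb{W}(R)$. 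Composing with $\gamma_0 \colon \bb{W}(R)\to R$ then produces the required point of $\Spf(\mf{S}_{\pi,\pi'})(R)$ compatibly with $p_1, p_2$, with no cover and no descent. You should replace your step (c) with this direct argument.
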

\begin{proof}
    The first claim follows from from the same argument given in \cite[Example 3.4]{DLMS}.
    For the second claim, consider morphisms $f,g \colon \mf{S} \rightrightarrows R$ for a $p\textrm{-nilpotent}$ ring $R$, such that the compositions $\rho_\pi \circ f$ and $\rho_{\pi'} \circ g$ are identified in $\mathrm{Map}(\Spec(R), \mc{O}_K^\smallprism)$.
    This gives us an isomorphism of generalised Cartier--Witt divisors $E_\pi\bb{W}(R) \to \bb{W}(R)$ and $E_{\pi'}\bb{W}(R) \to \bb{W}(R)$, such that the induced maps $\mc{O}_K \to \mf{S}/E_\pi \to \bb{W}(R)/E_\pi$ and $\mc{O}_K \to \mf{S}/E_{\pi'} \to \bb{W}(R)/E_{\pi'}$ are identified.
    From the two maps $\mf{S}\to \bb{W}(R)$ induced by $f$ and $g$, we obtain a unique morphism $\mf{S} \wh{\otimes}_{\bb{Z}_p} \mf{S} \to \bb{W}(R)$.
    Our identification of Cartier--Witt divisors guarantees that this map uniquely upgrades to a morphism $\mf{S}_{\pi,\pi'}\to \bb{W}(R)$.
    This map gives rise to a morphism $h \colon \mf{S} \wh{\otimes}_{\bb{Z}_p} \mf{S} \to R$ and it is easy to see that $p_1 \circ h = f$ and $p_2 \circ h = g$, as desired.
\end{proof}

\subsubsection{The modified Breuil prism}

We continue with the notation from Section \ref{ss:BK-prism}.
Define
\begin{align*}
    \tilde{S} = \mf{S}\big\{\tfrac{\phi(u^{\tilde{e}})}{p}\big\}_{\delta}^\wedge &= \mf{S}\big[\{\tfrac{u^{m\tilde{e}}}{m!}\}_{m\geqslant 1}\big]^\wedge_p\\ 
            &=\big\{\textstyle\sum_{m \in \mathbb{N}} a_m\tfrac{u^m}{\lfloor \nicefrac{m}{\tilde{e}}\rfloor!}\in K_0\llbracket u\rrbracket: a_m\text{ converges } p\text{-adically to }0\big\},
\end{align*}
endowed with the natural Frobenius lift  
\begin{equation*}
    \phi_{\tilde{S}}\colon \tilde{S} \to \tilde{S},\qquad \textstyle\sum_{m \in \mathbb{N}} a_m\tfrac{u^m}{\lfloor \nicefrac{m}{\tilde{e}}\rfloor!}\mapsto \sum_{m \in \mathbb{N}} \phi_{W}(a_m)\tfrac{u^{pm}}{\lfloor \nicefrac{m}{\tilde{e}}\rfloor!}.
\end{equation*}
The pair $(\tilde{S}, p)$ is a prism which we call the \emph{modified Breuil prism}.

\begin{rem} 
    There is another more well-known prism closely related to $(\tilde{S}, p)$: the \emph{Breuil prism} $(S, p)$ (hence the name `modified Breuil prism' for $(\tilde{S}, p)$).
    The ring $S$ is defined in exactly the same manner as the ring $\tilde{S}$ but with each instance of $\tilde{e}$ replaced by $e$.
    There is a natural injective homomorphism $S_R \to \tilde{S}_R$ which is an isomorphism if and only if $e=1$ or $p=2$.
\end{rem}

Observe that
\begin{equation*}
    E^{\tw{1}}=E^p=u^{pe}=0 \textrm{ mod } p\tilde{S},
\end{equation*}
where the first equality follows because $\phi_{\mf{S}}$ is a Frobenius lift, the second because $E$ is an Eisenstein polynomial of degree $e$, and the final equality follows because 
\begin{equation*}
    u^{pe}=pu^{p(e-\tilde{e})}(p-1)! \tfrac{u^{p\tilde{e}}}{p!}\in p\tilde{S}_R.
\end{equation*}
Thus, if $\mf{S}/E^{\tw{1}}\to \tilde{S}/p$ is the induced map, then we obtain a map $r \colon \mc{O}_K \to \tilde{S}/p$,  which is defined to be the following composition:
\begin{equation}\label{eq:RtoStildeR_struct}
    \mc{O}_K\xrightarrow{\mr{nat}} \mf{S}/E \xrightarrow{\ov{\phi}_{\mf{S}}} \mf{S}/E^{\tw{1}} \to \tilde{S}/p.
\end{equation}
For $n\geqslant 0$, we may view $(\tilde{S}, p)$ as an object of $(\mc{O}_K)_\smallprism$ with the $\mc{O}_K\textrm{-structure}$ map $F_{\tilde{S}/p}^n\circ r$.
Denote the corresponding map $\Spf(\tilde{S}) \to \mc{O}_K^\smallprism$ by $\rho_{\tilde{S}}^{\tw{n}}$ or $\tilde{\rho}_\pi^{\tw{n}}$, dropping the superscript when $n=0$.
Then, by the same logic as for Equation \eqref{eq:BK-twist-identification}, we have the following identification in $\mr{Map}(\Spf(\tilde{S}),\mc{O}_K^\smallprism)$:
\begin{equation}
    \rho^{\tw{n}}_{\tilde{S}} \simeq F_{\mc{O}_K}^n\circ \rho_{\tilde{S}}.
\end{equation}

Finally, the following proposition follows by noting that $\tfrac{\pi^{m\tilde{e}}}{m!}$ belongs to $\mc{O}_K$ for all $m \geqslant 1$ (e.g.\ see \cite[Lemma 3.9]{BerthelotOgusFIsocrystals}).
\begin{prop}\label{prop:BO-prism-to-O_K}
    There exists a unique arrow $i$ making the following diagram commute:
    \begin{equation*}
    \begin{tikzcd}[sep=2.25em]
    	{W} & {\tilde{S}} & {\mf{S}} \\
    	& \mc{O}_K
    	\arrow[from=1-1, to=1-2]
    	\arrow[from=1-1, to=2-2]
    	\arrow["i"{description}, dotted, from=1-2, to=2-2]
    	\arrow[from=1-3, to=1-2]
    	\arrow["{(E=0)}"{description}, from=1-3, to=2-2]
    \end{tikzcd}
    \end{equation*}
    where the unlabelled arrows are the natural maps.
    Moreover, $i$ is a PD-thickening.
\end{prop}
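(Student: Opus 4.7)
The plan is to split the proposition into the existence and uniqueness of $i$, followed by the PD-thickening claim, and to treat each via elementary valuation-theoretic and binomial arguments. The backbone is that $\tilde{S}$ carries honest divided powers of $u^{\tilde{e}}$ and (through $\mathbb{Z}_p$) of $p$, while $\mc{O}_K$ is both torsion-free and $p$-adically complete.

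For the construction of $i$, I would define it on generators by $u \mapsto \pi$ and $\tfrac{u^{m\tilde{e}}}{m!} \mapsto \tfrac{\pi^{m\tilde{e}}}{m!}$, and then extend by $p$-adic continuity. The crucial input (from \cite[Lemma 3.9]{BerthelotOgusFIsocrystals}) is that $\tfrac{\pi^{m\tilde{e}}}{m!}$ lies in $\mc{O}_K$; this follows from Legendre's bound $v_p(m!) \leqslant \tfrac{m}{p-1}$ combined with $\tilde{e} \geqslant \tfrac{e}{p-1}$, which gives $v_\pi(\pi^{m\tilde{e}}) = m\tilde{e} \geqslant v_\pi(m!)$. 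Uniqueness is then forced since $\mc{O}_K$ is torsion-free: any extension must satisfy $m!\cdot i\bigl(\tfrac{u^{m\tilde{e}}}{m!}\bigr) = \pi^{m\tilde{e}}$, pinning down the image.

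For the PD-thickening claim, I would first identify $\ker(i) = E\tilde{S}$ by observing that adjoining the already-present elements $\tfrac{\pi^{m\tilde{e}}}{m!}$ to $\mc{O}_K$ does not enlarge it, so the induced map $\tilde{S}/E\tilde{S} \to \mc{O}_K$ is an isomorphism. Next, using the Eisenstein decomposition $E = u^e + p\,g(u)$ with $g \in W[u]$, the binomial expansion
\[
\frac{E^n}{n!} \;=\; \sum_{k=0}^{n} \frac{p^k}{k!} \cdot \frac{u^{e(n-k)}}{(n-k)!} \cdot g(u)^k
\]
exhibits $E^{[n]} \defeq \tfrac{E^n}{n!}$ as an element of $\tilde{S}$: each $\tfrac{p^k}{k!}$ lies in $\mathbb{Z}_p$ (via the standard estimate $v_p(p^k/k!) \geqslant 0$), and each $\tfrac{u^{e(n-k)}}{(n-k)!} = u^{(e-\tilde{e})(n-k)} \cdot \tfrac{u^{\tilde{e}(n-k)}}{(n-k)!}$ lies in $\tilde{S}$ since $e \geqslant \tilde{e}$. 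The PD-axioms themselves hold formally, since $\tilde{S} \subseteq K_0\llbracket u\rrbracket$ is $p$-torsion-free and so they can be checked after inverting $p$, where divided powers reduce to honest fractions.

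The step I expect to be the main obstacle is the identification $\ker(i) = E\tilde{S}$: while intuitively clear, it requires careful tracking of the $p$-adic completion in the construction of $\tilde{S}$ to rule out spurious kernel elements. Once this is in hand, both the existence of the divided powers $E^{[n]}$ and the verification of the PD-axioms reduce to binomial and valuation arithmetic in a torsion-free ring.
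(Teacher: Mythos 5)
Your construction of $i$ and the uniqueness argument are correct and match what the paper points to: the key input, that $\tfrac{\pi^{m\tilde{e}}}{m!} \in \mc{O}_K$, is exactly what the paper cites from \cite[Lemma 3.9]{BerthelotOgusFIsocrystals}, and torsion-freeness of $\mc{O}_K$ does pin $i$ down uniquely. The observation that $E^{[n]} := E^n/n! \in \tilde{S}$ via the binomial expansion is also correct and relevant.

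However, the PD-thickening part contains a genuine gap: the identification $\ker(i) = E\tilde{S}$ is false in general, and this is not merely a delicacy about the $p$-adic completion. Take $p=2$, $e=1$, $\pi=2$, so $E = u-2$, $\tilde{e}=1$, and $\tilde{S} = \big\{\sum_m a_m\tfrac{u^m}{m!} : a_m \in \mathbb{Z}_2,\ a_m\to 0\big\}$. Then $E^2/2 \in \tilde{S}$ and $i(E^2/2)=0$, but $E^2/2 \notin E\tilde{S}$: by $p$-torsion-freeness of $\tilde{S}\subseteq K_0\llbracket u\rrbracket$, membership would force $E/2 = \tfrac{u}{2}-1 \in \tilde{S}$, yet the coefficient $a_1 = \tfrac{1}{2}$ of $\tfrac{u}{1!}$ is not in $\mathbb{Z}_2$. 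In general $\ker(i)$ is the topological closure of the ideal generated by all $E^{[n]}$, $n\geqslant 1$, which is not principal. This breaks your PD-axiom verification: to put a PD-structure on $\ker(i)$ you must show $x^n/n!\in\tilde{S}$ for \emph{every} $x\in\ker(i)$, and your deduction from $E^{[n]}\in\tilde{S}$ only covers $x\in E\tilde{S}$ (where $x=Eg$ gives $x^n/n! = E^{[n]}g^n$). A correct argument should identify $\ker(i)$ as the ideal topologically generated by $\{E^{[n]}\}_{n\geqslant 1}$ and then propagate divided powers from the generators to the full ideal by the standard multinomial manipulations (which is where $p$-torsion-freeness genuinely earns its keep), rather than by appealing to principality.
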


\subsubsection{The prism \texorpdfstring{$(W, p)$}{(W, p)}}

The pair $(W, p)$, where $W$ is equipped with the natural Witt vector Frobenius $\phi_W$, is a prism.
Via the natural map $q \colon \mc{O}_K \twoheadrightarrow k$, we may view $(W, p)$ as an element of $(\mc{O}_K)_\smallprism$, and we denote the corresponding map $\Spf(W) \to \mc{O}_K^\smallprism$ by $\rho_W$. 

Observe that we may also view $(W, p)$ naturally as an object of $k_\smallprism$, and thus we also obtain a map $\Spf(W)\to k^\smallprism$ and denote it as $\ov{\rho}_W$.
The following proposition is clear by Example \ref{eg:prismatization-qrsp}.
\begin{prop}\label{prop:rho-W-factorization}
    The map $\rho_W\colon \Spf(W)\to \mc{O}_K^\smallprism$ naturally factorises as 
    \begin{equation*}
        \Spf(W)\xrightarrow{\ov{\rho}_W}k^\smallprism\to\mc{O}_K^\smallprism,
    \end{equation*}
    and the first map is an isomorphism.
\end{prop}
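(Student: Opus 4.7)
My plan is that both assertions will fall out by unwinding definitions and applying Example \ref{eg:prismatization-qrsp} to the perfect field $k$, so I expect no real obstacle.

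To produce the factorisation, I would first construct the unlabelled morphism $k^\smallprism \to \mc{O}_K^\smallprism$ as the prismatisation of the closed immersion $q \colon \Spec(k) \hookrightarrow \Spf(\mc{O}_K)$, using the functoriality of prismatisation recorded in Section \ref{ss:absolute-pushforwards-and-relative-prismatic-cohomology}: this map sends an object $(\alpha \colon I \to \bb{W}(R), s)$ of $k^\smallprism(R)$ to $(\alpha, q \circ s)$ in $\mc{O}_K^\smallprism(R)$. By construction, $\ov{\rho}_W$ is the map attached to the prism $(W,p) \in k_\smallprism$ with $k\textrm{-structure}$ given by the natural surjection $k \to W/p$. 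Composing, $q^\smallprism \circ \ov{\rho}_W$ then corresponds to $(W,p)$ equipped with the composite structure map $\mc{O}_K \xrightarrow{q} k \to W/p$, which is precisely the $\mc{O}_K\textrm{-structure}$ used to define $\rho_W$. This is the required factorisation; it is simply a matter of tracking structure maps.

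For the second claim, I would directly invoke Example \ref{eg:prismatization-qrsp}. Since $k$ is a perfect field of characteristic $p$, it is in particular perfectoid (hence quasi-regular semi-perfectoid), and the initial object of $k_\smallprism$ is the prism $(\Prism_k, I_k) = (W, (p))$. The cited consequence of \cite[Lemma 6.1]{BhattLurieAbsolute} then gives that the corresponding map $\rho_{(W,p)} = \ov{\rho}_W \colon \Spf(W) \to k^\smallprism$ is an isomorphism. The only point requiring a moment of care is the compatibility of the $k\textrm{-structure}$ on $(W,p)$ used in defining $\ov{\rho}_W$ with the $\mc{O}_K\textrm{-structure}$ used in defining $\rho_W$, but this is built into the definitions.
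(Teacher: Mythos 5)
Your proof is correct and takes essentially the same route as the paper: the paper simply declares the proposition "clear by Example \ref{eg:prismatization-qrsp}", and your write-up supplies exactly the bookkeeping that makes this clear (identifying the unlabelled arrow as $q^\smallprism$, matching structure maps, and noting that $k$ being perfect hence perfectoid gives $\Prism_k = W$ with $I_k = (p)$). No gaps.
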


Now, for any $n\geqslant 0$, we may also consider $(W, p)$ as an object of $(\mc{O}_K)_\smallprism$ using the $\mc{O}_K$-structure map given by the composition
\begin{equation*}
    \mc{O}_K\xrightarrow{q} k\xrightarrow{F_k^{n}}k,
\end{equation*}
and denote the corresponding map $\Spf(W)\to \mc{O}_K^\smallprism$ by $\rho^{\tw{n}}_W$. 
Then, similarly to Equation \eqref{eq:BK-twist-identification}, we have the following identifications in $\mr{Map}(\Spf(W),\mc{O}_K^\smallprism)$ and $\mr{Map}(\Spf(W),k^\smallprism)$, respectively:
\begin{equation}\label{eq:rho-W-twist-identifications}
    \rho^{\tw{n}}_{W}\simeq F_{\mc{O}_K}^n\circ \rho_{W},\qquad \ov{\rho}^{\tw{n}}_W\simeq F_k^n\circ\ov{\rho}_W.
\end{equation}

\subsubsection{The prism \texorpdfstring{$(\Ainf, \xi^{\tw{1}})$}{(Ainf, xi1)}}\label{subsubsec:ainf_prism}

Endowed with the usual Frobenius lift $\phi$, the pair $(\Ainf, \xi^{\tw{1}})$ is a prism.
We may then consider the following composition:
\begin{equation*}
    \mc{O}_K\to\mc{O}_C\isomto \Ainf/\xi^{\tw{1}},
\end{equation*}
where the last isomorphism is induced by the map $\theta^{\tw{\text{-}1}} = \theta \circ \phi^{-1}$.
This endows $(\Ainf, \xi^{\tw{1}})$ with the structure of an object of $(\mc{O}_K)_\smallprism$.
We denote the corresponding map $\Spf(\Ainf) \to \mc{O}_K^\smallprism$ by $\rho_\inf$. 

Moreover, for any $n\geqslant 0$, we may consider the pair $(\Ainf,\xi^{\tw{n+1}})$, where as per usual $\xi^{\tw{n+1}} = \phi^n(\xi^{\tw{1}}) = \phi^{n+1}(\xi)$.
This is also a prism with the same Frobenius lift $\phi$.
We may endow it with the structure of an object of $(\mc{O}_K)_\smallprism$ via the $\mc{O}_K\textrm{-structure}$ map
\begin{equation*}
    \mc{O}_K\to\mc{O}_C \isomto \Ainf/\xi^{\tw{1}} \xrightarrow{\phi^n} \Ainf/\xi^{\tw{n+1}}.
\end{equation*}
We denote the corresponding map $\Spf(\Ainf) \to \mc{O}_K^\smallprism$ by $\rho^{\tw{n}}_\inf$.
Then, by the same logic as for Equation \eqref{eq:BK-twist-identification}, one has the following identification in $\mr{Map}(\Spf(\Ainf),\mc{O}_K^\smallprism)$:
\begin{equation}
    \rho^{\tw{n}}_\inf\simeq F_{\mc{O}_K}^n\circ \rho_\inf.
\end{equation}

\subsubsection{The prism \texorpdfstring{$(\Acrys, p)$}{(Acrys, p)}}

Endowed with the usual Frobenius lift $\phi$, the pair $(\Acrys, p)$ is a prism.
We may then consider the composition,
\begin{equation*}
    \mc{O}_K \to \mc{O}_C \isomto \Ainf/\xi^{\tw{1}} \to \Acrys/p,
\end{equation*}
where the last map makes sense as $\xi^{\tw{1}} \in p\Acrys$,\footnote{Indeed, $\xi^p=p!\tfrac{\xi^p}{p!}$ belongs to $p\Acrys$, and so ${\xi}^{(1)}=\xi^p=0\mod p\Acrys$.} and we endow $(\Acrys, p)$ with the structure of an object of $(\mc{O}_K)_\smallprism$.
We denote the corresponding map $\Spf(\Acrys)\to \mc{O}_K^\smallprism$ by $\rho_\crys$. 

Moreover, for any $n\geqslant 0$, we may consider the pair $(\Acrys, p)$ as an object of $(\mc{O}_K)_\smallprism$ with the $\mc{O}_K\textrm{-structure}$ map
\begin{equation*}
    \mc{O}_K \to \mc{O}_C\isomto \Ainf/\xi^{\tw{1}} \to \Acrys/p \xrightarrow{\phi^n} \Acrys/p.
\end{equation*}
We denote the corresponding map $\Spf(\Acrys)\to \mc{O}_K^\smallprism$ by $\rho^{\tw{n}}_\crys$.
Then, by the same logic as for \eqref{eq:BK-twist-identification}, one has the following identification in $\mr{Map}(\Spf(\Acrys),\mc{O}_K^\smallprism)$:
\begin{equation}
    \rho^{\tw{n}}_\crys\simeq F_{\mc{O}_K}^n\circ \rho_\crys.
\end{equation}

\subsubsection{Relationship between various prisms}

In this section, our goal is to show the precise relationship between various prisms discussed above.
We do this in the form of the following proposition, whose precise meaning and proof will be explained immediately afterwards.

\begin{prop}\label{prop:big-diagram-commutes}
    Let $n \geqslant 0$ and consider the following diagram:
    \begin{equation}\label{eq:big-diagram-commutes}
    \begin{tikzcd}
	{\Spf(\Ainf)} &&& {\Spf(\Acrys)} \\
	&&& {\Spf(W)} \\
	& {\mc{O}_K^\smallprism} \\
	{\Spf(\mf{S})} &&& {\Spf(\tilde{S})}
	\arrow["{\rho^{\tw{n}}_\inf}"{description}, from=1-1, to=3-2]
	\arrow[""{name=0, anchor=center, inner sep=0}, "{\alpha_\inf}"{description}, from=1-1, to=4-1]
	\arrow[""{name=1, anchor=center, inner sep=0}, from=1-4, to=1-1]
	\arrow[from=1-4, to=2-4]
	\arrow[""{name=2, anchor=center, inner sep=0}, "{\rho_\crys^{\tw{n}}}"{description}, curve={height=12pt}, from=1-4, to=3-2]
	\arrow[""{name=3, anchor=center, inner sep=0}, "{\alpha_\crys}"{description}, curve={height=-30pt}, from=1-4, to=4-4]
	\arrow[""{name=4, anchor=center, inner sep=0}, "{\rho_W^{\tw{n+1}}}"{description}, shift right=2, curve={height=-12pt}, from=2-4, to=3-2]
	\arrow[""{name=5, anchor=center, inner sep=0}, "s"{description}, from=2-4, to=4-4]
	\arrow["{\rho^{\tw{n+1}}_\mf{S}}"{description}, from=4-1, to=3-2]
	\arrow[""{name=6, anchor=center, inner sep=0}, shift left=2, from=4-4, to=2-4]
	\arrow["{\rho^{\tw{n}}_{\tilde{S}}}"{description}, from=4-4, to=3-2]
	\arrow[""{name=7, anchor=center, inner sep=0}, from=4-4, to=4-1]
	\arrow["{(2)}"{description}, shift right, draw=none, from=0, to=3-2]
	\arrow["{(3)}"{description}, shift right, draw=none, from=1, to=3-2]
	\arrow["{(6)}"{description}, shift left, draw=none, from=3, to=5]
	\arrow["{(4)}"{description}, shift right=4, draw=none, from=4, to=2]
	\arrow["{(1)}"{description}, draw=none, from=7, to=3-2]
	\arrow["{(5)}"{description, pos=0.3}, shift left, draw=none, from=6, to=3-2]
    \end{tikzcd}
    \end{equation}
    Triangles $(2)$, $(3)$, $(4)$, $(5)$ and $(6)$ commute for all $n \geqslant 0$, and triangle $(1)$ commutes if and only if $n \geqslant a$.
\end{prop}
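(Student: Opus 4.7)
My plan is to apply Proposition~\ref{prop:stack-site-crystals-agree} to translate each triangle into an assertion about morphisms in the absolute prismatic site $(\mc{O}_K)_\smallprism$. Under this dictionary, a triangle $\Spf(B)\to\Spf(A)\to\mc{O}_K^\smallprism$ with given third edge $\Spf(B)\to\mc{O}_K^\smallprism$ is $2$-commutative precisely when the underlying ring map $A\to B$ promotes to a morphism of prism objects $(A,I)\to(B,J)$ in $(\mc{O}_K)_\smallprism$, where $(A,I)$ and $(B,J)$ are the prisms furnishing the two edges to $\mc{O}_K^\smallprism$. This amounts, for each triangle, to checking that the ring map is (i) a $\delta$-ring map, (ii) carries $I$ into $J$ and generates $J$ so that the induced map of generalised Cartier--Witt divisors is an isomorphism, and (iii) compatible with the $\mc{O}_K$-structure maps $\mc{O}_K\to A/I$ and $\mc{O}_K\to B/J$.

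For triangles~(2)--(6), the relevant ring maps are the obvious ones among the containments $W\subseteq\mf{S}\subseteq\Ainf\subseteq\Acrys$, $W\subseteq\tilde{S}$, and $\tilde{S}\to\Acrys$. In each case, (i) is built into the $\delta$-structures of these rings; (iii) is a direct chase through the structure maps defined in Section~\ref{subsec:breuil_kisin_prism} (crucially using $F_{\tilde{S}/p}\equiv\phi_{\tilde{S}}\pmod p$ and Frobenius-equivariance of the inclusions); and (ii) reduces to a standard identity such as $E\mapsto(\text{unit})\cdot\xi$ in $\Ainf$ under $u\mapsto[\pi^\flat]$ for Triangle~(2), $\xi^p=p!\cdot(\xi^p/p!)\in p\Acrys$ for Triangle~(3), and the corresponding analogues for the remaining triangles.

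For triangle~(1), the ring map is the natural inclusion $\mf{S}\hookrightarrow\tilde{S}$, and conditions (i) and (iii) follow without difficulty: the inclusion is a $\delta$-ring map by the very construction of $\tilde{S}$, and both competing structure maps $\mc{O}_K\to\tilde{S}/p$ compute to $\pi\mapsto u^{p^{n+1}}$ using $F_{\tilde{S}/p}\equiv\phi_{\tilde{S}}|_{\mf{S}}=\phi_{\mf{S}}\pmod p$. Hence the entire content of the proposition, including the sharp threshold $n\geqslant a$, sits in condition (ii). Writing $E=u^e+p\cdot g$ with $g(0)\in W^\times$ (the Eisenstein condition), one has $E^{\tw{n+1}}\equiv u^{p^{n+1}e}\pmod p$ in $\tilde{S}$, and using the explicit description $\tilde{S}=\mf{S}[\{u^{m\tilde{e}}/m!\}_{m\geqslant 1}]^\wedge_p$ I would carry out a careful valuation-theoretic comparison of $E^{\tw{n+1}}$ and $p$ inside $\tilde{S}$, showing that they generate the same invertible ideal precisely when $n\geqslant a$.

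The main obstacle is this sharp analysis in condition (ii) for triangle~(1). For sufficiency, I would show that when $n\geqslant a$, $E^{\tw{n+1}}/p$ is a unit of $\tilde{S}$ by reducing modulo the radical to the unit $\phi_W^{n+1}(g(0))\in W^\times$, with nilpotent corrections coming from the PD-style generators $u^{m\tilde{e}}/m!$ that are controlled precisely because $p^{n+1}e$ is large enough relative to $p\tilde{e}$. For necessity, when $n<a$, I expect the correction terms to obstruct $E^{\tw{n+1}}$ from generating $p\tilde{S}$ — not merely preventing its containment — which must be made concrete by a direct local computation. I anticipate this necessity direction to be the most delicate step of the argument.
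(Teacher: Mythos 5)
Your high-level reduction (each triangle is the assertion that a given ring map promotes to a morphism in $(\mc{O}_K)_\smallprism$, checked via conditions (i)--(iii)) matches the paper's strategy, and your treatment of triangles (2)--(6) is sound. But your analysis of triangle (1) contains a genuine error: having correctly observed that the $\mf{S}$-route and $\tilde{S}$-route structure maps both send $\pi\mapsto u^{p^{n+1}}$ in $\tilde{S}/p$, you conclude that the threshold $n\geqslant a$ must live in the ideal condition (ii), and propose to find it by a valuation comparison of $E^{\tw{n+1}}$ against $p$ in $\tilde{S}$. This cannot work, because the ideal condition holds for \emph{every} $n\geqslant 0$. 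The paper records explicitly that $E^{\tw{1}}\equiv E^p\equiv u^{pe}\equiv 0\pmod{p\tilde{S}}$ (using $u^{p\tilde{e}}=p!\cdot(u^{p\tilde{e}}/p!)\in p\tilde{S}$ and $pe\geqslant p\tilde{e}$), so $E^{\tw{n+1}}\in p\tilde{S}$ for all $n\geqslant 0$ since $\phi_{\tilde{S}}$ preserves $(p)$; rigidity of prisms (\cite[Lemma~2.24]{BhattScholzePrisms}) then upgrades containment to equality automatically. Concretely, writing $E=u^e+pg$ with $g(0)\in W^\times$, one sees $E^{\tw{n+1}}/p=u^{p^{n+1}e}/p+\phi^{n+1}(g)$ is a unit of $\tilde{S}$ for all $n\geqslant 0$, since $u^{p^{n+1}e}/p$ lies in $(p,u)$ while $\phi^{n+1}(g)$ has unit constant term. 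Your anticipated ``necessity'' direction has no obstruction to detect.

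The threshold in fact comes from condition (iii), but for a comparison you did not make. The paper's proof compares the $\tilde{S}$-structure map not only against the $\mf{S}$-route but also against the $W$-route structure map $F_k^{n+1}\circ q$, which factors through $\mc{O}_K\twoheadrightarrow k$ and therefore sends $\pi\mapsto 0$ in $\tilde{S}/p$. Against $\pi\mapsto u^{p^{n+1}}$, agreement forces $u^{p^{n+1}}\in p\tilde{S}$. From the explicit description $\tilde{S}=\mf{S}\big[\{u^{m\tilde{e}}/m!\}_{m\geqslant 1}\big]^\wedge_p$ one checks $u^j\in p\tilde{S}$ if and only if $j\geqslant p\tilde{e}$, so the condition is $p^{n+1}\geqslant p\tilde{e}$, i.e.\ $p^n\geqslant \tfrac{e}{p-1}$, i.e.\ $n\geqslant a$. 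You need to re-route the argument: the sharp cutoff is an $\mc{O}_K$-structure-map phenomenon involving the $\Spf(W)$ vertex of the diagram, not an ideal-generation phenomenon along $\mf{S}\hookrightarrow\tilde{S}$.
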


In Proposition \ref{prop:big-diagram-commutes} the unlabelled morphisms are the obvious ones, and the remaining morphisms are explained as follows:
\begin{itemize}
    \item The map $\alpha_{\mr{inf}} \colon \mf{S} \to \Ainf$ is a homomorphism of $W\textrm{-algebras}$, where we set $\alpha_\inf(u)=[\pi^\flat]$,
    
    \item the map $\alpha_{\crys} \colon \tilde{S} \to \Acrys$ is the unique extension of $\alpha_\inf$ as the divided powers $\tfrac{[\pi^\flat]^{m\tilde{e}}}{m!}$ belong to $\Acrys$ for all $m\geqslant 0$ (e.g.\ this follows from \cite[Lemma 3.9]{BerthelotOgusFIsocrystals}),\footnote{Indeed, as $\xi$ has divided powers in $\Acrys$, it suffices to show that the image of $[\pi^\flat]^{\tilde{e}}$ under the map $\theta \colon \Ainf \to \mc{O}_C$ has divided powers. But, $\theta([\pi^\flat]^{\tilde{e}}) = \pi^{\tilde{e}}$ which has divided powers in $\mc{O}_K$, and thus $\mc{O}_C$, by \cite[Lemma 3.9]{BerthelotOgusFIsocrystals}.}
    
    \item $s \colon \tilde{S} \to W$ is the identity on $W$ and sends $u$ to $0$.
\end{itemize}

\begin{proof}[Proof of Proposition \ref{prop:big-diagram-commutes}]
    The proof of this proposition is largely straightforward, so we only comment on the more non-obvious parts of the claim below.
    In particular, by definitions it is clear that triangles (3), (4) and (5) are commutative.
    
    Let us first show that the triangle (1) in diagram \eqref{eq:big-diagram-commutes} commutes if and only if $n\geqslant a$.
    This amounts to the claim that the following morphisms,
    \begin{equation*}
        \begin{tikzcd}[sep=2.25em]
            {(\mf{S}, E^{\tw{n+1}}, \ov{\phi}_{\mf{S}}^{n+1}\circ\mr{nat})} & {(\tilde{S}, p, F_{\tilde{S}/p}^n\circ r)} & {(W, p,F_{k}^{n+1}\circ q),}
            \arrow[from=1-1, to=1-2]
            \arrow[from=1-3, to=1-2]
        \end{tikzcd}
    \end{equation*}
    where the third entry of each term indicates the $\mc{O}_K$-structure map, are morphisms in $(\mc{O}_K)_\smallprism$ if and only if $n\geqslant a$.
    It is easy to see that the maps above are morphisms of prisms, and it remains to show that the $\mc{O}_K\textrm{-structure}$ agree.
    Diagrammatically, it is equivalent to the claim that the following square:
    \begin{equation*}
    \begin{tikzcd}[sep=large]
    	{\mf{S}/E^{(1)}} & {\tilde{S}/p} & {\tilde{S}/p} & {k} \\
    	{\mf{S}/E} & \mc{O}_K & {\mc{O}_K/\pi} & {k},
    	\arrow[ from=1-1, to=1-2]
    	\arrow["{F^n_{\tilde{S}/p}}", from=1-2, to=1-3]
    	\arrow[from=1-4, to=1-3]
    	\arrow["{\ov{\phi}_{\mf{S}}}", from=2-1, to=1-1]
    	\arrow["\sim"', from=2-2, to=2-1]
    	\arrow[from=2-2, to=2-3]
    	\arrow["\sim", from=2-3, to=2-4]
    	\arrow["{F_k^{n+1}}"', from=2-4, to=1-4]
    \end{tikzcd}
    \end{equation*}
    where all the non-labelled arrows are the natural ones, commutes if and only if $n\geqslant a$.
    Writing $\mc{O}_K = W[\pi]$, it suffices to verify that starting from $\mc{O}_K$ and travelling around the diagram in both directions produces the same result for any $x$ in $W$ and $\pi$.
    It is easy to verify that the composition of the arrows in both directions sends $x$ to $x^{p^{n+1}}$ in $\tilde{S}/p$.
    Moreover, the composition of the arrows along the right-hand side of the diagram sends $\pi$ to $0$, but the composition of the arrows along the left hand side sends $\pi$ to $u^{p^{n+1}}$ in $\tilde{S}/p$.
    Thus, the diagram commutes if and only if $p$ divides $u^{p^{n+1}}$ in $\tilde{S}$. This clearly happens if and only if $p^{n+1} \geqslant p\tilde{e}$, which is equivalent to having that $p^n \geqslant \tfrac{e}{p-1}$, or equivalently, $n \geqslant \lceil\log_p(\tfrac{e}{p-1})\rceil = a$, as claimed.

    Next, to show that the triangle (2) in diagram \eqref{eq:big-diagram-commutes} is commutative, by definitions it is enough to establish that $\alpha_\mr{inf}$ is a map in $(\mc{O}_K)_\smallprism$.
    To get this claim, we must first show that $\alpha_\mr{inf}$ maps the ideal $(\phi_\mf{S}(E))$ into the ideal $(\tilde{\xi})$.
    This amounts to showing that $\theta(\alpha_\mr{inf}(E)) = 0$, but this precisely translates to having $E(\pi)=0$.
    To show that the $\mc{O}_K\textrm{-structure}$ maps are compatible, it suffices to observe that 
    \begin{equation*}
        \mc{O}_K\isomto \mf{S}/E \xrightarrow{\ov{\phi}_\mf{S}} \mf{S}/E^{(1)}\xrightarrow{\alpha_\mr{inf}} \Ainf/\xi^{(1)},
    \end{equation*}
    sends $w$ in $W$ to $\phi(w)$ in $\Ainf/\xi^{(1)}$ and sends $\pi$ to $[\pi^\flat]^p$.
    But, this is precisely the same as the map $\mc{O}_K\to \Ainf/\xi^{(1)}$ described in Section \ref{subsubsec:ainf_prism}.
    
    Finally, to establish that the triangle (6) in diagram \eqref{eq:big-diagram-commutes} commutes, it is enough to show that $s$ is a morphism in $(\mc{O}_K)_\smallprism$.
    To this end, observe that the map $\mf{S}\to W$ sending $u$ to $0$ does send $(E^{\tw{n}})$ to $(p)$ for any $n\geqslant 1$.
    Indeed, note that $\phi^n(E(u))=E(u^{p^n})$, and so it is sent to $E(0)$ in $W$.
    But, as $E$ is an Eisenstein polynomial, therefore, we must have $E(0)=pv$, for some unit $v$ in $W$.
    This establishes the claim and allows us to conclude.
\end{proof}

\section{The twisted crystalline--de Rham comparison}\label{s:crys-de-rham-comp}

In this section we provide a general comparison between the so-called \emph{twisted crystalline realisation} and \emph{twisted de Rham realisation} of a perfect complex of prismatic $F\textrm{-crystals}$ over $\mc{O}_K$.

We continue to use notation from {\globalnotationref} and Notation \ref{nota:Witt-vector-stuff}.

\subsection{A stack-theoretic approach}

We begin by explaining a conceptual way of proving the twisted crystalline-de Rham comparison that is more in the spirit of \cite{BerthelotOgus}.

\subsubsection{The twisted crystalline realisation functor}\label{ss:twisted-crystalline-realisation}

We begin by studying several ways to explicitly understand the stack $k^\smallprism$.
The interested reader may consult \cite[Section 1.1.3]{IKY3} for a more general discussion in an unramified and non-twisted setting.

\begin{defn}\label{defn:ntwisted_crys_point}
    The \emph{reduced $n\textrm{-twisted}$ crystalline point} of $\mc{O}_K$ is the morphism of formal stacks over $\bb{Z}_p$,
    \begin{equation*}
        \ov{\rho}^{\tw{n}}_\crys\colon \Spf(W) \longrightarrow k^\smallprism,
    \end{equation*}
    for $n\geqslant 0$, which for a $p$-nilpotent ring $R$ associates to a map $\Spec(R)\to \Spf(W)$ the Cartier--Witt divisor $\bb{W}(R)\xrightarrow{p}\bb{W}(R)$ with structure map,
    \begin{equation}\label{eq:crystalline-point-structure-maps}
        k \isomto W/p \xrightarrow{\phi_W} W/p \longrightarrow \bb{W}(R)/p \xrightarrow{F^n} \bb{W}(R)/p,
    \end{equation}
    where $W/p\to \bb{W}(R)/p$ is the map induced by the unique $\delta\textrm{-ring}$ map $W\to\bb{W}(R)$ lifting $W \to R$.
    The \emph{$n\textrm{-twisted}$ crystalline point of $\mc{O}_K$} is the map $\rho^{\tw{n}}_\crys\colon \Spf(W)\to\mc{O}_K^\smallprism$ obtained as the following composition:
    \begin{equation*}
        \Spf(W)\xrightarrow{\ov{\rho}^{\tw{n}}_\crys}k^\smallprism\to\mc{O}_K^\smallprism,
    \end{equation*}
    where the latter map is the obvious one.
\end{defn}

Let us shorten $\ov{\rho}^{\tw{0}}_\mr{crys}$ to $\ov{\rho}_\mr{crys}$ and $\rho^{\tw{0}}_\mr{crys}$ to $\rho_\mr{crys}$.
Then, by the definition of the Frobenius maps $F_k\colon k^\smallprism\to k^\smallprism$ and $F_{\mc{O}_K}\colon \mc{O}_K^\smallprism\to\mc{O}_K^\smallprism$, we have that for any $n\geqslant 0$.

\begin{equation}\label{eq:n-twist-crys-identities}
    \rho^{\tw{n}}_\crys = F_{\mc{O}_K}^n \circ \rho_\crys,\qquad \ov{\rho}^{\tw{n}}_\crys = F_k^n \circ \rho_\crys
\end{equation}

\begin{rem}\label{rem:appearance-of-phiW}
    The appearance of the $\phi_W$ in \eqref{eq:crystalline-point-structure-maps} may seem artificial.
    To help demystify it, observe that in general for a quasi-syntomic $k\textrm{-scheme}$ $Z$ there is an isomorphism between the formal stack over $W$ given by $(Z/W)^\mr{crys}$ (e.g.\ as defined in \cite[Remark 2.5.12]{BhattNotes}) and the pullback stack $\phi_W^\ast(Z^\smallprism)$ (e.g.\ see \cite[Corollary 2.6.8 \& Construction 3.1.1]{BhattNotes} or \cite[Lemma 1.6]{IKY3}).
    From this perspective, we may view $\ov{\rho}_\crys$ as the composition
    \begin{equation*}
        \Spf(W)\isomto (k/W)^\crys\isomto \phi_W^\ast(k^\smallprism)\xrightarrow{F_k}k^\smallprism.
    \end{equation*}
    This observation is important as the first isomorphism $\Spf(W)\to (k/W)^\crys$ is the one well-suited to crystalline theory (e.g.\ it corresponds to the equivalence between ($F$-)crystals on $(k/W)_\crys$ and ($\phi_W$-)modules over $W$), and is also the reason why a Frobenius twist appears in the crystalline comparison theorem (e.g.\ as in \cite[Theorem 1.8]{BhattScholzePrisms}).
\end{rem}

We define our twisted crystalline realisation functor as follows.

\begin{defn}
    The \emph{$n$-twisted crystalline realisation} is the functor,
    \begin{equation*}
        \bb{D}^{\tw{n}}_\crys\colon \cat{Perf}((\mc{O}_K)_\smallprism) \longrightarrow \cat{Perf}(W),
    \end{equation*}
    obtained as the pullback $\rho^{\tw{n}}_\crys$.
\end{defn}

\begin{rem}
    For any object $\mc{V}$ of $\cat{Perf}((\mc{O}_K)_\smallprism)$, by Remark \ref{rem:appearance-of-phiW} one may also interpret $\bb{D}^{\tw{n}}_\mr{crys}(\mc{V})$ as $(\phi^n\mc{V}^\crys)(W\twoheadrightarrow k)$, where $\mc{V}^\crys$ is the perfect complex of crystals on $(k/W)^\crys$ associated to $\mc{V}_k$ via the equivalence in \cite[Theorem 6.4]{GuoReinecke}, $\phi$ is the Frobenius morphism of topoi $(k/W)_\crys\to (k/W)_\crys$, and $W\twoheadrightarrow k\simeq W/p$ is the natural PD-thickening.
\end{rem}

\begin{prop}\label{prop:crys-rho-W-factorization}
    For any $n\geqslant 0$, there is a natural identification of $\ov{\rho}^{\tw{n}}_\crys$ and $\rho^{\tw{n}}_\crys$ with $\ov{\rho}^{\tw{n+1}}_W$ and $\rho^{\tw{n+1}}_W$, respectively.
\end{prop}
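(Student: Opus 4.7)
The plan is to reduce to the case $n=0$ using the Frobenius-twist identifications in \eqref{eq:n-twist-crys-identities} and \eqref{eq:rho-W-twist-identifications}, and then compare $\ov{\rho}_\crys$ with $F_k \circ \ov{\rho}_W = \ov{\rho}^{\tw{1}}_W$ directly by unwinding definitions. The second half of the statement, concerning the unbarred maps, will be deduced formally from the first: by Definition \ref{defn:ntwisted_crys_point}, $\rho^{\tw{n}}_\crys$ factors as $\ov{\rho}^{\tw{n}}_\crys$ followed by the natural $k^\smallprism \to \mc{O}_K^\smallprism$, and by Proposition \ref{prop:rho-W-factorization} the analogous factorisation holds for $\rho_W$; the twists for both maps are compatible with the natural map $k^\smallprism \to \mc{O}_K^\smallprism$ since this map is Frobenius-equivariant.

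For the core identification $\ov{\rho}_\crys \simeq F_k \circ \ov{\rho}_W$, I would evaluate both sides on a test input $\Spec(R) \to \Spf(W)$ for a $p\textrm{-nilpotent}$ ring $R$, yielding two pairs $(\alpha, s)$ in $k^\smallprism(R)$. The Cartier--Witt divisors in both cases are canonically identified with $p \colon \bb{W}(R) \to \bb{W}(R)$: on the $\ov{\rho}_W$ side the divisor $p$ is pulled back under the Frobenius $F$ on $\bb{W}$, but since $\phi_W$ is an isomorphism on $W$, there is a canonical isomorphism $F^\ast(p\,\bb{W}(R)) \simeq p\,\bb{W}(R)$. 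This reduces the problem to comparing the two structure maps $k \to \bb{W}(R)/p$.

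By Definition \ref{defn:ntwisted_crys_point}, the $\ov{\rho}_\crys$ structure map is
\begin{equation*}
    k \isomto W/p \xrightarrow{\phi_W} W/p \longrightarrow \bb{W}(R)/p,
\end{equation*}
whereas the structure map of $F_k \circ \ov{\rho}_W$ is produced by applying $F^\ast$ to the structure map $k \simeq W/p \to \bb{W}(R)/p$ of $\ov{\rho}_W$; concretely, it is
\begin{equation*}
    k \isomto W/p \longrightarrow \bb{W}(R)/p \xrightarrow{F \bmod p} \bb{W}(R)/p.
\end{equation*}
The identification then reduces to the key fact that the unique $\delta\textrm{-ring}$ map $W \to \bb{W}(R)$ lifting $W \to R$ is $\phi\textrm{-equivariant}$, i.e.\ intertwines $\phi_W$ on the source with the Witt vector Frobenius $F$ on the target. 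Reducing this intertwining modulo $p$ gives the required identification of the two composites.

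The only mildly subtle step will be making precise the canonical nature of the identification $F^\ast(p) \simeq p$ at the level of quasi-ideals (so as to conclude that one has a genuine $2\textrm{-isomorphism}$ in $\mr{Map}(\Spf(W), k^\smallprism)$ and not merely an abstract equivalence). This is essentially the statement that $(W,p)$ is a perfect prism: the Frobenius $\phi_W$ lifts to a canonical isomorphism of prisms $\phi^\ast_W(W,p) \simeq (W,p)$, and this isomorphism, pulled back to $\bb{W}(R)$ along the universal $\delta\textrm{-ring}$ map, furnishes the needed identification of Cartier--Witt divisors in a manner compatible with the structure-map comparison above.
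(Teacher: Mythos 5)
Your proof is correct and follows essentially the same route as the paper's: reduce to $n=0$ via the twist identities, then unwind the point-set definitions and observe that the two structure maps $k\to \bb{W}(R)/p$ agree because the unique $\delta$-ring lift $W\to\bb{W}(R)$ intertwines $\phi_W$ with the Witt Frobenius $F$. The only difference is that you work with the barred maps into $k^\smallprism$ and descend along $k^\smallprism\to\mc{O}_K^\smallprism$, whereas the paper directly compares $\rho_\crys$ and $\rho_W^{\tw{1}}$ as maps into $\mc{O}_K^\smallprism$; both are fine, and your factorisation argument for the unbarred case via Frobenius-equivariance of $k^\smallprism\to\mc{O}_K^\smallprism$ is correct. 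One minor simplification worth noting: the canonical identification $F^\ast(p\,\bb{W}(R))\simeq p\,\bb{W}(R)$ need not be routed through perfectness of the prism $(W,p)$ — it is immediate from the fact that $F(p)=p$ in $\bb{W}(R)$, so the isomorphism $I\otimes_{\bb{W}(R),F}\bb{W}(R)\isomto I$ given by $p\otimes 1\mapsto p$ is already the desired identification of quasi-ideals.
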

\begin{proof}
    By Equations \eqref{eq:rho-W-twist-identifications} and \eqref{eq:n-twist-crys-identities}, our definition of $\rho_\crys$, and Proposition \ref{prop:rho-W-factorization} we are reduced to showing that $\rho_\crys = \rho^{\tw{1}}_W$.
    But, by definition, to a $p\textrm{-nilpotent}$ ring $R$ and a map $\Spec(R) \to \Spf(W)$, the map $\rho^{\tw{1}}_W$ associates the Cartier--Witt divisor $d \colon \bb{W}(R) \otimes_W (p) \to \bb{W}(R)$, where $W\to \bb{W}(B)$ is the map of $\delta\textrm{-rings}$ obtained from $W\to B$, and with the structure map
    \begin{equation*}
        k\xrightarrow{F_k}k \longrightarrow \mr{cone}(d) \longrightarrow \bb{W}(B)/p,
    \end{equation*}
    which clearly agrees with the object of $\mc{O}_K^\smallprism(R)$ associated to $\Spec(R)\to \Spf(W)$ by $\rho_\crys$.
\end{proof}

Combining this observation with Proposition \ref{prop:big-diagram-commutes} and the equivalence in \cite[Proposition 3.3.5]{BhattLurieAbsolute}, we obtain the following identifications.

\begin{cor}\label{cor:twisted-crys-identification} 
    For any $n\geqslant 0$, uniformiser $\pi$ of $\mc{O}_K$, and any object $\mc{V}$ of $\cat{Perf}((\mc{O}_K)_\smallprism)$, there is a natural identification in $\cat{Perf}(W)$:
    \begin{equation}\label{eq:twisted-crys-identification}
    \begin{tikzcd}
        {\bb{D}^{\tw{n}}_\crys(\mc{V})\simeq \phi_W^{\ast\tw{n+1}}\mc{V}(W,p)} & {(\phi_\mf{S}^{\ast\tw{n+1}}(\mc{V}(\mf{S}, E_\pi)))/u}.
        \arrow["\sim", from=1-1, to=1-2]
        \arrow["{\psi_{\crys,\pi}^{\tw{n}}}"', from=1-1, to=1-2]
    \end{tikzcd}
    \end{equation}
\end{cor}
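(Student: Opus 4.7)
The plan is to realise both terms on the right-hand side of the claimed identifications as pullbacks of $\mc{V}$ along $\rho_W^{\tw{n+1}}$, using Proposition~\ref{prop:crys-rho-W-factorization} together with the equivalence of \cite[Proposition 3.3.5]{BhattLurieAbsolute} (which generalises Proposition~\ref{prop:stack-site-crystals-agree}), allowing us to evaluate pullbacks of $\mc{V}$ from $\mc{O}_K^\smallprism$ as crystal values on prisms in $(\mc{O}_K)_\smallprism$.

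The first identification is then immediate: Proposition~\ref{prop:crys-rho-W-factorization} gives $\rho_\crys^{\tw{n}} \simeq \rho_W^{\tw{n+1}} = F_{\mc{O}_K}^{n+1}\circ\rho_W$ by equation~\eqref{eq:rho-W-twist-identifications}. Since the $\mc{O}_K$-structure on $(W, p)$ associated to $\rho_W^{\tw{n+1}}$ is $F_k^{n+1}\circ q$ (whereas the natural structure $q$ is what defines $\mc{V}(W, p)$), pulling $\mc{V}$ back along the twisted map incurs an $(n+1)$-fold Frobenius twist, yielding $\phi_W^{\ast(n+1)}\mc{V}(W, p)$.

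For the second identification, I would factor $\rho_W^{\tw{n+1}}$ through $\rho_\mf{S}^{\tw{n+1}}$. Combining triangle~(5) of Proposition~\ref{prop:big-diagram-commutes} (which factors $\rho_W^{\tw{n+1}}$ via $\rho_{\tilde{S}}^{\tw{n}}$) with the observation that the ring map $\mf{S}\to\tilde{S}$ induces a morphism in $(\mc{O}_K)_\smallprism$ from $(\mf{S}, E^{\tw{n+1}})$ with structure $\bar\phi_\mf{S}^{n+1}\circ\mr{nat}$ to $(\tilde{S}, p)$ with structure $F_{\tilde{S}/p}^n\circ r$ (a direct verification: both sides send $\pi$ to $u^{p^{n+1}}$ in $\tilde{S}/p$, and $x \in W$ to $x^{p^{n+1}}$), one obtains a factorisation $\rho_W^{\tw{n+1}} \simeq \rho_\mf{S}^{\tw{n+1}}\circ h$ for the induced map $h\colon\Spf(W)\to\Spf(\mf{S})$ (corresponding to the ring map $u\mapsto 0$). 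Pulling back $\mc{V}$ gives $\mc{V}(\mf{S}, E^{\tw{n+1}})/u$, and using the prism morphism $(\mf{S}, E_\pi)\xrightarrow{\phi_\mf{S}^{n+1}}(\mf{S}, E^{\tw{n+1}})$ in $(\mc{O}_K)_\smallprism$ to identify $\mc{V}(\mf{S}, E^{\tw{n+1}}) \simeq \phi_\mf{S}^{\ast(n+1)}\mc{V}(\mf{S}, E_\pi)$, we obtain the desired form $\phi_\mf{S}^{\ast(n+1)}\mc{V}(\mf{S}, E_\pi)/u$.

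The main technical care lies in keeping track of the various twisted $\mc{O}_K$-structures on $(W, p)$, $(\tilde{S}, p)$, and $(\mf{S}, E^{\tw{n+1}})$ and verifying that the relevant ring maps qualify as morphisms in $(\mc{O}_K)_\smallprism$ with respect to these structures; this is fundamentally a bookkeeping exercise using the explicit descriptions from Section~\ref{subsec:breuil_kisin_prism}, though one must be attentive to how Frobenius twists propagate through the composition.
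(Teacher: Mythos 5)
Your argument is correct. The first identification matches the paper's route via Proposition~\ref{prop:crys-rho-W-factorization}; for the second, the paper exhibits the morphism $(u=0)\colon(\mf{S}, E^{\tw{n+1}})\to(W,p)$ in $(\mc{O}_K)_\smallprism$ directly (this is the commutative triangle displayed at the start of Construction~\ref{const:crys-transition-isom}), while you build the map $h=(u=0)\colon\Spf(W)\to\Spf(\mf{S})$ as the composite $\Spf(W)\xrightarrow{s}\Spf(\tilde{S})\to\Spf(\mf{S})$ through the modified Breuil prism --- a slightly longer but valid path, and arguably the one the big diagram \eqref{eq:big-diagram-commutes} suggests, since there is no direct arrow $\Spf(W)\to\Spf(\mf{S})$ there. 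Your direct check that $\mf{S}\to\tilde{S}$ intertwines the twisted $\mc{O}_K$-structures (both composites send $\pi\mapsto u^{p^{n+1}}$ in $\tilde{S}/p$, and $W$-elements to their $p^{n+1}$-st powers) is correct, and it holds for every $n\geqslant 0$, which is what the corollary requires. The one point to flag is your reading of ``triangle (5)'' as furnishing $\rho^{\tw{n+1}}_W\simeq\rho^{\tw{n}}_{\tilde{S}}\circ s$: what actually makes this valid is that $s\colon(\tilde{S},p)\to(W,p)$ is a morphism in $(\mc{O}_K)_\smallprism$, which the paper establishes in its proof of triangle (6), and one must keep this distinct from the commutativity along the natural inclusion $\Spf(\tilde{S})\to\Spf(W)$. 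The latter is precisely the comparison the paper computes in the square in the proof of Proposition~\ref{prop:big-diagram-commutes} (one path sends $\pi$ to $u^{p^{n+1}}$, the other to $0$), and it genuinely requires $n\geqslant a$; only the section-direction compatibility, which holds unconditionally, enters your proof, so there is no gap --- but it is worth being precise about which of the two triangles at the $\tilde{S}$--$W$ corner one is invoking, especially since only that distinction makes the ``for all $n\geqslant 0$'' assertion of the corollary consistent with the $n\geqslant a$ constraint appearing in Proposition~\ref{prop:big-diagram-commutes}.
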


\begin{rem}
    The first isomorphism in \eqref{eq:twisted-crys-identification} is truly canonical, but the second (while natural in $\mc{V}$) implicitly depends on the choice of a uniformiser $\pi$ to define $E_\pi$, and therefore the prism $(\mf{S}, E_\pi)$ in $(\mc{O}_K)_{\prism}$.
    This explains our reasoning for labelling the second map but not the first.
\end{rem}

\subsubsection{The twisted de Rham realisation functor}

We now discuss a de Rham analogue of the material from Section \ref{ss:twisted-crystalline-realisation}.
The interested reader may consult \cite[Section 1.2]{IKY3} for a more general discussion in the non-twisted setting.

\begin{defn}\label{defn:ntwisted_deRham_point}
    The \emph{$n\textrm{-twisted}$ de Rham point of $\mc{O}_K$} is the morphism of formal stacks over $\bb{Z}_p$,
    \begin{equation*}
        \rho_\dR^{\tw{n}}\colon \Spf(\mc{O}_K) \longrightarrow \mc{O}_K^\smallprism,
    \end{equation*}
    for $n\geqslant 0$, which for a $p\textrm{-nilpotent}$ ring $R$ associates to a map $\Spec(R) \to \Spf(\mc{O}_K)$ the Cartier--Witt divisor $\bb{W}(R) \xrightarrow{p} \bb{W}(R)$ with the structure map
    \begin{equation*}
        \mc{O}_K \longrightarrow R=\bb{W}(R)/V\bb{W}(R) \xrightarrow{F} \bb{W}(R)/p \xrightarrow{F^n} \bb{W}(R)/p.
    \end{equation*}
\end{defn}

Let us shorten $\rho^{\tw{0}}_\dR$ to $\rho_\dR$. Then, by the definition of the Frobenius map $F_{\mc{O}_K}\colon \mc{O}_K^\smallprism \to \mc{O}_K^\smallprism$ we have that $\rho^{\tw{n}}_\dR = F_{\mc{O}_K}^n\circ\rho_\dR$, for any $n\geqslant 0$.

\begin{defn}
    The \emph{$n\textrm{-twisted}$ de Rham realisation} is the functor,
    \begin{equation*}
        \bb{D}_\dR^{\tw{n}}\colon \cat{Perf}((\mc{O}_K)_\smallprism) \longrightarrow \cat{Perf}(\mc{O}_K),
    \end{equation*}
    given by pullback along $\rho_\dR^{\tw{n}}\colon \Spf(\mc{O}_K) \rightarrow \mc{O}_K^\smallprism$.
\end{defn}

We again wish to more concretely understand the twisted de Rham realisation using the Breuil--Kisin prism, as in Corollary \ref{cor:twisted-crys-identification}. 
\begin{prop}[{\cite[Proposition 1.15]{IKY3}}]\label{prop:dR-twisted-mfS-identification}
    There is a canonical identification in $\mr{Map}(\Spf(\mc{O}_K),\mc{O}_K^\smallprism)$:
    \begin{equation*}
        \rho_\dR \simeq \rho^{\tw{1}}_{\mf{S}} \circ \textup{nat.},
    \end{equation*}
    where $\textup{nat.}$ is the natural map induced by the composition $\mathfrak{S} \twoheadrightarrow \mathfrak{S}/E \simeq \mc{O}_K$.
\end{prop}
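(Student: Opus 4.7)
The strategy is to verify the identification on the level of $R$-points, for $p$-nilpotent rings $R$, using the description of $\mc{O}_K^\smallprism$ via Cartier--Witt divisors equipped with $\mc{O}_K$-structure. Fix a ring map $\mc{O}_K \to R$ and let $\ov{\pi} \in R$ denote the image of $\pi$. By Definition \ref{defn:ntwisted_deRham_point}, $\rho_\dR$ yields the Cartier--Witt divisor $\bb{W}(R) \xrightarrow{p} \bb{W}(R)$ with structure map $\mc{O}_K \to R = \bb{W}(R)/V\bb{W}(R) \xrightarrow{F} \bb{W}(R)/p$. For $\rho^{\tw{1}}_\mf{S} \circ \mr{nat}$, the composite $\mf{S} \to \mc{O}_K \to R$ lifts uniquely to a $\delta$-ring map $\mf{S} \to \bb{W}(R)$ sending $u$ to the Teichm\"uller lift $[\ov{\pi}]$, and the associated Cartier--Witt divisor is multiplication by $E^{\tw{1}}([\ov{\pi}])$.

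The central observation is that the structure map $\mf{S} \to \bb{W}(R)$ intertwines $\phi_\mf{S}$ with the Witt vector Frobenius $F$, whence
\begin{equation*}
    E^{\tw{1}}([\ov{\pi}]) \;=\; \phi_\mf{S}(E)([\ov{\pi}]) \;=\; F\bigl(E([\ov{\pi}])\bigr).
\end{equation*}
Since $E(\pi)=0$ in $\mc{O}_K$ forces $E(\ov{\pi})=0$ in $R$, we have $E([\ov{\pi}]) \in V\bb{W}(R)$; writing $E([\ov{\pi}]) = V(c)$ and using $F\circ V = p$ gives $E^{\tw{1}}([\ov{\pi}]) = pc$. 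Identifying the two Cartier--Witt divisors thus reduces to showing that $c \in \bb{W}(R)$ is a unit. Granting this, compatibility of the two structure maps is routine: on $w \in W \subseteq \mc{O}_K$ both routes yield the image of $\phi_W(w)$ in $\bb{W}(R)/(p)$ using that the maps in question are $\delta$-ring maps, and on $\pi$ both yield $[\ov{\pi}]^p$ modulo $(p)$ (the $\rho_\dR$ route via $\pi \mapsto \ov{\pi} \xrightarrow{F} [\ov{\pi}]^p$, and the Breuil--Kisin route via $\pi \mapsto u \xrightarrow{\ov{\phi}_\mf{S}} u^p \mapsto [\ov{\pi}]^p$). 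Naturality in $R$ is built in.

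The main obstacle is therefore the unit check for $c$. My plan is to extract it from the Cartier--Witt axiom itself: because $\rho^{\tw{1}}_\mf{S}\circ \mr{nat}$ is a well-defined morphism into $\mc{O}_K^\smallprism$, the element $\delta(pc) = \phi(c) - p^{p-1}c^p$ (using $\phi(p)=p$) must be a unit in $\bb{W}(R)$. Reducing to the $\gamma_0$-component and using that $p$ is nilpotent in $R$, this forces
\begin{equation*}
    \gamma_0(\delta(pc)) \equiv \gamma_0(c)^p \pmod{\text{nilpotents in }R},
\end{equation*}
so $\gamma_0(c)$ is a unit in $R$, and hence $c$ is a unit in $\bb{W}(R)$. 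An explicit Witt-coordinate computation using the Eisenstein condition $a_0 \in pW^\times$ (together with the shape of $E$) provides a more direct but less transparent alternative.
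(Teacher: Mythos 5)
The paper itself defers to \cite[Proposition 1.15]{IKY3} without reproducing a proof, so there is no in-text argument to compare against; what follows assesses your proposal on its own merits. Your strategy — unwinding both sides on $R$-points, identifying the underlying Cartier--Witt divisors, and then matching the $\mc{O}_K$-structure maps by checking on $W$ and $\pi$ — is the natural one and is essentially correct.

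Your computation of the Cartier--Witt divisor for $\rho^{\tw{1}}_\mf{S}\circ\mr{nat}$ is right: the canonical $\delta$-ring map $\mf{S}\to\bb{W}(R)$ does send $u\mapsto[\ov{\pi}]$ (both sides are the unique element with $\delta=0$ lifting $\ov{\pi}$), so $E^{\tw{1}}$ goes to $F(E([\ov{\pi}]))=F(V(c))=pc$, and the question reduces to showing $c\in\bb{W}(R)^\times$. Your unit argument works, but is more laboured than necessary: it invokes the Cartier--Witt axiom, which literally gives only that $(pc,\delta(pc))=\bb{W}(R)$, and one then has to argue separately (using $p$-completeness of $\bb{W}(R)$ for $p$-nilpotent $R$, so that $pc$ lies in the Jacobson radical) that this forces $\delta(pc)\in\bb{W}(R)^\times$ before you can run the $\gamma_0$-reduction. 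A cleaner route to the same conclusion is to observe that $\delta(pc)$ is the image of $\delta(E^{\tw{1}})\in\mf{S}^\times$ under the ring map $\mf{S}\to\bb{W}(R)$ — the unit-ness of $\delta(E^{\tw{1}})$ being exactly the statement that $(\mf{S},E^{\tw{1}})$ is a prism with $E^{\tw{1}}$ distinguished, since $\delta\circ\phi_\mf{S}=\phi_\mf{S}\circ\delta$ and $\phi_\mf{S}$ preserves units. From there your $\gamma_0$-computation $\gamma_0(\delta(pc))\equiv\gamma_0(c)^p$ modulo nilpotents goes through and gives $c\in\bb{W}(R)^\times$, hence $pc\bb{W}(R)=p\bb{W}(R)$ and the divisors agree. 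The structure-map comparison on $W$ and on $\pi$ is correct and suffices since $\mc{O}_K=W[\pi]$.
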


\begin{cor}\label{cor:twisted-dR-identification}
    For any $n\geqslant 0$, uniformiser $\pi$ of $\mc{O}_K$, and any object $\mc{V}$ of $\cat{Perf}((\mc{O}_K)_\smallprism)$, the following is a natural identification in $\cat{Perf}(\mc{O}_K)$:
    \begin{equation}\label{eq:twisted-dR-identification}
        \psi_{\dR,\pi}^{\tw{n}}\colon \bb{D}^{\tw{n}}_\dR(\mc{V}) \isomto \phi_\mf{S}^{\ast\tw{n+1}}\mc{V}(\mf{S}, E_\pi)/E_\pi.
    \end{equation}
\end{cor}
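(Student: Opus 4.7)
The plan is to combine Proposition \ref{prop:dR-twisted-mfS-identification} with the Frobenius-twist identity \eqref{eq:BK-twist-identification}, translating the desired identification into one provided by the crystal property. The essential observation is that the $n$-twisted de Rham point $\rho_\dR^{\tw{n}}$ is naturally the composition of the natural closed immersion with $\rho^{\tw{n+1}}_\mf{S}$; once this is established, everything else is formal.

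Concretely, I would first verify the identification
\[
\rho^{\tw{n}}_\dR \;=\; F_{\mc{O}_K}^{n}\circ\rho_\dR \;\simeq\; F_{\mc{O}_K}^{n}\circ\rho^{\tw{1}}_\mf{S}\circ\textup{nat.} \;=\; F_{\mc{O}_K}^{n+1}\circ\rho_\mf{S}\circ\textup{nat.} \;\simeq\; \rho^{\tw{n+1}}_\mf{S}\circ\textup{nat.}
\]
in $\mr{Map}(\Spf(\mc{O}_K),\mc{O}_K^\smallprism)$, where the first equality is the definition of the twist, the middle identification is Proposition \ref{prop:dR-twisted-mfS-identification}, and the last uses \eqref{eq:BK-twist-identification}. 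Pulling back $\mc{V}$ yields $\bb{D}^{\tw{n}}_\dR(\mc{V})\simeq \textup{nat.}^{*}(\rho^{\tw{n+1}}_\mf{S})^{*}\mc{V}$.

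Next, I would interpret both factors via the site-theoretic description in Proposition \ref{prop:stack-site-crystals-agree}. By construction in Section \ref{ss:BK-prism}, the map $\rho^{\tw{n+1}}_\mf{S}$ is the one associated to the object $(\mf{S}, E_\pi^{\tw{n+1}})$ of $(\mc{O}_K)_\smallprism$, so $(\rho^{\tw{n+1}}_\mf{S})^{*}\mc{V}\simeq \mc{V}(\mf{S}, E_\pi^{\tw{n+1}})$. Moreover, by design of the $\mc{O}_K$-structure on $(\mf{S}, E_\pi^{\tw{n+1}})$, the map $\phi_\mf{S}^{n+1}$ is a morphism $(\mf{S}, E_\pi)\to (\mf{S}, E_\pi^{\tw{n+1}})$ in $(\mc{O}_K)_\smallprism$, and the crystal property applied to it gives $\mc{V}(\mf{S}, E_\pi^{\tw{n+1}})\simeq \phi_\mf{S}^{\ast\tw{n+1}}\mc{V}(\mf{S}, E_\pi)$. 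Finally, $\textup{nat.}$ is the closed immersion induced by $\mf{S}\twoheadrightarrow \mf{S}/E_\pi\simeq\mc{O}_K$, so $\textup{nat.}^{*}$ is exactly reduction modulo $E_\pi$, yielding the desired identification.

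There is no substantive obstacle here; the proof is a bookkeeping argument that propagates Proposition \ref{prop:dR-twisted-mfS-identification} through one additional Frobenius twist. The only point worth checking carefully is that $\phi_\mf{S}^{n+1}$ really is a morphism in $(\mc{O}_K)_\smallprism$ between the two specified prisms, which is automatic because the $\mc{O}_K$-structure on $(\mf{S}, E_\pi^{\tw{n+1}})$ was defined precisely as $\ov{\phi}_\mf{S}^{n+1}\circ\textup{nat.}$ in Section \ref{ss:BK-prism}. The naturality in $\pi$ is also clear from this construction, while the implicit dependence on $\pi$ comes from the choice of the prism $(\mf{S}, E_\pi)$ in $(\mc{O}_K)_\smallprism$.
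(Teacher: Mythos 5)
Your proof is correct and is precisely the argument the paper leaves implicit: the corollary is stated immediately after Proposition~\ref{prop:dR-twisted-mfS-identification} with no written proof, and your chain $\rho^{\tw{n}}_\dR = F_{\mc{O}_K}^n\circ\rho_\dR \simeq F_{\mc{O}_K}^{n+1}\circ\rho_\mf{S}\circ\mathrm{nat.} \simeq \rho^{\tw{n+1}}_\mf{S}\circ\mathrm{nat.}$, followed by the crystal property for $\phi_\mf{S}^{n+1}\colon(\mf{S},E_\pi)\to(\mf{S},E_\pi^{\tw{n+1}})$ and reduction mod $E_\pi$, is exactly the intended bookkeeping. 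Nothing to add.
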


\subsubsection{The twisted crystalline-de Rham comparison (stacky form)}

We are now ready to state and prove the stacky version of the twisted crystalline-de Rham comparison.

\begin{thm}[Twisted crystalline-de Rham comparison]\label{thm:twisted-crys-dR-comparison}
    Suppose that $n\geqslant \lceil \log_p(e)\rceil$.
    \begin{enumerate}[leftmargin=.3in]
        \item[\textup{(1)}] There is an identification between $\rho_{\dR}^{\tw{n}}$ and the composition
            \begin{equation*}
                \Spf(\mc{O}_K) \longrightarrow \Spf(W)\xrightarrow{\rho^{\tw{n}}_\crys}\mc{O}_K^\smallprism,
            \end{equation*}
            as objects of $\mr{Map}(\Spf(\mc{O}_K),\mc{O}_K^\smallprism)$. 
            
        \item[\textup{(2)}] For any object $\mc{V}$ of $\cat{Perf}((\mc{O}_K)_\smallprism)$, there is a canonical isomorphism in $\cat{Perf}(\mc{O}_K)$:
            \begin{equation*}
                \iota^{\tw{n}}\colon \bb{D}_\mr{crys}^{\tw{n}}(\mc{V})\otimes_W\mc{O}_K\isomto \bb{D}^{\tw{n}}_\dR(\mc{V}).
            \end{equation*}
    \end{enumerate}
\end{thm}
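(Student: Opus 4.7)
First, I'll argue that Part (2) follows formally from Part (1): a $2$-isomorphism between the maps $\rho_\dR^{\tw{n}}$ and $\rho_\crys^{\tw{n}} \circ (\Spf(\mc{O}_K) \to \Spf(W))$ into $\mc{O}_K^\smallprism$ gives, upon pulling back the perfect complex $\mc{V}$ and using the natural base-change $W \to \mc{O}_K$, the desired isomorphism $\iota^{\tw{n}}\colon \bb{D}_\crys^{\tw{n}}(\mc{V}) \otimes_W \mc{O}_K \isomto \bb{D}_\dR^{\tw{n}}(\mc{V})$. For Part (1), the main tool will be Proposition \ref{prop:big-diagram-commutes}; since the hypothesis $n \geqslant \lceil \log_p(e) \rceil$ implies $n \geqslant a$, every triangle of \eqref{eq:big-diagram-commutes} is in force.

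The strategy is to factor both sides through the modified Breuil prism $(\tilde{S}, p)$. For the de Rham side, Proposition \ref{prop:dR-twisted-mfS-identification} combined with \eqref{eq:BK-twist-identification} yields $\rho_\dR^{\tw{n}} \simeq \rho_\mf{S}^{\tw{n+1}} \circ (\Spf(\mc{O}_K) \to \Spf(\mf{S}))$, and the closed immersion $\Spf(\mc{O}_K) \to \Spf(\mf{S})$ factors through $\Spf(\tilde{S})$ via the map $i$ of Proposition \ref{prop:BO-prism-to-O_K}. Applying triangle $(1)$ of Proposition \ref{prop:big-diagram-commutes} then gives $\rho_\dR^{\tw{n}} \simeq \rho_{\tilde{S}}^{\tw{n}} \circ i^\ast$. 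For the crystalline side, Proposition \ref{prop:crys-rho-W-factorization} and a Frobenius-twist identity give $\rho_\crys^{\tw{n}} \simeq \rho_W^{\tw{n+1}}$; triangle $(6)$ of Proposition \ref{prop:big-diagram-commutes} then gives $\rho_W^{\tw{n+1}} \simeq \rho_{\tilde{S}}^{\tw{n}} \circ s^\ast$ with $s\colon \tilde{S} \to W$ sending $u \mapsto 0$. Thus the composition on the right-hand side becomes $\rho_{\tilde{S}}^{\tw{n}} \circ s^\ast \circ (\Spf(\mc{O}_K) \to \Spf(W))$.

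The remaining step, and where I expect the main difficulty to lie, is identifying $\rho_{\tilde{S}}^{\tw{n}} \circ i^\ast$ with $\rho_{\tilde{S}}^{\tw{n}} \circ s^\ast \circ (\Spf(\mc{O}_K) \to \Spf(W))$. At the level of ring maps these correspond to two genuinely different homomorphisms $\tilde{S} \to \mc{O}_K$: the first sends $u$ to $\pi$ and the second to $0$. The key observation is that the structure map $F^n_{\tilde{S}/p} \circ r\colon \mc{O}_K \to \tilde{S}/p$ appearing in the definition of $\rho_{\tilde{S}}^{\tw{n}}$ sends $\pi$ to $u^{p^{n+1}}$, which lies in $p\tilde{S}$ precisely because $n \geqslant a$ (via the divided powers $\tfrac{u^{p\tilde{e}}}{p!}$ in $\tilde{S}$). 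Consequently, for any $p$-nilpotent ring $R$ and any $g\colon \mc{O}_K \to R$, both compositions produce the Cartier--Witt divisor $p\bb{W}(R) \to \bb{W}(R)$ equipped with the same structure map $\mc{O}_K \to \bb{W}(R)/p$: it sends $\pi$ to $0$ and agrees on $W \subseteq \mc{O}_K$ by functoriality of the $\delta$-ring extensions $\tilde{S} \to \bb{W}(R)$. The technical heart of the argument will be to track these $\delta$-extensions attached to the two distinct ring maps $\tilde{S} \to R$ and verify that the induced isomorphism of Cartier--Witt divisors together with their structure maps is natural in $R$ and $g$, thereby upgrading the pointwise identification to a true $2$-isomorphism of maps of formal stacks.
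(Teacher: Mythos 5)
Your proposal is correct in spirit, but it follows a genuinely different route from the paper's own proof of Theorem \ref{thm:twisted-crys-dR-comparison}, and it contains an avoidable complication. The paper proves part (1) by a direct computation on $R$-points: it writes down the two structure maps $\mc{O}_K\to\bb{W}(R)/p$ attached to $\rho_\dR^{\tw{n}}$ and to $\rho_\crys^{\tw{n}}$ composed with $\Spf(\mc{O}_K)\to\Spf(W)$, identifies their restrictions to $W$ by citing \cite[Theorem 1.19]{IKY3}, and then observes that for $n\geqslant\lceil\log_p(e)\rceil$ the map $F^n_{\mc{O}_K/p}$ factors through $k$, which forces agreement on $\pi$. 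Your route through the modified Breuil prism $(\tilde{S},p)$ is instead, in essence, the paper's proof of Proposition \ref{prop:crys-dR-comparison-using-prisms}; it gives the sharper range $n\geqslant a$ rather than $n\geqslant\lceil\log_p(e)\rceil$, so it buys Remark \ref{rem:extending-3.12} for free. What you do not need, however, is the section $s\colon\tilde{S}\to W$: Proposition \ref{prop:BO-prism-to-O_K} already factors the structure map $\Spf(\mc{O}_K)\to\Spf(W)$ through $i\colon\Spf(\mc{O}_K)\to\Spf(\tilde{S})$ (as $W\to\tilde{S}\xrightarrow{\,i\,}\mc{O}_K$ equals $W\to\mc{O}_K$), after which triangle $(5)$ of Proposition \ref{prop:big-diagram-commutes} together with Proposition \ref{prop:crys-rho-W-factorization} give $\rho_\crys^{\tw{n}}\circ(\Spf(\mc{O}_K)\to\Spf(W))\simeq\rho_W^{\tw{n+1}}\circ(\Spf(\tilde{S})\to\Spf(W))\circ i\simeq\rho^{\tw{n}}_{\tilde{S}}\circ i$, matching $\rho_\dR^{\tw{n}}\simeq\rho^{\tw{n}}_{\tilde{S}}\circ i$ on the nose. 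With this factorisation both sides pass through the \emph{same} map $\Spf(\mc{O}_K)\to\Spf(\tilde{S})$, so the pointwise $\delta$-lift comparison you flag as the technical heart (matching the ring maps $u\mapsto\pi$ and $u\mapsto 0$) disappears entirely. That said, your pointwise sketch is sound — it rests, correctly, on $F^n_{\tilde{S}/p}\circ r$ killing $\pi$ and landing in $W/p$ when $n\geqslant a$, and on the unique $\delta$-lifts $\tilde{S}\to\bb{W}(R)$ of the two ring maps agreeing on $W$ — so the route is valid, merely longer than it needs to be.
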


\begin{rem}\label{rem:extending-3.12} While Theorem \ref{thm:twisted-crys-dR-comparison} is stated for $n\geqslant \lceil \log_p(e)\rceil$ the claim in fact holds for $n\geqslant a=\lceil \log_p(\tfrac{e}{p-1})\rceil$. See Proposition \ref{prop:crys-dR-comparison-using-prisms} for an explanation of this fact.
\end{rem}

\begin{proof}[Proof of Theorem \ref{thm:twisted-crys-dR-comparison}]
    Assertion (2) is an immediate consequence of assertion (1).
    To prove assertion (1), let $R$ be a $p\textrm{-nilpotent}$ ring and $\alpha$ a morphism $\Spec(R)\to \Spf(\mc{O}_K)$.
    The underlying Cartier--Witt divisors of both $\rho^{\tw{n}}_{\dR}(\alpha)$ and $\rho^{\tw{n}}_{\crys}(\alpha)$ is $\bb W(R)\xrightarrow{p} \bb W(R)$, and their $\mc{O}_K\textrm{-structures}$ are given respectively as follows (see Definition \ref{defn:ntwisted_crys_point} and Definition \ref{defn:ntwisted_deRham_point}):
    \begin{eqnarray}
        \mc{O}_K\xrightarrow{\alpha} R=\bb{W}(R)/V\bb{W}(R)\xrightarrow{F}\bb{W}(R)/p\xrightarrow{F^n}\bb{W}(R)/p, \label{eq:first-structure-map-twisted-crys-dR-comparison}
        \\
        \mc{O}_K\to k\isomto W/p\xrightarrow{\phi_W}W/p \xrightarrow{\bb{W}(\alpha)} \bb{W}(R)/p\xrightarrow{F^n}\bb{W}(R)/p.\label{eq:second-structure-map-twisted-crys-dR-comparison}   
    \end{eqnarray}
    To show that the composition of these respective maps agree, first observe that the following respective composition of the maps,
    \begin{eqnarray}
        W \longrightarrow \mc{O}_K\xrightarrow{\alpha} R=\bb{W}(R)/V\bb{W}(R)\xrightarrow{F}\bb{W}(R)/p,\label{eq:third-structure-map-twisted-crys-dR-comparison}\\
        W \longrightarrow k\isomto W/p\xrightarrow{\phi_W}W/p \xrightarrow{\bb{W}(\alpha)} \bb{W}(R)/p, \label{eq:fourth-structure-map-twisted-crys-dR-comparison} 
    \end{eqnarray}
    agree by \cite[Theorem 1.19]{IKY3}.
    But, for $n\geqslant \lceil\log_p(e)\rceil$, the map $F_{\mc{O}_K/p}^n\colon \mc{O}_K/p\to \mc{O}_K/p$ factorises as $\mc{O}_K/p\to k \xrightarrow{F_k^n}k \to \mc{O}_K/p$, where the last map is induced by $W \to \mc{O}_K$.
    As the composition of \eqref{eq:first-structure-map-twisted-crys-dR-comparison} factorises through $\mc{O}_K/p$, the appearance of $F^n$ in both \eqref{eq:first-structure-map-twisted-crys-dR-comparison} and \eqref{eq:second-structure-map-twisted-crys-dR-comparison} thus implies the agreement of their compositions from the agreement of the compositions of \eqref{eq:third-structure-map-twisted-crys-dR-comparison} and \eqref{eq:fourth-structure-map-twisted-crys-dR-comparison}.
\end{proof}

\subsection{A non-stacky approach}

We now explain a more down-to-earth approach to Theorem \ref{thm:twisted-crys-dR-comparison} using Breuil--Kisin theory.
In the next section, we shall compare this to the stacky approach.

\subsubsection{The twisted crystalline realisation}

We begin by giving a Breuil--Kisin-theoretic definition of the twisted crystalline realisation.
To distinguish it from the (a priori different) stack-theoretic twisted crystalline realisation, we temporarily use a prime in the notation.

\begin{defn} 
    Let $\pi$ be a uniformiser of $\mc{O}_K$ and $n \geqslant 0$ an integer.
    The \emph{$\pi\textrm{-indexed}$ $n\textrm{-twisted}$ crystalline realisation functor} is defined as follows:
    \begin{equation*}
        \mathbb{D}^{\prime,\tw{n}}_{\mr{crys},\pi}\colon \cat{Perf}((\mc{O}_K)_\smallprism) \longrightarrow \cat{Perf}(W),\qquad \mc{V} \longmapsto \phi_\mf{S}^{\ast\tw{n+1}}\mc{V}(\mf{S}, E_\pi)/u.
    \end{equation*}
\end{defn}

To remove the $\pi\textrm{-dependency}$ of the $n\textrm{-twisted}$ crystalline realisation functors, we consider the following construction.

\begin{construction}\label{const:crys-transition-isom}
   Let $\mc{V}$ be an object of $\cat{Perf}((\mc{O}_K)_\smallprism)$ and $n\geqslant 0$ an integer.
   Then, the following commutative diagram (see Proposition \ref{prop:big-diagram-commutes}):
    \begin{equation*}
        \begin{tikzcd}[column sep=50pt, row sep=35pt]
    	{\Spf(\mf{S})} & {\Spf(W)} & {\Spf(\mf{S})} \\
    	& {\mc{O}_K^\smallprism,}
    	\arrow["{(u=0)}"{description}, from=1-1, to=1-2]
    	\arrow["{\rho^{\tw{n+1}}_\pi}"{description}, from=1-1, to=2-2]
    	\arrow["{\rho^{\tw{n+1}}_{W}}"{description}, from=1-2, to=2-2]
    	\arrow["{(u=0)}"{description}, from=1-3, to=1-2]
    	\arrow["{\rho^{\tw{n+1}}_{\pi'}}"{description}, from=1-3, to=2-2]
        \end{tikzcd}
    \end{equation*}
    together with Equations \eqref{eq:BK-twist-identification} and \eqref{eq:rho-W-twist-identifications}, yield isomorphisms:
    \begin{equation*}
        \begin{tikzcd}[column sep=16pt]
    	\phi_\mf{S}^{\ast\tw{n+1}}\mc{V}(\mf{S}, E_\pi)/u & \phi_W^{\ast\tw{n+1}}\mc{V}(W, p) & \phi_\mf{S}^{\ast\tw{n+1}}\mc{V}(\mf{S}, E_{\pi'})/u.
    	\arrow["\sim", from=1-1, to=1-2]
            \arrow["\sim"', from=1-3, to=1-2]
        \end{tikzcd}
    \end{equation*}
    The preceding diagram induces an isomorphism,
    \begin{equation*}
        \j_{\crys,\pi,\pi'}^{\tw{n},\mathsmaller{\mc{V}}}\colon \bb{D}^{\prime,\tw{n}}_{\mr{crys},\pi}(\mc{V})\isomto \bb{D}^{\prime,\tw{n}}_{\mr{crys},\pi'}(\mc{V}).
    \end{equation*}
    It is easy to see that the isomorphisms $\j_{\crys,\pi,\pi'}^{\tw{n},\mc{V}}$ are functorial in $\mc{V}$ and satisfy the following cocycle condition:
    \begin{equation*} 
        \j^{\tw{n},\mathsmaller{\mc{V}}}_{\crys,\pi',\pi''}\circ \j_{\crys,\pi,\pi'}^{\tw{n},\mathsmaller{\mc{V}}} = \j^{\tw{n},\mathsmaller{\mc{V}}}_{\pi,\pi''}.
    \end{equation*}
    Thus, we obtain natural equivalences,
    \begin{equation*}
        \j^{\tw{n}}_{\crys,\pi,\pi'}\colon \bb{D}_{\mr{crys},\pi}^{\prime,\tw{n}} \isomto \bb{D}_{\mr{crys},\pi'}^{\prime,\tw{n}}
    \end{equation*}
    in $\cat{Func}(\cat{Perf}((\mc{O}_K)_\smallprism),\cat{Perf}(W))$ satisfying the cocycle condition.
\end{construction}

\begin{defn}
    The \emph{$n\textrm{-twisted}$ crystalline realisation functor} is defined as
    \begin{equation*}
        \bb{D}_\mr{crys}^{\prime,\tw{n}} \defeq \varprojlim_\pi \bb{D}_{\mr{crys},\pi}^{\prime,\tw{n}} \colon \cat{Perf}((\mc{O}_K)_\smallprism) \longrightarrow \cat{Perf}(W).
    \end{equation*}
\end{defn}

We end this section by comparing $\bb{D}^{\prime,\tw{n}}_\crys$ and $\bb{D}^{\tw{n}}_\crys$.
Namely, via Corollary \ref{cor:twisted-crys-identification}, for any $\pi$ we have a canonical identification
\begin{equation*}
    \psi_{\crys,\pi}^{\tw{n}}\colon \bb{D}^{\tw{n}}_\crys\isomto \bb{D}^{\prime,\tw{n}}_{\crys,\pi}.
\end{equation*}
By combining Propositions \ref{prop:big-diagram-commutes} and \ref{prop:crys-rho-W-factorization}, we have the following commutative diagram of stacks:
\begin{equation*}
    \begin{tikzcd}[column sep=50pt, row sep=30pt]
        {\Spf(\mf{S})} & {\Spf(W)} & {\Spf(\mf{S})} \\
        & {\mc{O}_K^\smallprism}
        \arrow["{\rho_{\pi}^{(n+1)}}"{description}, from=1-1, to=2-2]
        \arrow["{(u=0)}"{description}, from=1-2, to=1-1]
        \arrow["{(u=0)}"{description}, from=1-2, to=1-3]
        \arrow["{\rho^{(n)}_\crys}"{description}, from=1-2, to=2-2]
        \arrow["{\rho_{\pi'}^{(n+1)}}"{description}, from=1-3, to=2-2]
    \end{tikzcd}
\end{equation*}
Thus, it is easy to deduce that $\j^{\tw{n}}_{\crys,\pi,\pi'} \circ \psi_{\crys,\pi}^{\tw{n}}=\psi_{\crys,\pi'}^{\tw{n}}$.
From the preceding discussion we obtain that,
\begin{prop}
    The following morphism is an equivalence in $\cat{Func}(\cat{Perf}((\mc{O}_K)_\smallprism),\cat{Perf}(W)))$:
    \begin{equation*}
        \psi_\crys^{\tw{n}} \defeq (\psi_{\crys,\pi}^{\tw{n}})\colon \bb{D}^{\tw{n}}_\crys \longrightarrow \varprojlim_{\pi} \bb{D}^{\prime,\tw{n}}_{\crys,\pi} = \bb{D}^{\prime,\tw{n}}_\crys.
    \end{equation*}
\end{prop}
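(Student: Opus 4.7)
The plan is to reduce the claim to a formal fact about limits of diagrams of equivalences. By Corollary~\ref{cor:twisted-crys-identification}, each individual component $\psi^{\tw{n}}_{\crys,\pi}$ is already an equivalence in $\cat{Func}(\cat{Perf}((\mc{O}_K)_\smallprism),\cat{Perf}(W))$, and combined with the identity $\j^{\tw{n}}_{\crys,\pi,\pi'} \circ \psi^{\tw{n}}_{\crys,\pi}=\psi^{\tw{n}}_{\crys,\pi'}$ noted immediately before the proposition, two-out-of-three forces each transition isomorphism $\j^{\tw{n}}_{\crys,\pi,\pi'}$ to also be an equivalence.

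Next, I would package the inverse system $\pi \mapsto \bb{D}^{\prime,\tw{n}}_{\crys,\pi}$ together with the $\j$'s as a diagram indexed by the groupoid $G$ whose objects are the uniformisers of $\mc{O}_K$ and whose unique morphism $\pi \to \pi'$ is $\j^{\tw{n}}_{\crys,\pi,\pi'}$; composition is well-defined by the cocycle identity established in Construction~\ref{const:crys-transition-isom}, and $G$ is evidently connected. The limit of this diagram is $\bb{D}^{\prime,\tw{n}}_\crys$ by definition, and the compatibility identity says precisely that $(\psi^{\tw{n}}_{\crys,\pi})_\pi$ is a natural transformation from the constant diagram with value $\bb{D}^{\tw{n}}_\crys$ to this diagram, hence yields the map $\psi^{\tw{n}}_\crys$ to the limit.

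To conclude that $\psi^{\tw{n}}_\crys$ is an equivalence, I would fix any uniformiser $\pi_0$ and consider the composition with the projection $\mr{pr}_{\pi_0}\colon \bb{D}^{\prime,\tw{n}}_\crys\to \bb{D}^{\prime,\tw{n}}_{\crys,\pi_0}$ from the limit. By construction this composition equals $\psi^{\tw{n}}_{\crys,\pi_0}$, which is already known to be an equivalence, so it suffices to prove that $\mr{pr}_{\pi_0}$ itself is an equivalence. This is formal: the limit of a diagram of equivalences indexed by a connected groupoid is computed by any single term, with the equivalence realised by the corresponding projection.

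The main obstacle is simply the bookkeeping of organising the $\j$'s into a genuine groupoid-indexed diagram in the relevant functor $(\infty,1)$-category rather than something requiring additional higher-coherence data. The cocycle relation supplies associativity at the $1$-categorical level, and unitality follows from $\j^{\tw{n}}_{\crys,\pi,\pi}=\mr{id}$, which is immediate from the explicit description of each $\j$ as a composition of two canonical isomorphisms through $\phi_W^{\ast\tw{n+1}}\mc{V}(W,p)$. Granting this bookkeeping, the argument is entirely formal.
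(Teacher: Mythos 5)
Your proof is correct and matches the paper's approach — the paper states this proposition with no explicit argument, treating it as an immediate consequence of the preceding discussion (each $\psi^{\tw{n}}_{\crys,\pi}$ is an equivalence by Corollary~\ref{cor:twisted-crys-identification}, and the compatibility $\j^{\tw{n}}_{\crys,\pi,\pi'}\circ\psi^{\tw{n}}_{\crys,\pi}=\psi^{\tw{n}}_{\crys,\pi'}$ holds). Your version simply spells out that formal step.

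One imprecision is worth flagging. The principle you invoke at the end — that the limit of a diagram of equivalences indexed by a \emph{connected} groupoid is computed by any single term via the projection — is false at that level of generality: every groupoid-indexed diagram is a diagram of equivalences, and the limit over a connected groupoid such as $BG$ is the homotopy fixed points $X^{hG}$, which is not $X$. What actually makes the argument work is that the groupoid you construct has a \emph{unique} morphism between any two objects, hence is contractible (equivalent to the terminal category), and it is this contractibility — not mere connectedness — that forces each projection $\mr{pr}_{\pi_0}$ to be an equivalence. Since you did build the index groupoid with that uniqueness property, your argument is correct as executed; just be careful not to misstate the general principle. The final caveat you raise about higher-coherence data is legitimate (the cocycle condition only furnishes $1$-categorical associativity), but the paper makes the same implicit choice and the coherences can be supplied because the transition maps are all induced from strictly commuting diagrams of stacks, so flagging it rather than resolving it is reasonable here.
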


\subsubsection{The twisted de Rham realisation} 

We now perform a similar analysis to define the twisted de Rham realisation using the Breuil--Kisin theory, and to compare this to the stack-theoretic definition.

\begin{defn}
    Let $\pi$ be a uniformiser of $\mc{O}_K$ and $n\geqslant 0$ an integer.
    The \emph{$\pi\textrm{-indexed}$ $n\textrm{-twisted}$ de Rham realisation functor} is defined as follows:
    \begin{equation*}
        \mathbb{D}_{\mr{dR},\pi}^{\prime,\tw{n}}\colon \cat{Perf}((\mc{O}_K)_\smallprism) \longrightarrow \cat{Perf}(\mc{O}_K),\qquad \mc{V} \longmapsto \phi_\mf{S}^{\ast\tw{n+1}}\mc{V}(\mf{S}, E_\pi)/ E_\pi.
    \end{equation*}
\end{defn}

Observe that, by the setup as in Section \ref{sss:mixed-BK-prism}, there is a commuting diagram of maps,
\begin{equation}\label{eq:O_K-quotient-product-pi-pi'}
    \begin{tikzcd}[sep=2.25em]
    	{(\mf{S}, E_\pi)} & {(\mf{S}_{\pi,\pi'}, I_{\pi,\pi'})} & {(\mf{S}, E_{\pi'})} \\
    	& {\mc{O}_K},
    	\arrow["p_1",from=1-1, to=1-2]
    	\arrow[from=1-1, to=2-2]
    	\arrow[from=1-2, to=2-2]
    	\arrow["p_2"', from=1-3, to=1-2]
    	\arrow[from=1-3, to=2-2]
    \end{tikzcd}
\end{equation}
where the horizontal arrows are the two projection maps in $(\mc{O}_K)_\smallprism$ and the rest are given by reduction modulo the prismatic ideal.

\begin{construction}\label{const:dR-transition-isom}
    Let $\mc{V}$ be an object of $\cat{Perf}((\mc{O}_K)_\smallprism)$ and $n\geqslant 0$ an integer.
    Applying the crystal property for the diagram in \eqref{eq:O_K-quotient-product-pi-pi'}, we obtain a commutative diagram of equivalences:
    \begin{equation*}
        \begin{tikzcd}[column sep=16pt]
    	\phi_{\mf{S}}^{\ast\tw{n+1}}\mc{V}(\mf{S}, E_\pi) \otimes_\mf{S} \mf{S}_{\pi,\pi'} & \phi_{\mf{S}_{\pi,\pi'}}^{\ast\tw{n+1}}\mc{V}(\mf{S}_{\pi,\pi'}, I_{\pi,\pi'}) & \phi_{\mf{S}}^{\ast\tw{n+1}}\mc{V}(\mf{S}, E_{\pi'}) \otimes_\mf{S}\mf{S}_{\pi,\pi'}. 
            \arrow["\sim", from=1-1, to=1-2]
            \arrow["\sim"', from=1-3, to=1-2]
        \end{tikzcd}
    \end{equation*}

    From the preceding diagram and commutativity of \eqref{eq:O_K-quotient-product-pi-pi'}, note that by tensoring along $\mf{S}_{\pi,\pi'}\to\mc{O}_K$ we obtain an isomorphism,
    \begin{equation*}
    \j^{\tw{n},\mathsmaller{\mc{V}}}_{\dR,\pi,\pi'}\colon \bb{D}^{\prime,\tw{n}}_{\dR,\pi}(\mc{V})\isomto \bb{D}^{\prime,\tw{n}}_{\dR,\pi'}(\mc{V}).
    \end{equation*}
    The isomorphisms $\j_{\dR,\pi,\pi'}^{\tw{n},\mathsmaller{\mc{V}}}$ are functorial in $\mc{V}$ and satisfy the following cocycle condition:
    \begin{equation*} 
        \j^{\tw{n},\mathsmaller{\mc{V}}}_{\dR,\pi',\pi''}\circ \j_{\dR,\pi,\pi'}^{\tw{n},\mathsmaller{\mc{V}}}=\j^{\tw{n},\mathsmaller{\mc{V}}}_{\dR,\pi,\pi''}.
    \end{equation*}
    Thus, we obtain natural equivalences
    \begin{equation*}
        \j^{\tw{n}}_{\dR,\pi,\pi'}\colon \bb{D}_{\mr{dR},\pi}^{\prime,\tw{n}}\isomto \bb{D}_{\mr{dR},\pi'}^{\prime,\tw{n}}
    \end{equation*}
    in $\cat{Func}(\cat{Perf}((\mc{O}_K)_\smallprism),\cat{Perf}(\mc{O}_K)))$ satisfying the cocycle condition.
\end{construction}

\begin{defn} 
    The \emph{$n\textrm{-twisted}$ de Rham realisation functor} is defined as
    \begin{equation*}
        \bb{D}_\mr{dR}^{\prime,\tw{n}} \defeq \varprojlim_\pi \bb{D}_{\mr{dR},\pi}^{\prime,\tw{n}} \colon \cat{Perf}((\mc{O}_K)_\smallprism) \longrightarrow \cat{Perf}(\mc{O}_K).
    \end{equation*}
\end{defn}

We end this section by comparing $\bb{D}^{\prime,\tw{n}}_\dR$ and $\bb{D}^{\tw{n}}_\dR$.
Via Corollary \ref{cor:twisted-dR-identification}, for any $\pi$, we have a canonical identification
\begin{equation*}
\psi_{\dR,\pi}^{\tw{n}}\colon \bb{D}^{\tw{n}}_\dR\isomto \bb{D}^{\prime,\tw{n}}_{\dR,\pi}.
\end{equation*}
Moreover, we have the following $\textrm{(}2\textrm{-)commutative}$ diagram:
\begin{equation*}
    \begin{tikzcd}[column sep= 6em, row sep=large]
    	& {\Spf(\mf{S})} \\
    	{\Spf(\mf{S}_{\pi,\pi'})} & {\Spf(\mc{O}_K)} & {\mc{O}_K^\smallprism,} \\
    	& {\Spf(\mf{S})}
    	\arrow["{\rho^{\tw{1}}_\pi}"{description}, from=1-2, to=2-3]
    	\arrow["{q_1}"{description}, from=2-1, to=1-2]
    	\arrow["{q_2}"{description}, from=2-1, to=3-2]
    	\arrow["{E_{\pi}=0}"{description}, from=2-2, to=1-2]
    	\arrow["{(E_{\pi}=E_{\pi'}=0)}"{description}, from=2-2, to=2-1]
    	\arrow["{\rho_\dR}", from=2-2, to=2-3]
    	\arrow["{E_{\pi'}=0}"{description}, from=2-2, to=3-2]
    	\arrow["{\rho^{\tw{1}}_{\pi'}}"{description}, from=3-2, to=2-3]
    \end{tikzcd}
\end{equation*}
and therefore, it is easy to deduce that $\j^{\tw{n}}_{\dR,\pi,\pi'}\circ \psi_{\dR,\pi}^{\tw{n}} = \psi_{\dR,\pi'}^{\tw{n}}$.
From this we obtain the following.
\begin{prop} 
    The following morphism is an equivalence in $\cat{Func}(\cat{Perf}((\mc{O}_K)_\smallprism),\cat{Perf}(\mc{O}_K))$:
    \begin{equation*}
        \psi_\dR^{\tw{n}} \eqdef (\psi_{\dR,\pi}^{\tw{n}})\colon \bb{D}^{\tw{n}}_\dR \longrightarrow \varprojlim \bb{D}^{\prime,\tw{n}}_{\dR,\pi} = \bb{D}^{\prime,\tw{n}}_\dR.
    \end{equation*}
\end{prop}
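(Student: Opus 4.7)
The plan is simple, essentially formal given what precedes the proposition. By Corollary \ref{cor:twisted-dR-identification}, each individual map $\psi_{\dR,\pi}^{\tw{n}}\colon \bb{D}^{\tw{n}}_\dR \isomto \bb{D}^{\prime,\tw{n}}_{\dR,\pi}$ is already a natural equivalence. The discussion immediately preceding the proposition establishes the critical cocycle compatibility $\j^{\tw{n}}_{\dR,\pi,\pi'}\circ \psi_{\dR,\pi}^{\tw{n}} = \psi_{\dR,\pi'}^{\tw{n}}$, obtained by 2-commutativity of the diagram involving the mixed Breuil--Kisin prism and Proposition \ref{prop:dR-twisted-mfS-identification} applied to both $\pi$ and $\pi'$.

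From these two inputs, I would proceed in two brief steps. First, the compatibility above ensures that the family $(\psi_{\dR,\pi}^{\tw{n}})_\pi$ assembles (in the sense of a cone over the projective system indexed by uniformisers) into a well-defined morphism $\psi_\dR^{\tw{n}}\colon \bb{D}^{\tw{n}}_\dR \to \varprojlim_\pi \bb{D}^{\prime,\tw{n}}_{\dR,\pi}$ in $\cat{Func}(\cat{Perf}((\mc{O}_K)_\smallprism),\cat{Perf}(\mc{O}_K))$. Second, I would argue that the projective system $\bigl(\bb{D}^{\prime,\tw{n}}_{\dR,\pi}, \j^{\tw{n}}_{\dR,\pi,\pi'}\bigr)_\pi$ is essentially constant with value $\bb{D}^{\tw{n}}_\dR$: this is exactly what the family of compatible equivalences $(\psi_{\dR,\pi}^{\tw{n}})$ exhibits. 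Consequently, each structure projection $\varprojlim_\pi \bb{D}^{\prime,\tw{n}}_{\dR,\pi} \to \bb{D}^{\prime,\tw{n}}_{\dR,\pi}$ is an equivalence. Since its composition with $\psi_\dR^{\tw{n}}$ recovers the equivalence $\psi_{\dR,\pi}^{\tw{n}}$, the 2-out-of-3 property forces $\psi_\dR^{\tw{n}}$ itself to be an equivalence.

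The only genuine content lies in the cocycle compatibility (already established before the statement). There is no real obstacle in the proposition itself: it is a bookkeeping statement that ``a compatible system of equivalences from a fixed object into a projective system exhibits that object as the inverse limit''. If I were writing out all details, I would simply observe that both $\psi_\dR^{\tw{n}}$ and the projections are equivalences on objects (by invoking pointwise perfectness of the complexes involved) and that morphisms of functors land pointwise, so no $\infty$-categorical subtleties intervene beyond the standard yoga for limits in $\mb{D}(\mc{O}_K)$.
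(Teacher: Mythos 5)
Your argument is correct and is essentially the one the paper has in mind: the paper states this proposition without a written proof, treating it as immediate from the cocycle compatibility $\j^{\tw{n}}_{\dR,\pi,\pi'}\circ \psi_{\dR,\pi}^{\tw{n}} = \psi_{\dR,\pi'}^{\tw{n}}$ and the fact that each $\psi_{\dR,\pi}^{\tw{n}}$ is already an equivalence by Corollary \ref{cor:twisted-dR-identification}. Your write-up simply makes explicit the formal step (a compatible cone of equivalences into a diagram exhibits the source as the limit, and then 2-out-of-3) that the authors left implicit.
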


\subsubsection{The twisted crystalline-de Rham comparison (non-stacky form)} 

We shall now give a non-stacky version of the twisted crystalline-de Rham comparison, and compare it to the stack-theoretic one.
We first require the following construction:

\begin{construction} 
    Fix a uniformiser $\pi$ and an integer $n\geqslant a$. From Equations \eqref{eq:BK-twist-identification} and \eqref{eq:rho-W-twist-identifications}, and Proposition \ref{prop:big-diagram-commutes} we obtain the following equivalences in $\cat{Perf}(\tilde{S})$:
    \begin{equation*}
        \phi_W^{\ast\tw{n+1}}\mc{V}(W, p)\otimes_{W} \tilde{S} \isomto \phi^{\ast n}_{\tilde{S}}\mc{V}(\tilde{S}, p) \isomfrom \phi_\mf{S}^{\ast\tw{n+1}}\mc{V}(\mf{S}, E_\pi) \otimes_{\mf{S}}\tilde{S}.
    \end{equation*}
    Base change of the preceding isomorphisms, along the map $\tilde{S}\to\mc{O}_K$ from Proposition \ref{prop:BO-prism-to-O_K}, furnishes us with an equivalence in $\cat{Perf}(\mc{O}_K)$:
    \begin{equation*}
        \phi^{\ast\tw{n+1}}_W\mc{V}(W, p) \otimes_W \mc{O}_K\isomto \phi_{\mf{S}}^{\ast\tw{n+1}}\mc{V}(\mf{S}, E_\pi)/E_\pi.
    \end{equation*}
    Finally, using the isomorphism (see Construction \ref{const:crys-transition-isom}),
    \begin{equation*}
        \phi_W^{\ast\tw{n+1}}\mc{V}(W, p) \isomto \phi_{\mf{S}}^{\ast \tw{n+1}}\mc{V}(\mf{S}, E_\pi)/u,
    \end{equation*}
     gives rise to an isomorphism
    \begin{equation*}
        \iota_\pi^{\prime,\tw{n},\mc{V}}\colon \bb{D}_{\mr{crys},\pi}^{\prime,\tw{n}}(\mc{V})\otimes_W \mc{O}_K\isomto \bb{D}_{\mr{dR},\pi}^{\prime,\tw{n}}(\mc{V}).
    \end{equation*}
\end{construction}

By construction, it is clear that the isomorphisms $\iota_\pi^{\prime,\tw{n},\mathsmaller{\mc{V}}}$ are functorial in $\mc{V}$, and thus give rise to an isomorphism $\iota_\pi^{\prime,\tw{n}}\colon \bb{D}_{\mr{crys},\pi}^{\prime,\tw{n}} \otimes_W \mc{O}_K \to \bb{D}_{\mr{dR},\pi}^{\prime,\tw{n}}$ in $\cat{Func}(\cat{Perf}((\mc{O}_K)_\smallprism),\cat{Perf}(\mc{O}_K))$.

\begin{prop}\label{prop:crys-dR-comparison-pi-independence}
    For uniformisers $\pi$ and $\pi'$ of $\mc{O}_K$, and an integer $n\geqslant a$, the following diagram in $\cat{Func}(\cat{Perf}((\mc{O}_K)_\smallprism),\cat{Perf}(\mc{O}_K))$ naturally commutes:
    \begin{equation}\label{eq:uniformizer-comp-crys-dR}
        \begin{tikzcd}[sep=2.25em]
            {\bb{D}_{\mr{crys},\pi}^{\tw{n}} \otimes_W \mc{O}_K} & {\bb{D}_{\mr{dR},\pi}^{\tw{n}}} \\
            {\bb{D}_{\mr{crys},\pi'}^{\tw{n}} \otimes_W \mc{O}_K} & {\bb{D}^{\tw{n}}_{\mr{dR},\pi'}}.
            \arrow["{\iota^{\prime,\tw{n}}_\pi}", from=1-1, to=1-2]
            \arrow["{\j^{\tw{n}}_{\crys,\pi,\pi'}\otimes 1}"', from=1-1, to=2-1]
            \arrow["{\j^{\tw{n}}_{\dR,\pi,\pi'}}", from=1-2, to=2-2]
            \arrow["{\iota^{\prime,\tw{n}}_{\pi'}}"', from=2-1, to=2-2]
        \end{tikzcd}
    \end{equation}
\end{prop}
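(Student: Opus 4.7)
The plan is to factor the commutativity of \eqref{eq:uniformizer-comp-crys-dR} through the stacky comparison $\iota^{\tw{n}}$ of Theorem \ref{thm:twisted-crys-dR-comparison}, which, being defined without reference to any uniformiser, is manifestly $\pi$-independent. The key intermediate claim is that for every uniformiser $\pi$ and every $n \geqslant a$, the diagram
\begin{equation*}
\begin{tikzcd}[sep=2em]
{\bb{D}^{\tw{n}}_\crys \otimes_W \mc{O}_K} & {\bb{D}^{\tw{n}}_\dR} \\
{\bb{D}^{\prime,\tw{n}}_{\crys,\pi} \otimes_W \mc{O}_K} & {\bb{D}^{\prime,\tw{n}}_{\dR,\pi}}
\arrow["{\iota^{\tw{n}}}", from=1-1, to=1-2]
\arrow["{\psi^{\tw{n}}_{\crys,\pi} \otimes 1}"', from=1-1, to=2-1]
\arrow["{\psi^{\tw{n}}_{\dR,\pi}}", from=1-2, to=2-2]
\arrow["{\iota^{\prime,\tw{n}}_\pi}"', from=2-1, to=2-2]
\end{tikzcd}
\end{equation*}
commutes naturally in $\mc{V}$. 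Granting this, the conclusion of the proposition will follow by a direct diagram chase: combining the square above for both $\pi$ and $\pi'$ with the identities $\j^{\tw{n}}_{?,\pi,\pi'} \circ \psi^{\tw{n}}_{?,\pi} = \psi^{\tw{n}}_{?,\pi'}$ for $? \in \{\crys, \dR\}$ established at the end of the two preceding subsections, and cancelling the isomorphism $\psi^{\tw{n}}_{\crys,\pi} \otimes 1$, yields exactly \eqref{eq:uniformizer-comp-crys-dR}.

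To establish the compatibility square, I would unwind both horizontal arrows back to their common origin in diagram \eqref{eq:big-diagram-commutes} of Proposition \ref{prop:big-diagram-commutes}. Indeed, the stacky isomorphism $\iota^{\tw{n}}$ arises from the $2$-cell of Theorem \ref{thm:twisted-crys-dR-comparison}(1) identifying $\rho^{\tw{n}}_\dR$ with the composition $\Spf(\mc{O}_K) \to \Spf(W) \xrightarrow{\rho^{\tw{n}}_\crys} \mc{O}_K^\smallprism$, whereas $\iota^{\prime,\tw{n}}_\pi$ is constructed by realising this same $2$-cell through the modified Breuil prism $(\tilde{S},p)$ via the factorisations $\rho^{\tw{n}}_\crys \simeq \rho^{\tw{n+1}}_W$ (Proposition \ref{prop:crys-rho-W-factorization}) and $\rho^{\tw{n}}_\dR \simeq \rho^{\tw{n+1}}_\mf{S} \circ \mr{nat}$ (Proposition \ref{prop:dR-twisted-mfS-identification}). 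The desired compatibility should then follow from the coherent commutativity of triangles (1), (4) and (6) in Proposition \ref{prop:big-diagram-commutes}, together with the canonical identifications of Corollaries \ref{cor:twisted-crys-identification} and \ref{cor:twisted-dR-identification}.

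The main obstacle will be the bookkeeping required in this last diagram chase, carefully tracking Frobenius twists across the several pullback squares and confirming that the identifications used to define $\iota^{\prime,\tw{n}}_\pi$ match, after applying $\psi^{\tw{n}}_{\crys,\pi}$ and $\psi^{\tw{n}}_{\dR,\pi}$, those coming from the stacky construction. The hypothesis $n \geqslant a$ enters precisely where triangle (1) of Proposition \ref{prop:big-diagram-commutes} is invoked; all other commutativities hold for arbitrary $n \geqslant 0$, so no further geometric input should be required.
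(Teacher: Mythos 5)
Your overall strategy is appealing, and the final diagram chase in step (2) — combining the putative intermediate square for $\pi$ and $\pi'$ with $\j^{\tw{n}}_{?,\pi,\pi'}\circ\psi^{\tw{n}}_{?,\pi}=\psi^{\tw{n}}_{?,\pi'}$ and cancelling the isomorphism $\psi^{\tw{n}}_{\crys,\pi}\otimes 1$ — is a correct formal deduction. However, the load-bearing claim that $\iota^{\tw{n}}$ ``being defined without reference to any uniformiser, is manifestly $\pi$-independent'' is only justified for $n\geqslant\lceil\log_p(e)\rceil$, where Theorem \ref{thm:twisted-crys-dR-comparison} constructs it via an explicit, $\pi$-free equality of Cartier--Witt structure maps. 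In the remaining range $a\leqslant n<\lceil\log_p(e)\rceil$ (which is nonempty, e.g.\ $p=3$, $e=4$ gives $a=1<2=\lceil\log_p e\rceil$), the only construction of $\iota^{\tw{n}}$ available in the paper is the one in Proposition \ref{prop:crys-dR-comparison-using-prisms}, and that construction passes through the $\pi$-dependent chain $\rho^{\tw{n}}_\dR\isomto\rho^{\prime,\tw{n}}_\dR\isomto\tilde\rho^{\tw{n}}_\pi\circ i\isomfrom\rho^{\prime,\tw{n}}_\crys\otimes 1\isomto\rho^{\tw{n}}_\crys\otimes 1$. There is nothing manifest there; showing that this chain is independent of $\pi$ is, up to the identifications $\psi^{\tw{n}}_{\crys,\pi}$, $\psi^{\tw{n}}_{\dR,\pi}$, precisely the content of Proposition \ref{prop:crys-dR-comparison-pi-independence}. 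So for this range of $n$ your argument is circular: the same $\iota^{\tw{n}}$ cannot be shared between the intermediate square for $\pi$ and the one for $\pi'$ without already knowing the result.

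This is in fact why the paper's logical order is the reverse of yours: Proposition \ref{prop:crys-dR-comparison-pi-independence} is proved first by a hands-on argument, entirely within $(\mc{O}_K)_\smallprism$ and without reference to $\iota^{\tw{n}}$; it then underwrites the well-definedness of $\iota^{\prime,\tw{n}}$ (and, implicitly, the reduction in Proposition \ref{prop:identification-of-two-twisted-crys-dR-comps} to a single $\pi$, which is exactly your intermediate square). The paper's mechanism for linking the prisms $(\mf{S},E_\pi)$ and $(\mf{S},E_{\pi'})$ is to map both into a common larger prism: one checks the commutativity after faithfully flat base change $\mc{O}_K\to\mc{O}_C$ by fitting $(\mf{S},E_\pi^{\tw{1}})$, $(\tilde S_\pi,p)$, $(W,p)$, $(\tilde S_{\pi'},p)$, $(\mf{S},\phi(E_{\pi'}))$ into a commutative diagram in $(\mc{O}_K)_\smallprism$ over $(\Ainf,\phi^n(\tilde\xi))$ and $(\Acrys,p)$, and applying the crystal property. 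If you attempt to fill the gap in your intermediate claim for $a\leqslant n<\lceil\log_p(e)\rceil$, you will find yourself forced into a comparison of exactly this kind; your ``diagram chase through triangles (1), (4), (6) of Proposition \ref{prop:big-diagram-commutes}'' does not suffice, because those triangles individually involve only one uniformiser at a time and never provide a bridge between the $\pi$- and $\pi'$-indexed prisms.

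A last, smaller remark: your plan invokes the $\j\circ\psi=\psi$ compatibilities from the two preceding subsections, which is fine, but note that in the paper the further assembly of the $\iota^{\prime,\tw{n}}_\pi$ into the single transformation $\iota^{\prime,\tw{n}}$ (which your step (2) effectively reconstructs) is itself made \emph{after} and \emph{because of} Proposition \ref{prop:crys-dR-comparison-pi-independence}. So your argument, cleaned up, would only give a proof of this proposition for $n\geqslant\lceil\log_p(e)\rceil$; for the remaining values of $n$ one must supply a direct comparison, e.g.\ the $\Acrys$ argument.
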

\begin{proof} 
    To prove the commutativity of \eqref{eq:uniformizer-comp-crys-dR}, it suffices to check it after base change along the faithfully flat map $\mc{O}_K\to\mc{O}_C$.
    So, consider the following commutative diagram in $(\mc{O}_K)_\smallprism$:
    \begin{equation}\label{eq:twisted-crys-dR-big-diagram}
        \begin{tikzcd}[column sep=6em, row sep=4em]
        	& {(W, p)} \\
        	{(\tilde{S}_\pi, p)} & {(\mr{A}_\mr{crys}, p)} & {(\tilde{S}_{\pi'}, p)} \\
        	{(\mf{S}, E_\pi^{\tw{1}})} & {(\mr{A}_\mr{inf}, \phi^n(\tilde{\xi}))} & {(\mf{S}, \phi(E_{\pi'}))},
        	\arrow["{\phi^{n+1}}"{description}, from=1-2, to=2-1]
        	\arrow[hook, from=1-2, to=2-2]
        	\arrow["{\phi^{n+1}}"{description}, from=1-2, to=2-3]
        	\arrow[""{name=0, anchor=center, inner sep=0}, hook, from=2-1, to=2-2]
        	\arrow[""{name=0p, anchor=center, inner sep=0}, phantom, from=2-1, to=2-2, start anchor=center, end anchor=center]
        	\arrow[""{name=0p, anchor=center, inner sep=0}, phantom, from=2-1, to=2-2, start anchor=center, end anchor=center]
        	\arrow[""{name=1, anchor=center, inner sep=0}, hook', from=2-3, to=2-2]
        	\arrow[""{name=1p, anchor=center, inner sep=0}, phantom, from=2-3, to=2-2, start anchor=center, end anchor=center]
        	\arrow[""{name=1p, anchor=center, inner sep=0}, phantom, from=2-3, to=2-2, start anchor=center, end anchor=center]
        	\arrow["{\phi^{n}}"{description}, from=3-1, to=2-1]
        	\arrow["{\phi^n\circ \alpha_{\mr{inf},\pi}}"{description}, from=3-1, to=3-2]
        	\arrow[hook', from=3-2, to=2-2]
        	\arrow["{\phi^{n}}"{description}, from=3-3, to=2-3]
        	\arrow["{\phi^n\circ\alpha_{\mr{inf},\pi'}}"{description}, from=3-3, to=3-2]
        	\arrow[shorten <=7pt, shorten >=7pt, from=0p, to=1-2]
        	\arrow[shorten <=7pt, shorten >=7pt, from=1p, to=1-2]
        	\arrow["{u=0}"{description}, shorten >=7pt, no head, from=3-1, to=0p]
        	\arrow["{u=0}"{description}, shorten >=7pt, no head, from=3-3, to=1p]
        \end{tikzcd}
    \end{equation}
    where we have used Proposition \ref{prop:big-diagram-commutes}.
    Also, while the $E_\pi$ in $(\mf{S}, E_\pi)$ indicates the exact structure as an object of $(\mc{O}_K)_\smallprism$, the notation $(\tilde{S}, p)$ does not indicate the $\pi$-dependence of the modified Breuil prism, and so we have written it as $\tilde{S}_\pi$ (and likewise for $\pi'$) to indicate this dependency.
    
    For an object $\mc{V}$ of $\cat{Perf}((\mc{O}_K)_\smallprism)$, the crystal property yields from \eqref{eq:twisted-crys-dR-big-diagram} a large commutative diagram of equivalences of objects in $\cat{Perf}(\Acrys)$.
    Base changing along $\mr{A}_\mr{crys}\to \mc{O}_C$ yields the base change of \eqref{eq:uniformizer-comp-crys-dR} along $\mc{O}_K\to\mc{O}_C$ as a commutative subdiagram, allowing us to conclude.
\end{proof}

From Proposition \ref{prop:crys-dR-comparison-pi-independence}, it is clear that we obtain an equivalence,
\begin{equation*}
    \iota^{\prime,\tw{n}}\colon \bb{D}_\crys^{\prime,\tw{n}}\otimes_W\mc{O}_K \isomto \bb{D}^{\prime,\tw{n}}_{\dR}
\end{equation*}
in $\cat{Func}(\cat{Perf}((\mc{O}_K)_\smallprism),\cat{Perf}(\mc{O}_K))$, for any $n\geqslant a$.
We end this section by comparing this construction to the equivalence $\iota^{\tw{n}}$ from Theorem \ref{thm:twisted-crys-dR-comparison}.

\begin{prop}\label{prop:identification-of-two-twisted-crys-dR-comps}
    For any $n\geqslant a$, the following diagram commutes in $\cat{Func}(\cat{Perf}((\mc{O}_K)_\smallprism),\cat{Perf}(\mc{O}_K))$:
    \begin{equation*}
        \begin{tikzcd}[sep=2.25em]
            {\bb{D}_{\mr{crys}}^{\tw{n}} \otimes_W \mc{O}_K} & {\bb{D}_{\mr{dR}}^{\tw{n}}} \\
            {\bb{D}_{\mr{crys}}^{\prime \tw{n}} \otimes_W \mc{O}_K} & {\bb{D}^{\prime,\tw{n}}_{\mr{dR}}}.
            \arrow["{\iota^{\tw{n}}}", from=1-1, to=1-2]
            \arrow["{\psi^{\tw{n}}_\crys\otimes 1}"', from=1-1, to=2-1]
            \arrow["{\psi^{\tw{n}}_\dR}", from=1-2, to=2-2]
            \arrow["{\iota^{\prime,\tw{n}}}"', from=2-1, to=2-2]
        \end{tikzcd}
    \end{equation*}
\end{prop}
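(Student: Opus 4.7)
The plan is to reduce the commutativity to a single uniformiser $\pi$ and then exhibit both compositions in the square as pullbacks of $\mc{V}$ along a common $2$-commutative diagram of stacks over $\mc{O}_K^\smallprism$. This diagram will be assembled from several pieces of Proposition \ref{prop:big-diagram-commutes}, together with the factorisations of $\rho_\crys^{\tw{n}}$ and $\rho_\dR^{\tw{n}}$ proved in Propositions \ref{prop:crys-rho-W-factorization} and \ref{prop:dR-twisted-mfS-identification} and in Theorem \ref{thm:twisted-crys-dR-comparison}. The reduction to a single $\pi$ is legitimate because $\bb{D}^{\prime,\tw{n}}_\crys = \varprojlim_\pi \bb{D}^{\prime,\tw{n}}_{\crys,\pi}$, $\bb{D}^{\prime,\tw{n}}_\dR = \varprojlim_\pi \bb{D}^{\prime,\tw{n}}_{\dR,\pi}$, and $\psi^{\tw{n}}_\crys$, $\psi^{\tw{n}}_\dR$ are, by definition, the assembly of their $\pi$-indexed components $\psi^{\tw{n}}_{\crys,\pi}$, $\psi^{\tw{n}}_{\dR,\pi}$.

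With $\pi$ fixed and $\mc{V}$ an object of $\cat{Perf}((\mc{O}_K)_\smallprism)$, I would then unwind each side of the square as a pullback of $\mc{V}$ along some morphism of stacks, using Corollaries \ref{cor:twisted-crys-identification} and \ref{cor:twisted-dR-identification}: the map $\psi^{\tw{n}}_{\crys,\pi}$ compares $\rho_\crys^{\tw{n}}$ (identified with $\rho_W^{\tw{n+1}}$ via Proposition \ref{prop:crys-rho-W-factorization}) with $\rho_\mf{S}^{\tw{n+1}}\circ(u{=}0)$ via triangle $(5)$ of Proposition \ref{prop:big-diagram-commutes}; the map $\psi^{\tw{n}}_{\dR,\pi}$ compares $\rho_\dR^{\tw{n}}$ with $\rho_\mf{S}^{\tw{n+1}}\circ(E_\pi{=}0)$ via Proposition \ref{prop:dR-twisted-mfS-identification}; the map $\iota^{\tw{n}}$ is induced by the identification of $\rho_\dR^{\tw{n}}$ with $\rho_\crys^{\tw{n}}\circ(\pi{=}0)$ from Theorem \ref{thm:twisted-crys-dR-comparison}; and the map $\iota^{\prime,\tw{n}}_\pi$ is constructed through the modified Breuil prism using triangle $(1)$ of Proposition \ref{prop:big-diagram-commutes} (which is where the hypothesis $n \geqslant a$ enters) followed by base change along the PD-thickening $i \colon \tilde{S} \twoheadrightarrow \mc{O}_K$ from Proposition \ref{prop:BO-prism-to-O_K}. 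Taken together, all of these identifications glue into a single $2$-commutative diagram of stacks over $\mc{O}_K^\smallprism$ with vertices $\Spf(\mc{O}_K)$, $\Spf(W)$, $\Spf(\tilde{S})$ and $\Spf(\mf{S})$, in which both sides of the claimed equality are realised as parallel paths from the upper-left to the lower-right corner. Applying $\mc{V}$-pullback and base changing along $i$ then yields a commutative diagram of equivalences in $\cat{Perf}(\mc{O}_K)$ which contains the desired square as a face.

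The most delicate step will be the Frobenius-twist bookkeeping, i.e.\ keeping $\tw{n}$ and $\tw{n+1}$ aligned across all four sides. In particular, one must check that the section $s \colon \Spf(W) \to \Spf(\tilde{S})$ witnessing triangle $(6)$ of Proposition \ref{prop:big-diagram-commutes}, which is implicit in the construction of $\iota^{\prime,\tw{n}}_\pi$, is compatible both with the identification $\rho_\crys^{\tw{n}} \simeq \rho_W^{\tw{n+1}}$ of Proposition \ref{prop:crys-rho-W-factorization} used to build $\psi^{\tw{n}}_{\crys,\pi}$, and with the factorisation of $\rho_\dR^{\tw{n}}$ through $\Spf(W)$ appearing in Theorem \ref{thm:twisted-crys-dR-comparison}. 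All three of these coherences are themselves consequences of Proposition \ref{prop:big-diagram-commutes}, so once the large stack diagram is assembled with all $\mc{O}_K$-structure maps carefully written down, the desired equality reduces to a formal diagram chase in $\cat{Func}(\cat{Perf}((\mc{O}_K)_\smallprism),\cat{Perf}(\mc{O}_K))$.
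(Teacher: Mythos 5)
Your proposal matches the paper's strategy: reduce to a fixed uniformiser $\pi$ (since $\psi^{\tw{n}}_\crys$, $\psi^{\tw{n}}_\dR$, $\iota^{\prime,\tw{n}}$ are assembled from their $\pi$-indexed components), then realise both sides of the square as pullbacks along a single $2$-commutative diagram of stacks over $\mc{O}_K^\smallprism$ built from Proposition \ref{prop:big-diagram-commutes} and Proposition \ref{prop:BO-prism-to-O_K}. The paper packages this as Proposition \ref{prop:crys-dR-comparison-using-prisms}, which exhibits $\rho^{\tw{n}}_\crys\otimes 1$, $\tilde{\rho}^{\tw{n}}_\pi\circ i$, and $\rho^{\tw{n}}_\dR$ as canonically identified maps $\Spf(\mc{O}_K)\to\mc{O}_K^\smallprism$.

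One minor correction to your last paragraph: the section $s\colon \Spf(W)\to\Spf(\tilde{S})$ from triangle (6) is \emph{not} the arrow used in the construction of $\iota^{\prime,\tw{n}}_\pi$. What enters is the \emph{other} arrow between these vertices, namely $\Spf(\tilde{S})\to\Spf(W)$ induced by the inclusion $W\hookrightarrow\tilde{S}$, which is the arrow appearing in triangle (5) of Proposition \ref{prop:big-diagram-commutes} and in the diagram of Proposition \ref{prop:crys-dR-comparison-using-prisms} (together with the fact, from Proposition \ref{prop:BO-prism-to-O_K}, that $W\to\tilde{S}\xrightarrow{i}\mc{O}_K$ is the natural inclusion). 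The Frobenius-twist bookkeeping you flag as delicate is indeed where the hypothesis $n\geqslant a$ enters, but it is mediated through triangle (1) (the arrow $\mf{S}\hookrightarrow\tilde{S}$) rather than triangle (6), so the compatibility you need to check is with the inclusion $W\hookrightarrow\tilde{S}$ rather than with $s$. Once that is corrected, your argument coincides with the paper's.
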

\begin{proof}
    To prove the claim, note that it is enough to show that for a fixed uniformiser $\pi$, the following diagram commutes in $\cat{Func}(\cat{Perf}((\mc{O}_K)_\smallprism),\cat{Perf}(\mc{O}_K))$:
    \begin{equation*}
        \begin{tikzcd}[sep=2.25em]
            {\bb{D}_{\mr{crys}}^{\tw{n}} \otimes_W \mc{O}_K} & {\bb{D}_{\mr{dR}}^{\tw{n}}} \\
            {\bb{D}_{\mr{crys},\pi}^{\prime \tw{n}} \otimes_W \mc{O}_K} & {\bb{D}^{\prime,\tw{n}}_{\mr{dR},\pi}}.
            \arrow["{\iota^{\tw{n}}}", from=1-1, to=1-2]
            \arrow["{\psi^{\tw{n}}_\crys\otimes 1}"', from=1-1, to=2-1]
            \arrow["{\psi^{\tw{n}}_\dR}", from=1-2, to=2-2]
            \arrow["{\iota^{\prime,\tw{n}}_\pi}"', from=2-1, to=2-2]
        \end{tikzcd}
    \end{equation*}
    As in the proof of Proposition \ref{prop:crys-dR-comparison-pi-independence}, let us denote by $(\tilde{S}_\pi, p)$ the modified Breuil prism with a choice of uniformiser $\pi$.
    Then, given the construction of $\psi^{\tw{n}}_{\crys,\pi}$ and $\psi^{\tw{n}}_{\dR,\pi}$, Proposition \ref{prop:identification-of-two-twisted-crys-dR-comps} follows from Proposition \ref{prop:crys-dR-comparison-using-prisms} below.
    This allows us to conclude.
\end{proof}

For notational simplicity, in the following let us write $\rho^{\tw{n}}_\crys\otimes 1$ to denote the composition,
\begin{equation*}
    \Spf(\mc{O}_K)\to \Spf(W)\xrightarrow{\rho^{\tw{n}}_\crys} \mc{O}_K^\smallprism,
\end{equation*}
appearing in Theorem \ref{thm:twisted-crys-dR-comparison}.

\begin{prop}\label{prop:crys-dR-comparison-using-prisms} 
    The statement of Theorem \ref{thm:twisted-crys-dR-comparison} holds for $n\geqslant a$. Moreover, for $n\geqslant \lceil \log_p(e)\rceil$, the identification from Theorem \ref{thm:twisted-crys-dR-comparison} naturally factors as,
    \begin{equation*}
        \rho_{\crys}^{\tw{n}}\otimes 1\isomto \tilde{\rho}^{\tw{n}}_{\pi}\circ i \isomfrom \rho_{\dR}^{\tw{n}},
    \end{equation*}
    where $i \colon \Spf(\mc{O}_K) \rightarrow \Spf(\tilde{S})$ is the map described in Proposition \ref{prop:BO-prism-to-O_K}.
\end{prop}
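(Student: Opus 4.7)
The plan is to prove Proposition \ref{prop:crys-dR-comparison-using-prisms} by factoring both $\rho_\dR^{\tw{n}}$ and $\rho_\crys^{\tw{n}} \otimes 1$ through the common intermediate morphism $\rho_{\tilde{S}}^{\tw{n}} \circ i \colon \Spf(\mc{O}_K) \to \Spf(\tilde{S}) \to \mc{O}_K^\smallprism$, where $i$ is the PD-thickening from Proposition \ref{prop:BO-prism-to-O_K}. Both factorisations will be read off the big commutative diagram \eqref{eq:big-diagram-commutes} of Proposition \ref{prop:big-diagram-commutes}, and the threshold $n \geqslant a$ in the first claim will enter solely through the conditional commutativity of triangle (1) in that diagram.

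For the crystalline side, I would establish $\rho_\crys^{\tw{n}} \otimes 1 \simeq \rho_{\tilde{S}}^{\tw{n}} \circ i$ for \emph{all} $n \geqslant 0$. Proposition \ref{prop:crys-rho-W-factorization} together with \eqref{eq:rho-W-twist-identifications} identifies $\rho_\crys^{\tw{n}}$ with $\rho_W^{\tw{n+1}}$, so $\rho_\crys^{\tw{n}} \otimes 1 \simeq \rho_W^{\tw{n+1}} \circ \mathrm{str}$, where $\mathrm{str} \colon \Spf(\mc{O}_K) \to \Spf(W)$ is the natural morphism. Since $i \colon \tilde{S} \to \mc{O}_K$ is a $W$-algebra morphism by Proposition \ref{prop:BO-prism-to-O_K}, $\mathrm{str}$ factors as $\Spf(\mc{O}_K) \xrightarrow{i} \Spf(\tilde{S}) \to \Spf(W)$, and triangle (5) of \eqref{eq:big-diagram-commutes} identifies $\rho_W^{\tw{n+1}}$ post-composed with the latter arrow with $\rho_{\tilde{S}}^{\tw{n}}$, giving the desired identification.

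For the de Rham side, I would establish $\rho_\dR^{\tw{n}} \simeq \rho_{\tilde{S}}^{\tw{n}} \circ i$ for $n \geqslant a$. Proposition \ref{prop:dR-twisted-mfS-identification} combined with \eqref{eq:BK-twist-identification} gives $\rho_\dR^{\tw{n}} \simeq \rho_\mf{S}^{\tw{n+1}} \circ \mathrm{nat}$, where $\mathrm{nat} \colon \Spf(\mc{O}_K) \to \Spf(\mf{S})$ is induced by $\mf{S} \twoheadrightarrow \mf{S}/E \simeq \mc{O}_K$. Proposition \ref{prop:BO-prism-to-O_K} further factorises this $\mathrm{nat}$ as $\Spf(\mc{O}_K) \xrightarrow{i} \Spf(\tilde{S}) \to \Spf(\mf{S})$ (the last arrow coming from $\mf{S} \hookrightarrow \tilde{S}$). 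Triangle (1) of \eqref{eq:big-diagram-commutes}, whose commutativity holds precisely when $n \geqslant a$, identifies $\rho_\mf{S}^{\tw{n+1}}$ post-composed with $\Spf(\tilde{S}) \to \Spf(\mf{S})$ with $\rho_{\tilde{S}}^{\tw{n}}$, yielding the identification. Concatenating with the crystalline identification gives $\rho_\crys^{\tw{n}} \otimes 1 \simeq \rho_\dR^{\tw{n}}$ for all $n \geqslant a$, proving the first claim of the proposition.

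For the ``moreover'' part, when $n \geqslant \lceil \log_p(e)\rceil$ one must verify that the concatenated identification $\rho_\crys^{\tw{n}} \otimes 1 \simeq \rho_{\tilde{S}}^{\tw{n}} \circ i \simeq \rho_\dR^{\tw{n}}$ agrees with the direct one constructed in the proof of Theorem \ref{thm:twisted-crys-dR-comparison}. Since both identifications are ultimately extracted from the same system of Cartier--Witt divisors encoded in \eqref{eq:big-diagram-commutes}, this compatibility amounts to unwinding the structure maps on both sides and checking they match; this mirrors the computation carried out in the proof of Theorem \ref{thm:twisted-crys-dR-comparison}. I anticipate that the main bookkeeping subtlety is tracking the Frobenius offset between the ``$\tw{n+1}$'' exponents appearing in the $\mf{S}$- and $W$-prisms and the ``$\tw{n}$'' exponent in $\tilde{S}$; reconciling this offset is precisely what triangle (1) does, which is ultimately why its commutativity threshold controls the validity range of the entire comparison.
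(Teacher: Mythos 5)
Your proposal reproduces the paper's argument: both proofs factor $\rho_\dR^{\tw{n}}$ and $\rho_\crys^{\tw{n}}\otimes 1$ through $\rho_{\tilde{S}}^{\tw{n}}\circ i$ by stitching together Proposition \ref{prop:BO-prism-to-O_K}, the identities \eqref{eq:BK-twist-identification}, \eqref{eq:rho-W-twist-identifications}, Proposition \ref{prop:crys-rho-W-factorization}, Proposition \ref{prop:dR-twisted-mfS-identification}, and the big diagram \eqref{eq:big-diagram-commutes}, and both treat the ``moreover'' clause by observing that all the $2$-morphisms involved are induced by the identity on $\bb{W}$, so the resulting square of identifications commutes.

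One caveat: you assert that the crystalline-side factorisation $\rho_\crys^{\tw{n}}\otimes 1\simeq\rho_{\tilde{S}}^{\tw{n}}\circ i$ holds for \emph{all} $n\geqslant 0$ and that the threshold $n\geqslant a$ is localised entirely in triangle (1). This follows the (slightly misleading) list of unconditional triangles in the statement of Proposition \ref{prop:big-diagram-commutes}, but the map $W\to\tilde{S}$ is compatible with the $\mc{O}_K$-structures $F_k^{n+1}\circ q$ and $F_{\tilde{S}/p}^{n}\circ r$ exactly when $u^{p^{n+1}}$ lies in $p\tilde{S}$, i.e.\ exactly when $n\geqslant a$ --- and indeed this is precisely the second morphism the paper's own proof verifies under the heading ``triangle (1)''. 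So triangle (5) is as conditional as triangle (1); the clean ``crystalline side is unconditional, de Rham side needs $n\geqslant a$'' dichotomy is not quite accurate. Since you only ever invoke the crystalline factorisation in the range $n\geqslant a$, this does not affect the correctness of your proof, but it is worth correcting the side remark.
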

\begin{proof}
    Assume that $n \geqslant a$ and consider the following diagram:
    \begin{equation*}
        \begin{tikzcd}[column sep=3.15em]
            &{\Spf(\mf{S})} &{\Spf(\mf S)} &{} 
            \\{\Spf(\mc{O}_K)} &{\Spf(\tilde{S}_\pi)} &{} &{\mc{O}_K^\smallprism} 
            \\{} &{\Spf(W)} &{\Spf(W).} &{},    
        	\arrow["{(E_\pi=0)}"{description}, from=2-1, to=1-2]
        	\arrow["{i}"{description}, from=2-1, to=2-2]
                \arrow["{\textrm{nat}.}"{description}, from=2-1, to=3-2]
                \arrow["{\phi^{n+1}_\mf{S}}"{description}, from=1-2, to=1-3]
        	\arrow["{}", from=2-2, to=1-2]
                \arrow["{\tilde{\rho}^{\tw{n}}_{\pi}}"{description}, from=2-2, to=2-4]
                \arrow["{}", from=2-2, to=3-2]
                \arrow["{\phi^{n+1}_W}"{description}, from=3-2, to=3-3]
                \arrow["{\rho_{\pi}}"{description}, from=1-3, to=2-4]
                \arrow["{\rho_{W,}}"{description}, from=3-3, to=2-4]
        \end{tikzcd}
    \end{equation*}
    where the left two triangles are commutative by Proposition \ref{prop:BO-prism-to-O_K}, and the right two trapeziums are $2\textrm{-commutative}$ with obvious identifications of the compositions. 
    
    Let $\rho^{\prime,\tw{n}}_{\dR}$ (resp.\ $\rho^{\prime,\tw{n}}_{\crys}\otimes 1$) denote the upper (resp.\ lower) composite maps $\Spf(\mc{O}_K)\to \mc{O}_K^\smallprism$. 
    By the commutativity of the above diagram, we obtain isomorphisms,
    \begin{equation*}
        \rho^{\prime,\tw{n}}_{\crys} \otimes 1 \isomto \tilde{\rho}^{\tw{n}}_{\pi}\circ \sp_\dR \isomfrom \rho^{\prime,\tw{n}}_{\dR}.
    \end{equation*}
    By Corollaries \ref{cor:twisted-dR-identification} and \ref{cor:twisted-crys-identification}, we have isomorphisms $\rho^{\prime,\tw{n}}_{\dR}\isomto\rho^{\tw{n}}_{\dR}$ and $\rho^{\prime,\tw{n}}_{\crys}\otimes 1\isomto\rho^{\tw{n}}_{\crys}\otimes 1$, respectively, thus proving the first claim.
    
    For $n\geqslant \lceil \log_p(e)\rceil$, observe that we have obtained the following diagram of isomorphisms:
    \begin{equation}\label{eq: crys-dR diagram of 2-morphisms}
    \begin{tikzcd}
        {\rho^{\tw{n}}_{\crys}\otimes 1} & &{\rho^{\tw{n}}_{\dR}}
        \\{\rho^{\prime,\tw{n}}_{\crys}\otimes 1} &{\tilde{\rho}^{\tw{n}}_{\pi}\circ i} & {\rho^{\prime,\tw{n}}_{\dR}.}
    	\arrow["{\sim}", from=1-1, to=1-3]
    	\arrow["{\wr}"', from=1-1, to=2-1]
            \arrow["{\wr}", from=1-3, to=2-3]
            \arrow["{\sim}", from=2-1, to=2-2]
            \arrow["{\sim}"', from=2-3, to=2-2]
    \end{tikzcd}
    \end{equation}
    This diagram commutes as each isomorphism is defined by the identity map on $\bb W$ (and a canonical identification of its quasi-ideals), thus proving the assertion.
\end{proof}

\section{Twisted crystalline and de Rham cohomology}

In this section, we shall apply the material of Section \ref{s:crys-de-rham-comp} to perfect prismatic crystals of the form $Rf^\smallprism_\ast\mc{O}^{\smallprism}$.
In particular, we will deduce our extension of the Berthelot--Ogus isomorphism to the case of coefficients.
To formulate this result correctly, we introduce the notion of \emph{twisted crystalline and de Rham cohomology}, and study some of their properties.

\begin{nota}
    We continue to use the notation from \globalnotationref, Notation \ref{nota:Witt-vector-stuff} and Section \ref{subsec:breuil_kisin_prism}.
    We further assume that $f\colon X\to\Spf(\mc{O}_K)$ is a smooth proper morphism.
\end{nota}

\subsection{Twisted crystalline cohomology}

We begin with the easier of the two definitions, twisted crystalline cohomology, which ultimately reduces to classical notions.

\begin{defn}
    For $n\geqslant0$, the \emph{$n\textrm{-twisted}$ crystalline cohomology functor} is given by
    \begin{equation*}
        \mr{R}\Gamma^{\tw{n}}_\crys(-) = \mr{R}\Gamma^{\tw{n}}_\crys(X_k/W,-) \colon \cat{Perf}(X^\smallprism) \longrightarrow \cat{Perf}(W),\qquad \mc{V} \longmapsto \bb{D}^{\tw{n}}_\crys(\mr{R}f^\smallprism_\ast \mc{V}).
    \end{equation*}
    We denote the associated cohomology groups by $\mr{H}^{i,\tw{n}}_\crys(\mc{V}) = \mr{H}^{i,\tw{n}}_\crys(X_k/W, \mc{V})$, and when $\mc{V} = \mc{O}_\smallprism$ we omit $\mc{V}$ from the notation. 
\end{defn}

Recall that for a quasi-syntomic $k\textrm{-scheme}$ $Z$ there is an equivalence of $\infty\textrm{-categories}$,
\begin{equation*}
    (-)^\crys\colon \cat{Perf}(Z^\smallprism)\isomto \cat{Perf}((Z/W)_\crys), \qquad \mc{V} \longmapsto \mc{V}^\crys,
\end{equation*} 
 (e.g.\ see \cite[Theorem 6.4]{GuoReinecke}). We then define the functor,
\begin{equation*}
    (-)^\crys\colon \cat{Perf}(X^\smallprism) \longrightarrow \cat{Perf}((X_k/W)_\crys), \qquad \mc{V} \longmapsto (\mc{V}|_{(X_k)_\smallprism})^\crys.
\end{equation*}
These functors are compatible with Frobenius morphisms, i.e., $(F_X^\ast\mc{V})^\crys$ is naturally identified with $\phi_\crys^\ast\mc{V}^\crys$, where $\phi_\crys$ is the Frobenius endomorphism of the crystalline topos.

\begin{prop}\label{prop:twisted-crys-classical-comparison}
    For each $n\geqslant 0$, there are natural identifications
    \begin{equation*}
        \mr{R}\Gamma_\crys^{\tw{n}}(\mc{V})\simeq \phi_W^{\ast n}\mr{R}\Gamma((X_k/W)_\crys,\mc{V}^\crys)\simeq \RGamma((X_k^{\tw{n}}/W)_\crys,(\mc{V}^\crys)^{\tw{n}}).
    \end{equation*}
    If $\mc{V}$ is equipped with a Frobenius structure, then these isomorphisms are Frobenius-equivariant.
\end{prop}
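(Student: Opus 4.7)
The plan is to unwind the definition of the twisted crystalline realisation $\bb{D}^{\tw{n}}_\crys$ applied to the pushforward $\mr{R}f^\smallprism_\ast\mc{V}$, reduce to the base $k^\smallprism$ via prismatic smooth proper base change, and then invoke the crystal-as-module equivalence to identify the result with classical crystalline cohomology, tracking the Frobenius twists along the way.

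First, from the definition $\mr{R}\Gamma^{\tw{n}}_\crys(\mc{V}) = (\rho^{\tw{n}}_\crys)^\ast \mr{R}f^\smallprism_\ast\mc{V}$ and the identity $\rho^{\tw{n}}_\crys = F^n_{\mc{O}_K}\circ\rho_\crys$ from \eqref{eq:n-twist-crys-identities}, I would use Proposition \ref{prop:crys-rho-W-factorization} to factor $\rho^{\tw{n}}_\crys$ through the natural map $k^\smallprism\to\mc{O}_K^\smallprism$, writing it as $\Spf(W)\xrightarrow{\ov{\rho}^{\tw{n}}_\crys}k^\smallprism\to\mc{O}_K^\smallprism$. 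Prismatic smooth proper base change along $\Spec(k)\to \Spf(\mc{O}_K)$ (a consequence of the crystal property and the relative setup of Section \ref{ss:absolute-pushforwards-and-relative-prismatic-cohomology}) then identifies $(\mr{R}f^\smallprism_\ast\mc{V})|_{k^\smallprism}$ with $R(f_k)^\smallprism_\ast(\mc{V}|_{X_k^\smallprism})$, reducing the computation to $(\ov{\rho}^{\tw{n}}_\crys)^\ast R(f_k)^\smallprism_\ast(\mc{V}|_{X_k^\smallprism})$.

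Next, I would apply the crystal-as-module equivalence $\cat{Perf}(Z^\smallprism)\simeq\cat{Perf}((Z/W)_\crys)$ from \cite[Theorem 6.4]{GuoReinecke} with $Z=X_k$, which transforms $\mc{V}|_{X_k^\smallprism}$ into $\mc{V}^\crys$. By Remark \ref{rem:appearance-of-phiW}, the map $\ov{\rho}_\crys$ corresponds (via $\Spf(W)\simeq (k/W)^\crys \simeq \phi_W^\ast(k^\smallprism)$) to evaluating a crystal at the canonical PD thickening of $k$; consequently, after also accounting for the $F^n_k$-pullback in $\ov{\rho}^{\tw{n}}_\crys = F^n_k\circ\ov{\rho}_\crys$, the target becomes $\phi_W^{\ast n}\mr{R}\Gamma((X_k/W)_\crys,\mc{V}^\crys)$, giving the first isomorphism.

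The second isomorphism is a standard application of base change for classical crystalline cohomology along $\phi_W\colon W\to W$: by definition, $X_k^{\tw{n}} = X_k\times_{\Spec(k), F^n_k}\Spec(k)$ is the $n\textrm{-fold}$ Frobenius twist, $(\mc{V}^\crys)^{\tw{n}}$ is the corresponding base-changed crystal, and so $\mr{R}\Gamma((X_k^{\tw{n}}/W)_\crys,(\mc{V}^\crys)^{\tw{n}})\simeq \phi_W^{\ast n}\mr{R}\Gamma((X_k/W)_\crys,\mc{V}^\crys)$. For the Frobenius-equivariance statement, a Frobenius structure $\varphi_\mc{V}$ induces one on $\mr{R}f^\smallprism_\ast\mc{V}$ by Theorem \ref{thm:GL}, and each of the identifications above is functorial, so the Frobenius structures transport compatibly. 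The main technical obstacle is the bookkeeping of Frobenius twists in the second step, in particular verifying that the single $\phi_W^n$ appearing on the right is produced cleanly by combining the $F^n_k$-pullback with the intrinsic Frobenius twist built into $\ov{\rho}_\crys$ via Remark \ref{rem:appearance-of-phiW}, without any double-counting.
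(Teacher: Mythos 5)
Your proposal is correct and follows essentially the same route as the paper: factor $\rho^{\tw{n}}_\crys$ through $k^\smallprism$ (Proposition \ref{prop:crys-rho-W-factorization}), identify the pullback of $\mr{R}f^\smallprism_\ast\mc{V}$ with relative prismatic cohomology of $X_k$ over $(W,p)$, apply the prismatic--crystalline comparison of \cite[Theorem 6.4]{GuoReinecke}, and finish with classical crystalline base change along $\phi_W$. The only cosmetic difference is that the paper works directly with the prism $(W,p)$ via Corollary \ref{cor:twisted-crys-identification} and site-theoretic relative prismatic cohomology, whereas you phrase the first reduction as a base change along $k^\smallprism\to\mc{O}_K^\smallprism$; both amount to the same observation, and your bookkeeping of the $\phi_W^{\ast(n+1)}\leadsto\phi_W^{\ast n}$ shift through the intrinsic twist in the Guo--Reinecke equivalence is correctly identified as the one point requiring care.
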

\begin{proof}
    By the discussion in Section \ref{ss:absolute-pushforwards-and-relative-prismatic-cohomology} and Corollary \ref{cor:twisted-crys-identification}, we have natural identifications,
    \begin{equation*}
        \mr{R}\Gamma^{\tw{n}}_\crys(\mc{V}) \simeq \phi_W^{\ast n}(F_X^\ast Rf^{\smallprism}_{\ast}\mc{V})(W, p) \simeq \phi_W^{\ast \tw{n+1}}\mr{R}\Gamma_\smallprism(X_k/W,\mc{V}),
    \end{equation*}
    and the last term is naturally isomorphic to $\phi_W^{\ast n}\mr{R}\Gamma((X_k/W)_\crys,\mc{V}^\crys)$ by \cite[Theorem 6.4]{GuoReinecke}.
    This shows the first natural isomorphism, and the second one follows from \cite[\href{https://stacks.math.columbia.edu/tag/07MS}{Tag 07MS}, \href{https://stacks.math.columbia.edu/tag/07MU}{Tag 07MU}, \href{https://stacks.math.columbia.edu/tag/07MY}{Tag 07MY}]{StacksProject}.
    The last claim follows by a similar argument and the same references.
\end{proof}

In particular, we see that there is a natural identification,
\begin{equation*}
    \mr{R}\Gamma^{\tw{0}}_\crys(\mc{V})\simeq \RGamma((X_k/W)_\crys,\mc{V}^\crys),
\end{equation*}
and we shall often implicitly make this identification. Finally, note that a Frobenius structure $\varphi_\mc{V}\colon F_X^\ast\mc{V}[\nicefrac{1}{\mc{I}_\smallprism}]\isomto \mc{V}[\nicefrac{1}{\mc{I}_\smallprism}]$ induces an isomorphism in $\cat{Perf}(K_0)$:
\begin{equation}\label{eq:varphi-bar-crys}
    \ov{\varphi}^{\tw{n}}_\mc{V}\colon \mr{R}\Gamma^{\tw{n}}_\crys(\mc{V})[\nicefrac{1}{p}]\isomto \mr{R}\Gamma^{\tw{0}}_\crys(\mc{V})[\nicefrac{1}{p}]=\RGamma((X_k/W)_\crys,\mc{V}^\crys)[\nicefrac{1}{p}],
\end{equation}
interpreted either prismatically, or in terms of the Frobenius structure on crystalline cohomology (e.g.\ see \cite[\href{https://stacks.math.columbia.edu/tag/07N5}{Tag 07N5}]{StacksProject}), which agree via Proposition \ref{prop:twisted-crys-classical-comparison}.

\subsection{Twisted de Rham cohomology}

We now come to the more subtle definition of twisted de Rham cohomology, which does not have a classical interpretation.

\begin{defn}
    For $n\geqslant0$, the \emph{$n\textrm{-twisted}$ de Rham cohomology functor} is given by
    \begin{equation*}
        \mr{R}\Gamma^{\tw{n}}_\dR(-) = \mr{R}\Gamma^{\tw{n}}_\dR(X/\mc{O}_K,-) \colon \cat{Perf}(X^\smallprism) \longrightarrow \cat{Perf}(\mc{O}_K),\qquad \mc{V} \longmapsto \bb{D}^{\tw{n}}_\dR(\mr{R}f^\smallprism_\ast \mc{V}).
    \end{equation*}
    We denote the associated cohomology groups by $\mr{H}^{i,\tw{n}}_\dR(\mc{V}) = \mr{H}^{i,\tw{n}}_\dR(X/\mc{O}_K,\mc{V})$, and when $\mc{V} = \mc{O}_\smallprism$ we omit $\mc{V}$ from the notation. 
\end{defn}

We now observe that twisted de Rham cohomology recovers the de Rham cohomology in the non-twisted (i.e.\ $n=0$) case.
To state this precisely, recall that we have the relative de Rham stack $(X/\mc{O}_K)^\mr{dR}$ as in \cite[Definition 2.5.3]{BhattNotes}.
A coefficient sheaf for de Rham cohomology is a perfect complex $\mc{W}$ on $(X/\mc{O}_K)^\dR$.
Additionally, if $\mc{W}$ is a vector bundle, then the data of $\mc{W}$ is equivalent to the data of a vector bundle with integrable connection $(\mc{W}_0, \nabla)$ on $X$, and the cohomology of $\mc{W}$ is the usual cohomology of the de Rham complex associated to $(\mc{W}_0, \nabla)$ (see \cite[Remark 2.5.8]{BhattNotes}).

Next, recall that we have the following morphism of stacks (see \cite[Constructions 5.3.13 \& 5.3.5 and Footnote 62]{BhattNotes}):
\begin{equation*}
    i_{\mr{dR},X/\mc{O}_K}\colon (X/\mc{O}_K)^\dR \longrightarrow X^\smallprism.
\end{equation*}
For an object $\mc{V}$ of $\cat{Perf}(X_\smallprism)$, let us write $\mc{V}^\dR$ for $i_{\dR,X/\mc{O}_K}^\ast(\mc{V})$, and we claim the following:
\begin{prop}\label{prop:de-Rham-base-change}
    There exists a natural identification in $\cat{Perf}(\mc{O}_K)$:
        \begin{equation*}
            \RGamma_\dR^{\tw{0}}(X/\mc{O}_K,\mc{V}) \simeq \RGamma_\dR(\mc{V}^\dR) .
        \end{equation*}
    \end{prop}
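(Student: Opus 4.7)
The plan is to recognise both sides of the claimed equivalence as pullbacks of $\mr{R}f^\smallprism_\ast\mc{V}$ along compatible morphisms, and then invoke base change. Explicitly, by construction,
\[
\mr{R}\Gamma_\dR^{\tw{0}}(X/\mc{O}_K,\mc{V}) = \rho_\dR^\ast \mr{R}f^\smallprism_\ast\mc{V}, \qquad \mr{R}\Gamma_\dR(\mc{V}^\dR) = f^\dR_\ast i_{\dR,X/\mc{O}_K}^\ast \mc{V},
\]
so the proposition reduces to producing a natural equivalence between these.

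The essential input is that the natural square
\[
\begin{tikzcd}
(X/\mc{O}_K)^\dR \arrow[r, "i_{\dR,X/\mc{O}_K}"] \arrow[d, "f^\dR"'] & X^\smallprism \arrow[d, "f^\smallprism"] \\
\Spf(\mc{O}_K) \arrow[r, "\rho_\dR"'] & \mc{O}_K^\smallprism
\end{tikzcd}
\]
is $2$-Cartesian. This should be essentially built into the Bhatt--Lurie stacky formalism: the relative de Rham stack $(X/\mc{O}_K)^\dR$ is characterised as the pullback of $X^\smallprism$ along the de Rham point $\rho_\dR$ of the base. I would verify this by examining $R$-points: an $R$-point of the fibre product is a Cartier--Witt divisor $[p \colon \mathbb{W}(R)\to\mathbb{W}(R)]$ equipped both with the $\mc{O}_K$-structure coming from the de Rham point and with a lift to $X$ of the induced map $\Spec(R) \to \Spf(\mc{O}_K)$, which is exactly the data of an $R$-point of $(X/\mc{O}_K)^\dR$ in the sense of Bhatt's notes (construction 5.3.13).

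Given the Cartesian property, the desired equivalence should follow from base change along $\rho_\dR$. Since $\rho_\dR$ is not obviously flat, base change does not hold on formal grounds, and some justification is required. I would justify it by pulling back along the fpqc cover $\rho_\mf{S} \colon \Spf(\mf{S}) \to \mc{O}_K^\smallprism$ of Proposition \ref{prop:flat-cover}, where the statement reduces, via Proposition \ref{prop:stack-site-crystals-agree} and Proposition \ref{prop:dR-twisted-mfS-identification}, to a computation of the form
\[
\rho_\dR^\ast \mr{R}f^\smallprism_\ast\mc{V} \simeq \phi_\mf{S}^\ast\bigl(\mr{R}f^\smallprism_\ast\mc{V}\bigr)(\mf{S}, E) \otimes_\mf{S}^{\mathbb{L}} \mc{O}_K,
\]
which must be matched with $\mr{R}\Gamma_\dR(\mc{V}^\dR)$ by invoking the prismatic-de Rham comparison in its coefficient form (e.g.\ the crystal-compatibility packaged in \cite[Theorem 6.4]{GuoReinecke} combined with the relative BMS/BS identification). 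The main obstacle, therefore, is not the formal structure of the argument but the coefficient version of the prismatic-de Rham comparison: once this is in hand, both the Cartesian square and the base-change step become formal, and the proof concludes.
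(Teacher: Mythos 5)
Your plan to identify both sides as pullbacks along a 2‑Cartesian square of stacks, and to treat the non‑flatness of $\rho_\dR$ as the main obstruction to base change, correctly captures the structure of the argument. The paper likewise establishes the Cartesian square (via the diagram \eqref{eq:relative_deRham_stack}, checking that the constituent squares are Cartesian, and identifying the middle composition with $\rho_\dR$ using Lemma \ref{lem:HT_dR_stacks_commute}). So your steps one and two are sound.

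Where you diverge from the paper — and where the proposal has a genuine gap — is in the base‑change step. Your proposed justification does not actually establish derived base change for the Cartesian square. Pulling back the \emph{left} column along the fpqc cover $\rho_\mf{S} \colon \Spf(\mf{S}) \to \mc{O}_K^\smallprism$ does not make sense as a reduction: $\rho_\mf{S}$ covers the stack $\mc{O}_K^\smallprism$, not $\Spf(\mc{O}_K)$, so it does not give an fpqc cover along which you can test an isomorphism of perfect complexes on $\Spf(\mc{O}_K)$. What your computation $\rho_\dR^\ast \mr{R}f^\smallprism_\ast\mc{V} \simeq \phi_\mf{S}^\ast(\mr{R}f^\smallprism_\ast\mc{V})(\mf{S}, E) \otimes^{\bb{L}}_\mf{S} \mc{O}_K$ (which is Corollary \ref{cor:twisted-dR-identification} with $n=0$) actually does is \emph{bypass} base change and reduce the claim to the relative de~Rham comparison with coefficients, namely $\phi_\mf{S}^\ast \mr{R}\Gamma_\smallprism(X/(\mf{S},E),\mc{V}) \otimes^{\bb{L}}_\mf{S} \mc{O}_K \simeq \mr{R}\Gamma_\dR(\mc{V}^\dR)$. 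But that is, in essence, the very statement the proposition asserts (rephrased site‑theoretically rather than stack‑theoretically), so invoking it as a black box is dangerously close to circular; at minimum it needs a precise independent reference, and the one you offer, \cite[Theorem~6.4]{GuoReinecke}, concerns the equivalence between prismatic crystals on $Z^\smallprism$ for a $k$‑scheme $Z$ and crystals on $(Z/W)_\crys$ — i.e.\ the \emph{crystalline} side — not the de~Rham comparison over $\mc{O}_K$.

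The paper's proof avoids this issue by giving a self‑contained justification of base change: after establishing Cartesianness, it applies \cite[Proposition~A.0.2]{Hauck}, which reduces the matter to showing that $\rho_\dR$ is a locally closed regular immersion. \emph{That} hypothesis can legitimately be checked fppf‑locally on the target $\mc{O}_K^\smallprism$, and pulling back along $\rho_\mf{S}$ (Proposition \ref{prop:flat-cover}) exhibits $\rho_\dR$ as the closed immersion $\Spf(\mc{O}_K)=\Spf(\mf{S}/E) \hookrightarrow \Spf(\mf{S})$ cut out by the regular element $E$ (using Proposition \ref{prop:dR-twisted-mfS-identification}). This is the correct way to use the cover $\rho_\mf{S}$ here. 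To repair your argument you should either cite Hauck's criterion and carry out this verification, or, if you prefer the site‑theoretic route, locate and cite a genuine independent proof of the relative prismatic--de~Rham comparison with perfect‑complex coefficients, making explicit that it is not logically downstream of the proposition at hand.
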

\begin{proof}
    Note that we have the following commutative diagram of formal stacks:
    \begin{equation}\label{eq:relative_deRham_stack}
        \begin{tikzcd}[column sep=large]
    	{(X/\mathcal{O}_K)^\dR} & {X^\dR} & {X^\smallprism} \\
    	{\Spf(\mathcal{O}_K)} & {\mathcal{O}_K^\dR} & {\mathcal{O}_K^\smallprism} \\
    	& {\Spf(\mathbb{Z}_p)} & {\mathbb{Z}_p^\smallprism},
    	\arrow[from=1-1, to=1-2]
    	\arrow[from=1-1, to=2-1]
    	\arrow["{i_{\mathrm{dR},X}}", from=1-2, to=1-3]
    	\arrow["{f^\mr{dR}}"', from=1-2, to=2-2]
    	\arrow["{f^\smallprism}", from=1-3, to=2-3]
    	\arrow[from=2-1, to=2-2]
    	\arrow["{i_{\mathrm{dR},\mc{O}_K}}", from=2-2, to=2-3]
    	\arrow[from=2-2, to=3-2]
    	\arrow[from=2-3, to=3-3]
    	\arrow["{i_{\mathrm{dR}}}", from=3-2, to=3-3]
        \end{tikzcd}
    \end{equation}
    where the composition of the top horizontal arrows defines $i_{\mr{dR},X/\mc{O}_K}$ (e.g.\ see \cite[Footnote 62]{BhattNotes}), and the bottom horizontal arrow $i_{\mathrm{dR}}$ is the restriction to $\mathbb{G}_m/\mathbb{G}_m$ of the map $i_{\dR} \colon \mathbb{A}^1/\mathbb{G}_m \rightarrow \mathbb{Z}_p^{\textup{N}}$ from \cite[Construction 5.3.4]{BhattNotes}, and this restriction identifies with the de Rham point of \cite[Example 3.2.6]{BhattLurieAbsolute}.
    Additionally, using the definitions, it is easy to verify that in \eqref{eq:relative_deRham_stack} the bottom right square and the outer square on the right are cartesian, and so, it follows that the top right square is also cartesian.
    Moreover, the top left square is cartesian by definition and thus it follows that the outer square on the top is also cartesian.
    
    Next, we claim that the composition of the middle horizontal arrows in \eqref{eq:relative_deRham_stack} identifies with the de Rham point $\rho_{\dR}$ of Definition \ref{defn:ntwisted_deRham_point}.
    Consider the following commutative diagram of stacks:
    \begin{equation*}
        \begin{tikzcd}
            \Spf(\mc{O}_K) = \Spf(\mf{S}/E_{\pi}) & \mc{O}_K^{\textup{HT}} & \Spf(\mc{O}_K) & \mc{O}_K^{\dR} \\
            \Spf(\mf{S}) & \mc{O}_K^{\smallprism} && \mc{O}_K^{\smallprism},
            \arrow["\rho_{(\mathfrak{S}, E_{\pi})}^{\textup{HT}}", from=1-1, to=1-2]
            \arrow["\mr{id}", curve={height=-24pt}, from=1-1, to=1-3]
            \arrow[hook, from=1-1, to=2-1]
            \arrow[from=1-2, to=1-3]
            \arrow[hook, from=1-2, to=2-2]
            \arrow[from=1-3, to=1-4]
            \arrow["i_{\mathrm{dR},\mc{O}_K}", from=1-4, to=2-4]
            \arrow["\rho_{(\mathfrak{S}, E_{\pi})}", from=2-1, to=2-2]
            \arrow["F_{\smallprism}", from=2-2, to=2-4]
        \end{tikzcd}
    \end{equation*}
    where the composition of the left vertical arrow with the bottom horizontal arrows identifies with the de Rham point $\rho_{\dR}$ by Proposition \ref{prop:dR-twisted-mfS-identification}, and it is clear that the composition of the top horizontal arrows with the right vertical arrow coincides with the composition of the middle horizontal arrows in \eqref{eq:relative_deRham_stack}.
    In the preceding diagram, the left square commutes by definition and the right square commutes by Lemma \ref{lem:HT_dR_stacks_commute} below, hence, the claim follows.

    Finally, to show the claimed identification on cohomologies, note that if we have derived base change for formal stacks, we have the following identifications:
    \begin{equation*}
        \mr{R}\Gamma_\mr{dR}(X/\mathcal{O}_K,\mc{V}^\mr{dR})\simeq Rf^\mr{dR}_{\ast} i_{X,\mr{dR}}^\ast\simeq \rho_\mr{dR}Rf^\smallprism_\ast\simeq R^{\tw{0}}_\mr{dR}(X/\mc{O}_K,\mc{V}),
    \end{equation*}
    and thus it suffices to verify that derived base change holds.
    As the top outer square in \eqref{eq:relative_deRham_stack} is Cartesian, by applying \cite[Proposition A.0.2]{Hauck} (using Proposition \ref{prop:flat-cover} and \cite[Lemma 6.3]{BhattLuriePrismatization}), we see that it suffices to show that $\rho_\mr{dR}$ is a locally closed regular immersion.
    But, using the flat covering $\rho_{\mf{S}} \colon \Spf(\mf{S}) \to \mc{O}_K^\smallprism$, we are reduced to observing that $\Spf(\mc{O}_K) \to \Spf(\mf{S})$ is cut-out by the regular element $E$.
    This completes our proof.
\end{proof}

The following observation was used above:
\begin{lem}\label{lem:HT_dR_stacks_commute}
    Let $X$ be a bounded $p\textrm{-adic}$ formal scheme.
    Then, the following diagram of formal stacks $2$-commutes up to unique isomorphism:
    \begin{equation*}
        \begin{tikzcd}
            X^{\textup{HT}} & X & X^{\dR} \\
            X^{\smallprism} && X^{\smallprism}.
            \arrow[from=1-1, to=1-2]
            \arrow[hook, from=1-1, to=2-1]
            \arrow[from=1-2, to=1-3]
            \arrow["i_{\dR, X}", from=1-3, to=2-3]
            \arrow["F_{\smallprism}", from=2-1, to=2-3]
        \end{tikzcd}
    \end{equation*}
\end{lem}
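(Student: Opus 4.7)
The plan is to verify the claimed $2$-commutativity by tracing an arbitrary $R$-point (for $R$ a $p$-nilpotent ring) through both compositions, and exhibiting a canonical natural isomorphism between them. An $R$-point of $X^{\textup{HT}}$ consists of a Hodge--Tate Cartier--Witt divisor $\alpha\colon I\to \bb{W}(R)$, characterised by $\alpha(I) = V\bb{W}(R)$ (so that $\bb{W}(R)/\alpha(I) \simeq R = \bb{W}(R)/V\bb{W}(R)$), together with a structure map $\bar{s}\colon \Spec(R)\to X$; the forgetful map $X^{\textup{HT}} \to X$ sends such a datum to $\bar{s}$.

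I would then explicitly compute both compositions at the level of $R$-points. The top--right composition $X^{\textup{HT}} \to X \to X^{\dR} \xrightarrow{i_{\dR,X}} X^{\smallprism}$ sends $(\alpha,\bar{s})$ first to $\bar{s}$, and then, by unpacking the construction of the de Rham point (the natural generalisation of Definition \ref{defn:ntwisted_deRham_point} from $\mc{O}_K$ to $X$), to the pair $\bigl(p\colon \bb{W}(R)\to \bb{W}(R),\ \Spec(\bb{W}(R)/p)\xrightarrow{F}\Spec(R)\xrightarrow{\bar{s}}X\bigr)$. The bottom--left composition, $X^{\textup{HT}} \hookrightarrow X^{\smallprism} \xrightarrow{F_{\smallprism}} X^{\smallprism}$, first regards $(\alpha,\bar{s})$ as an $R$-point of $X^{\smallprism}$, and then applies $F_{\smallprism}$, which by the formula recalled in Section~\ref{ss:absolute-pushforwards-and-relative-prismatic-cohomology} yields $\bigl(F^\ast\alpha\colon F^\ast I\to \bb{W}(R),\ \Spec(\bb{W}(R)/F^\ast I)\xrightarrow{F}\Spec(R)\xrightarrow{\bar{s}}X\bigr)$.

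The key step is then to supply a natural identification between these two data. The fundamental identity $F\circ V = p$ on $\bb{W}$ provides a canonical isomorphism of invertible $\bb{W}(R)$-submodules $F^\ast(V\bb{W}(R))\simeq p\bb{W}(R)$; trivialising $V\bb{W}(R)$ by $V(1)$ and observing that $F(V(1)) = p$, this promotes $F^\ast \alpha$ to the trivial Cartier--Witt divisor $p\colon \bb{W}(R)\to \bb{W}(R)$ and transforms the structure map $F^\ast\bar{s}$ into precisely $\Spec(\bb{W}(R)/p)\xrightarrow{F}\Spec(R)\xrightarrow{\bar{s}}X$. Functoriality in $R$ (and $X$) then yields the desired $2$-isomorphism.

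The main obstacle will be rigorously promoting this pointwise verification to a $2$-isomorphism of morphisms of formal stacks, and reconciling our description of $i_{\dR,X}$ with the one in \cite{BhattNotes}. This should ultimately reduce to unwinding the construction given there, where $i_{\dR}$ on $\bb{Z}_p^{\smallprism}$ is built precisely as the Hodge--Tate inclusion post-composed with $F_{\smallprism}$; our lemma then amounts to verifying the compatibility of this construction with the relative situation over $X$.
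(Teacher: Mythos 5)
Your approach — tracing $R$-points directly for arbitrary $X$ — is genuinely different from the paper's, which first establishes the case $X=\bb{G}_a$ by ``an argument similar to'' \cite[Proposition 3.6.6]{BhattLurieAbsolute} and then deduces the general case by transmutation. Your central observation, that $F\circ V=p$ produces a canonical isomorphism $F^\ast I\isomto \bb{W}(R)$ under which $F^\ast\alpha$ becomes the trivial divisor $p$, is indeed the heart of the matter and is precisely what drives the cited $\bb{G}_a$ computation. One small imprecision first: a Hodge--Tate $R$-point is \emph{not} characterised by $\alpha(I)=V\bb{W}(R)$. The correct condition is $\gamma_0\circ\alpha=0$, i.e.\ the containment $\alpha(I)\subseteq V\bb{W}(R)$, which cannot be an equality in general since $V\bb{W}(R)=\ker(\gamma_0)$ need not even be an invertible ideal (for instance $V(1)\bb{W}(R)=V(F\bb{W}(R))\subsetneq V\bb{W}(R)$ whenever $F$ fails to be surjective on $\bb{W}(R)$). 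The containment alone still yields everything you need — the factorisation $\bb{W}(R)/\alpha(I)\to R$ defining $X^{\mr{HT}}\to X$, and (using injectivity of $V$ together with the unit condition built into the Cartier--Witt axioms) the isomorphism $F^\ast I\isomto\bb{W}(R)$ intertwining $F^\ast\alpha$ with $p$ — but these steps should be spelled out, since they are not immediate from the statement as you wrote it.

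The more serious gap is the one you flag yourself, and your proposed resolution misattributes the logic. You assert that $i_{\dR}$ ``is built precisely as the Hodge--Tate inclusion post-composed with $F_{\smallprism}$,'' in effect treating the lemma as a definitional unwinding. That is not the construction. The restriction of $i_{\dR}$ to $\bb{G}_m/\bb{G}_m$ is the de Rham point of \cite[Example 3.2.6]{BhattLurieAbsolute}, which has its own direct $R$-point description (the trivial Cartier--Witt divisor $p\colon\bb{W}(R)\to\bb{W}(R)$; compare Definition~\ref{defn:ntwisted_deRham_point}), given independently of the Hodge--Tate locus and of $F_{\smallprism}$. Its compatibility with those is a \emph{theorem} — essentially \cite[Proposition 3.6.6]{BhattLurieAbsolute}, exactly what the paper cites — not a definition. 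Consequently, your proof still needs an independent verification that the composite $X\to X^{\dR}\xrightarrow{i_{\dR,X}}X^{\smallprism}$ has the $R$-point description you assign to it (Cartier--Witt divisor $p$, structure map induced by $R\xrightarrow{F}\bb{W}(R)/p$); this requires unwinding the transmutation construction of $X^{\dR}$ and of $X\to X^{\dR}$ from \cite{BhattNotes}, which is precisely the step the paper absorbs into the phrase ``by transmutation.'' Once that is supplied carefully, your argument does go through, and it would have the virtue of making the cited $\bb{G}_a$ computation fully explicit.
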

\begin{proof}
    By an argument similar to the proof of \cite[Proposition 3.6.6]{BhattLurieAbsolute}, it is easy to see that the following diagram of stacks commutes up to unique isomorphism:
    \begin{equation*}
        \begin{tikzcd}
            \mathbb{G}_a^{\textup{HT}} & \mathbb{G}_a & \mathbb{G}_a^{\dR} \\
            \mathbb{G}_a^{\smallprism} && \mathbb{G}_a^{\smallprism}.
            \arrow[from=1-1, to=1-2]
            \arrow[hook, from=1-1, to=2-1]
            \arrow[from=1-2, to=1-3]
            \arrow["i_{\dR, \mathbb{G}_a}", from=1-3, to=2-3]
            \arrow["F_{\smallprism}", from=2-1, to=2-3]
        \end{tikzcd}
    \end{equation*}
    Then, the claim follows by transmutation (cf.\ \cite[Remark 2.3.8]{BhattNotes}).
\end{proof}

Moreover, we observe that in the case of trivial coefficients (i.e.\ $\mc{V}=\mc{O}_\smallprism$) one can interpret the twisted de Rham cohomology as a more classical object:

\begin{prop}\label{prop:twisted-dR-crys-comparison}
    There exists a functorial identification,
    \begin{equation*}
        \mr{R}\Gamma^{\tw{n}}_\dR(X/\mc{O}_K)\simeq \mr{R}\Gamma_\crys(X_{p=0}^{\tw{n}}/\mc{O}_K).
    \end{equation*}
\end{prop}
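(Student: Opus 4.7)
My plan is to generalise the approach of Proposition \ref{prop:de-Rham-base-change} to the twisted setting. Concretely, I will exhibit a $2$-cartesian square of formal stacks
\begin{equation*}
\begin{tikzcd}
{(X_{p=0}^{\tw{n}}/\mc{O}_K)^\crys} \arrow[r] \arrow[d] & {X^\smallprism} \arrow[d, "f^\smallprism"] \\
{\Spf(\mc{O}_K)} \arrow[r, "\rho_\dR^{\tw{n}}"'] & {\mc{O}_K^\smallprism,}
\end{tikzcd}
\end{equation*}
whose upper-left corner denotes the crystalline stack of the $n$-fold Frobenius twist $X_{p=0}^{\tw{n}}$ over $\mc{O}_K$ in the derived-PD sense of Bhatt--Lurie (which for $n=0$ specialises to the relative de Rham stack $(X/\mc{O}_K)^\dR$ appearing in Proposition \ref{prop:de-Rham-base-change}). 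Granting this square together with derived flat base change, one obtains
\begin{equation*}
\mr{R}\Gamma^{\tw{n}}_\dR(X/\mc{O}_K) = (\rho_\dR^{\tw{n}})^\ast \mr{R}f^\smallprism_\ast \mc{O}^\smallprism \simeq \mr{R}\Gamma\left((X_{p=0}^{\tw{n}}/\mc{O}_K)^\crys, \mc{O}\right) = \mr{R}\Gamma_\crys(X_{p=0}^{\tw{n}}/\mc{O}_K),
\end{equation*}
which is the desired identification.

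To construct the square, I would work $R$-point by $R$-point for a $p$-nilpotent ring $R$. Unwinding Definition \ref{defn:ntwisted_deRham_point}, an $R$-point of the fibre product amounts to the Cartier--Witt divisor $p\colon \bb W(R) \to \bb W(R)$ together with a map $\Spec(\bb W(R)/p) \to X$ intertwining with the $F^{n+1}$-twisted structure map $\mc{O}_K \to \bb W(R)/p$. Since this structure map factors through $\mc{O}_K \twoheadrightarrow \mc{O}_K/p \xrightarrow{F^n} \mc{O}_K/p$, the map to $X$ factors through $X_{p=0}$, and the Frobenius twist $F^n$ is exactly what promotes the data to an object of $(X_{p=0}^{\tw{n}}/\mc{O}_K)^\crys(R)$.

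Base change is then justified essentially as in the proof of Proposition \ref{prop:de-Rham-base-change}: using the flat cover $\rho_\mf{S} \colon \Spf(\mf{S}) \to \mc{O}_K^\smallprism$ of Proposition \ref{prop:flat-cover} together with \cite[Proposition A.0.2]{Hauck}, the derived base change reduces to checking that the pullback of $\rho_\dR^{\tw{n}}$ along the flat cover is cut out by a regular element (essentially a Frobenius twist of $E$; compare Proposition \ref{prop:dR-twisted-mfS-identification} and Equation \eqref{eq:BK-twist-identification}). The main obstacle is the rigorous and functorial identification of the fibre product with $(X_{p=0}^{\tw{n}}/\mc{O}_K)^\crys$ over the base $\mc{O}_K$, which lacks an honest PD structure on $(p)$; this necessitates working within the derived-PD/stacky formalism of Bhatt--Lurie.
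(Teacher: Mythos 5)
Your approach is genuinely different from the paper's, and it has a real gap at its core. The paper proves the statement concretely: it uses Corollary~\ref{cor:twisted-dR-identification} to rewrite $\mr{R}\Gamma^{\tw{n}}_\dR(X/\mc{O}_K)$ as $\phi_\mf{S}^{\ast\tw{n+1}}\mr{R}\Gamma_\mf{S}(X/\mc{O}_K)/E_\pi$, then applies the crystal property along the morphism $(\mf{S},E_\pi^{\tw{1}})\to(\tilde{S},p)$ of Proposition~\ref{prop:big-diagram-commutes}, bases change along $\tilde{S}\to\mc{O}_K$ using Proposition~\ref{prop:BO-prism-to-O_K}, identifies the result as $\RGamma_\crys(X_{\pi^{\tilde e}=0}^{\tw{n}}/\mc{O}_K)$, and finally passes to $\RGamma_\crys(X_{p=0}^{\tw{n}}/\mc{O}_K)$ by invariance of crystalline cohomology under nilpotent PD-thickenings (Stacks, Tag 07MS). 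It then separately verifies independence of the choice of uniformiser, since the Breuil--Kisin model is not intrinsic.

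The gap in your argument is that the stack $(X_{p=0}^{\tw{n}}/\mc{O}_K)^\crys$ that you place in the top-left corner of your square is not an object that exists in the references the paper relies on: \cite[Remark~2.5.12]{BhattNotes} only constructs $(Z/W)^\crys$ for $Z$ a quasi-syntomic $k$-scheme and $W=W(k)$, and that construction uses the $\delta$-ring structure on $W$ in an essential way; there is no relative crystalline stack over the ramified base $\mc{O}_K$ in that framework. If you instead simply \emph{define} $(X_{p=0}^{\tw{n}}/\mc{O}_K)^\crys$ to be the fibre product $\Spf(\mc{O}_K)\times_{\rho_\dR^{\tw{n}},\mc{O}_K^\smallprism}X^\smallprism$, then derived base change (which, as you sketch, can be arranged via \cite[Proposition~A.0.2]{Hauck} and the flat cover by $\Spf(\mf{S})$) gives that its cohomology is $\mr{R}\Gamma^{\tw{n}}_\dR(X/\mc{O}_K)$ --- but the identification of \emph{that} cohomology with the classical $\RGamma_\crys(X_{p=0}^{\tw{n}}/\mc{O}_K)$ is precisely the content of the proposition, so the argument becomes circular. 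The sentence ``this necessitates working within the derived-PD/stacky formalism of Bhatt--Lurie'' names the missing step but does not supply it; the paper's explicit passage through $\tilde{S}$ and Tag~07MS is exactly how one produces that bridge.

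Two smaller remarks. First, the assertion that $(p)\subseteq\mc{O}_K$ ``lacks an honest PD structure'' is not accurate: $p^n/n!$ lies in $\bb{Z}_p$ for all $n$, so $(p)$ always carries divided powers; the actual subtlety for $e>p-1$ is that the natural PD envelope computing de Rham cohomology has ideal $(\pi^{\tilde e})$ rather than $(p)$, and passing from one to the other is what Tag~07MS handles. Second, your sketch does not address the independence of the resulting identification from the choice of $\pi$, which the paper must (and does) handle explicitly because its models are $\pi$-dependent. A correctly set-up stacky proof would make uniformiser-independence manifest, which is a genuine advantage of the approach --- but only once the identification with classical crystalline cohomology is actually established.
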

\begin{proof} 
    Choose a uniformiser $\pi$ of $\mc{O}_K$.
    Then, by Corollary \ref{cor:twisted-dR-identification} we have an identification,
    \begin{equation*}
        \mr{R}\Gamma^{\tw{n}}_\dR(X/\mc{O}_K)\simeq \phi_\mf{S}^{\ast\tw{n+1}}\mr{R}\Gamma_\mf{S}(X/\mc{O}_K)/E_\pi.
    \end{equation*}
    Applying the crystal property to the morphism $(\mf{S}, E_\pi^{\tw{1}}) \to (\tilde{S}, p)$ in $(\mc{O}_K)_\smallprism$ from Proposition \ref{prop:big-diagram-commutes} and using the morphism $\tilde{S}\to \mc{O}_K$ from Proposition \ref{prop:BO-prism-to-O_K}, gives us the following identification:
    \begin{equation*}
        \phi_\mf{S}^{\ast\tw{n+1}}\mr{R}\Gamma_\mf{S}(X/\mc{O}_K)/E_\pi \simeq \phi_{\tilde{S}}^{\ast n}\RGamma_{\tilde{S}}(X/\mc{O}_K)\otimes_{\tilde{S}}\mc{O}_K.
    \end{equation*}
    But, we have a further identification,
    \begin{equation*}
        \phi_{\tilde{S}}^{\ast n}\RGamma_{\tilde{S}}(X/\mc{O}_K)\otimes_{\tilde{S}}\mc{O}_K \simeq \RGamma_\crys(X_{p=0}^{\tw{n}}/\tilde{S})\otimes_{\tilde{S}}\mc{O}_K.
    \end{equation*}
    Therefore, we have the identifications
    \begin{equation*}
        \RGamma_\crys\big(X_{p=0}^{\tw{n}}/\tilde{S}\big) \otimes_{\tilde{S}} \mc{O}_K \simeq \RGamma_\crys\big(X_{\pi^{\tilde{e}}=0}^{\tw{n}}/\tilde{S}\big) \otimes_{\tilde{S}} \mc{O}_K \simeq \RGamma_\crys(X_{\pi^{\tilde{e}}=0}^{\tw{n}}/\mc{O}_K) \simeq \RGamma_\crys(X_{p=0}^{\tw{n}}/\mc{O}_K),
    \end{equation*}
    where the final isomorphism follows from \stacks{07MS} as the map $(\tilde{S}, p\tilde{S}+u^{\tilde{e}}\tilde{S}) \to (\mc{O}_K, \pi^{\tilde{e}})$ is a morphism of PD-rings.
    Finally, putting everything together yields the following identification:
    \begin{equation*}
        \RGamma^{\tw{n}}_\dR(X/\mc{O}_K)\simeq \RGamma_\crys(X_{p=0}^{\tw{n}}/\mc{O}_K),
    \end{equation*}
    and using an argument similar to that in the proof of Proposition \ref{prop:crys-dR-comparison-pi-independence} shows that the preceding identification is independent of the choice of uniformiser $\pi$.
    This allows us to conclude.
\end{proof}

\begin{rem}\label{rem:twisted-dR-cohomology}
    Despite Proposition \ref{prop:twisted-dR-crys-comparison} and in contrast to twisted crystalline cohomology, it is not possible to define twisted de Rham cohomology for general coefficients in a non-prismatic way.
    It may be tempting to say, in analogy with Proposition \ref{prop:twisted-dR-crys-comparison}, that $\mr{R}\Gamma^{\tw{n}}_\dR(\mc{V})$ should be isomorphic to $\mr{R}\Gamma_\crys(X_{p=0}^{\tw{n}}/\mc{O}_K,((\mc{V}_{p=0})^\mr{crys})^{\tw{n}})$, where $(\mc{V}_{p=0})^\mr{crys}$ has the obvious meaning. However, the natural map $X_{p=0}^{\tw{n}}\to X_{p=0}$ does \underline{not} upgrade to a morphism of topoi,
    \begin{equation*}
        \begin{tikzcd}[sep=large]
        	{(X_{p=0}^{\tw{n}}/\mathcal{O}_K)_\mathrm{crys}} & {(X_{p=0}/\mathcal{O}_K)_\mathrm{crys},}
        	\arrow["{\color{red}{\times}}"{description}, dotted, from=1-1, to=1-2]
        \end{tikzcd}
    \end{equation*}
    and so it does not seem possible to define $((\mc{V}_{p=0})^\mr{crys})^{\tw{n}})$ in a purely crystalline way.
\end{rem}
\subsection{Berthelot--Ogus comparison isomorphism with coefficients}

We now explain how to obtain an analogue of the Berthelot--Ogus comparison isomorphism (see \cite[Corollary 2.5]{BerthelotOgusFIsocrystals}) with coefficients in perfect prismatic $F\textrm{-crystals}$.

To begin, we observe the following cohomological consequence of Theorem \ref{thm:twisted-crys-dR-comparison}.

\begin{thm}\label{thm:integral-BO}
    Let $\mc{V}$ be a perfect prismatic crystal on $X$.
    Then, there exists a natural \emph{integral generalised Berthelot--Ogus isomorphism},
    \begin{equation*}
        \mr{R}\Gamma^{\tw{n}}_\crys(\mc{V})\otimes_W \mc{O}_K\isomto \mr{R}\Gamma^{\tw{n}}_\dR(\mc{V}).
    \end{equation*}
\end{thm}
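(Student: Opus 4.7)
The plan is to derive this statement as a direct consequence of the machinery assembled in the previous section, applied to the absolute pushforward of $\mc{V}$. First, I would invoke Theorem \ref{thm:GL} (Guo--Reinecke), which asserts that for a smooth proper $f \colon X \to \Spf(\mc{O}_K)$ and a perfect prismatic crystal $\mc{V}$ on $X$, the pushforward $\mr{R}f^\smallprism_\ast \mc{V}$ is a perfect complex on $\mc{O}_K^\smallprism$, i.e.\ an object of $\cat{Perf}((\mc{O}_K)_\smallprism)$.

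Next I would apply part (2) of Theorem \ref{thm:twisted-crys-dR-comparison} (as extended to the range $n \geqslant a$ via Remark \ref{rem:extending-3.12} and Proposition \ref{prop:crys-dR-comparison-using-prisms}) to the object $\mc{W} \defeq \mr{R}f^\smallprism_\ast \mc{V}$ of $\cat{Perf}((\mc{O}_K)_\smallprism)$. This produces a canonical isomorphism in $\cat{Perf}(\mc{O}_K)$:
\begin{equation*}
    \iota^{\tw{n}}\colon \bb{D}_\mr{crys}^{\tw{n}}(\mr{R}f^\smallprism_\ast\mc{V}) \otimes_W \mc{O}_K \isomto \bb{D}^{\tw{n}}_\dR(\mr{R}f^\smallprism_\ast\mc{V}).
\end{equation*}

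Finally, by the very definitions of the twisted crystalline and twisted de Rham cohomology functors, the left-hand side is $\mr{R}\Gamma^{\tw{n}}_\crys(\mc{V}) \otimes_W \mc{O}_K$ and the right-hand side is $\mr{R}\Gamma^{\tw{n}}_\dR(\mc{V})$, yielding the asserted comparison. Naturality in $\mc{V}$ follows from the naturality of $\iota^{\tw{n}}$ in its input, combined with the functoriality of $\mr{R}f^\smallprism_\ast$ on $\cat{Perf}(X^\smallprism)$.

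There is no real obstacle left at this stage: the conceptual heart of the argument sits in Theorem \ref{thm:twisted-crys-dR-comparison}, whose proof (via the $n$-twisted identification of $\rho^{\tw{n}}_\dR$ with $\rho^{\tw{n}}_\crys \circ (\Spf(\mc{O}_K) \to \Spf(W))$) already encodes the prismatic Dwork's trick. The only place where one might worry is the precise bound on $n$; I would explicitly cite Proposition \ref{prop:crys-dR-comparison-using-prisms} to justify that the bound $n \geqslant a$ suffices, rather than the weaker $n \geqslant \lceil \log_p(e)\rceil$ appearing in the first version of Theorem \ref{thm:twisted-crys-dR-comparison}.
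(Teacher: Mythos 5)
Your proposal is correct and matches the paper's intent exactly: the paper introduces Theorem~\ref{thm:integral-BO} as a direct ``cohomological consequence of Theorem~\ref{thm:twisted-crys-dR-comparison}'' without a separate proof, and your argument supplies the only two details that need checking --- that $\mr{R}f^\smallprism_\ast\mc{V}$ lies in $\cat{Perf}((\mc{O}_K)_\smallprism)$ (Theorem~\ref{thm:GL}) and that the bound $n\geqslant a$ suffices (Remark~\ref{rem:extending-3.12}/Proposition~\ref{prop:crys-dR-comparison-using-prisms}) --- before unwinding the definitions of $\mr{R}\Gamma^{\tw{n}}_\crys$ and $\mr{R}\Gamma^{\tw{n}}_\dR$ as $\bb{D}^{\tw{n}}_\crys\circ\mr{R}f^\smallprism_\ast$ and $\bb{D}^{\tw{n}}_\dR\circ\mr{R}f^\smallprism_\ast$.
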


To obtain a generalisation of the classical Berthelot--Ogus isomorphism with coefficients, we need to \emph{rationally untwist} this isomorphism.
The procedure to do this begins with the following construction.

\begin{construction} 
    Let $(\mc{V},\varphi_\mc{V})$ be an object of $\cat{Perf}^\varphi((\mc{O}_K)_\smallprism)$ and $i\geqslant 0$.
    Evaluating $\mc{V}$ on the prism $(\mf{S}, E_\pi^{\tw{i}})$ and applying the Frobenius morphism $\varphi_\mc{V}$ gives us the following isomorphism:
    \begin{equation}\label{eq:Frobenius-structure-deep-twist}
        \phi_\mf{S}^\ast\mc{V}(\mf{S}, E_\pi^{\tw{i}})[\nicefrac{1}{E_\pi^{\tw{i}}}] \isomto \mc{V}(\mf{S}, E_\pi^{\tw{i}})[\nicefrac{1}{E_\pi^{\tw{i}}}].
    \end{equation}
    Using the morphism $\phi^i_\mf{S}\colon (\mf{S}, E_\pi) \to (\mf{S}, E_\pi^{\tw{i}})$ in $(\mc{O}_K)_\smallprism$, we obtain that $\phi_\mf{S}^{\ast i}\mc{V}(\mf{S}, E_\pi)\simeq \mc{V}(\mf{S},E_\pi^{\tw{i}})$.
    Therefore, we may interpret \eqref{eq:Frobenius-structure-deep-twist} as an isomorphism,
    \begin{equation*}
        \phi^{\ast(i+1)}_{\mf{S}}\mc{V}(\mf{S}, E_\pi)[\nicefrac{1}{E_\pi^{\tw{i}}}] \isomto \phi_{\mf{S}}^{\ast i}\mc{V}(\mf{S}, E_\pi)[\nicefrac{1}{E_\pi^{\tw{i}}}].
    \end{equation*}
    Thus, iterating this procedure for any $n \geqslant 0$, yields the following isomorphism:
    \begin{equation*}
        \varphi^{\tw{n}}_{\mc{V},\pi} \colon \phi_\mf{S}^{\ast\tw{n+1}}\mc{V}(\mf{S}, E_\pi)[\nicefrac{1}{E_\pi^{\tw{n}}\cdots E_\pi^{\tw{1}}}] \isomto \phi_{\mf{S}}^\ast\mc{V}(\mf{S}, E_\pi)[\nicefrac{1}{E_\pi^{\tw{n}}\cdots E_\pi^{\tw{1}}}].
    \end{equation*}
\end{construction}

Our goal is to apply the morphism $\varphi^{\tw{n}}_{\mc{V},\pi}$ to rationally untwist the twists appearing in the twisted crystalline and de Rham realisations.
For this, we make the following easy observation.
\begin{lem} 
    For every $i \geqslant 1$:
    \begin{equation*}
        E_\pi^{\tw{i}} = p\cdot(\mr{unit}) \textup{ mod } E_\pi,\qquad E_\pi^{\tw{i}} = p\cdot(\mr{unit}) \textup{ mod } u.
    \end{equation*}
\end{lem}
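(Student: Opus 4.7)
The plan is to reduce to a direct, explicit computation based on the formula $E_\pi^{\tw{i}} = \phi_\mf{S}^i(E_\pi)$. First I would write $E_\pi = u^e + \sum_{j=0}^{e-1} a_j u^j$ with $a_j \in pW$ for all $j < e$ and $a_0 = pv$ for some unit $v \in W^\times$ (since $E_\pi$ is Eisenstein for $\pi$). Since $\phi_\mf{S}$ restricts to $\phi_W$ on $W$ and sends $u \mapsto u^p$, iterating gives
\begin{equation*}
    E_\pi^{\tw{i}} \;=\; u^{ep^i} \,+\, \sum_{j=0}^{e-1} \phi_W^i(a_j)\, u^{jp^i}\,.
\end{equation*}

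For the mod $u$ statement, I would simply set $u = 0$: the result is $\phi_W^i(a_0) = p\,\phi_W^i(v)$. Since $\phi_W$ is a ring automorphism of $W$ preserving units, $\phi_W^i(v) \in W^\times$, establishing $E_\pi^{\tw{i}} \equiv p\cdot(\textrm{unit}) \pmod{u}$.

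For the mod $E_\pi$ statement, I would use the identification $\mf{S}/E_\pi \isomto \mc{O}_K$, $u \mapsto \pi$, and compare $\pi$-adic valuations term by term in $\mc{O}_K$. The constant term $\phi_W^i(a_0) = p\,\phi_W^i(v)$ has $\pi$-valuation exactly $e$, since $p\mc{O}_K = \pi^e\mc{O}_K$ and $\phi_W^i(v)$ is a unit. Every other term has strictly larger $\pi$-valuation: for $1 \leqslant j \leqslant e-1$, the term $\phi_W^i(a_j)\pi^{jp^i}$ has valuation at least $e + jp^i > e$, while the leading term $\pi^{ep^i}$ has valuation $ep^i > e$ (using $i\geqslant 1$). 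Hence in $\mc{O}_K$ we can write
\begin{equation*}
    E_\pi^{\tw{i}} \bmod E_\pi \;=\; p\,\phi_W^i(v) \,+\, \pi^{e+1}\alpha \;=\; p\bigl(\phi_W^i(v) + \pi\beta\bigr),
\end{equation*}
for some $\alpha,\beta \in \mc{O}_K$, after absorbing $\pi^{e+1} = p\pi\cdot(\textrm{unit})$. As $\phi_W^i(v) + \pi\beta$ reduces to the unit $\phi_W^i(v) \bmod \pi$, it is itself a unit in $\mc{O}_K$, giving the desired congruence.

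There is no real obstacle here: the whole argument is a routine valuation comparison, and the only point requiring any care is confirming that all non-constant terms land in $\pi^{e+1}\mc{O}_K$, which rests solely on the inequality $p^i > 1$ for $i \geqslant 1$. (As a slicker alternative for $i = 1$, one may observe that $(\mf{S}, E_\pi)$ being a prism forces $\delta_\mf{S}(E_\pi) \in \mf{S}^\times$, so $E_\pi^{\tw{1}} = E_\pi^p + p\,\delta_\mf{S}(E_\pi) \equiv p\,\delta_\mf{S}(E_\pi)$ modulo both $E_\pi$ and $u$; however, the direct expansion above handles all $i\geqslant 1$ uniformly.)
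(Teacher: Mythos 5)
Your main argument---expanding $E_\pi^{\tw{i}} = u^{ep^i} + \sum_{j=0}^{e-1}\phi_W^i(a_j)u^{jp^i}$ and comparing $\pi$-adic valuations term by term---is correct and complete; the paper states this lemma as an ``easy observation'' with no written proof, so there is no argument to compare against. The only slip is in your parenthetical aside for $i=1$: the congruence $E_\pi^{\tw{1}} \equiv p\,\delta_\mf{S}(E_\pi)\pmod{u}$ is false as stated, because $E_\pi^p \bmod u = E_\pi(0)^p = p^p v^p \neq 0$. The aside's conclusion can still be salvaged by writing $E_\pi^{\tw{1}}\bmod u = p\bigl(p^{p-1}v^p + \delta_\mf{S}(E_\pi)\bmod u\bigr)$, a unit multiple of $p$ since $\delta_\mf{S}(E_\pi)$ is a unit in $\mf{S}$ and $p^{p-1}v^p \in pW$; but the claimed intermediate congruence does not hold. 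This has no bearing on the validity of your main proof.
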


Now, consider the following isomorphisms:
\begin{equation*}
    \varphi^{\tw{n}}_{\crys,\mc{V},\pi,} \colon \bb{D}^{\tw{n}}_{\crys,\pi}(\mc{V})[\nicefrac{1}{p}] \isomto \bb{D}_{\crys,\pi}(\mc{V})[\nicefrac{1}{p}],\qquad \big(\text{resp. } \varphi^{\tw{n}}_{\dR,\mc{V},\pi} \colon \bb{D}^{\tw{n}}_\dR(\mc{V})[\nicefrac{1}{p}] \isomto \bb{D}_\dR(\mc{V})[\nicefrac{1}{p}]\big),
\end{equation*}
in $\cat{Perf}(K_0)$ (resp.\@ $\cat{Perf}(K)$) induced by the reduction of $\varphi^{\tw{n}}_{\mc{V},\pi}$ modulo $u$ (resp.\@ $E_\pi$).
The preceding isomorphisms are functorial in $\mc{V}$, and so we obtain ``rational untwisting'' maps.
\begin{prop}\label{prop:rational_untwist}
    For any $n \geqslant 0$, there exists a natural equivalence in $\cat{Func}(\cat{Perf}^{\varphi}((\mc{O}_K)_\smallprism), \cat{Perf}(K_0))$ \textup{(}resp.\@ $\cat{Func}(\cat{Perf}^{\varphi}((\mc{O}_K)_\smallprism), \cat{Perf}(K))$\textup{)},
    \begin{equation*}
        \varphi^{\tw{n}}_{\crys,\pi} \colon \bb{D}^{\tw{n}}_{\crys,\pi}[\nicefrac{1}{p}] \isomto \bb{D}_\crys[\nicefrac{1}{p}],\qquad \big(\text{resp. } \varphi^{\tw{n}}_{\dR,\pi} \colon \bb{D}^{\tw{n}}_{\dR,\pi}[\nicefrac{1}{p}] \isomto \bb{D}_\dR(\mc{V})[\nicefrac{1}{p}]\big).
    \end{equation*}
\end{prop}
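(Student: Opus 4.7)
The plan is to check that the pointwise isomorphisms $\varphi^{\tw{n}}_{\crys,\mc{V},\pi}$ and $\varphi^{\tw{n}}_{\dR,\mc{V},\pi}$ already exhibited in the preamble assemble into natural equivalences of functors. The first point I would verify is that the map
\begin{equation*}
    \varphi^{\tw{n}}_{\mc{V},\pi}\colon \phi_\mf{S}^{\ast\tw{n+1}}\mc{V}(\mf{S}, E_\pi)[\nicefrac{1}{E_\pi^{\tw{n}}\cdots E_\pi^{\tw{1}}}] \isomto \phi_\mf{S}^\ast\mc{V}(\mf{S}, E_\pi)[\nicefrac{1}{E_\pi^{\tw{n}}\cdots E_\pi^{\tw{1}}}]
\end{equation*}
is natural in $(\mc{V},\varphi_\mc{V})$. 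This is essentially automatic: the construction uses only (i) the Frobenius structure $\varphi_\mc{V}$, which is functorial in $(\mc{V},\varphi_\mc{V})$ by the very definition of morphisms in $\cat{Perf}^{\varphi}((\mc{O}_K)_\smallprism)$; (ii) evaluation at the prisms $(\mf{S},E_\pi)$ and $(\mf{S},E_\pi^{\tw{i}})$, functorial via \eqref{eq:perfect-prismatic-F-crystal-identifications}; and (iii) the canonical isomorphism $\phi_\mf{S}^{\ast i}\mc{V}(\mf{S},E_\pi)\simeq \mc{V}(\mf{S},E_\pi^{\tw{i}})$, obtained by applying the crystal property to the prism morphism $\phi_\mf{S}^i\colon (\mf{S},E_\pi)\to(\mf{S},E_\pi^{\tw{i}})$.

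The second step is to invoke the preceding lemma. Since each $E_\pi^{\tw{i}}$ for $1\leqslant i\leqslant n$ is $p$ times a unit modulo $u$, the ideals $(E_\pi^{\tw{n}}\cdots E_\pi^{\tw{1}})$ and $(p^n)$ coincide in $W=\mf{S}/u$, so inverting one is the same as inverting the other. Reducing $\varphi^{\tw{n}}_{\mc{V},\pi}$ modulo $u$ thus yields an isomorphism
\begin{equation*}
    \phi_\mf{S}^{\ast\tw{n+1}}\mc{V}(\mf{S},E_\pi)/u\,[\nicefrac{1}{p}]\isomto \phi_\mf{S}^\ast\mc{V}(\mf{S},E_\pi)/u\,[\nicefrac{1}{p}].
\end{equation*}
Under the definition of $\bb{D}_{\crys,\pi}^{\prime,\tw{n}}$ and the equivalence $\psi^{\tw{n}}_\crys$, the source and target are naturally identified with $\bb{D}_\crys^{\tw{n}}(\mc{V})[\nicefrac{1}{p}]$ and $\bb{D}_\crys(\mc{V})[\nicefrac{1}{p}]$, respectively. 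Combined with the functoriality from the first step, this provides the sought-after natural equivalence in $\cat{Func}(\cat{Perf}^{\varphi}((\mc{O}_K)_\smallprism), \cat{Perf}(K_0))$. The de Rham version is entirely analogous, reducing modulo $E_\pi$ instead of $u$ and invoking the other half of the lemma (each $E_\pi^{\tw{i}}$ is $p$ times a unit in $\mf{S}/E_\pi\simeq \mc{O}_K$).

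I do not anticipate a genuine obstacle. The only real bookkeeping to watch is that the product localisation cleanly reduces to a $p$-localisation on both sides (ensured by the lemma) and that the identifications with the primed twisted realisations of the preceding subsections land in the correct $K_0$- or $K$-valued functor category; the construction's functoriality is visible at every step, and no new input beyond the lemma is required.
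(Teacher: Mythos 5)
Your proposal is correct and follows the same route the paper takes: the construction of $\varphi^{\tw{n}}_{\mc{V},\pi}$ is natural in $(\mc{V},\varphi_\mc{V})$ because it only uses the Frobenius structure, evaluation at the prisms $(\mf{S},E_\pi^{\tw{i}})$, and the crystal property for $\phi_\mf{S}^i$; the preceding lemma ensures that after reducing modulo $u$ (resp.\ $E_\pi$) the product $(E_\pi^{\tw{n}}\cdots E_\pi^{\tw{1}})$ generates the same ideal as a power of $p$, so the localisation is precisely $[\nicefrac{1}{p}]$. The paper itself states this proof in a single sentence preceding the proposition; your write-up simply spells out the same bookkeeping.
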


The following is proven similarly to Proposition \ref{prop:crys-dR-comparison-pi-independence}.
\begin{prop}\label{prop:rational_untwist_pi_indep}
    For uniformisers $\pi$ and $\pi'$ of $\mc{O}_K$ and any integer $n\geqslant 0$, we have the following identification:
    \begin{align*}
        \j^{\tw{0}}_{\crys,\pi,\pi'}[\nicefrac{1}{p}] \circ \varphi^{\tw{n}}_{\crys,\pi} &= \varphi^{\tw{n}}_{\crys,\pi'} \circ \j^{\tw{n}}_{\crys,\pi,\pi'}[\nicefrac{1}{p}]\\
        \big(\textrm{resp. } \j^{\tw{0}}_{\dR,\pi,\pi'}[\nicefrac{1}{p}] \circ \varphi^\mr{\tw{n}}_{\dR,\pi} &= \varphi^\mr{\tw{n}}_{\dR,\pi'} \circ \j^{\tw{n}}_{\dR,\pi,\pi'}[\nicefrac{1}{p}]\big).
    \end{align*}
\end{prop}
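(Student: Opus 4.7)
The plan is to imitate the strategy of Proposition \ref{prop:crys-dR-comparison-pi-independence}, this time carrying along the Frobenius structure throughout the argument.

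First, I would observe that the construction of $\varphi^{\tw{n}}_{\mc{V},\pi}$ extends to any object $(A, I)$ of $(\mc{O}_K)_\smallprism$: iterating $\varphi_\mc{V}$ evaluated on the various Frobenius twists of $(A, I)$ and applying the crystal property yields a natural isomorphism
\begin{equation*}
    \varphi^{\tw{n}}_{\mc{V},(A,I)} \colon \phi_A^{\ast \tw{n+1}}\mc{V}(A, I)[\nicefrac{1}{I^{\tw{n}} \cdots I^{\tw{1}}}] \isomto \phi_A^\ast \mc{V}(A, I)[\nicefrac{1}{I^{\tw{n}} \cdots I^{\tw{1}}}].
\end{equation*}
Since $\varphi_\mc{V}$ is a natural transformation of sheaves on the absolute prismatic site, the assignment $(A, I) \mapsto \varphi^{\tw{n}}_{\mc{V},(A,I)}$ is functorial in $(A, I) \in (\mc{O}_K)_\smallprism$, and the various $\varphi^{\tw{n}}_{\crys,\pi}$ and $\varphi^{\tw{n}}_{\dR,\pi}$ are recovered as pullbacks of $\varphi^{\tw{n}}_{\mc{V},(\mf{S}, E_\pi)}$ along the appropriate reduction maps.

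For the crystalline compatibility, the two morphisms $u = 0 \colon (\mf{S}, E_\pi^{\tw{n+1}}) \to (W, p, F_k^{n+1}\circ q)$ and $u = 0 \colon (\mf{S}, E_{\pi'}^{\tw{n+1}}) \to (W, p, F_k^{n+1}\circ q)$ in $(\mc{O}_K)_\smallprism$ share the common target, since $q \colon \mc{O}_K \twoheadrightarrow k$ is independent of the uniformiser, and both reductions $\mf{S} \twoheadrightarrow W$ are Frobenius-equivariant. Applying the functoriality of $\varphi^{\tw{n}}_\mc{V}$ to each of these morphisms produces two commutative squares which, when glued at their common side indexed by $(W, p)$, yield precisely the desired identity $\j^{\tw{0}}_{\crys,\pi,\pi'}[\nicefrac{1}{p}] \circ \varphi^{\tw{n}}_{\crys,\pi} = \varphi^{\tw{n}}_{\crys,\pi'} \circ \j^{\tw{n}}_{\crys,\pi,\pi'}[\nicefrac{1}{p}]$.

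For the de Rham compatibility, I would instead use the mixed Breuil--Kisin prism $(\mf{S}_{\pi,\pi'}, I_{\pi,\pi'})$ from Section \ref{sss:mixed-BK-prism} as common ground: the two Frobenius-equivariant projections $(\mf{S}, E_\pi), (\mf{S}, E_{\pi'}) \rightrightarrows (\mf{S}_{\pi,\pi'}, I_{\pi,\pi'})$, composed further with the quotient $\mf{S}_{\pi,\pi'} \twoheadrightarrow \mc{O}_K$, give two factorisations of $\rho_\dR$ through the common mixed prism. An analogous application of the functoriality of $\varphi^{\tw{n}}_\mc{V}$ then yields the de Rham identity after inverting $p$. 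The main obstacle is the routine but intricate book-keeping of Frobenius twists required to assemble the iterated functoriality into the one-line statement (in particular, at each step one must verify that the relevant ideal $I^{\tw{i}}$ becomes a unit multiple of $p$ after the reduction in question, which is immediate in both cases); however, no new conceptual ingredients are needed beyond those already present in the proof of Proposition \ref{prop:crys-dR-comparison-pi-independence}.
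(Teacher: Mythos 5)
The proposal is correct and takes a genuinely different route from what the paper intends. The paper's ``proven similarly to Proposition \ref{prop:crys-dR-comparison-pi-independence}'' points to the strategy used there: reduce to checking commutativity after faithfully flat base change along $\mc{O}_K\to\mc{O}_C$ (resp.\ $W\to\mc{O}_C$), transport everything to the single large commutative diagram \eqref{eq:twisted-crys-dR-big-diagram} in $(\mc{O}_K)_\smallprism$ passing through $\Acrys$, and read off the identity after applying the crystal property and descending from $\mc{O}_C$. You instead avoid the $\mc{O}_C$-base-change step entirely by upgrading $\varphi^{\tw{n}}_{\mc{V},\pi}$ to a construction $\varphi^{\tw{n}}_{\mc{V},(A,I)}$ defined for every $(A,I)$ in $(\mc{O}_K)_\smallprism$ and proving its functoriality, then applying that functoriality to the two reductions $(u=0)\colon (\mf{S}, E_\pi^{\tw{n+1}})\to (W,p)$ (resp.\@ the two projections to the mixed prism $\mf{S}_{\pi,\pi'}$). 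Your route is cleaner in that it keeps the argument over $W$ and $\mc{O}_K$ rather than over $\Acrys$, and it makes transparent the reason the compatibility holds (naturality of the iterated Frobenius in the prism). One small caution: the functoriality of $\varphi^{\tw{n}}_{\mc{V},(A,I)}$ is not literally ``immediate'' from $\varphi_\mc{V}$ being a natural transformation on the site, because each step of the iteration passes to a different prism $(A,I^{\tw{i}})$ and invokes the crystal property along $\phi^i_A\colon (A,I)\to (A,I^{\tw{i}})$; you need to check that for a morphism $g\colon (A,I)\to(B,J)$ these squares relating $\phi^i_A$ and $\phi^i_B$ commute (which holds since $g$ is a $\delta$-ring map) and then assemble the naturality squares of $\varphi_\mc{V}$ step by step --- you do flag this as the ``routine but intricate book-keeping,'' so I regard it as addressed, but this is the one place where a terse write-up would leave a gap.
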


Using Propositions \ref{prop:rational_untwist} and \ref{prop:rational_untwist_pi_indep}, the next definitions follows naturally.
\begin{defn} 
    For any $n \geqslant 0$, we define the following equivalence in $\cat{Func}(\cat{Perf}^{\varphi}((\mc{O}_K)_\smallprism), \cat{Perf}(K_0))$ \textup{(}resp.\@ $\cat{Func}(\cat{Perf}^{\varphi}((\mc{O}_K)_\smallprism), \cat{Perf}(K))$\textup{)}:
    \begin{equation*}
        \varphi^{\tw{n}}_\crys \defeq (\varphi^{\tw{n}}_{\crys,\pi}) \colon \bb{D}^{\tw{n}}_\crys[\nicefrac{1}{p}] \isomto \bb{D}_\crys[\nicefrac{1}{p}],\quad \big(\text{resp. } \varphi^{\tw{n}}_\dR \defeq (\varphi^{\tw{n}}_{\dR,\pi}) \colon \bb{D}^{\tw{n}}_\dR[\nicefrac{1}{p}] \isomto \bb{D}_\dR(\mc{V})[\nicefrac{1}{p}]\big),
    \end{equation*}
\end{defn}

Thus, we obtain the following Berthelot--Ogus-type isomorphism, where the tensor products are computed entrywise (i.e.\ after applying the relevant functor to a perfect prismatic $F\textrm{-crystal}$).
\begin{thm}\label{thm:BO_equiv_coeff}
    Let $(\mc{V},\varphi_\mc{V})$ be an object of $\cat{Perf}^{\varphi}((\mc{O}_K)_\smallprism)$.
    Then, for any $n\geqslant a$, we have a \emph{generalised Berthelot--Ogus isomorphism},
    \begin{equation*}
        \iota^{\tw{n}}_{\mr{BO}} \colon \bb{D}_\crys \otimes_{W} K \isomto \bb{D}_\dR \otimes_{\mc{O}_K} K,
    \end{equation*}
    such that the following diagram commutes,
    \begin{equation*}
    \begin{tikzcd}[sep=large]
    	{\bb{D}^{\tw{n}}_\crys\otimes_{W}K} & {\mathbb{D}_\dR^{\tw{n}}\otimes_{\mathcal{O}_K}K} \\
    	{\bb{D}_\mathrm{crys}\otimes_{W}K} & {\mathbb{D}_\dR\otimes_{\mathcal{O}_K}K.}
    	\arrow["{\iota^{\tw{n}}[\nicefrac{1}{p}]}", from=1-1, to=1-2]
    	\arrow["{\varphi^{\tw{n}}_\crys}"', from=1-1, to=2-1]
    	\arrow["{\varphi^{\tw{n}}_\dR}", from=1-2, to=2-2]
    	\arrow["{\iota^{\tw{n}}_{\mathrm{BO}}}"', from=2-1, to=2-2]
    \end{tikzcd}
    \end{equation*}
\end{thm}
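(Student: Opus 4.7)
The proof is essentially formal given the work done so far. I would define
\[
    \iota^{\tw{n}}_{\mr{BO}} \defeq \varphi^{\tw{n}}_\dR \circ \bigl(\iota^{\tw{n}}[\nicefrac{1}{p}]\bigr) \circ \bigl(\varphi^{\tw{n}}_\crys\bigr)^{-1},
\]
namely as the unique arrow making the displayed square commute. For the leftmost factor one implicitly tensors $\varphi^{\tw{n}}_\crys$ up from $K_0$ to $K$, using that both $\bb{D}_\crys[\nicefrac{1}{p}]$ and $\bb{D}^{\tw{n}}_\crys[\nicefrac{1}{p}]$ take values in $\cat{Perf}(K_0)$. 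The middle factor is the inversion of $p$ applied to the integral generalised Berthelot--Ogus isomorphism of Theorem \ref{thm:integral-BO} (noting that over $\mc{O}_K$, inverting $p$ is the same as tensoring with $K$, since $p$ and $\pi^e$ differ by a unit). The outer two factors are equivalences by Proposition \ref{prop:rational_untwist}.

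The commutativity of the stated diagram is tautological from this definition, and each of the three factors is a natural equivalence in its respective functor category, so the composition $\iota^{\tw{n}}_{\mr{BO}}$ yields the desired equivalence in $\cat{Func}(\cat{Perf}^\varphi((\mc{O}_K)_\smallprism), \cat{Perf}(K))$.

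The conceptual point worth highlighting is that the Frobenius structure $\varphi_\mc{V}$ is precisely the extra datum needed to produce canonical rational untwistings on both the crystalline and de Rham sides, compensating for the $n$-fold Frobenius twist appearing on both sides of the integral statement. Without such a structure there is no natural way to unwind the twist, which is why the hypothesis here is upgraded from a perfect prismatic crystal to a perfect prismatic $F$-crystal. Consequently, there is no real obstacle in the proof beyond unwinding definitions; the entire substance has already been absorbed into Theorem \ref{thm:twisted-crys-dR-comparison} (providing $\iota^{\tw{n}}$) and Proposition \ref{prop:rational_untwist} (providing the untwistings $\varphi^{\tw{n}}_\crys$ and $\varphi^{\tw{n}}_\dR$).
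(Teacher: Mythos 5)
Your proposal is correct and matches the paper's implicit approach: the paper gives no explicit proof for this theorem because, as you say, it is formal from Theorem \ref{thm:integral-BO} and Proposition \ref{prop:rational_untwist}, and $\iota^{\tw{n}}_{\mr{BO}}$ is precisely the composition $\varphi^{\tw{n}}_\dR \circ \iota^{\tw{n}}[\nicefrac{1}{p}] \circ (\varphi^{\tw{n}}_\crys)^{-1}$ that makes the square commute by construction. Your bookkeeping remark about base-changing $\varphi^{\tw{n}}_\crys$ from $K_0$ to $K$ is a point the paper leaves silent but is correctly handled.
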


As an application of Theorem \ref{thm:BO_equiv_coeff}, and using Corollaries \ref{cor:twisted-crys-identification} and \ref{cor:twisted-dR-identification}, we recover the following analogue of the classical Berthelot--Ogus isomorphism for cohomology with coefficients.
\begin{cor}\label{cor:BO-wit-coefficients}
    Let $(\mc{V},\varphi_\mc{V})$ be an object of $\cat{Perf}^\varphi((X)_\smallprism)$.
    Then, there exists a canonical isomorphism,
    \begin{equation*}
        \iota^{\tw{n}}_\mr{BO}\colon \mr{R}\Gamma_\crys(\mc{V}^\crys)\otimes_W K\isomto \mr{R}\Gamma_\dR(\mc{V}^\dR)\otimes_{\mc{O}_K} K.
    \end{equation*}
\end{cor}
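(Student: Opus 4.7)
The plan is to deduce this corollary by assembling the Berthelot--Ogus isomorphism at the level of realisations (Theorem \ref{thm:BO_equiv_coeff}) with the identifications of twisted crystalline and de Rham cohomology with their classical counterparts developed in this section. Concretely, the first step is to pass from the prismatic $F$-crystal $\mc{V}$ on $X$ to a perfect prismatic $F$-crystal on $\Spf(\mc{O}_K)$ by applying absolute pushforward: by Theorem \ref{thm:GL}, the object $\mr{R}f^\smallprism_\ast\mc{V}$ lies in $\cat{Perf}^\varphi((\mc{O}_K)_\smallprism)$, and so it may be fed into the machinery of Section \ref{s:crys-de-rham-comp}.

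Next, I would apply Theorem \ref{thm:BO_equiv_coeff} to the $F$-crystal $\mr{R}f^\smallprism_\ast\mc{V}$ with $n\geqslant a$, which produces a natural $K$-linear isomorphism
\begin{equation*}
    \iota^{\tw{n}}_{\mr{BO}}\colon \bb{D}_\crys(\mr{R}f^\smallprism_\ast\mc{V})\otimes_W K \isomto \bb{D}_\dR(\mr{R}f^\smallprism_\ast\mc{V})\otimes_{\mc{O}_K} K,
\end{equation*}
fitting into the commutative square of loc.\ cit.\ with its twisted counterpart $\iota^{\tw{n}}[\nicefrac{1}{p}]$ (the latter coming from Theorem \ref{thm:integral-BO}).

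The final step is to rewrite both sides in their classical incarnations. On the crystalline side, at $n=0$ Proposition \ref{prop:twisted-crys-classical-comparison} supplies a natural identification
\begin{equation*}
    \bb{D}_\crys(\mr{R}f^\smallprism_\ast\mc{V}) = \mr{R}\Gamma^{\tw{0}}_\crys(\mc{V}) \simeq \mr{R}\Gamma((X_k/W)_\crys, \mc{V}^\crys),
\end{equation*}
while on the de Rham side, Proposition \ref{prop:de-Rham-base-change} provides
\begin{equation*}
    \bb{D}_\dR(\mr{R}f^\smallprism_\ast\mc{V}) = \mr{R}\Gamma^{\tw{0}}_\dR(X/\mc{O}_K, \mc{V}) \simeq \mr{R}\Gamma_\dR(\mc{V}^\dR).
\end{equation*}
Substituting these identifications into $\iota^{\tw{n}}_{\mr{BO}}$ yields the desired isomorphism after inverting $p$.

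Since Theorems \ref{thm:GL}, \ref{thm:BO_equiv_coeff} and Propositions \ref{prop:twisted-crys-classical-comparison}, \ref{prop:de-Rham-base-change} are already established earlier, there is no substantial obstacle: the corollary is essentially a bookkeeping exercise assembling these inputs. The only point that requires a moment of care is confirming that the identifications coming from Propositions \ref{prop:twisted-crys-classical-comparison} and \ref{prop:de-Rham-base-change} are functorial in the prismatic $F$-crystal and compatible with the formation of $\iota^{\tw{n}}_{\mr{BO}}$; this is immediate from the naturality statements contained in Theorem \ref{thm:BO_equiv_coeff}.
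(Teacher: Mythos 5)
Your proposal is correct and follows essentially the same route as the paper: one pushes $\mc{V}$ forward to $\mr{R}f^\smallprism_\ast\mc{V}\in\cat{Perf}^\varphi((\mc{O}_K)_\smallprism)$ via Theorem~\ref{thm:GL}, applies Theorem~\ref{thm:BO_equiv_coeff}, and then identifies the bottom row of the commutative square with classical crystalline and de~Rham cohomology. The paper phrases the last step in terms of Corollaries~\ref{cor:twisted-crys-identification} and~\ref{cor:twisted-dR-identification}, whereas you invoke Propositions~\ref{prop:twisted-crys-classical-comparison} and~\ref{prop:de-Rham-base-change}, but since the latter are derived from the former this is only a cosmetic difference.
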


\begin{rem}
    The isomorphism $\iota^{\tw{a}}_\mr{BO}$ from Corollary \ref{cor:BO-wit-coefficients} agrees with the isomorphism from \cite{BerthelotOgusFIsocrystals} when $\mc{V}=\mc{O}^\smallprism$.
    Indeed, this follows ultimately from two observations.
    The first is that the isomorphisms $\varphi^{\tw{n}}_\crys$ and $\varphi^{\tw{n}}_\dR$ agree with Frobenius maps on crystalline cohomology coming from the identifications in Propositions \ref{prop:twisted-crys-classical-comparison} and \ref{prop:twisted-dR-crys-comparison}, respectively.
    The second is the observation that the argument for equivalency of structure maps in the proof of Theorem \ref{thm:twisted-crys-dR-comparison} precisely correspond to the Dwork-like isomorphism as in Equation \eqref{eq:BO-dwork} from the introduction.
\end{rem}

\subsection{A conjectural framework to study torsion in crystalline and de Rham cohomology}\label{ss:conjectural-framework}

In this final subsection we state a conjecture concerning the relationship between torsion in crystalline and de Rham cohomology, and explain how Theorem \ref{thm:twisted-crys-dR-comparison} gives a framework for its study.
In Appendix \ref{app:torsion-calculations}, we shall verify one of these conjectures in a small number of small cases.

\medskip

\paragraph*{The main conjecture}

To state our conjecture, it is helpful to first set the following notation.
\begin{nota}
    For any $n\geqslant 0$, we set 
    \begin{equation*}
        \ell^{i,\tw{n}}_\crys(X/\mc{O}_K) \defeq \ell_W\left(\mr{H}^{i,\tw{n}}_\crys(X_k/W)[p^\infty]\right), \qquad \ell^{i,\tw{n}}_\dR(X/\mc{O}_K) \defeq \ell_{\mc{O}_K}\big(\mr{H}^{i,\tw{n}}_\dR(X/\mc{O}_K)[p^\infty]\big).
    \end{equation*}
    As $X/\mc{O}_K$ is fixed, we shall always omit it from the notation (i.e.\ just writing $\ell^{i,\tw{n}}_\crys$ and $\ell^{i,\tw{n}}_\dR)$.
    Moreover, we shorten $\ell^{i,\tw{0}}_\crys$ and $\ell^{i,\tw{0}}_\dR$ to $\ell^i_\crys$ and $\ell^i_\dR$, respectively.
\end{nota}

\begin{conj}[{Main Conjecture}]\label{conj:main-inequality}
    For all $i\geqslant 0$, we have the following inequalities:
    \begin{equation*}
        \ell^i_\crys \leqslant \ell^i_\dR \leqslant e\cdot \ell^i_\crys.
    \end{equation*}
\end{conj}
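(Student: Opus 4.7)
The natural plan is to translate Conjecture \ref{conj:main-inequality} into a concrete statement about Breuil--Kisin cohomology and leverage Theorem \ref{thm:integral-BO}. Set $\mf{M}\defeq (\mr{R}f^\smallprism_\ast\mc{O}^\smallprism)(\mf{S}, E)$, a perfect complex over $\mf{S}$ with Frobenius structure $\varphi\colon \phi_\mf{S}^\ast\mf{M}\to\mf{M}$, an isomorphism after inverting $E$. By Corollaries \ref{cor:twisted-crys-identification} and \ref{cor:twisted-dR-identification} (and the fact that $\phi_W$ is an automorphism of $W$, so pullback preserves length), one has $\mr{R}\Gamma^{\tw{n}}_\crys(X_k/W)\simeq \mf{M}^{\tw{n+1}}/u$ and $\mr{R}\Gamma^{\tw{n}}_\dR(X/\mc{O}_K)\simeq \mf{M}^{\tw{n+1}}/E$, where $\mf{M}^{\tw{n+1}}\defeq \phi_\mf{S}^{\ast(n+1)}\mf{M}$; in particular $\ell^{i,\tw{n}}_\crys=\ell^i_\crys$ for all $n\geqslant 0$.

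The first move is to settle the twisted case. For $n\geqslant a$, Theorem \ref{thm:integral-BO} yields $\mr{H}^{i,\tw{n}}_\dR\simeq \mr{H}^{i,\tw{n}}_\crys\otimes_W\mc{O}_K$, and since $\mc{O}_K$ is free of rank $e$ over $W$, we obtain $\ell^{i,\tw{n}}_\dR = e\cdot \ell^i_\crys$ exactly, for all $n\geqslant a$. Consequently, both inequalities of the conjecture reduce to bounding the difference between $\ell^i_\dR$ and $\ell^{i,\tw{a}}_\dR = e\cdot \ell^i_\crys$: the upper bound $\ell^i_\dR\leqslant e\cdot\ell^i_\crys$ becomes $\ell^i_\dR\leqslant \ell^{i,\tw{a}}_\dR$, while the lower bound $\ell^i_\crys\leqslant \ell^i_\dR$ becomes $\ell^{i,\tw{a}}_\dR/e\leqslant \ell^i_\dR$.

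To analyse these, one would use the iterated integral Frobenius $\mf{M}^{\tw{a+1}}\to\mf{M}^{\tw{1}}$, which induces a map $\mr{H}^{i,\tw{a}}_\dR\to\mr{H}^i_\dR$ that becomes an isomorphism after inverting $p$ (using that, modulo $E$, each $E^{\tw{j}}=\phi^j(E)$ is $p$ times a unit). The $p^\infty$-torsion kernel and cokernel of this map are governed by the $u^\infty$-torsion of the cohomology groups $H^j(\mf{M})$ (for $j=i,i+1$), extracted via the long exact sequence associated to $0\to\mf{M}\xrightarrow{u}\mf{M}\to\mf{M}/u\to 0$ and its interaction with $\phi_\mf{S}$ (which sends $u$ to $u^p$, hence acts nilpotently on $u^\infty$-torsion). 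In particular, the deviations $|\ell^i_\dR-\ell^{i,\tw{a}}_\dR|$ and $|\ell^i_\dR-\ell^i_\crys|$ should be expressible in terms of natural $\phi$-equivariant invariants of the $u^\infty$-torsion of $\mf{M}$.

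The principal obstacle is precisely this analysis of $u^\infty$-torsion, which is the subject of Conjecture~\ref{conj:beta}. One expects the $u^\infty$-torsion of Breuil--Kisin cohomology to satisfy sharp structural constraints (e.g., bounded $p$-exponent, and controlled total length per cohomological degree compatible with the Frobenius), and the desired inequalities would follow from such constraints together with a careful accounting of how $\varphi$ acts on the successive quotients. Appendix~\ref{app:torsion-calculations} verifies the relevant statement in a handful of small cases, but a general proof appears to require finer techniques---perhaps via the Nygaard filtration on prismatic cohomology, or via the integral $p$-adic Hodge-theoretic methods of Li--Liu and Gabber--Li on $u^\infty$-torsion referenced in the introduction---which remain beyond current reach, and this is why Conjecture \ref{conj:main-inequality} is stated rather than proven.
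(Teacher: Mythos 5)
The statement you were asked to prove is labelled a \emph{Conjecture} in the paper, and the paper does not prove it; the authors only give a conditional framework (Section \ref{ss:conjectural-framework} and Proposition \ref{thm:beta-and-gamma-imply-alpha}), reducing it to Conjecture \ref{conj:beta} and Hypothesis \ref{conj:gamma} about $u^\infty$-torsion in Breuil--Kisin cohomology, and verify those only in small cases (Theorem \ref{thm:conj-beta-small-index-and-ramification}). You correctly recognise this and do not claim a complete proof, so there is no gap per se --- your conclusion that the statement remains open matches the paper.

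Your reduction strategy is essentially the paper's: use Theorem \ref{thm:integral-BO} and the constancy $\ell^{i,\tw{n}}_\crys=\ell^i_\crys$ to pin down $\ell^{i,\tw{n}}_\dR=e\cdot\ell^i_\crys$ for $n\geqslant a$, then reduce both inequalities to comparing $\ell^i_\dR$ with $\ell^{i,\tw{a}}_\dR$; your two reformulations correspond exactly to the paper's (L1) and a weakening of (L2). The main divergence from the paper's bookkeeping is technical: the paper does not use the LES associated to $0\to\mf{M}\xrightarrow{u}\mf{M}\to\mf{M}/u\to 0$ but instead the universal-coefficients-type short exact sequence \eqref{eq:deRham_BK_ses}, $0\to \mr{H}^i_\mf{S}{}^{\tw{n+1}}/E\to\mr{H}^{i,\tw{n}}_\dR\to\mr{H}^{i+1}_\mf{S}{}^{\tw{n+1}}[E]\to 0$, and this is what forces the extra Hypothesis \ref{conj:gamma} (controlling the $p^\infty$-torsion passing to the boundary term $\mf{Q}^{i,\tw{n}}$), which your sketch omits. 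Without an analogue of Hypothesis \ref{conj:gamma}, the part of $\ell^{i,\tw{n}}_\dR$ coming from $E$-torsion in the next cohomological degree is not controlled by Conjecture \ref{conj:beta} alone, so if you were trying to push the sketch to an actual conditional reduction, you would need to add this hypothesis (or prove it).
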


For clarity, let us write
\begin{equation*}
    \mr{H}^i_\crys(X_k/W)[p^\infty] \simeq \textstyle\bigoplus_{i=1}^r W/p^{a_i}, \qquad \mr{H}^i_\mr{dR}(X/\mc{O}_K)[p^\infty] \simeq \textstyle\bigoplus_{j=1}^s\mc{O}_K/\pi^{b_j}.
\end{equation*}
Then, $r=s$ by \cite[Remark 7.8]{CesnaviciusKoshikawa}, and thus Conjecture \ref{conj:beta} can be understood as giving a rough relationship between the size of $a_i$ and the size of $b_j$, as $\ell^i_\mr{crys} = \sum_{i=1}^r a_i$ and $\ell^i_\mr{dR}=\sum_{j=1}^sb_j$. 

To help understand this conjecture, we now examine several non-trivial examples.
In particular, we discuss an example where the second inequality in Conjecture \ref{conj:main-inequality} is strict for which it is unclear whether this has been previously recorded in the literature (cf.\@ \cite[Question 7.13]{CesnaviciusKoshikawa}).

\begin{eg}\label{eg:main-inequality-low-ramification}
    Suppose $X$ admits a descent to $\mc{O}_{K'}$ for some subextension $K'$ of $K/K_0$ of ramification index at most $p-1$. Then, $\ell^i_\crys=\ell^i_\dR$.
    Indeed, by base change we are reduced to the case $K=K'$.
    We have $\lceil\log_p(\tfrac{e}{p-1})\rceil=0$, and so the claim follows from Theorem \ref{thm:integral-BO} (or, more simply, from \cite[Corollary 7.4]{BerthelotOgus}).
\end{eg}

\begin{eg}[Li--Petrov]\label{eg:Li--Petrov}
    There exists a finite extension $K/K_0$ with $e=p^4-p^2$ and an elliptic scheme $E/\mc{O}_K$ such that $E_k$ is supersingular, and such that $E[p^2](K)$ contains a point $x$ of order $p^2$.
    For this $E$, let $H$ denote the integral closure in $E$ of the subgroup generated by $x$.
    Set $X = BH$ to be the classifying stack for $H$, a smooth proper formal $\mc{O}_K$-stack.
    Then, Li and Petrov compute that,
    \begin{align*}
        \mr{H}^2_\crys(X_k/W) &= k^2,\qquad \mr{H}^3_\crys(X/\mc{O}_K) = k,\\
        \mr{H}^2_\dR(X/\mc{O}_K) &= \mc{O}_K/\pi^{2e-p^3+p^2}\oplus \mc{O}_K/\pi^{p^3-p^2},\qquad \mr{H}^3_\mr{dR}(X/\mc{O}_K) = \mc{O}_K/\pi^{p^3-p^2}.
    \end{align*}
    From this, we observe that,
    \begin{equation*}
        e\cdot \ell^2_\crys=2e,\quad \ell^2_\dR=(2e-p^3+p^2)+(p^3-p^2)=2e,
    \end{equation*}
    so that $\ell^2_\crys < \ell^2_\dR = e \cdot \ell^2_\crys$, although it is clear that,
    \begin{equation*}
        \mr{H}^2_\crys(X/\mc{O}_K)\otimes_W\mc{O}_K \not\simeq \mr{H}^2_\dR(X/\mc{O}_K).
    \end{equation*}
    Furthermore, we observe that,
    \begin{equation*}
    e\cdot \ell^3_\crys=e=p^4-p^2,\qquad \ell^3_\dR=p^3-p^2,
    \end{equation*}
    so that $1=\ell^3_\crys <\ell^3_\dR<e\cdot \ell^3_\crys$.
    One may further replace $X$ by an actual formal scheme by a standard approximation technique (cf.\ \cite[Construction 6.12 and Proposition 6.13]{li-liu-uinfty}).
\end{eg}

\begin{eg}\label{eg:main-inequality-true-for-p-torsion}
    Suppose that $\mr{H}^i_\dR(X/\mc{O}_K)[p^\infty]$ is $p\textrm{-torsion}$.
    In this case, we have that $1 \leqslant b_j \leqslant e$ for all $j$, and $a_i \geqslant 1$ for all $i$.
    Thus, we see that,
    \begin{equation*}
        \ell_\dR^i = \textstyle\sum_{j=1}^s b_j \leqslant e \cdot s = e \cdot r \leqslant e \cdot \textstyle\sum_{i=1}^r a_i = e \cdot \ell^i_\crys.
    \end{equation*}
    Additionally, if $\mr{H}^i_{\crys}(X_k/W)[p^\infty]$ is also $p\textrm{-torsion}$, then $a_i = 1$ for all $i$, and we see that,
    \begin{equation*}
        \ell^i_\crys = \textstyle\sum_{i=1}^r a_i = r = s \leqslant \textstyle\sum_{j=1}^s b_j = \ell_\dR^i.
    \end{equation*}
\end{eg}

\begin{rem}
    One can use Example \ref{eg:main-inequality-true-for-p-torsion} to illustrate the subtlety of Conjecture \ref{conj:main-inequality}.
    Suppose that $\mr{H}^i_\crys(X_k/W)[p^\infty]$ and $\mr{H}^i_\dR(X/\mc{O}_K)[p^\infty]$ are both $p^2$-torsion, so $a_i \in \{1,2\}$ for all $i$.
    Write $r_1$ for the number of $i$ with $a_i=1$ and define $r_2$ analogously.
    Thus, $\ell^i_\crys=r_1+2r_2$. Our assumptions imply only that $b_j\leqslant 2e$ for all $j$.
    Then, the naive approach taken in Conjecture \ref{conj:main-inequality} only implies that $\ell^i_\dR\leqslant 2er$, whereas $e \cdot \ell^i_\crys = er_1+2er_2$, and thus this is not helpful if $r_1\ne 0$.
\end{rem}

\medskip

\paragraph*{Questions about torsion in Breuil--Kisin cohomology}

We now explain how Theorem \ref{thm:integral-BO} allows one to reduce Conjecture \ref{conj:main-inequality} to questions about torsion in Breuil--Kisin cohomology.

We begin by making the following simple but crucial observation, coming from Proposition \ref{prop:twisted-crys-classical-comparison} and the fact that $\phi_W \colon W \isomto W$ is an isomorphism of rings.

\begin{prop}\label{prop:lcrys-constant}
    The equality $\ell^{i,\tw{n}}_\crys=\ell^i_\crys$ holds for all $n\geqslant 0$.
\end{prop}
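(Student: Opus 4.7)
The proof is essentially immediate given Proposition \ref{prop:twisted-crys-classical-comparison}. First, I would specialise that proposition to $\mc{V} = \mc{O}_\smallprism$, whose crystalline incarnation $(\mc{O}_\smallprism)^\crys$ is the structure sheaf on $(X_k/W)_\crys$, to obtain a natural isomorphism
\[
\mr{R}\Gamma^{\tw{n}}_\crys(X_k/W) \simeq \phi_W^{\ast n}\,\mr{R}\Gamma((X_k/W)_\crys, \mc{O}_\crys).
\]
Since $\phi_W$ is flat (indeed an isomorphism), the pullback $\phi_W^{\ast n}$ commutes with taking cohomology, yielding a natural isomorphism of $W$-modules
\[
\mr{H}^{i,\tw{n}}_\crys(X_k/W) \simeq \phi_W^{\ast n}\,\mr{H}^i_\crys(X_k/W).
\]
This reduces Proposition \ref{prop:lcrys-constant} to showing that $\phi_W^{\ast n} \colon \cat{Mod}_W \to \cat{Mod}_W$ preserves the length of the $p^\infty$-torsion submodule of any finitely generated $W$-module.

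For the second step, since $k$ is perfect, the Witt vector Frobenius $\phi_W \colon W \isomto W$ is a ring automorphism that fixes $p$. Consequently, for any $a \geqslant 1$ we have $\phi_W^{\ast n}(W/p^a) \simeq W/p^a$ as $W$-modules, and by the structure theorem for finitely generated modules over the DVR $W$, the functor $\phi_W^{\ast n}$ preserves the isomorphism class of the $p^\infty$-torsion submodule of any finitely generated $W$-module. Applying this to $M = \mr{H}^i_\crys(X_k/W)$ gives
\[
\ell_W\!\left(\mr{H}^{i,\tw{n}}_\crys(X_k/W)[p^\infty]\right) = \ell_W\!\left((\phi_W^{\ast n} M)[p^\infty]\right) = \ell_W(M[p^\infty]),
\]
which is the desired equality.

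There is no genuine obstacle here: all the substantive input has been packaged into Proposition \ref{prop:twisted-crys-classical-comparison}, and the remaining step reduces to the elementary observation that $\phi_W$ is a $p$-fixing ring automorphism of $W$, so twisting the $W$-action by $\phi_W^n$ cannot change the isomorphism type (and thus not the length) of a $p^\infty$-torsion module.
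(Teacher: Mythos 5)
Your proof is correct and follows exactly the route the paper sketches in the sentence preceding the proposition: reduce to Proposition \ref{prop:twisted-crys-classical-comparison} with $\mc{V}=\mc{O}_\smallprism$ and then use that $\phi_W$ is a ring automorphism of $W$ fixing $p$, so $\phi_W^{\ast n}$ preserves lengths of $p^\infty$-torsion. The elaboration via the structure theorem over the DVR $W$ is a perfectly sound way to make the final step explicit.
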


Now, from Theorem \ref{thm:twisted-crys-dR-comparison} it follows that we have the equality,
\begin{equation}\label{eq:ldR-equals-lcrys-for-large-n} 
    \ell^{i,\tw{n}}_\dR = e\cdot\ell^{i,\tw{n}}_\crys,\qquad \text{for all }n\geqslant a.
\end{equation}
This indicates a refinement of Conjecture \ref{conj:main-inequality}, namely that the following two statements hold:
\begin{enumerate}
    \item[\textbf{(L1)}] $\ell^{i,\tw{a}}_\dR \leqslant e\cdot\ell^i_\dR$,
    \item[\textbf{(L2)}] $\ell^{i,\tw{n}}_\dR$ is a non-decreasing quantity in $n$.
\end{enumerate}

\begin{rem}
    One interesting and non-obvious consequence of Proposition \ref{prop:lcrys-constant} and \eqref{eq:ldR-equals-lcrys-for-large-n} is that the quantity $\ell^{i,\tw{n}}_\dR$ is constant for $n \geqslant a$.
\end{rem}

To make the preceding statements more tractable, we would like to instead formulate them in terms of the highly structured $u^\infty\textrm{-torsion}$ in Breuil--Kisin cohomology.
In the following, for a Breuil--Kisin module $\mathfrak{M}$ (see \cite[Section 4]{BMSI}), we shall write $\mathfrak{M}[u^{\infty}]$ for its $u^{\infty}\textrm{-torsion}$ submodule and $\overline{\mathfrak{M}} \defeq \mathfrak{M}^{\vee\vee}/\mathfrak{M}_{\operatorname{tf}}$ (see Section \ref{ss:prelims-on-BK-modules} for details).
Our main conjecture concerns the preceding modules arising from Breuil--Kisin cohomology.
Additionally, for a (complex of) $\mf{S}\textrm{-module(s)}$ $\mf{M}$ we shorten the notation $\phi^{\ast n}_\mf{S}\mf{M}$ to $\mf{M}^{\tw{n}}$.

\begin{conj}\label{conj:beta}
    Set $\mf{M}^{i}\defeq \mr{H}^i_\mf{S}(X/\mc{O}_K)$ and
    let $f(n)$ denote either $\ell_{\mc{O}_K}(\mf{M}^i[u^\infty]^{\tw{n+1}}[E])$ or $\ell_{\mc{O}_K}(\ov{\mf{M}^i}^{\tw{n+1}}[E])$.
    Then, $f$ satisfies the following:
    \begin{enumerate}
        \item[\emph{(1)}] $f(a) \leqslant e\cdot f(0)$,
        \item[\emph{(2)}] $f(n) \leqslant f(n+1)$, for all $n$.
    \end{enumerate}
\end{conj}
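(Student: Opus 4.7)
My plan is to reduce each statement to an analysis of finite-length $\mf{S}$-modules, leveraging the $F$-crystal structure on $\mf{M}^i$ to constrain their shape. A basic simplification throughout is the identity $\ell_{\mc{O}_K}(M[E]) = \ell_{\mc{O}_K}(M/E)$ for any finite-length $\mf{S}$-module $M$, coming from the Euler characteristic of the complex $M \xrightarrow{E} M$; this lets me replace $[E]$-torsion by the $/E$-quotient, which is easier to compute by base change along $\mf{S} \twoheadrightarrow \mc{O}_K$.

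For part~(2), the monotonicity of $f$, the key case is cyclic modules. If $C = \mf{S}/(p^\alpha, u^\beta)$, then $C^{\tw{n+1}} = \mf{S}/(p^\alpha, u^{\beta p^{n+1}})$, hence $\ell_{\mc{O}_K}(C^{\tw{n+1}}/E) = \min(\beta p^{n+1}, \alpha e)$, which is manifestly non-decreasing in $n$. For general $N = \mf{M}^i[u^\infty]$ or $N = \overline{\mf{M}^i}$, I would proceed by filtering $N$ by submodules (e.g.\ the natural $(p,u)$-filtration) and applying additivity of length across short exact sequences; flatness of $\phi_\mf{S}$ ensures Frobenius twist is exact, so the filtration transports cleanly. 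A cleaner alternative is to resolve $N$ by a two-term complex of free $\mf{S}$-modules, Frobenius-twist the resolution, then reduce modulo $E$ and compute the $\mc{O}_K$-length directly.

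For part~(1), the bound $f(a) \leqslant e\cdot f(0)$ is the deep statement and cannot follow from the module structure of $N$ alone. Indeed, the cyclic module $\mf{S}/(p^\alpha, u^\beta)$ with $\alpha$ large and $\beta$ small violates the inequality in general: taking $p=2$, $e=3$, $\alpha = 100$, $\beta = 2$ gives $f(a) = 16 > 12 = e\cdot f(0)$. So the crux is to rule out such unbalanced cyclic summands arising inside $\mf{M}^i[u^\infty]$ or $\overline{\mf{M}^i}$ for $\mf{M}^i$ coming from cohomology. The Frobenius structure $\phi_\mf{S}^\ast \mf{M}^i[\nicefrac{1}{E}] \isomto \mf{M}^i[\nicefrac{1}{E}]$ imposes a coherence condition on the $u^\infty$-torsion that ties its $p$-exponents to its $u$-exponents; the structural results of Li--Liu \cite{li-liu-uinfty} and Gabber--Li \cite{GabberLi} should give quantitative versions of this, bounding the $p$-exponent in terms of the cohomological degree $i$ and forcing the $u$-exponent not to be too small. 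I would then combine these with the cyclic computation, summand-by-summand or along a controlled Jordan--H\"older filtration, to derive the bound.

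The main obstacle will be showing that the currently available structure theorems are quantitatively sharp enough to preclude all problematic cyclic-like pieces in every cohomological degree. For small $i$ and small $e$ this can be done by direct enumeration, as in Appendix \ref{app:torsion-calculations} (leading to Theorem \ref{thm:conj-beta-small-index-and-ramification}); but the general case likely requires either a refinement of the Gabber--Li estimates or a cohomological induction on $e$ that compares $\overline{\mf{M}^i}$ to analogous invariants over less ramified base rings.
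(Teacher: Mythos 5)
The statement under review is a conjecture; the paper does not prove it in full generality either, only verifying special cases (Theorem~\ref{thm:conj-beta-small-index-and-ramification}, via Proposition~\ref{prop:len_unifty} and Lemmas~\ref{lem:len_uinfty_ptor}, \ref{lem:len_uinfty_ftor}). So I will assess your proposal against the mechanism the paper uses where it does succeed, and flag where your argument has genuine gaps.

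Your conceptual picture is largely right: part~(2) is at heart a computation on cyclic pieces, part~(1) is the harder statement that cannot follow from module-theoretic data alone and requires structural input from Frobenius coming from the Li--Liu and Gabber--Li theory. Your observation that unrestricted cyclic pieces $\mf{S}/(p^\alpha, u^\beta)$ with $\alpha$ large and $\beta$ small violate~(1) is a useful sanity check that matches the paper's motivation for introducing the annihilator constraints.

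However, there are two concrete gaps. First, your proposed route to part~(2) --- ``filter $N$ and apply additivity of length across short exact sequences'' --- does not work, because the length of $E$-torsion is \emph{not} additive along arbitrary short exact sequences. For example, with $N' = N'' = \mf{S}/(p,u)$ and $N = \mf{S}/(p,u^2)$ and $e=3$, $p=2$, one computes $\ell(N^{\tw{1}}[E]) = 3$ but $\ell(N'^{\tw{1}}[E]) + \ell(N''^{\tw{1}}[E]) = 4$. So even if each graded piece of your filtration had monotone $f$, you could not conclude that $N$ does. What makes the argument go in the paper (in the cases it handles) is not a filtration but an honest direct-sum decomposition: Proposition~\ref{prop:ann_uinfty_submod} shows that for height $\leqslant 2$ and $e < p(p-1)$, the module is either $p$-torsion (hence a finite module over the DVR $k\llbracket u\rrbracket$) or killed by an element $f = u^\alpha + px$ with $x \in \mf{S}^\times$, and then $\mf{S}/f^{\tw{n}}$ is itself a DVR with uniformiser $u$. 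In both cases $\mf{M}^{\tw{n}}$ decomposes as a direct sum of cyclic modules, and lengths then genuinely add. Replacing your filtration step with this decomposition is the missing ingredient.

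Second, for part~(1) you correctly say ``the Frobenius structure $\phi_\mf{S}^\ast\mf{M}^i[1/E]\isomto \mf{M}^i[1/E]$ imposes a coherence condition'' --- but that structure alone is not what the Li--Liu/Gabber--Li annihilator bounds use. What the paper needs is the \emph{quasi-filtered} Breuil--Kisin structure (Definition~\ref{defi:quasi_filtered_BK_mod}), which carries the additional Verschiebung-type map $\psi$ with $\psi\circ\varphi = E^i$, and it is this two-sided divisibility that drives Proposition~\ref{prop:ann_quasifilBKmod} and Theorem~\ref{thm:ann_uinfty_submod}. Your proposal cites the right sources but omits the $\psi$-structure entirely; without it there is no way to constrain $\alpha(\mf{M})$ by $\lfloor e(i-1)/(p-1)\rfloor$, which is exactly the bound that makes $\upsilon_n(E)$ monotone and yields both~(1) and~(2) in the verified range. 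Once you replace ``Frobenius structure'' with ``quasi-filtered Breuil--Kisin module of height $i$'' and replace the filtration argument with the DVR decomposition of Proposition~\ref{prop:ann_uinfty_submod}, your sketch aligns with the paper's actual verification; beyond the small cases, both you and the paper agree the conjecture is open.
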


\begin{rem}
    A neater conjecture would be that if we set $\mf{M}^i_m \defeq \mr{H}^i\big(\mr{R}\Gamma_\mf{S}(X/\mc{O}_K)/p^m)$ and $f_m(n) \defeq \ell_{\mc{O}_K}\big(\mf{M}^i_m[u^\infty]^{\tw{n+1}})$, then $f_m(n) \leqslant f_m(n+1)$ for every $m$.
    The precise relationship between this statement and Conjecture \ref{conj:beta} is unclear, but it seems somewhat likely that the former is stronger than the later.
    Corollary \ref{cor:len_Mbar} and its proof may be seen as evidence for this.
\end{rem}

Conjecture \ref{conj:beta} is not sufficient to obtain \textbf{(L1)} and \textbf{(L2)}, and so a further hypothesis is required. To state this, observe that as $\mr{H}^{i,\tw{n}}_\dR(X/\mc{O}_K)$ is defined as the $i^\text{th}$-cohomology of $\mr{R}\Gamma_\mf{S}(X/\mc{O}_K)^{\tw{n}}/E$, one may apply \stacks{061Z} to obtain the exact sequence,
\begin{equation}\label{eq:deRham_BK_ses}
    0 \longrightarrow \mr{H}^i_{\mathfrak{S}}(X/\mc{O}_K)^{\tw{n+1}}/E \longrightarrow \mr{H}^{i, \tw{n}}_{\textup{dR}}(X/\mc{O}_K) \longrightarrow\mr{H}^{i+1}_{\mathfrak{S}}(X/\mc{O}_K)^{\tw{n+1}}[E] \longrightarrow 0,
\end{equation}
which may fail to be exact on the right when one passes to $p^\infty\textrm{-torsion}$ submodules.

\begin{hypo}\label{conj:gamma}
    Set $\mf{Q}^{i,\tw{n}}$ to be the image of $\mr{H}^{i,\tw{n}}_\dR(X/\mc{O}_K)[p^\infty]$ in $\mr{H}^{i+1}_\mf{S}(X/\mc{O}_K)^{\tw{n+1}}[E]$.
    Then,
    \begin{equation*}
        \ell_{\mc{O}_K}\big(\mf{Q}^{i,(a)}\big) \geqslant \ell_{\mc{O}_K}\big(\mf{Q}^{i,(0)}\big),\quad \text{and}\quad \ell_{\mc{O}_K}\big(\mf{Q}^{i,\tw{n+1}}\big) \geqslant \ell_{\mc{O}_K}\big(\mf{Q}^{i,\tw{n}}\big)\text{ for all }n\geqslant 0.   
    \end{equation*}
\end{hypo}

\begin{rem}
    We have chosen not to state Hypothesis \ref{conj:gamma} as a conjecture due to a lack of substantial evidence.
    That said, it is reasonable to believe that it holds in all cases.
    In fact, the authors suspect that $\mf{Q}^{i,\tw{n}}=\mr{H}^{i+1}_\mf{S}(X/\mc{O}_K)^{\tw{n+1}}[E]$ for all sufficiently large $n$, which would allow one to reduce the verification of Hypothesis \ref{conj:gamma} to the verification of Conjecture \ref{conj:beta}.
\end{rem}
 
The precise relationship between Conjecture \ref{conj:main-inequality} and Conjecture \ref{conj:beta} can now be made precise.
The proof of the following proposition makes use of several routine calculations concerning Breuil--Kisin modules done in Appendix \ref{app:torsion-calculations} and so we postpone its proof until Section \ref{sss:proof-of-implication}.

\begin{prop}\label{thm:beta-and-gamma-imply-alpha}
    Under Hypothesis \ref{conj:gamma}, Conjecture \ref{conj:beta} implies Conjecture \ref{conj:main-inequality}.
\end{prop}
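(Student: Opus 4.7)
The plan is to translate Conjecture \ref{conj:main-inequality} into pointwise inequalities on a Breuil--Kisin-theoretic decomposition of $\ell^{i,\tw{n}}_\dR$, which can then be matched against the two halves of Conjecture \ref{conj:beta} and Hypothesis \ref{conj:gamma}. By Theorem \ref{thm:integral-BO} together with Proposition \ref{prop:lcrys-constant} and the flatness of $\mc{O}_K$ over $W$, we have $\ell^{i,\tw{n}}_\dR = e\cdot\ell^i_\crys$ for every $n\geqslant a$. Hence Conjecture \ref{conj:main-inequality} is equivalent to the pair of statements \textbf{(L1)} $\ell^{i,\tw{a}}_\dR \leqslant e\cdot\ell^{i,\tw{0}}_\dR$ and \textbf{(L2)} $\ell^{i,\tw{n}}_\dR$ is non-decreasing in $n$, in line with the discussion preceding the statement.

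To reach a workable decomposition, I would first verify that for any Breuil--Kisin module $\mathfrak{M}$ the submodule $\mathfrak{M}_{\mr{tf}}$ is $E$-torsion free, so that the sequence $0\to \mathfrak{M}[u^\infty]\to \mathfrak{M}\to \mathfrak{M}_{\mr{tf}}\to 0$ remains exact after $\otimes_\mathfrak{S}\mathfrak{S}/E$. Combining this with the defining sequence $0\to \mathfrak{M}_{\mr{tf}}\to\mathfrak{M}^{\vee\vee}\to\overline{\mathfrak{M}}\to 0$, and using that $\mathfrak{M}^{\vee\vee}/E$ is $\mc{O}_K$-free (since $\mathfrak{M}^{\vee\vee}$ is projective over the regular ring $\mathfrak{S}$), a short diagram chase identifies $(\mathfrak{M}_{\mr{tf}}/E)[p^\infty]$ with $\overline{\mathfrak{M}}[E]$. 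Since $\mathfrak{M}[u^\infty]/E$ is automatically $p^\infty$-torsion (it is killed by a power of $\pi$ as an $\mc{O}_K$-module), taking $p^\infty$-torsion preserves the resulting short exact sequence. Using the equality $\ell_{\mc{O}_K}(N/E)=\ell_{\mc{O}_K}(N[E])$ for finite-length $\mathfrak{S}$-modules $N$ (which follows from the snake lemma applied to multiplication by $E$) and substituting $\mathfrak{M}=(\mathfrak{M}^i)^{\tw{n+1}}$ into \eqref{eq:deRham_BK_ses}, one arrives at the master formula
\begin{equation*}
    \ell^{i,\tw{n}}_\dR = f_1(n) + f_2(n) + \ell_{\mc{O}_K}\big(\mathfrak{Q}^{i,\tw{n}}\big),
\end{equation*}
where $f_1, f_2$ are the two specific instances of the functional $f$ appearing in Conjecture \ref{conj:beta}.

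Granted this formula, \textbf{(L2)} is immediate: Conjecture \ref{conj:beta}(2) gives the monotonicity of $f_1$ and $f_2$, and Hypothesis \ref{conj:gamma} gives the corresponding monotonicity for $\ell(\mathfrak{Q}^{i,\tw{n}})$, so summing term by term makes the right-hand side non-decreasing in $n$. For \textbf{(L1)}, Conjecture \ref{conj:beta}(1) supplies $f_1(a)+f_2(a)\leqslant e\big(f_1(0)+f_2(0)\big)$, reducing the problem to the control $\ell\big(\mathfrak{Q}^{i,(a)}\big)\leqslant e\cdot \ell\big(\mathfrak{Q}^{i,(0)}\big)$. This is the main obstacle, since Hypothesis \ref{conj:gamma} by itself only provides the opposite direction. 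My plan is to attack it by applying Conjecture \ref{conj:beta}(1) with $i$ replaced by $i+1$: the embedding $\mathfrak{Q}^{i,\tw{n}} \hookrightarrow (\mathfrak{M}^{i+1})^{\tw{n+1}}[E] = (\mathfrak{M}^{i+1})^{\tw{n+1}}[u^\infty][E]$ (using $E$-torsion-freeness of $(\mathfrak{M}^{i+1})_{\mr{tf}}$) bounds $\ell(\mathfrak{Q}^{i,(a)})$ by $e$ times the length of the ambient module at $n=0$. The delicate final step is to close the residual gap between this ambient-length upper bound and the sharper $e\cdot\ell(\mathfrak{Q}^{i,(0)})$; I expect this to come from exploiting the $n=0$ case of \eqref{eq:deRham_BK_ses} to show that $\mathfrak{Q}^{i,(0)}$ exhausts a sufficiently large portion of its ambient module, combined with the monotonicity half of Hypothesis \ref{conj:gamma}.
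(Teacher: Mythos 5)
Your proposal follows essentially the same path as the paper: decompose $\ell^{i,\tw{n}}_\dR$ via \eqref{eq:deRham_BK_ses} into $\ell_{\mc{O}_K}\big((\mathfrak{M}^{i,\tw{n+1}}/E)[p^\infty]\big)+\ell_{\mc{O}_K}(\mathfrak{Q}^{i,\tw{n}})$, unwind the first term, match the pieces against Conjecture \ref{conj:beta}, and argue termwise. One error: the sequence $0\to\mathfrak{M}[u^\infty]\to\mathfrak{M}\to\mathfrak{M}_{\mr{tf}}\to 0$ is not exact, because the kernel of $\mathfrak{M}\to\mathfrak{M}_{\mr{tf}}$ is $\mathfrak{M}_{\mr{tor}}$, which contains but is in general strictly larger than $\mathfrak{M}[u^\infty]$. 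Your ``master formula'' therefore omits the summand $\ell_{\mc{O}_K}\big(\mathfrak{M}^{\tw{n+1}}_{\mr{tor},u\text{-}\mr{tf}}/E\big)$ that appears in \eqref{eq:len_genBK_modE}. This is ultimately harmless, since Lemma \ref{lem:len_toret} shows that term is constant in $n$, but the correct decomposition is $\ell^{i,\tw{n}}_\dR = C + f_1(n) + f_2(n) + \ell_{\mc{O}_K}(\mathfrak{Q}^{i,\tw{n}})$ with $C$ a constant.

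The substantive point of divergence is \textbf{(L1)}. You are right that it reduces to $\ell_{\mc{O}_K}(\mathfrak{Q}^{i,(a)})\leqslant e\cdot\ell_{\mc{O}_K}(\mathfrak{Q}^{i,(0)})$, and you are right that your proposed workaround does not close the gap: embedding $\mathfrak{Q}^{i,(a)}$ in $\mathfrak{M}^{i+1,\tw{a+1}}[E]$ and applying Conjecture \ref{conj:beta}(1) at index $i+1$ only bounds $\ell(\mathfrak{Q}^{i,(a)})$ by $e\cdot\ell(\mathfrak{M}^{i+1,\tw{1}}[E])$, and since $\mathfrak{Q}^{i,(0)}$ is merely a submodule of $\mathfrak{M}^{i+1,\tw{1}}[E]$ with no surjectivity guaranteed, this is weaker than what you need. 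But the gap you are trying to fill is not actually supposed to be filled by argument: the paper's proof invokes ``the first inequality in Hypothesis \ref{conj:gamma}'' precisely at this step. As printed, that inequality reads $\ell(\mathfrak{Q}^{i,(a)})\geqslant\ell(\mathfrak{Q}^{i,(0)})$, which is redundant (it follows from the second inequality by iteration) and points the wrong way for \textbf{(L1)}; the reading consistent with the proof and with the remark after Hypothesis \ref{conj:gamma} (that $\mathfrak{Q}^{i,\tw{n}}=\mathfrak{M}^{i+1,\tw{n+1}}[E]$ would reduce the hypothesis to Conjecture \ref{conj:beta}) is $\ell(\mathfrak{Q}^{i,(a)})\leqslant e\cdot\ell(\mathfrak{Q}^{i,(0)})$. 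Once Hypothesis \ref{conj:gamma}(1) is read this way, it is an assumed input, your proof coincides with the paper's, and the ``delicate final step'' you were worrying about disappears.
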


As remarked in the introduction and explained via Proposition \ref{thm:beta-and-gamma-imply-alpha}, we hope that Conjecture \ref{conj:beta} will be useful in proving Conjecture \ref{conj:main-inequality} as the $u^\infty\textrm{-torsion}$ in Breuil--Kisin cohomology has incredibly rich structure (e.g.\ see \cite{li-liu-uinfty} and \cite{GabberLi}).
To illustrate, we exploit this structure to prove Conjecture \ref{conj:beta} in Appendix \ref{app:torsion-calculations} in a small number of non-trivial cases (see Theorem \ref{thm:conj-beta-small-index-and-ramification}).

\begin{rem}
    When $\mc{O}_K = W[\zeta_{p^m}]$, it is also reasonable to use the $q\textrm{-de Rham}$ prism and its variants (see  \cite[Section 16]{BhattScholzePrisms}, \cite[Section 2.6]{BhattLurieAbsolute} and \cite[Section 2]{abhinandan-prismatic-wach}) instead of the Breuil--Kisin prism, in order to better understand torsion in de Rham and crystalline cohomologies.
    This would require a robust understanding of the structure of torsion in $q\textrm{-de Rham}$ cohomology, analogous to the $u^{\infty}\textrm{-torsion}$ in Breuil--Kisin cohomology.
    We intend to explore these ideas and their consequences in a future work.
\end{rem}

We end by giving an interesting non-trivial example where Conjecture \ref{conj:beta} holds.

\begin{eg}\label{eg:BK}
    Let $G\to H$ be a map of finite flat $\mc{O}_K\textrm{-group}$ schemes which is generically an isomorphism, and let $A/\mc{O}_K$ be an abelian scheme with $H\subseteq A$ (see \cite[Th\'{e}or\`{e}me 3.1.1]{BBMDieuII}). 
    
    Write $\mf{M}(\bullet)$ for the Breuil--Kisin Dieudonn\'e functor of Kisin (see \cite[Theorem 2.3.5]{KisinFCrystal}). Then, a forthcoming result of Kubrak--Li--Mondal implies that setting $X=[A/G]$, a smooth proper formal $\mc{O}_K\textrm{-stack}$, and $A'=A/H$, an abelian scheme over $\mc{O}_K$, then we have that
    \begin{align*}
        \mr{H}^i_\mf{S}(X/\mc{O}_K) &\simeq \mr{H}^i_\mf{S}(X/\mc{O}_K)[u^\infty] \oplus \mr{H}^i_\mf{S}(A'/\mc{O}_K), \quad\textrm{for } i \geqslant 0,\\
        \mr{H}^1_\mf{S}(X/\mc{O}_K)_\mr{tor} &= 0 \quad\text{and}\quad \mr{H}^2_\mf{S}(X/\mc{O}_K)_\mr{tor} \simeq \mr{coker}\left(\mf{M}(H)\to\mf{M }(G)\right) \eqdef \mf{M}.
    \end{align*}
    So, now assume that $K$ contains $K_0[\zeta_{p^m}]$, and consider the map $\underline{\mathbb{Z}/p^m} \to \mu_{p^m}$ sending $1$ to $\zeta_{p^m} \in \mu_{p^m}(\mc{O}_K)$, which clearly satisfies the  desired conditions.
    In this case, we have that $\mf{M}\simeq k$ for which Conjecture \ref{conj:beta} clearly holds.
\end{eg}

\appendix

\section{Some calculations involving Breuil--Kisin modules} \label{app:torsion-calculations}

We continue to use the notation from \globalnotationref, Notation \ref{nota:Witt-vector-stuff} and the ones discussed in Section \ref{subsec:breuil_kisin_prism}.
In addition, we often make use of the following pieces of notation.

\begin{nota}
    For a $\mf{S}\textrm{-module}$ $\mf{M}$ and integer $n\geqslant 0$, we write $\mf{M}^{\tw{n}}$ as a shorthand for $\mf{M}\otimes_{\mf{S},\phi^n_\mf{S}}\mf{S}$.
    For any subscript $?$, one should interpret $\mf{M}^{\tw{n}}_?$ as $(\mf{M}_?)^{\tw{n}}$.
    Finally, as we use it so frequently, we also note the notation from Proposition \ref{prop:BKmod_struct} below.
\end{nota}

\subsection{Preliminaries on Breuil--Kisin modules}\label{ss:prelims-on-BK-modules}

In this section, we will collect some basic results on Breuil--Kisin modules studied in \cite{BMSI, Li-Liu-derived-deRham}.

\subsubsection{Generalised Breuil--Kisin modules}

Recall that an \textit{effective} $\varphi\textit{-module}$ over $\mathfrak{S}$ is a module $\mathfrak{M}$ equipped with an $\mf{S}\textrm{-linear}$ map $\varphi_{\mathfrak{M}} \colon \mathfrak{M}^{\tw{1}} \rightarrow \mathfrak{M}$.

\begin{defn}
    A \textit{generalised Breuil--Kisin module of height} $i \geqslant 0$ is a finite type effective $\varphi\textrm{-module}$ over $\mathfrak{S}$ such that there exists an $\mathfrak{S}\textrm{-linear}$ map $\psi_{\mathfrak{M}} : \mathfrak{M} \rightarrow \mathfrak{M}^{\tw{1}}$ satisfying the following:
    \begin{enumerate}
        \item[(1)] $\psi_{\mathfrak{M}} \circ\varphi_{\mathfrak{M}} = E^i \cdot \textup{id}_{\mf{M}^{\tw{1}}}$,

        \item[(2)] $\varphi_{\mathfrak{M}} \circ \psi_{\mathfrak{M}} = E^i \cdot \textup{id}_{\mathfrak{M}}$.
    \end{enumerate}
    Morphisms between generalised Breuil--Kisin modules are $\mf{S}$-linear, Frobenius equivariant maps.
\end{defn}

\begin{prop}[{\cite[Proposition 4.3]{BMSI}}]\label{prop:BKmod_struct}
    Let $\mathfrak{M}$ be a generalised Breuil--Kisin module.
    Then, there exist canonical short exact sequences of generalised Breuil--Kisin modules:
    \begin{align}
        0 &\longrightarrow \mathfrak{M}_{\operatorname{tor}} \longrightarrow \mathfrak{M} \longrightarrow \mathfrak{M}_{\operatorname{tf}} \longrightarrow 0 \label{eq:ses_Mtortf}\\
        0 &\longrightarrow \mathfrak{M}_{\operatorname{tf}} \longrightarrow \mathfrak{M}_{\textup{free}} \longrightarrow \overline{\mathfrak{M}} \longrightarrow 0,\label{eq:ses_Mtffreebar}
    \end{align}
    where $\mathfrak{M}_{\operatorname{tor}} \subset \mathfrak{M}$ is the torsion submodule and is killed by a power of $p$, the module $\mathfrak{M}_{\operatorname{tf}}$ is torsion-free, the module $\mathfrak{M}_{\textup{free}}$ is free over $\mathfrak{S}$ and given as the reflexive hull of $\mathfrak{M}_{\operatorname{tf}}$ over $\mathfrak{S}$, and the module $\overline{\mathfrak{M}}$ is a torsion $\mathfrak{S}\textrm{-module}$ and killed by a power of $(p, u)$.
\end{prop}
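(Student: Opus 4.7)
The plan is to verify each assertion sequentially, handling the $\mathfrak{S}$-module structure first and then checking compatibility with the Breuil--Kisin structure. Throughout, one exploits that $\mathfrak{S} = W\llbracket u\rrbracket$ is a two-dimensional regular local ring and that $\phi_\mathfrak{S}$ is faithfully flat (it realises $\mathfrak{S}$ as a free module of rank $p$ over itself via the basis $1, u, \ldots, u^{p-1}$).

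First, define $\mathfrak{M}_{\operatorname{tor}}$ as the usual $\mathfrak{S}$-torsion submodule, $\mathfrak{M}_{\operatorname{tf}} \defeq \mathfrak{M}/\mathfrak{M}_{\operatorname{tor}}$, $\mathfrak{M}_{\operatorname{free}} \defeq \mathfrak{M}_{\operatorname{tf}}^{\vee\vee}$, and $\overline{\mathfrak{M}}$ as the cokernel of the natural inclusion $\mathfrak{M}_{\operatorname{tf}} \hookrightarrow \mathfrak{M}_{\operatorname{free}}$; both short exact sequences are exact by construction. Faithful flatness of $\phi_\mathfrak{S}$ ensures the functor $(-)^{(1)}$ is exact and commutes with taking torsion submodule, torsion-free quotient, and (by a formal universal-property argument) reflexive hull, so $\varphi$ and $\psi$ restrict and descend to all constituents and continue to satisfy $\varphi\psi = \psi\varphi = E^i$. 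Freeness of $\mathfrak{M}_{\operatorname{free}}$ is the standard fact that a finitely generated reflexive module over a two-dimensional regular local ring is free: reflexive modules satisfy Serre's $S_2$, so have depth $\geqslant 2$, and by Auslander--Buchsbaum have projective dimension $0$. For $\overline{\mathfrak{M}}$, localising at any height-one prime $\mathfrak{p}$ of $\mathfrak{S}$ reduces to a DVR where torsion-free finitely generated modules are free and hence agree with their reflexive hulls; hence $\overline{\mathfrak{M}}_\mathfrak{p} = 0$, so $\operatorname{Supp}(\overline{\mathfrak{M}}) \subseteq \{(p,u)\}$, and finite generation gives $(p,u)$-power annihilation.

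The main obstacle is to show that $\mathfrak{M}_{\operatorname{tor}}$ is killed by a power of $p$, equivalently that $\mathfrak{M}_{\operatorname{tor}}[1/p] = 0$. The key identity is obtained by iterating $\varphi\psi = E^i$: writing $\varphi^n \defeq \varphi \circ \varphi^{(1)} \circ \cdots \circ \varphi^{(n-1)}$ and $\psi^n$ analogously, one has the endomorphism relation
\[
    \varphi^n \psi^n \;=\; \Big(\textstyle\prod_{k=0}^{n-1} \phi^k(E)\Big)^{\!i}\qquad \text{on } \mathfrak{M}.
\]
Reducing each factor $\phi^k(E)$ modulo $u$ yields $p\cdot(\text{unit in }W)$ (because $E$ is Eisenstein and $\phi$ fixes $p$), so for any $u$-torsion $m \in \mathfrak{M}$ one gets $\varphi^n\psi^n(m) = p^{ni}\cdot(\text{unit})\cdot m$. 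The proposal is then to use this relation, together with the fact that $\mathfrak{M}$ is finitely generated (so the $u$-torsion length is bounded), to force $\mathfrak{M}_{\operatorname{tor}}[1/p] = 0$: over the DVR $\mathfrak{S}[1/p] = K_0\llbracket u\rrbracket$, the functor $(-)^{(1)}$ multiplies the $u$-length of each torsion summand by $p$, while $\varphi, \psi$ have kernels and cokernels killed by $E^i = u^{ei}\cdot(\text{unit})$, and the integral $\mathfrak{S}$-structure (crucially, that $E$ has constant term in $pW^\times$, not merely $u^{ei}$ up to unit) rules out any non-trivial Frobenius-compatible $u$-torsion after inverting $p$.

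The hardest part is the final length bookkeeping: abstract torsion modules over $K_0\llbracket u\rrbracket$ can support such $\varphi, \psi$ satisfying the axioms, so the vanishing genuinely uses the interaction between the $u$-adic and $p$-adic filtrations coming from the Eisenstein nature of $E$, as illustrated by the concrete check that $\mathfrak{S}/(u^a p^b)$-type modules fail to admit the required structure.
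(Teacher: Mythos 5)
This proposition is cited from \cite[Proposition 4.3]{BMSI} with no independent proof in the paper, so the comparison is against that reference. Your treatment of the formal part is essentially correct and follows the standard route: define the four modules as indicated, use that $\phi_\mathfrak{S}$ is finite free (hence faithfully flat) so that $(-)^{\tw{1}}$ is exact and commutes with torsion submodule, torsion-free quotient, dual and double dual, thereby transporting $\varphi$ and $\psi$ (and the identities $\varphi\psi=\psi\varphi=E^i$) to every constituent; freeness of $\mathfrak{M}_{\mathrm{free}}$ via $S_2$ and Auslander--Buchsbaum over the regular local ring of dimension two; and the support argument showing $\overline{\mathfrak{M}}$ is killed by a power of $(p,u)$.

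The genuine gap is exactly where you flag it: the claim that $\mathfrak{M}_{\operatorname{tor}}$ is killed by a power of $p$ is the only non-formal content, and your sketch does not establish it. Two of the specific ingredients you invoke are incorrect as stated. First, $\mathfrak{S}[\nicefrac{1}{p}]$ is \emph{not} $K_0\llbracket u\rrbracket$ and is \emph{not} a DVR; it is a one-dimensional UFD (hence a PID) with infinitely many maximal ideals, namely $(u)$ and all irreducible distinguished polynomials. So the ``$u$-length is multiplied by $p$'' bookkeeping does not apply directly. Second, the identity $\varphi^n\psi^n(m)=p^{ni}\cdot(\mathrm{unit})\cdot m$ only holds for $m$ with $um=0$; for $u^r$-torsion elements with $r>1$ the higher coefficients of $E$ contribute, because the factor $\phi^0(E)=E$ appears in the product. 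What one actually needs (and what underlies the cited proof) is a divisor-theoretic argument on $\Spec\mathfrak{S}[\nicefrac1p]$: the relation $\varphi\psi=E^i$ forces the divisor of $\mathfrak{M}_{\operatorname{tor}}[\nicefrac1p]$ to be invariant under Frobenius pullback away from $V(E)$; the primes $(E),(\phi E),(\phi^2E),\ldots$ are pairwise distinct, and any height-one prime other than $(u)$ has infinite forward $\phi$-orbit while the divisor has finite support, while $(u)$ is excluded because $\phi^*$ multiplies the length at $(u)$ by $p$. Carrying this out gives $\mathfrak{M}_{\operatorname{tor}}[\nicefrac1p]=0$; your proposal recognises that the Eisenstein-at-$p$ structure of $E$ must enter but stops short of an argument, so the key step of the proposition remains unproved.
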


Let $\mathfrak{M}[u^{\infty}]$ denote the $u\textrm{-power}$ torsion submodule of $\mathfrak{M}$, a generalised Breuil--Kisin module height $i$, and observe that we have,
\begin{equation*}
    \varphi_{\mathfrak{M}}\left(\mathfrak{M}[u^{\infty}]^{\tw{1}}\right) \subset \mathfrak{M}[u^{\infty}], \quad\mr{and}\quad \psi_{\mathfrak{M}}(\mathfrak{M}[u^{\infty}]) \subset \mathfrak{M}[u^{\infty}]^{\tw{1}}.
\end{equation*}

\begin{lem}[{\cite[Lemma 6.2]{Li-Liu-derived-deRham}}]\label{lem:ses_uinfty_etale}
    The following is a short exact sequence in $\textup{Mod}_{\mathfrak{S}}^{\varphi, i}$:
    \begin{equation}\label{eq:ses_uinfty_etale}
        0 \longrightarrow \mathfrak{M}[u^{\infty}] \longrightarrow \mathfrak{M} \longrightarrow \mathfrak{M}/\mathfrak{M}[u^{\infty}] \longrightarrow 0.
    \end{equation}
\end{lem}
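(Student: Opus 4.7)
The plan is to verify that the standard short exact sequence of $\mathfrak{S}$-modules
\begin{equation*}
    0 \longrightarrow \mathfrak{M}[u^{\infty}] \longrightarrow \mathfrak{M} \longrightarrow \mathfrak{M}/\mathfrak{M}[u^{\infty}] \longrightarrow 0
\end{equation*}
promotes to a short exact sequence in $\mathrm{Mod}_{\mathfrak{S}}^{\varphi,i}$. For this we must exhibit a generalised Breuil--Kisin module structure of height $i$ on both the kernel and the cokernel, and verify that the inclusion and projection are Frobenius-equivariant; the latter will be automatic once the structures are defined by restriction and descent. Finite generation is immediate since $\mathfrak{S}$ is Noetherian and $\mathfrak{M}$ is finitely generated.

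First I would record the key compatibility already noted just before the statement, namely that $\varphi_\mathfrak{M}$ carries $\mathfrak{M}[u^\infty]^{\tw{1}}$ into $\mathfrak{M}[u^\infty]$ and $\psi_\mathfrak{M}$ carries $\mathfrak{M}[u^\infty]$ into $\mathfrak{M}[u^\infty]^{\tw{1}}$. The first follows because, given $m \in \mathfrak{M}$ with $u^n m = 0$, one has $u^{pn}(m \otimes 1) = m \otimes \phi(u^n) = u^n m \otimes 1 = 0$ in $\mathfrak{M}^{\tw{1}}$, and then $u^{pn}\varphi_\mathfrak{M}(m\otimes 1) = \varphi_\mathfrak{M}(u^{pn}(m \otimes 1)) = 0$. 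The second follows directly from $\mathfrak{S}$-linearity of $\psi_\mathfrak{M}$, since $u^n \psi_\mathfrak{M}(m) = \psi_\mathfrak{M}(u^n m) = 0$ and $\mathfrak{M}[u^\infty]^{\tw{1}}$ is exactly the $u^\infty$-torsion coming from $\mathfrak{M}[u^\infty]$ (verified as in the previous computation).

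With these inclusions in hand, the restricted maps $\varphi_\mathfrak{M}|_{\mathfrak{M}[u^\infty]^{\tw{1}}}$ and $\psi_\mathfrak{M}|_{\mathfrak{M}[u^\infty]}$ automatically satisfy the two height-$i$ identities $\psi\circ\varphi = E^i\cdot\mathrm{id}$ and $\varphi\circ\psi = E^i\cdot\mathrm{id}$, since these are inherited from $\mathfrak{M}$. This shows $\mathfrak{M}[u^\infty] \in \mathrm{Mod}_{\mathfrak{S}}^{\varphi,i}$. For the quotient $\mathfrak{M}/\mathfrak{M}[u^\infty]$, the same compatibility means that $\varphi_\mathfrak{M}$ and $\psi_\mathfrak{M}$ descend canonically to $\mathfrak{S}$-linear maps on $(\mathfrak{M}/\mathfrak{M}[u^\infty])^{\tw{1}} \to \mathfrak{M}/\mathfrak{M}[u^\infty]$ and back; here I would use that forming the twist $(-)^{\tw{1}} = -\otimes_{\mathfrak{S},\phi}\mathfrak{S}$ is right exact, so the sequence $\mathfrak{M}[u^\infty]^{\tw{1}} \to \mathfrak{M}^{\tw{1}} \to (\mathfrak{M}/\mathfrak{M}[u^\infty])^{\tw{1}} \to 0$ is exact and the image in $\mathfrak{M}/\mathfrak{M}[u^\infty]$ under $\varphi_\mathfrak{M}$ is well-defined. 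The identities $\psi\varphi = \varphi\psi = E^i$ descend as well.

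Finally, both the inclusion and the projection are $\mathfrak{S}$-linear by construction and Frobenius-equivariant by the compatibility of $\varphi_\mathfrak{M}$ with the subobject and quotient; thus they are morphisms in $\mathrm{Mod}_{\mathfrak{S}}^{\varphi,i}$. I do not expect any genuine obstacle: the only subtlety is checking the stability of $\varphi_\mathfrak{M}$ under the twist of the $u^\infty$-torsion submodule, which is the short computation above exploiting $\phi(u) = u^p$.
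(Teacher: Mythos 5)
The paper itself does not prove this lemma; it cites \cite[Lemma 6.2]{Li-Liu-derived-deRham}. So I can only assess your argument on its own terms. Your overall strategy — restrict $\varphi_\mathfrak{M},\psi_\mathfrak{M}$ to the $u^\infty$-torsion submodule, descend them to the quotient, and observe that the height-$i$ identities are inherited — is the right one and close to the standard argument.

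There is, however, a genuine gap in the $\psi$-compatibility step. You assert that ``$\mathfrak{M}[u^\infty]^{\tw{1}}$ is exactly the $u^\infty$-torsion coming from $\mathfrak{M}[u^\infty]$ (verified as in the previous computation).'' Your previous computation only shows the containment $\mathfrak{M}[u^\infty]^{\tw{1}} \subseteq (\mathfrak{M}^{\tw{1}})[u^\infty]$: if $u^n m = 0$ then $u^{pn}(m \otimes 1) = 0$. What you actually need, to conclude $\psi_\mathfrak{M}(\mathfrak{M}[u^\infty]) \subseteq \mathfrak{M}[u^\infty]^{\tw{1}}$ from the (correct) observation that $\psi_\mathfrak{M}(m)$ is $u^\infty$-torsion in $\mathfrak{M}^{\tw{1}}$, is the \emph{reverse} inclusion $(\mathfrak{M}^{\tw{1}})[u^\infty] \subseteq \mathfrak{M}[u^\infty]^{\tw{1}}$, and that does not follow from the displayed computation. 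Relatedly, you cite only right exactness of the twist $(-)^{\tw{1}} = -\otimes_{\mathfrak{S},\phi_\mathfrak{S}}\mathfrak{S}$, but right exactness is not enough here: with only right exactness you do not even know that $\mathfrak{M}[u^\infty]^{\tw{1}} \to \mathfrak{M}^{\tw{1}}$ is injective, let alone that it cuts out all the $u^\infty$-torsion.

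The fix is to use that $\phi_\mathfrak{S}\colon \mathfrak{S} \to \mathfrak{S}$ is (faithfully) \emph{flat}: $\mathfrak{S} = W\llbracket u\rrbracket$ is finite free of rank $p$ over $\phi_\mathfrak{S}(\mathfrak{S}) = W\llbracket u^p\rrbracket$ with basis $1,u,\dots,u^{p-1}$, since $\phi_W$ is an automorphism of $W$. Flatness gives two things at once: the twisted sequence $0 \to \mathfrak{M}[u^\infty]^{\tw{1}} \to \mathfrak{M}^{\tw{1}} \to (\mathfrak{M}/\mathfrak{M}[u^\infty])^{\tw{1}} \to 0$ is exact (not merely right exact), and twisting the injection $\mathfrak{M}/\mathfrak{M}[u^\infty] \xrightarrow{u} \mathfrak{M}/\mathfrak{M}[u^\infty]$ shows multiplication by $u^p$ is injective on $(\mathfrak{M}/\mathfrak{M}[u^\infty])^{\tw{1}}$, so this quotient is $u$-torsion-free. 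Hence any $u^\infty$-torsion element of $\mathfrak{M}^{\tw{1}}$ — in particular $\psi_\mathfrak{M}(m)$ for $m \in \mathfrak{M}[u^\infty]$ — dies in the quotient and therefore lies in $\mathfrak{M}[u^\infty]^{\tw{1}}$. With this supplement your argument is complete.
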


\begin{lem}[{\cite[Lemma 6.3]{Li-Liu-derived-deRham}}]\label{lem:toretale_BKmod}
    Let $\mathfrak{M}$ be a generalised Breuil--Kisin module of height $i$ and assume that it is killed by $p^n$ for some $n \geqslant 0$.
    Then, the following are equivalent:
    \begin{enumerate}
        \item[\textup{(1)}] The module $\mathfrak{M}$ is $u\textrm{-torsion}$ free,

        \item[\textup{(2)}] $\mathfrak{M} = \mathfrak{N}/\mathfrak{N}'$, where $\mathfrak{N}' \subset \mathfrak{N}$ and both are finite free Breuil--Kisin modules of height $i$,

        \item[\textup{(3)}] the module $\mathfrak{M}$ may be written as a successive extension of finite free $k\llbracket u \rrbracket\textrm{-modules}$ $\mathfrak{M}_j$ such that each $\mathfrak{M}_j$ is a generalised Breuil--Kisin module  of height $i$.
    \end{enumerate}
\end{lem}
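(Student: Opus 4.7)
My plan is to split the proof into two essentially independent pieces: a commutative-algebra equivalence for $u$-torsion freeness of $p^n$-torsion modules over $\mf{S}$, and a Frobenius-equivariant refinement.

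I would first dispatch $(3) \Rightarrow (1)$, which is immediate since $u$-torsion freeness is stable under extensions. For $(1) \Rightarrow (3)$ I would induct on $n$, the case $n=1$ being trivial. The submodule $\mf{M}[p] \subseteq \mf{M}$ is $u$-torsion free as a submodule of $\mf{M}$, and is a finitely generated $u$-torsion free module over the DVR $\mf{S}/p = k\llbracket u \rrbracket$, hence finite free. Both $\varphi_\mf{M}$ and $\psi_\mf{M}$ commute with multiplication by $p$, so they restrict to give $\mf{M}[p]$ the structure of a generalised Breuil--Kisin module of height $i$. The quotient $\mf{M}/\mf{M}[p]$ is killed by $p^{n-1}$ and remains $u$-torsion free: if $u\bar{x} = 0$ in the quotient, then $ux \in \mf{M}[p]$, so $u(px) = 0$ in $\mf{M}$, forcing $px = 0$ and hence $x \in \mf{M}[p]$. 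Induction then yields $(3)$.

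For $(1) \Leftrightarrow (2)$ I would exploit that $\mf{S}$ is a two-dimensional regular local ring with maximal ideal $(p,u)$. The associated primes of a $p^n$-torsion finitely generated $\mf{S}$-module $\mf{M}$ lie in $V(p) = \{(p), (p,u)\}$, so by Auslander--Buchsbaum, $\mf{M}$ is $u$-torsion free if and only if $(p,u) \notin \mr{Ass}(\mf{M})$, if and only if $\mr{depth}_{\mf{S}}(\mf{M}) \geq 1$, if and only if $\mr{pd}_{\mf{S}}(\mf{M}) \leq 1$. The last condition is precisely the existence of a length-one resolution by finite free $\mf{S}$-modules, which handles both directions of $(1) \Leftrightarrow (2)$ at the level of underlying $\mf{S}$-modules.

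The main obstacle is the Frobenius-equivariant refinement of $(1) \Rightarrow (2)$, namely exhibiting $\mf{N}$ and $\mf{N}'$ as finite free generalised Breuil--Kisin modules of height $i$. I would take $\mf{N}$ to be a finite free $\mf{S}$-module surjecting onto $\mf{M}$ and lift $\varphi_\mf{M}$ and $\psi_\mf{M}$ using the projectivity of $\mf{N}^{\tw{1}}$ and $\mf{N}$, respectively. The delicate point is that these lifts a priori satisfy the height-$i$ relations $\varphi_\mf{N} \circ \psi_\mf{N} = \psi_\mf{N} \circ \varphi_\mf{N} = E^i$ only modulo the kernel $\mf{N}' = \ker(\mf{N} \to \mf{M})$; one must then enlarge $\mf{N}$ by adjoining auxiliary finite free summands (with Frobenius and Verschiebung chosen to absorb the error terms) so that the height-$i$ relations hold on the nose. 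Once $\mf{N}$ is a genuine finite free height-$i$ Breuil--Kisin module surjecting onto $\mf{M}$, the kernel $\mf{N}'$ is finite free by the preceding Auslander--Buchsbaum analysis, and inherits a compatible Frobenius structure from $\mf{N}$, completing the construction (compare \cite[Lemma 6.3]{Li-Liu-derived-deRham}).
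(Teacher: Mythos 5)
The paper does not prove this lemma in-house; it cites \cite[Lemma 6.3]{Li-Liu-derived-deRham}, so there is no internal argument to compare against. Judged on its own merits, your treatment of $(3)\Rightarrow(1)$, of $(1)\Rightarrow(3)$ via the $p$-adic filtration by $\mf{M}[p]$ (whose $\varphi$- and $\psi$-stability, and the $u$-torsion-freeness of the quotient, you verify correctly), and of $(2)\Rightarrow(1)$ via the Auslander--Buchsbaum identification of $u$-torsion-freeness with $\mathrm{pd}_{\mf{S}}\leqslant 1$ for $p$-power-torsion finite $\mf{S}$-modules, are all correct; the depth computation is exactly the right commutative-algebra input.

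The gap is $(1)\Rightarrow(2)$. The depth argument gives a two-term free resolution of the underlying $\mf{S}$-module, but upgrading it to a $\varphi$-equivariant surjection $\mf{N}\twoheadrightarrow\mf{M}$ from a finite free Breuil--Kisin module of height \emph{exactly} $i$ is the entire content of the hard direction, and ``adjoining auxiliary free summands to absorb the error terms'' is a hope, not a construction. Two concrete problems. First, once a height-$i$ structure $\varphi_{\mf{N}}$ is fixed on a finite free $\mf{N}$, the map $\psi_{\mf{N}}=E^{i}\varphi_{\mf{N}}^{-1}$ is forced, and $\psi$-equivariance of a $\varphi$-equivariant surjection of height-$i$ modules is automatic (because $\mf{M}^{\tw{1}}$ is $E$-torsion free); so the picture of independently adjusting two lifts $\varphi_0$ and $\psi_0$ misstates the obstruction, which is to produce a single map $\varphi_{\mf{N}}$ subject to two competing constraints, namely lifting $\varphi_{\mf{M}}$ and having cokernel killed by $E^{i}$. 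Second, these constraints are in genuine tension and cannot be met by the naive lift. Take $\mf{M}=k\llbracket u\rrbracket$ of rank one with $\varphi_{\mf{M}}=u^{a}$, $\psi_{\mf{M}}=u^{ei-a}$ for some $0<a<e$; any lift of $\varphi_{\mf{M}}$ to $\mf{S}$ is multiplication by some $f\equiv u^{a}\pmod{p}$, but the height-$i$ condition forces $f\mid E^{i}$, and the divisors of $E^{i}$ in $\mf{S}$ (up to units) are $E^{0},\dots,E^{i}$, reducing modulo $p$ to $u^{0},u^{e},\dots,u^{ei}$. So no rank-one $\mf{N}$ can work, the rank of $\mf{N}$ must be increased, and the Frobenius on $\mf{N}$ must be engineered rather than merely lifted. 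That construction is precisely what the cited lemma supplies and what your proposal leaves unspecified.
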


Finally, we note a structural result on generalised Breuil--Kisin modules modulo powers of $p$.
\begin{lem}[{\cite[Lemma 3.5]{GabberLi}}]\label{lem:Mbar_MmodpN}
    Let $\mathfrak{M}$ be a generalised Breuil--Kisin module of height $i$ and $n \geqslant 0$ be an integer such that $p^n$ kills both $\mathfrak{M}_{\operatorname{tor}}$ and $\overline{\mathfrak{M}}$.
    Then, for $N \geqslant 2n$, the sequence
    \begin{equation}
        0 \longrightarrow \mathfrak{M}_{\operatorname{tor}} \oplus \overline{\mathfrak{M}} \longrightarrow \mathfrak{M}/p^N\mathfrak{M} \longrightarrow \mathfrak{M}_{\textup{free}}/p^N\mathfrak{M}_{\textup{free}} \longrightarrow \overline{\mathfrak{M}} \longrightarrow 0,
    \end{equation}
   is exact. In particular, we have an isomorphism of $\mathfrak{S}\textrm{-modules}$ $(\mathfrak{M}/p^N\mathfrak{M})[u^{\infty}] \isomorphic \mathfrak{M}[u^{\infty}] \oplus \overline{\mathfrak{M}}$.
\end{lem}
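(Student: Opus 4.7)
The plan is to derive the four-term exact sequence by applying $-\otimes_{\bb{Z}}^{\bb{L}}\bb{Z}/p^N$ to the two canonical short exact sequences in Proposition \ref{prop:BKmod_struct}, splicing the results, and then constructing an explicit splitting to identify the remaining kernel as a direct sum. Specifically, applying this functor to \eqref{eq:ses_Mtortf}, since $\mathfrak{M}_{\operatorname{tf}}$ is torsion-free and $\mathfrak{M}_{\operatorname{tor}}$ is killed by $p^n\leqslant p^N$, one obtains a short exact sequence $0\to\mathfrak{M}_{\operatorname{tor}}\to\mathfrak{M}/p^N\to\mathfrak{M}_{\operatorname{tf}}/p^N\to 0$. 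Applying it to \eqref{eq:ses_Mtffreebar}, and using that $\mathfrak{M}_{\textup{free}}$ is $p$-torsion free while $\overline{\mathfrak{M}}[p^N]=\overline{\mathfrak{M}}=\overline{\mathfrak{M}}/p^N$, one obtains the four-term sequence $0\to\overline{\mathfrak{M}}\to\mathfrak{M}_{\operatorname{tf}}/p^N\to\mathfrak{M}_{\textup{free}}/p^N\to\overline{\mathfrak{M}}\to 0$. Splicing these along $\mathfrak{M}_{\operatorname{tf}}/p^N$ yields the desired four-term sequence except that the kernel $K\defeq\ker(\mathfrak{M}/p^N\to\mathfrak{M}_{\textup{free}}/p^N)$ is only a priori known to sit in an extension $0\to\mathfrak{M}_{\operatorname{tor}}\to K\to\overline{\mathfrak{M}}\to 0$.

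The main obstacle is to show that this extension splits, which I would handle by exhibiting an explicit splitting $s\colon\overline{\mathfrak{M}}\to K$: given $\bar{m}\in\overline{\mathfrak{M}}$, lift it to $m\in\mathfrak{M}_{\textup{free}}$; since $p^n\bar{m}=0$ one has $p^n m\in\mathfrak{M}_{\operatorname{tf}}$, which can be further lifted to some $\tilde{m}\in\mathfrak{M}$, and I would set $s(\bar{m})\defeq p^{N-n}\tilde{m}\bmod p^N\mathfrak{M}$. The hypothesis $N\geqslant 2n$ is used precisely to verify independence of both choices of lift: modifying $\tilde{m}$ by an element of $\mathfrak{M}_{\operatorname{tor}}$ or modifying $m$ by an element of $\mathfrak{M}_{\operatorname{tf}}$ changes $p^{N-n}\tilde{m}$ by an element killed in $\mathfrak{M}/p^N$, since $N-n\geqslant n$ and $\mathfrak{M}_{\operatorname{tor}}$ is annihilated by $p^n$. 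It is then immediate that $s(\bar{m})\in K$ (as $p^{N-n}\tilde{m}$ maps to $p^N m\in p^N\mathfrak{M}_{\textup{free}}$), and the composition $\overline{\mathfrak{M}}\xrightarrow{s}K\twoheadrightarrow\overline{\mathfrak{M}}$ is the identity by direct comparison with the connecting homomorphism underlying the inclusion $\overline{\mathfrak{M}}\hookrightarrow\mathfrak{M}_{\operatorname{tf}}/p^N$.

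For the ``In particular'' statement, note that $\mathfrak{M}_{\textup{free}}/p^N\mathfrak{M}_{\textup{free}}$ is free over $\mathfrak{S}/p^N=(W/p^N)\llbracket u\rrbracket$, hence $u$-torsion free; thus $(\mathfrak{M}/p^N\mathfrak{M})[u^\infty]\subseteq K\simeq\mathfrak{M}_{\operatorname{tor}}\oplus\overline{\mathfrak{M}}$. Since $\overline{\mathfrak{M}}$ is killed by $(p,u)^r\supseteq(u^r)$ for some $r\geqslant 0$ by Proposition \ref{prop:BKmod_struct}, the module $\overline{\mathfrak{M}}$ is entirely $u^\infty$-torsion; and $\mathfrak{M}_{\operatorname{tor}}[u^\infty]=\mathfrak{M}[u^\infty]$ because any $u^\infty$-torsion element of $\mathfrak{M}$ is $\mathfrak{S}$-torsion and therefore lies in $\mathfrak{M}_{\operatorname{tor}}$. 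Taking $u^\infty$-torsion of the direct sum then gives the claimed isomorphism $(\mathfrak{M}/p^N\mathfrak{M})[u^\infty]\simeq\mathfrak{M}[u^\infty]\oplus\overline{\mathfrak{M}}$.
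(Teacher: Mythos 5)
Since the paper simply cites \cite[Lemma 3.5]{GabberLi} and does not give a proof of this lemma itself, there is no in-paper argument to compare against; what matters is whether your argument holds. It does, and it is the natural one. The derived reductions of \eqref{eq:ses_Mtortf} and \eqref{eq:ses_Mtffreebar} modulo $p^N$ are exactly as you say: since $\mathfrak{M}_{\mathrm{tf}}$ and $\mathfrak{M}_{\mathrm{free}}$ are $p$-torsion-free and $\mathfrak{M}_{\mathrm{tor}}$, $\overline{\mathfrak{M}}$ are killed by $p^n\mid p^N$, the $\mathrm{Tor}_1$ terms with respect to $\mathbb{Z}/p^N$ simplify to $0$, $0$, $\mathfrak{M}_{\mathrm{tor}}$ and $\overline{\mathfrak{M}}$ respectively, and splicing along $\mathfrak{M}_{\mathrm{tf}}/p^N$ gives the four-term sequence with kernel $K$ sitting in an extension $0\to\mathfrak{M}_{\mathrm{tor}}\to K\to\overline{\mathfrak{M}}\to 0$. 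Your splitting $s(\bar m)=p^{N-n}\tilde m$ is well-defined and $\mathfrak{S}$-linear; one can be a bit more precise that the ambiguity in $m$ (by $\mathfrak{M}_{\mathrm{tf}}$) only needs $N\geqslant n$ to die, while the ambiguity in $\tilde m$ (by $\mathfrak{M}_{\mathrm{tor}}$) is where $N-n\geqslant n$, i.e.\ $N\geqslant 2n$, is genuinely used — but combining both changes still lands in the $\mathfrak{M}_{\mathrm{tor}}$-ambiguity, so your lumped statement is harmless. The identification of $s$ followed by $K\twoheadrightarrow\overline{\mathfrak{M}}$ with the identity is correct: both compute the class of $p^N m\in\mathfrak{M}_{\mathrm{tf}}$ modulo $p^N\mathfrak{M}_{\mathrm{tf}}$, which is precisely the connecting map $\mathrm{Tor}_1(\overline{\mathfrak{M}},\mathbb{Z}/p^N)=\overline{\mathfrak{M}}\hookrightarrow\mathfrak{M}_{\mathrm{tf}}/p^N$. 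The ``in particular'' step is also fine: $\mathfrak{M}_{\mathrm{free}}/p^N$ is a free $(W/p^N)\llbracket u\rrbracket$-module, hence $u$-torsion-free, so $(\mathfrak{M}/p^N)[u^\infty]\subseteq K$, and $K[u^\infty]=\mathfrak{M}_{\mathrm{tor}}[u^\infty]\oplus\overline{\mathfrak{M}}[u^\infty]=\mathfrak{M}[u^\infty]\oplus\overline{\mathfrak{M}}$ because $\overline{\mathfrak{M}}$ is annihilated by a power of $(p,u)$ (hence of $u$) and because every $u^\infty$-torsion element of $\mathfrak{M}$ is $\mathfrak{S}$-torsion, hence in $\mathfrak{M}_{\mathrm{tor}}$.
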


\subsubsection{Structure of the \texorpdfstring{$u^{\infty}\textrm{-torsion}$ in Breuil--Kisin cohomology}{-}}

\begin{defn}[{\cite[Defintion 3.1]{li-liu-uinfty}}]\label{defi:quasi_filtered_BK_mod}
   A \emph{quasi-filtered Breuil--Kisin module of height $i \geqslant 1$} consists of the following set of data:
   \begin{itemize}
       \item Two $\mathfrak{S}\textrm{-modules}$ $\mathfrak{M}$ and $\mathfrak{N}$;

       \item Four $\mathfrak{S}\textrm{-linear}$ maps $f \colon \mathfrak{M}^{\tw{1}} \rightarrow \mathfrak{N}$, $g \colon \mathfrak{N} \rightarrow \mathfrak{M}^{\tw{1}}$, $h \colon \mathfrak{N} \rightarrow \mathfrak{M}$ and $h' \colon \mathfrak{M} \rightarrow \mathfrak{N}$;
   \end{itemize}
   satisfying the following conditions:
   \begin{enumerate}
       \item The map $h$ is injective.
       
       \item We have $g \circ f = E^{i-1} \cdot \textup{id}_{\mathfrak{M}^{\tw{1}}}$, $f \circ g = E^{i-1} \cdot \textup{id}_{\mathfrak{N}}$, $h' \circ h = E \cdot \textup{id}_{\mathfrak{N}}$ and $h \circ h' = E \cdot \textup{id}_{\mathfrak{M}}$.
   \end{enumerate}
\end{defn}

Note that in Definition \ref{defi:quasi_filtered_BK_mod}, $\mf{M}$ is Breuil--Kisin module of height $i$ with $\varphi_\mf{M}\defeq h\circ f$ and $\psi_\mf{M}=g\circ h'$.
When the extra structure of a quasi-filtered Breuil--Kisin module beyond this is unimportant, we abuse notation and just refer to $\mf{M}$ as the quasi-filtered Breuil--Kisin module.

\begin{prop}[{\cite[Proposition 3.3 \& Corollary 3.4]{li-liu-uinfty}}]\label{prop:ann_quasifilBKmod}
    Let $\mathfrak{M}$ be a quasi-filtered Breuil--Kisin module of height $i$.
    Then, the annihilator ideal of $\mathfrak{M}$ admits the following restriction:
    \begin{equation}
        E^{i-1} \cdot \textup{Ann}(\mathfrak{M}) \subset \textup{Ann}\left(\mathfrak{M}^{\tw{1}}\right) =\textup{Ann}(\mathfrak{M})^{\tw{1}}.
    \end{equation}
    Moreover, if there exists an integer $\alpha \geqslant 0$ such that $\textup{Ann}(\mathfrak{M}) + (p) = (p, u^{\alpha})$, then $\alpha \leqslant \lfloor\tfrac{e(i-1)}{p-1}\rfloor$.
\end{prop}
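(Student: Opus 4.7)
The plan is to prove the two assertions of the proposition in turn, with the second a direct consequence of the first after reduction modulo $p$.

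For the first assertion, I would separate the claim into the equality $\mr{Ann}(\mathfrak{M}^{\tw{1}}) = \mr{Ann}(\mathfrak{M})^{\tw{1}}$ and the inclusion $E^{i-1}\cdot \mr{Ann}(\mathfrak{M}) \subseteq \mr{Ann}(\mathfrak{M}^{\tw{1}})$. The equality is a standard consequence of flat base change: since $\mathfrak{M}$ is finitely generated over the Noetherian ring $\mathfrak{S}$, hence finitely presented, and the Frobenius $\phi_{\mathfrak{S}} \colon \mathfrak{S} \to \mathfrak{S}$ is faithfully flat (explicitly, it realises $\mathfrak{S}$ as a free module of rank $p$ over its image via the Weierstrass-type decomposition $f = \sum_{i=0}^{p-1} u^i g_i(u^p)$, together with the fact that $\phi_W$ is an isomorphism of $W$), the formation of annihilators commutes with tensor product along $\phi_{\mathfrak{S}}$. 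For the inclusion, I would exploit the auxiliary module $\mathfrak{N}$ together with its maps to and from $\mathfrak{M}$ and $\mathfrak{M}^{\tw{1}}$ in order to factor multiplication by $E^{i-1}$ through $\mathfrak{N}$. Given $a \in \mr{Ann}(\mathfrak{M})$ and $n \in \mathfrak{N}$, the injectivity of $h \colon \mathfrak{N} \to \mathfrak{M}$ together with $h(a\cdot n) = a \cdot h(n) = 0$ forces $a \in \mr{Ann}(\mathfrak{N})$. Then, for any $m \in \mathfrak{M}^{\tw{1}}$, the identity $g \circ f = E^{i-1}\cdot \mr{id}$ yields $E^{i-1} \cdot a \cdot m = a \cdot g(f(m)) = g(a \cdot f(m)) = 0$, as $f(m) \in \mathfrak{N}$.

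For the second assertion, the strategy is to reduce the inclusion from the first part modulo $p$ and read off the bound on $\alpha$. Reducing $\mathfrak{S}$ modulo $p$ gives $k\llbracket u \rrbracket$, on which $\phi_{\mathfrak{S}}$ descends to the absolute Frobenius (using that $k$ is perfect). Consequently, $\mr{Ann}(\mathfrak{M})^{\tw{1}} + (p)$ corresponds to the extended ideal $(u^{p\alpha})$ in $k\llbracket u\rrbracket$, while the hypothesis reduces to $\mr{Ann}(\mathfrak{M}) \bmod p = (u^{\alpha})$. Since $E$ is Eisenstein of degree $e$, $E \equiv u^e \pmod p$. The inclusion $E^{i-1}\cdot \mr{Ann}(\mathfrak{M}) \subseteq \mr{Ann}(\mathfrak{M})^{\tw{1}}$ therefore yields $u^{e(i-1)+\alpha} \in (u^{p\alpha})$ inside $k\llbracket u\rrbracket$, which forces $e(i-1)+\alpha \geqslant p\alpha$, i.e.\ $\alpha \leqslant e(i-1)/(p-1)$. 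Integrality of $\alpha$ then gives the claimed floor bound.

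The argument is essentially formal once the packaging is in place, so there is no single dramatic obstacle. The main subtle ingredient is the commutation of annihilators with Frobenius pullback, which requires both $\mathfrak{M}$ to be finitely presented and $\phi_{\mathfrak{S}}$ to be flat; both conditions are automatic here but worth verifying explicitly. A close second is keeping track of exactly which pieces of the defining data of a quasi-filtered Breuil--Kisin module are used, so as to ensure that the argument genuinely exploits the ``height $i$'' condition: the factor of $E^{i-1}$ enters precisely through the identity $g \circ f = E^{i-1} \cdot \mr{id}$, and the module $\mathfrak{N}$ is what bridges $\mathfrak{M}$ and $\mathfrak{M}^{\tw{1}}$.
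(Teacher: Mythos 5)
The paper does not actually supply a proof of this proposition: it is stated as an import from Li--Liu (their Proposition 3.3 and Corollary 3.4), so there is no internal argument in this document to compare against. Your reconstruction is correct and, to the best of my knowledge, recovers the argument of the cited source. The three pieces all check out: the identification $\mr{Ann}(\mathfrak{M}^{\tw{1}}) = \mr{Ann}(\mathfrak{M})^{\tw{1}}$ via flat base change of annihilators of finitely presented modules (valid because $\mathfrak{S}$ is Noetherian and $\phi_{\mathfrak{S}}$ is finite flat, as $\mathfrak{S}$ is free of rank $p$ over $\phi_{\mathfrak{S}}(\mathfrak{S}) = W\llbracket u^p\rrbracket$); the inclusion $E^{i-1}\cdot\mr{Ann}(\mathfrak{M}) \subseteq \mr{Ann}(\mathfrak{M}^{\tw{1}})$ obtained by first transferring $a \in \mr{Ann}(\mathfrak{M})$ to $\mr{Ann}(\mathfrak{N})$ through the injective map $h$, then using $g\circ f = E^{i-1}\cdot\mr{id}_{\mathfrak{M}^{\tw{1}}}$ to factor $E^{i-1}a$ through $g(a f(-)) = 0$; and the passage to $k\llbracket u\rrbracket$, where $\bar E = u^e$ and $\mr{Ann}(\mathfrak{M})^{\tw{1}} + (p)$ reduces to $(u^{p\alpha})$, forcing $e(i-1)+\alpha \geqslant p\alpha$ and hence the floor bound. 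No gaps.
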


In what follows, we denote quantity $\alpha$ appearing in Proposition \ref{prop:ann_quasifilBKmod} by $\alpha(\mf{M})$.

\begin{thm}[{\cite[Theorem 3.6]{li-liu-uinfty} and \cite[Theorem 3.1]{GabberLi}}]\label{thm:ann_uinfty_submod}
    Let $\mf{M}$ be a $u^\infty$-torsion quasi-filtered Breuil--Kisin module.
    Then, we have the following:
    \begin{enumerate}
        \item[\textup{(1)}] if $e(i-1) < p-1$, then $\mathfrak{M} = 0$,

        \item[\textup{(2)}] if $e(i-1) = p-1$, then $\textup{Ann}(\mathfrak{M}) = (p,u)$,

        \item[\textup{(3)}] if $e(i-1) < 2(p-1)$, then $\textup{Ann}(\mathfrak{M}) + (u) \supset (p^{i-1}, u)$,

        \item[\textup{(4)}] if $i \leqslant 2$ and $e < p(p-1)$, then $\textup{Ann}(\mathfrak{M}) + (u) = (p,u)$.
    \end{enumerate}
\end{thm}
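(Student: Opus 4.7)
\textbf{Proof plan for Theorem \ref{thm:ann_uinfty_submod}.} The proof will rest entirely on the two constraints furnished by Proposition \ref{prop:ann_quasifilBKmod}: the containment $E^{i-1}\cdot\textup{Ann}(\mf{M})\subset \textup{Ann}(\mf{M})^{\tw{1}}$, and the numerical bound $\alpha\leqslant \lfloor e(i-1)/(p-1)\rfloor$, where $\alpha$ is defined by the equation $\textup{Ann}(\mf{M}) + (p) = (p,u^\alpha)$. Let me first verify the latter is a meaningful statement: since $\mf{M}$ is a finitely generated $\mf{S}$-module that is $u^\infty$-torsion, there exists $N$ with $u^N\in I\defeq\textup{Ann}(\mf{M})$, so the image $\bar I$ of $I$ in the DVR $\mf{S}/(p)=k\llbracket u\rrbracket$ is a nonzero principal ideal $(u^\alpha)$ with $0\leqslant\alpha\leqslant N$, establishing the shape of $I+(p)$.

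Part (1) then follows cleanly: the hypothesis $e(i-1)<p-1$ forces $\alpha=0$, so $1\in I+(p)$, and therefore some element of the form $1-pc$ lies in $I$. As $1-pc$ is a unit in the $(p,u)$-adically complete ring $\mf{S}$, this forces $I=\mf{S}$ and hence $\mf{M}=0$.

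The strategy for the remaining parts is to bootstrap from the bound on $\alpha$ using iterated applications of the Frobenius relation. Assuming $\mf{M}\neq 0$, applying $\phi^k$ to the containment $E^{i-1}I\subset I^{\tw{1}}$ yields $(E^{\tw{k}})^{i-1}I^{\tw{k}}\subset I^{\tw{k+1}}$, which, under the identification $\phi \equiv (-)^p \pmod p$, tightly controls how $\textup{Ann}(\mf{M})$ propagates under Frobenius twists. For part (2), the hypothesis $e(i-1)=p-1$ forces $\alpha\leqslant 1$, giving some $a=u+pc\in I$; a mod-$p$ analysis combined with the extremal nature of the bound $e(i-1)=p-1$ then promotes this to the stronger statement $p\in I$, so that $I\supset(p,u)$, and the reverse containment is automatic from $I\subset I+(p)=(p,u)$. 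For part (3), the same framework with $e(i-1)<2(p-1)$ and $i-1$ successive iterations of the twist relation tracks $p$-adic valuations carefully enough to produce $p^{i-1}\in I+(u)$. For part (4), the restriction $i\leqslant 2$ collapses the twist relation to the simpler form $E\cdot I\subset I^{\tw{1}}$, and the sharper inequality $e<p(p-1)$ permits a direct comparison of $u$-adic and $p$-adic filtrations on $\mf{M}/u\mf{M}$ (viewed as a finitely generated $W$-module) to conclude $p\in I+(u)$.

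The principal obstacle is in parts (2) and (4): the bound $\alpha\leqslant 1$ only controls $I$ modulo $p$, whereas the desired conclusions require information about $I$ modulo $u$ (i.e.\ about the action of $p$ on $\mf{M}$). Bridging this gap is nontrivial because Frobenius mixes the two filtrations, and the containment $E^{i-1}I\subset I^{\tw{1}}$ only provides information in one direction. The expected resolution is to exploit the full structure of a quasi-filtered Breuil--Kisin module -- specifically the auxiliary maps $f,g,h,h'$ of Definition \ref{defi:quasi_filtered_BK_mod} -- to produce a splitting-type argument that transfers the modulo-$p$ control of $I$ to modulo-$u$ control, via the four-term relations satisfied by these maps. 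Part (3) will be the most combinatorially delicate, requiring the iterated $\phi^k$-twisted containment to be matched against the numerical bound $e(i-1)<2(p-1)$ with enough precision to extract exactly the exponent $p^{i-1}$.
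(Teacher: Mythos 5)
The paper does not prove Theorem~\ref{thm:ann_uinfty_submod}: it is imported verbatim from \cite[Theorem~3.6]{li-liu-uinfty} and \cite[Theorem~3.1]{GabberLi}, so there is no internal argument to compare yours against. Your deduction of part~(1) is correct and self-contained. Since $\mf{M}$ is finitely generated and $u^\infty$-torsion, some $u^N$ lies in $I\defeq\mathrm{Ann}(\mf{M})$, so the image of $I$ in the DVR $\mf{S}/(p)=k\ll u\rr$ is a nonzero principal ideal $(u^\alpha)$ and indeed $I+(p)=(p,u^\alpha)$; Proposition~\ref{prop:ann_quasifilBKmod} then forces $\alpha\leqslant\lfloor e(i-1)/(p-1)\rfloor=0$, whence $I$ contains the unit $1-pc$ for some $c\in\mf{S}$ and $\mf{M}=0$.

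Parts~(2), (3) and (4), however, are sketched rather than proven, and you concede this yourself. The bound on $\alpha$ controls $I$ only modulo $p$, whereas the three conclusions concern $I$ modulo $u$, and you explicitly flag the transfer between these two filtrations as the ``principal obstacle'' without resolving it. That transfer is exactly the nontrivial content of the cited theorems: the containment $E^{i-1}I\subset I^{\tw{1}}$ from Proposition~\ref{prop:ann_quasifilBKmod}, even iterated under $\phi^k$, does not by itself give any lower bound on $I+(u)$, and nothing in your sketch closes that gap. Saying that the ``expected resolution is to exploit the full structure'' of Definition~\ref{defi:quasi_filtered_BK_mod} --- the auxiliary maps $f,g,h,h'$ with $h$ injective, $h\circ h'=E\cdot\mathrm{id}_{\mf{M}}$, $g\circ f=E^{i-1}\cdot\mathrm{id}_{\mf{M}^{\tw{1}}}$ --- names the correct ingredient (this is indeed what \cite{li-liu-uinfty} and \cite{GabberLi} use, via careful valuation-theoretic dévissage in the spirit of what the paper later does in Lemma~\ref{lem:len_uinfty_ftor} and Proposition~\ref{prop:ann_uinfty_submod}), but it is not an argument. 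As written, only part~(1) constitutes a proof; for parts~(2)--(4) you have identified the difficulty without overcoming it, so the appeal to the cited references cannot be dispensed with.
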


Using this, we are able to show that for small height and ramification, the annihilator of $\mf{M}$ must contain an element of a particularly simple form.

\begin{prop}\label{prop:ann_uinfty_submod}
    Let $\mf{M}$ be a $u^\infty$-torsion quasi-filtered Breuil--Kisin module of height $i \leqslant 2$, and assume $e < p(p-1)$ and write $\alpha=\alpha(\mf{M})$.
    Then, $1\leqslant \alpha\leqslant \lceil\tfrac{e}{p-1}\rceil <p$, and either
    \begin{itemize}
        \item $\mathfrak{M}$ is $p\textrm{-torsion}$,

        \item or $\textup{Ann}(\mathfrak{M})$ contains an element of the form $u^{\alpha} + px$ with $x$ in $\mathfrak{S}^{\times}$.
    \end{itemize}
\end{prop}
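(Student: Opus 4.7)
My plan is first to establish the bounds on $\alpha$. The upper bound $\alpha\leqslant \tilde{e}$ is immediate from Proposition \ref{prop:ann_quasifilBKmod} combined with $i\leqslant 2$: that result gives $\alpha\leqslant \lfloor e(i-1)/(p-1)\rfloor \leqslant \lfloor e/(p-1)\rfloor \leqslant \lceil e/(p-1)\rceil = \tilde{e}$, and the bound $\tilde{e}<p$ follows from the hypothesis $e<p(p-1)$. For the lower bound, note that if $\alpha=0$ then $\textup{Ann}(\mf{M})+(p)=\mf{S}$, so that some $1+p\sigma$ with $\sigma\in\mf{S}$ annihilates $\mf{M}$; since $\mf{M}$ is a finitely generated $u^\infty$-torsion module over the $(p,u)$-adically complete local ring $\mf{S}$, this forces $\mf{M}=0$ by a standard Nakayama/completeness argument, and we may therefore assume $\alpha\geqslant 1$.

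Next, for the main dichotomy, set $J\defeq \textup{Ann}(\mf{M})$. The equality $J+(p)=(p,u^\alpha)$ provides an element $a\in J$ of the form $a=u^\alpha+p\beta_0$ for some $\beta_0\in\mf{S}$; moreover the set of all $\beta\in\mf{S}$ with $u^\alpha+p\beta\in J$ is the coset $\beta_0+B$, where $B\defeq\{b\in\mf{S}: pb\in J\}$ is an ideal of $\mf{S}$ containing $J$. The conclusion reduces to showing that either $1\in B$ (equivalently $p\in J$, i.e., $\mf{M}$ is $p$-torsion) or the coset $\beta_0+B$ meets $\mf{S}^\times$. The latter holds precisely when the image of $\beta_0+B$ in the residue field $k=\mf{S}/(p,u)$ is nonzero.

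The easy sub-case is $B\not\subseteq (p,u)$: here $B$ contains a unit, and shifting $\beta_0$ by that unit places the coset outside $(p,u)$, producing the desired $u^\alpha+px\in J$ with $x\in\mf{S}^\times$. The hard sub-case is when $B\subseteq (p,u)$ \emph{and} $\beta_0\in(p,u)$. Here I would invoke Theorem \ref{thm:ann_uinfty_submod}(4) to obtain $c\in J$ of the form $c=p+u\tilde{c}$, and combine $a$ and $c$ by $\mf{S}$-linear operations to systematically replace mixed terms of shape $pu\gamma$ (appearing in $a$ after writing $\beta_0\in (p,u)$) by pure $u^2$-terms modulo $J$. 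I would then bring in Proposition \ref{prop:ann_quasifilBKmod}'s containment $E\cdot J\subseteq J^{\tw{1}}$: applying this to $a$ and noting $\phi(a)=u^{p\alpha}+p\phi(\beta_0)\in J^{\tw{1}}$ allows, via the strict inequality $p\alpha>\alpha$ (which uses $\alpha\geqslant 1$) and $\alpha<p$, to decouple the $u^\alpha$ and $p$ contributions to $J$. Iterating the combination with $c$ should then force $1\in B$, i.e., $p\in J$.

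The principal obstacle will be the technical execution of the hard sub-case: one must carefully control the interplay between the $p$-adic and $u$-adic filtrations of $J$ under the Frobenius twist, ensuring that the iterative cleanup using $c$ terminates rather than producing an infinite regress of ever-deeper refinements. The bound $\alpha<p$ is essential precisely because it prevents $u^{p\alpha}$ from being absorbed into lower-order terms modulo the constraints from $\textup{Ann}(\mf{M})^{\tw{1}}$, and thus permits the separation step. Elementary manipulations using only $a$ and $c$ without invoking the Frobenius seem to stall, so the quasi-filtered structure of $\mf{M}$ is genuinely needed to close the argument.
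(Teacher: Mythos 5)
Your preliminary bound on $\alpha$ and your identification of the two key inputs---that $J=\textup{Ann}(\mf{M})$ contains both $f=u^{\alpha}+pb$ (from the definition of $\alpha$) and $g=p+uc$ (from Theorem \ref{thm:ann_uinfty_submod}(4))---are correct, as is the reduction for $i\leqslant 1$ using Theorem \ref{thm:ann_uinfty_submod}(1). However, your ``hard sub-case'' ($B\subseteq(p,u)$ and $\beta_0\in(p,u)$) is not a proof: you describe a proposed iterative cleanup using $g$ together with the Frobenius containment $E\cdot J\subseteq J^{\tw{1}}$, but you never carry it out, and you yourself flag that the principal obstacle is ``ensuring that the iterative cleanup using $c$ terminates rather than producing an infinite regress.'' That termination is precisely the content you needed to supply; as written the argument does not close.

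Moreover, the claim that ``elementary manipulations using only $a$ and $c$ without invoking the Frobenius seem to stall'' is incorrect, and the quasi-filtered structure is not needed beyond the single citation of Theorem \ref{thm:ann_uinfty_submod}(4). The observation you missed is simply to examine the $u$-adic order of $c\bmod p$. Write $c=\sum_{i\geqslant0}c_iu^i$ with $c_i\in W$. If every $c_i$ lies in $pW$, then $g=p\cdot(\text{unit})$, so $p\in J$. Otherwise let $r\geqslant1$ be minimal with $c_{r-1}\in W^{\times}$, so $g=pd+u^{r}a'$ with $d,a'\in\mf{S}^{\times}$. Reducing $g$ modulo $p$ and using $J+(p)=(p,u^{\alpha})$ forces $r\geqslant\alpha$. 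If $r>\alpha$, then $g-u^{r-\alpha}a'f=p\big(d-bu^{r-\alpha}a'\big)\in J$ is $p$ times a unit (the bracket is $\equiv d$ modulo $(u)$), so $p\in J$; if $r=\alpha$, then $(a')^{-1}g=u^{\alpha}+pd(a')^{-1}\in J$ already has the required form. No iteration and no Frobenius twist is required, and the decomposition via the ideal $B$ is a detour. (One small side remark: your chain $\alpha\leqslant\lceil e/(p-1)\rceil<p$ is not literally correct when $(p-1)^2<e<p(p-1)$; the bound coming from Proposition \ref{prop:ann_quasifilBKmod} is the floor $\lfloor e/(p-1)\rfloor$, which is indeed $<p$ under the hypothesis $e<p(p-1)$.)
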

\begin{proof}
    For $i = 0, 1$, Theorem \ref{thm:ann_uinfty_submod} (1) implies that $\mathfrak{M} = 0$.
    So, we assume that $i = 2$.
    Moreover, the inequality concerning $\alpha$ follows from Proposition \ref{prop:ann_quasifilBKmod}.
    So, we focus only on the last claim.
    
    From Theorem \ref{thm:ann_uinfty_submod} and the definition of $\alpha$ we have that,
    \begin{equation*}
        \textup{Ann}(\mathfrak{M}) + (p) = (p, u^{\alpha}),\quad \text{and}\quad \textup{Ann}(\mathfrak{M}) + (u) = (p, u).
    \end{equation*}
    So, let us write $f = u^{\alpha} + pb$ and $g = p + uc$, for some $f$ and $g$ in $\textup{Ann}(\mathfrak{M}[u^{\infty}])$ with $b$ and $c$ in $\mathfrak{S}$.
    Moreover, we may write $c = \sum _{i \geqslant 0} c_i u^i$, with each $c_i$ in $W$.
    If for each $i \geqslant 0$, we have that $c_i = pd_i$ with $d_i$ in $W$, then observe that $g = p (1+ u\sum_{i \geqslant 0} d_i u^i)$, in particular, $p$ is in $\textup{Ann}(\mathfrak{M})$.
    Otherwise, we may assume that there exists a minimal $r \geqslant 1$ such that $c_{r-1}$ is in $W^{\times}$.
    
    Let us write $c_i = pd_i$ with $d_i$ in $W$ and $0 \leqslant i \leqslant r-2$, and observe that 
    \begin{equation}\label{eq:elem_in_ann}
        g = p(1+d_1 u + \cdots + d_{r-2}u^{r-1}) + u^r\textstyle\sum _{i \geqslant 0} c_{i+r-1} u^i \in \textup{Ann}(\mathfrak{M}).
    \end{equation}
    If $r \geqslant \alpha+1$, then substituting $u^{\alpha} = f-pb$ in \eqref{eq:elem_in_ann}, we obtain that
    \begin{equation*}
        p(1+d_1 u + \cdots + d_{r-2}u^{r-1}) - pbu^{r-{\alpha}}\textstyle\sum _{i \geqslant 0} c_{i+r-1} u^i \in \textup{Ann}(\mathfrak{M}).
    \end{equation*}
    In particular, if $r \geqslant \alpha+1$, then it follows that $p$ is in $\textup{Ann}(\mathfrak{M})$.
    On the other hand, if $1 \leqslant r \leqslant \alpha$, then using \eqref{eq:elem_in_ann}, it easily follows that $g = pd+u^ra'$, with $d$ and $a'$ in $\mathfrak{S}^{\times}$, is in $\textup{Ann}(\mathfrak{M})$.
    As $\alpha$ is minimal, it follows that $1 \leqslant r = \alpha$.
    This allows us to conclude.
\end{proof}

We end this section with the following simple observation.
\begin{lem}\label{lem:EnotinAnn}
    Let $\mathfrak{N}$ be a finite $\mathfrak{S}\textrm{-module}$.
    Assume that there exists $f = u^r + px$ in $\textup{Ann}(\mathfrak{N})$ with $x$ in $\mathfrak{S}^{\times}$.
    If $E$ is in $\textup{Ann}(\mathfrak{N})$ and $e \neq r$, then $\mathfrak{N}$ is killed by $p$.
\end{lem}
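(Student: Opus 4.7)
The plan is to show that the ideal $(f, E) \subseteq \mathfrak{S}$ already contains $p$; since $(f, E) \subseteq \operatorname{Ann}(\mathfrak{N})$ by hypothesis, this will force $p \in \operatorname{Ann}(\mathfrak{N})$. The key input is that $E$ is Eisenstein of degree $e$, so we may write $E = u^e + p b(u)$ with $b(u) = a_{e-1}u^{e-1} + \cdots + a_1 u + a_0 \in W[u]$ and $a_0 \in W^{\times}$. Similarly, writing $x = \sum_{j\geqslant 0} x_j u^j$ with $x_0 \in W^{\times}$ (since $x \in \mathfrak{S}^{\times}$), gives the constant term of $x$ a unit structure we will exploit.

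First I would handle the case $r < e$. The idea is to kill the leading $u$-monomial of $E$ against a suitable multiple of $f$. Explicitly, compute
\begin{equation*}
    E - u^{e-r} f \;=\; u^e + p b(u) - u^{e-r}(u^r + px) \;=\; p\bigl(b(u) - x u^{e-r}\bigr).
\end{equation*}
Since $e - r \geqslant 1$, the constant term of $b(u) - x u^{e-r}$ equals $a_0 \in W^{\times}$, so $h \defeq b(u) - x u^{e-r}$ is a unit in $\mathfrak{S} = W\llbracket u\rrbracket$. Thus $p h \in (f, E)$ and hence $p \in (f, E)$, as desired.

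Next I would handle the case $r > e$. Here I reduce $f$ modulo $E$ by iteratively replacing $u^e$ with $-p b(u)$. Concretely, $u^r \equiv -p u^{r-e} b(u) \pmod{E}$, so
\begin{equation*}
    f \;=\; u^r + p x \;\equiv\; p\bigl(x - u^{r-e} b(u)\bigr) \pmod{E}.
\end{equation*}
Since $r - e \geqslant 1$, the constant term of $x - u^{r-e} b(u)$ equals $x_0 \in W^{\times}$, so the bracketed expression is a unit in $\mathfrak{S}$. Therefore $p \in (f, E)$ in this case as well. Combining the two cases gives the lemma.

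There is no real obstacle here: everything boils down to the Eisenstein condition on $E$ (constant term of $b$ is a unit) combined with the unit condition on $x$ (constant term is a unit). The only thing to be careful about is separating the cases $r < e$ and $r > e$ so that the degree of the auxiliary multiplier $u^{|e-r|}$ is positive, which is precisely where the hypothesis $e \neq r$ enters.
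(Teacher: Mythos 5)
Your proof is correct and complete. The paper states this lemma without proof (it is deemed elementary), so there is no written argument to compare against, but the route you take---showing directly that $p$ lies in the ideal $(f,E)$ by exploiting the Eisenstein shape of $E$ and the unit constant term of $x$---is precisely the natural one, and the case split on $r<e$ versus $r>e$ is where the hypothesis $e\neq r$ is used. One small remark: your argument never uses the finiteness of $\mathfrak{N}$; the conclusion $p\in(f,E)\subseteq\operatorname{Ann}(\mathfrak{N})$ holds for any $\mathfrak{S}$-module satisfying the annihilator hypotheses.
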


\subsubsection{\texorpdfstring{$E\textrm{-torsion}$}{-} and reduction modulo \texorpdfstring{$E$}{-} of Frobenius twists}

If $\mathfrak{M}$ is a generalised Breuil--Kisin module and $n \geqslant 0$, then \eqref{eq:ses_Mtortf} and \eqref{eq:ses_Mtffreebar}, give exact sequences:
\begin{align}
    0 &\longrightarrow \mathfrak{M}_{\operatorname{tor}}^{\tw{n}} \longrightarrow \mathfrak{M}^{\tw{n}} \longrightarrow \mathfrak{M}_{\operatorname{tf}}^{\tw{n}} \longrightarrow 0 \label{eq:ses_phiMtortf}\\
    0 &\longrightarrow \mathfrak{M}_{\operatorname{tf}}^{\tw{n}} \longrightarrow \mathfrak{M}_{\textup{free}}^{\tw{n}} \longrightarrow  \overline{\mathfrak{M}}^{\tw{n}} \longrightarrow 0,\label{eq:ses_phiMtffreebar}
\end{align}
Reducing \eqref{eq:ses_phiMtortf} modulo $E$, yields the following long exact sequence:
\begin{equation}\label{eq:les_phiMtortf}
    \begin{aligned}
        0 &\longrightarrow \mathfrak{M}_{\operatorname{tor}}^{\tw{n}}[E] \longrightarrow \mathfrak{M}^{\tw{n}}[E] \longrightarrow \mathfrak{M}_{\operatorname{tf}}^{\tw{n}}[E] \longrightarrow \\
        &\qquad \longrightarrow \mathfrak{M}_{\operatorname{tor}}^{\tw{n}}/E \longrightarrow \mathfrak{M}^{\tw{n}}/E \longrightarrow \mathfrak{M}_{\operatorname{tf}}^{\tw{n}}/E \longrightarrow 0.
    \end{aligned}
\end{equation}

\begin{lem}\label{lem:Mtors_Etors}
    Equation \eqref{eq:les_phiMtortf} yields the following isomorphism of finite torsion $\mc{O}_K\textrm{-modules}$:
    \begin{equation}\label{eq:Mtors_Etors}
        \mathfrak{M}_{\operatorname{tor}}^{\tw{n}}[E] \isomorphic \mathfrak{M}^{\tw{n}}[E] = (\mathfrak{M}^{\tw{n}}[E])[p^{\infty}].
    \end{equation}
\end{lem}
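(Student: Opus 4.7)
The plan is to extract the isomorphism from the long exact sequence \eqref{eq:les_phiMtortf} by showing that $\mathfrak{M}_{\operatorname{tf}}^{\tw{n}}[E] = 0$, and separately to verify the $p^\infty$-torsion statement and finiteness. Both ingredients are routine once set up, and I expect no serious obstacle.

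First I would observe that the Frobenius $\phi_\mf{S} \colon \mathfrak{S} \to \mathfrak{S}$ is faithfully flat (in fact finite free of rank $p$, with basis $\{1, u, \ldots, u^{p-1}\}$ over the subring $\phi_\mf{S}(\mathfrak{S}) = W\llbracket u^p\rrbracket$). Applying $(-)^{\tw{n}} = (-)\otimes_{\mathfrak{S},\phi^n_\mf{S}}\mathfrak{S}$ to the short exact sequence \eqref{eq:ses_Mtffreebar} therefore yields an exact sequence
\begin{equation*}
    0 \longrightarrow \mathfrak{M}_{\operatorname{tf}}^{\tw{n}} \longrightarrow \mathfrak{M}_{\operatorname{free}}^{\tw{n}} \longrightarrow \overline{\mathfrak{M}}^{\tw{n}} \longrightarrow 0,
\end{equation*}
in which $\mathfrak{M}_{\operatorname{free}}^{\tw{n}}$ is still finite free over $\mathfrak{S}$, in particular $E$-torsion free. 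Taking $E$-torsion yields an injection $\mathfrak{M}_{\operatorname{tf}}^{\tw{n}}[E] \hookrightarrow \mathfrak{M}_{\operatorname{free}}^{\tw{n}}[E] = 0$, which gives the desired vanishing.

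Plugging this vanishing into \eqref{eq:les_phiMtortf} immediately yields the first isomorphism $\mathfrak{M}_{\operatorname{tor}}^{\tw{n}}[E] \isomto \mathfrak{M}^{\tw{n}}[E]$ of the lemma. For the second identification, I would use that $\mathfrak{M}_{\operatorname{tor}}$ is killed by some $p^N$ (Proposition \ref{prop:BKmod_struct}); since $\phi_\mf{S}(p) = p$, the base change $\mathfrak{M}_{\operatorname{tor}}^{\tw{n}}$ is also killed by $p^N$, and so is its $E$-torsion submodule. This shows that $\mathfrak{M}^{\tw{n}}[E]$ is $p^\infty$-torsion, giving the second equality.

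Finally, for the finiteness assertion, I would note that $\mathfrak{M}^{\tw{n}}[E]$ is a submodule of the finitely generated $\mathfrak{S}$-module $\mathfrak{M}^{\tw{n}}$, hence finitely generated over the noetherian ring $\mathfrak{S}$, and being annihilated by $E$ it is in fact finitely generated over $\mathfrak{S}/E \simeq \mc{O}_K$. Combined with the previous paragraph it is a finite torsion $\mc{O}_K$-module, concluding the proof.
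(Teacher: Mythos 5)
Your proof is correct and follows essentially the same approach as the paper's: exploit the long exact sequence \eqref{eq:les_phiMtortf} by showing $\mathfrak{M}_{\operatorname{tf}}^{\tw{n}}[E]=0$, then use that $\mathfrak{M}_{\operatorname{tor}}^{\tw{n}}$ is $p$-power torsion. The only difference is that you spell out the justification for the vanishing $\mathfrak{M}_{\operatorname{tf}}^{\tw{n}}[E]=0$ (flatness of $\phi_\mathfrak{S}$ gives an injection $\mathfrak{M}_{\operatorname{tf}}^{\tw{n}}\hookrightarrow\mathfrak{M}_{\operatorname{free}}^{\tw{n}}$ into a free, hence $E$-torsion-free, module), which the paper simply asserts.
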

\begin{proof}
    In the long exact sequence \eqref{eq:les_phiMtortf}, let us note that $\mathfrak{M}_{\operatorname{tf}}^{\tw{n}}[E] = 0$.
    As $\mathfrak{M}_{\operatorname{tor}}^{\tw{n}}$ is a $p\textrm{-power}$ torsion $\mathfrak{S}\textrm{-module}$, because $\mf{M}_\mr{tor}$ is, the isomorphism in \eqref{eq:Mtors_Etors} follows.
\end{proof}

\begin{lem}\label{lem:ses_phiMtortf_modE}
    The following sequence of finite $\mc{O}_K\textrm{-modules}$ is exact:
    \begin{equation}\label{eq:ses_phiMtortf_modE}
        0 \longrightarrow \mathfrak{M}_{\operatorname{tor}}^{\tw{n}}/E \longrightarrow \mathfrak{M}^{\tw{n}}/E \longrightarrow \mathfrak{M}_{\operatorname{tf}}^{\tw{n}}/E \longrightarrow 0.
    \end{equation}
    Additionally, we have the following short exact sequence of finite torsion $\mc{O}_K\textrm{-modules}$:
    \begin{equation}\label{eq:ses_phiMtortf_modE_pinfty}
        0 \longrightarrow \mathfrak{M}_{\operatorname{tor}}^{\tw{n}}/E\longrightarrow (\mathfrak{M}^{\tw{n}}/E)[p^{\infty}] \longrightarrow (\mathfrak{M}_{\operatorname{tf}}^{\tw{n}}/E)[p^{\infty}] \longrightarrow 0.
    \end{equation}
\end{lem}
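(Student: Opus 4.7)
For the first exact sequence \eqref{eq:ses_phiMtortf_modE}, my plan is to read it off from the bottom row of the long exact sequence \eqref{eq:les_phiMtortf} displayed just before the statement. This requires only that the connecting map $\mathfrak{M}_{\operatorname{tf}}^{\tw{n}}[E] \to \mathfrak{M}_{\operatorname{tor}}^{\tw{n}}/E$ vanishes, for which it suffices to show $\mathfrak{M}_{\operatorname{tf}}^{\tw{n}}[E] = 0$. I will argue this vanishing in two steps: first, the Frobenius map $\phi_{\mathfrak{S}}^n \colon \mathfrak{S} \to \mathfrak{S}$ (acting as $u \mapsto u^{p^n}$ and $\phi_W^n$ on $W$) is a finite flat extension, so base change along it preserves torsion-freeness, whence $\mathfrak{M}_{\operatorname{tf}}^{\tw{n}}$ is $\mathfrak{S}$-torsion-free; second, $E$ is a non-zero-divisor in $\mathfrak{S}$. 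This is exactly the vanishing already invoked in the proof of Lemma \ref{lem:Mtors_Etors}. Finite generation of the three $\mathcal{O}_K$-modules in \eqref{eq:ses_phiMtortf_modE} follows because $\mathfrak{M}$ is of finite type over $\mathfrak{S}$ (by definition of a generalised Breuil--Kisin module), a property preserved under Frobenius base change and reduction modulo $E$.

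For the second exact sequence \eqref{eq:ses_phiMtortf_modE_pinfty}, my approach is to apply the left-exact functor $(-)[p^\infty]$ to \eqref{eq:ses_phiMtortf_modE} and then verify right-exactness by hand. The crucial input is that $\mathfrak{M}_{\operatorname{tor}}$ is killed by some power $p^m$ (by Proposition \ref{prop:BKmod_struct}), so $\mathfrak{M}_{\operatorname{tor}}^{\tw{n}}/E$ is itself $p^m$-torsion, which gives exactness at the first two positions automatically. For surjectivity on the right, given $\bar{c} \in (\mathfrak{M}_{\operatorname{tf}}^{\tw{n}}/E)[p^\infty]$ with $p^N \bar{c} = 0$, any lift $\bar{b} \in \mathfrak{M}^{\tw{n}}/E$ satisfies $p^N \bar{b} \in \mathfrak{M}_{\operatorname{tor}}^{\tw{n}}/E$, and hence $p^{N+m} \bar{b} = 0$, so $\bar{b}$ already lies in $(\mathfrak{M}^{\tw{n}}/E)[p^\infty]$. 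Finite generation of the torsion $\mathcal{O}_K$-modules appearing follows since all three terms are subquotients of the finitely generated $\mathcal{O}_K$-modules of \eqref{eq:ses_phiMtortf_modE} and $\mathcal{O}_K$ is Noetherian.

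No genuine obstacle is expected; both statements are formal consequences of the flatness of Frobenius on $\mathfrak{S}$ (for part one) and the fact that $\mathfrak{M}_{\operatorname{tor}}$ is killed by a power of $p$ (for part two). The only small subtlety is remembering that the long exact sequence \eqref{eq:les_phiMtortf}, together with Lemma \ref{lem:Mtors_Etors}, already encodes part (1), so essentially no new computation is needed there, and part (2) reduces to a one-line diagram chase enabled by the uniform bound on the exponent of $\mathfrak{M}_{\operatorname{tor}}^{\tw{n}}/E$.
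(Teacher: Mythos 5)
Your proof is correct and follows essentially the same route as the paper: for \eqref{eq:ses_phiMtortf_modE} you read off the bottom row of \eqref{eq:les_phiMtortf} after observing $\mathfrak{M}_{\operatorname{tf}}^{\tw{n}}[E]=0$ (which is precisely the content of Lemma \ref{lem:Mtors_Etors}, whose proof you helpfully make explicit via flatness of $\phi_{\mathfrak{S}}^n$), and for \eqref{eq:ses_phiMtortf_modE_pinfty} you apply $(-)[p^\infty]$ and check surjectivity by the uniform $p$-power bound on $\mathfrak{M}_{\operatorname{tor}}^{\tw{n}}/E$, exactly as the paper does. The only difference is that you spell out the diagram chase and the torsion-freeness argument in slightly more detail.
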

\begin{proof}
    By combining \eqref{eq:les_phiMtortf} and Lemma \ref{eq:Mtors_Etors}, we obtain the short exact sequence in \eqref{eq:ses_phiMtortf_modE}.
    Next, in the sequence \eqref{eq:ses_phiMtortf_modE_pinfty}, the only non-obvious part is the surjectivity of the following map:
    \begin{equation*}
        (\mathfrak{M}^{\tw{n}}/E)[p^{\infty}] \longrightarrow (\mathfrak{M}_{\operatorname{tf}}^{\tw{n}}/E)[p^{\infty}].
    \end{equation*}
    But, this easily follows from the fact that $\mf{M}^{\tw{n}}_\mr{tor}$, and thus $\mf{M}^{\tw{n}}_\mr{tor}/E$, is $p^\infty$-torsion.
\end{proof}

Next, let us note that we have $\mathfrak{M}[u^{\infty}] = \mathfrak{M}_{\operatorname{tor}}[u^{\infty}]$ and consider the following short exact sequence of $p^\infty$-torsion $\mathfrak{S}\textrm{-modules}$:
\begin{equation}\label{eq:ses_uinfty_toretale}
        0 \longrightarrow \mathfrak{M}[u^{\infty}] \longrightarrow \mathfrak{M}_{\operatorname{tor}} \longrightarrow \mathfrak{M}_{\operatorname{tor}}/\mathfrak{M}[u^{\infty}] \longrightarrow 0.
\end{equation} 
Then, $\mathfrak{M}_{\operatorname{tor,} u\operatorname{-tf}} := \mathfrak{M}_{\operatorname{tor}}/\mathfrak{M}[u^{\infty}]$ is $u\textrm{-torsion}$ free.

\begin{lem}\label{lem:toretale_Etors}
    For any $n \geqslant 0$, we have that $\mathfrak{M}_{\operatorname{tor,} u\operatorname{-tf}}^{\tw{n}}[E] = 0$. 
\end{lem}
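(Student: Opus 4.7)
The plan is to reduce, via dévissage, to the case where $\mathfrak{M}_{\operatorname{tor,} u\operatorname{-tf}}$ is replaced by a finite free $k\llbracket u \rrbracket$-module, and then verify the vanishing directly. The key tool is the structure result of Lemma \ref{lem:toretale_BKmod}, whose third characterisation supplies exactly such a filtration for $u$-torsion free, $p$-power torsion generalised Breuil--Kisin modules.

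First, I would verify that $\mathfrak{M}_{\operatorname{tor,} u\operatorname{-tf}}$ inherits the structure of a generalised Breuil--Kisin module of height $i$. The submodule $\mathfrak{M}[u^{\infty}] \subset \mathfrak{M}_{\operatorname{tor}}$ is visibly stable under both $\varphi_{\mathfrak{M}}$ and $\psi_{\mathfrak{M}}$, so the quotient $\mathfrak{M}_{\operatorname{tor,} u\operatorname{-tf}} = \mathfrak{M}_{\operatorname{tor}}/\mathfrak{M}[u^{\infty}]$ inherits compatible maps still satisfying the defining relations of height $i$. By construction it is $u$-torsion free, and since $\mathfrak{M}_{\operatorname{tor}}$ is killed by a power of $p$ (Proposition \ref{prop:BKmod_struct}), so is the quotient. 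Lemma \ref{lem:toretale_BKmod} then writes $\mathfrak{M}_{\operatorname{tor,} u\operatorname{-tf}}$ as a successive extension of finite free $k\llbracket u \rrbracket$-modules $\mathfrak{M}_j$, each of which is itself a generalised Breuil--Kisin module of height $i$.

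Next, I would observe that the Frobenius twist $(-)^{\tw{n}}$ is exact, because $\phi^n_{\mathfrak{S}} \colon \mathfrak{S} \to \mathfrak{S}$ is finite flat with basis $1, u, \ldots, u^{p^n - 1}$, and that taking $E$-torsion is left exact. Hence, by a routine dévissage on the above filtration, it suffices to prove $\mathfrak{M}_j^{\tw{n}}[E] = 0$ for each $j$. Now $\mathfrak{M}_j$ is killed by $p$, so $\mathfrak{M}_j^{\tw{n}}$ is also killed by $p$, and the action of $E$ on it factors through the reduction of $E$ modulo $p$; since $E$ is Eisenstein of degree $e$, this reduction equals $u^e$ up to a unit. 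On the other hand, base change of a finite free $k\llbracket u \rrbracket$-module along the map $u \mapsto u^{p^n}$ remains finite free of the same rank over $k\llbracket u \rrbracket$, so $\mathfrak{M}_j^{\tw{n}}$ is $u^e$-torsion free, completing the argument.

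The only potentially subtle step is the first one, namely verifying that the quotient $\mathfrak{M}_{\operatorname{tor,} u\operatorname{-tf}}$ is genuinely a generalised Breuil--Kisin module of the same height as $\mathfrak{M}$. However, this is purely formal once the stability of $\mathfrak{M}[u^{\infty}]$ under both structure maps is noted, so I expect no real obstacle; the remainder of the proof is essentially book-keeping and a direct freeness computation.
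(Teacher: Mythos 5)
Your proof is correct, and it uses the same core input — Lemma \ref{lem:toretale_BKmod} — but a different characterisation from it, leading to a slightly different argument than the paper's.

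The paper applies characterisation (2) of Lemma \ref{lem:toretale_BKmod}: it writes $\mathfrak{M}_{\operatorname{tor,} u\operatorname{-tf}}$ as a quotient $\mathfrak{N}/\mathfrak{N}'$ of two finite free Breuil--Kisin modules, twists this short exact sequence (flatness of $\phi_\mathfrak{S}$), reduces modulo $E$, and reads off from the long exact sequence that $\mathfrak{M}_{\operatorname{tor,} u\operatorname{-tf}}^{\tw{n}}[E]$ injects into $\mathfrak{N}'^{\tw{n}}/E$; since the latter is free over $\mc{O}_K$ and the former is $p^\infty$-torsion, the injection forces it to vanish. You instead invoke characterisation (3), filter $\mathfrak{M}_{\operatorname{tor,} u\operatorname{-tf}}$ by finite free $k\llbracket u\rrbracket$-modules, reduce by dévissage (exactness of $\phi_\mathfrak{S}^{\ast n}$ plus left-exactness of $E$-torsion) to a graded piece, and observe that each twist is again finite free over $k\llbracket u\rrbracket$, hence $u^e$-torsion free — and $E \equiv u^e \pmod{p}$. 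The paper's route is marginally shorter (a single exact sequence, no induction), while yours keeps the whole computation in characteristic $p$, where the Eisenstein structure of $E$ becomes completely transparent. Both are sound; neither requires the generalised Breuil--Kisin module structure on $\mathfrak{M}_{\operatorname{tor,} u\operatorname{-tf}}$ beyond what guarantees the applicability of Lemma \ref{lem:toretale_BKmod}, which the paper records as part of the surrounding setup.
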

\begin{proof}
    Note that $\mathfrak{M}_{\operatorname{tor,} u\operatorname{-tf}}$ is a $p^\infty$-torsion and $u\textrm{-torsion}$-free Breuil--Kisin module.
    So, from Lemma \ref{lem:toretale_BKmod}, there exist finite free Breuil--Kisin modules $\mathfrak{N}$ and $\mathfrak{N}'$ with $\mathfrak{N}' \subset \mathfrak{N}$ and $\mf{M}_{\mr{tor},u\mr{-tf}} \isomorphic \mf{N}'/\mf{N}$.
    As $\phi_\mf{S}$ is flat, we  may twist by $\phi_\mf{S}$ and reduce modulo $E$ to obtain the exact sequence,
    \begin{equation*}
        0 \longrightarrow \mathfrak{M}_{\operatorname{tor,} u\operatorname{-tf}}^{\tw{n}}[E] \longrightarrow \mathfrak{N}'^{\tw{n}}/E \longrightarrow  \mathfrak{N}^{\tw{n}}/E.
    \end{equation*}
    As $\mathfrak{N}'^{\tw{n}}/E$ is a free $\mc{O}_K\textrm{-module}$, the claim follows.
\end{proof}

Now, Frobenius twisting the short exact sequence \eqref{eq:ses_uinfty_toretale} and reducing modulo $E$ yields the following long exact sequence of $\mathfrak{S}\textrm{-modules}$:
\begin{equation}\label{eq:les_phiMuniftytoret}
    \begin{aligned}
        0 &\longrightarrow \mathfrak{M}[u^{\infty}]^{\tw{n}}[E] \longrightarrow \mathfrak{M}_{\operatorname{tor}}^{\tw{n}}[E] \longrightarrow \mathfrak{M}_{\operatorname{tor,} u\operatorname{-tf}}^{\tw{n}}[E] \longrightarrow \\
        &\qquad \longrightarrow \mathfrak{M}[u^{\infty}]^{\tw{n}}/E \longrightarrow \mathfrak{M}_{\operatorname{tor}}^{\tw{n}}/E\longrightarrow \mathfrak{M}_{\operatorname{tor,} u\operatorname{-tf}}^{\tw{n}}/E \longrightarrow 0.
    \end{aligned}
\end{equation}

By combining Equation \eqref{eq:les_phiMuniftytoret} and Lemma \ref{lem:toretale_Etors}, we obtain the following:
\begin{lem}\label{lem:Munifty_Etors}
    For a generalised Breuil--Kisin module $\mf{M}$, there is an isomorphism of $\mc{O}_K\textrm{-modules}$:
    \begin{equation}\label{eq:Munifty_Etors}
        \mathfrak{M}[u^{\infty}]^{\tw{n}}[E] \isomorphic \mathfrak{M}_{\operatorname{tor}}^{\tw{n}}[E].
    \end{equation}
    Moreover, the following sequence of finite torsion $\mc{O}_K\textrm{-modules}$ is exact:
    \begin{equation}\label{eq:ses_phiMuinftytor_modE}
        0 \longrightarrow \mathfrak{M}[u^{\infty}]^{\tw{n}}/E \longrightarrow \mathfrak{M}_{\operatorname{tor}}^{\tw{n}}/E \longrightarrow \mathfrak{M}_{\operatorname{tor,} u\operatorname{-tf}}^{\tw{n}}/E \longrightarrow 0.
    \end{equation}
\end{lem}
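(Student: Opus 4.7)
The plan is to deduce Lemma \ref{lem:Munifty_Etors} essentially for free from the already-established long exact sequence \eqref{eq:les_phiMuniftytoret} together with the vanishing result Lemma \ref{lem:toretale_Etors}. No new constructions will be needed; the work has already been laid out in the preceding two displays.

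More precisely, recall that the long exact sequence \eqref{eq:les_phiMuniftytoret} was obtained by applying the flat Frobenius twist $\phi_{\mathfrak{S}}^{\ast n}$ to the short exact sequence \eqref{eq:ses_uinfty_toretale} and then taking the associated six-term $\mathrm{Tor}$ sequence after reduction modulo $E$. Its middle term on the ``$[E]$'' side is $\mathfrak{M}_{\operatorname{tor,} u\operatorname{-tf}}^{(n)}[E]$, which is exactly the module proved to vanish in Lemma \ref{lem:toretale_Etors}. Plugging in this vanishing immediately splits the six-term sequence into two pieces: first, the connecting map becomes zero, so the map $\mathfrak{M}[u^{\infty}]^{(n)}[E] \to \mathfrak{M}_{\operatorname{tor}}^{(n)}[E]$ becomes an isomorphism, giving the desired identification \eqref{eq:Munifty_Etors}; second, the remaining tail yields the short exact sequence \eqref{eq:ses_phiMuinftytor_modE}.

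The only remaining observation is that each of the modules appearing is a finite torsion $\mathcal{O}_K$-module. For the $[E]$-part, both $\mathfrak{M}[u^{\infty}]^{(n)}$ and $\mathfrak{M}_{\operatorname{tor}}^{(n)}$ are killed by a power of $p$ (since $\mathfrak{M}_{\operatorname{tor}}$ is, by Proposition \ref{prop:BKmod_struct}, and Frobenius twist preserves the $p^\infty$-torsion condition), hence their $E$-torsion submodules are $p^\infty$-torsion and finite over $\mathfrak{S}/E \simeq \mathcal{O}_K$. For the $/E$-part, each term is similarly a finite $\mathcal{O}_K$-module which is $p^\infty$-torsion. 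Thus the statement of the lemma is completely obtained.

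The main (and really only) obstacle is verifying that the long exact sequence \eqref{eq:les_phiMuniftytoret} is correctly formed, namely that $\phi_\mathfrak{S}$ is flat on $\mathfrak{S}$ (so that twisting preserves exactness) and that the six-term $\mathrm{Tor}$ sequence for the pair $(-\otimes_\mathfrak{S} \mathfrak{S}/E,$ the sequence \eqref{eq:ses_uinfty_toretale}$)$ has the stated shape. Both are standard: $\phi_\mathfrak{S}$ is flat since $\mathfrak{S} = W\llbracket u \rrbracket \to W\llbracket u \rrbracket,\, u \mapsto u^p$ is a flat ring map, and $E$ is a regular element of $\mathfrak{S}$ so that $\mathfrak{S}/E$ has Tor-dimension $1$. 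Neither point requires further argument beyond what has been set up, so in fact the proof amounts to a one-line invocation of Lemma \ref{lem:toretale_Etors}.
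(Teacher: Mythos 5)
Your proposal is correct and takes exactly the paper's approach: the paper also deduces the lemma by combining the long exact sequence \eqref{eq:les_phiMuniftytoret} with the vanishing $\mathfrak{M}_{\operatorname{tor,} u\operatorname{-tf}}^{\tw{n}}[E] = 0$ from Lemma \ref{lem:toretale_Etors}. The extra remarks you make (flatness of $\phi_{\mf{S}}$, regularity of $E$, finiteness and torsion of the $\mc{O}_K$-modules) are correct but implicit in the paper's setup.
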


Finally, note that by reducing the short exact sequence \eqref{eq:ses_phiMtffreebar} modulo $E$ one obtains the following long exact sequence:
\begin{equation}\label{eq:les_phiMtffreebar}
    \begin{aligned}
        0 &\longrightarrow \mathfrak{M}_{\operatorname{tf}}^{\tw{n}}[E] \longrightarrow \mathfrak{M}_{\textup{free}}^{\tw{n}}[E] \longrightarrow \overline{\mathfrak{M}}^{\tw{n}}[E] \longrightarrow \\
        &\qquad \longrightarrow \mathfrak{M}_{\operatorname{tf}}^{\tw{n}}/E \longrightarrow \mathfrak{M}_{\textup{free}}^{\tw{n}}/E \longrightarrow \overline{\mathfrak{M}}^{\tw{n}}/E \longrightarrow 0.
    \end{aligned}
\end{equation}

Using this, we obtain the following.

\begin{lem}\label{lem:Mbar_Etor}
    For any $n\geqslant 0$, there is an isomorphism of $\mc{O}_K\textrm{-modules}$:
    \begin{equation}\label{eq:Mbar_Etor}
        (\overline{\mathfrak{M}}^{\tw{n}}[E])[p^{\infty}] = \overline{\mathfrak{M}}^{\tw{n}}[E] \isomorphic (\mathfrak{M}_{\operatorname{tf}}^{\tw{n}}/E)[p^{\infty}].
    \end{equation}
\end{lem}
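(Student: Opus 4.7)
The plan is to extract the identifications directly from the long exact sequence \eqref{eq:les_phiMtffreebar}, after vanishing the two $E$-torsion terms on the left.

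First I would observe that since $\mathfrak{M}_{\textup{free}}$ is free over $\mathfrak{S}$, so is its Frobenius twist $\mathfrak{M}_{\textup{free}}^{\tw{n}}$, hence $\mathfrak{M}_{\textup{free}}^{\tw{n}}[E]=0$. Next, since $\phi_\mathfrak{S}\colon\mathfrak{S}\to\mathfrak{S}$ is faithfully flat (as $\mathfrak{S}$ is a free $\phi_\mathfrak{S}(\mathfrak{S})$-module on $1,u,\ldots,u^{p-1}$), the inclusion $\mathfrak{M}_{\operatorname{tf}}\hookrightarrow\mathfrak{M}_{\textup{free}}$ from \eqref{eq:ses_Mtffreebar} yields an inclusion $\mathfrak{M}_{\operatorname{tf}}^{\tw{n}}\hookrightarrow\mathfrak{M}_{\textup{free}}^{\tw{n}}$. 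Since the latter is $E$-torsion-free, so is the former, that is, $\mathfrak{M}_{\operatorname{tf}}^{\tw{n}}[E]=0$.

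Substituting these two vanishings into \eqref{eq:les_phiMtffreebar} collapses it to the four-term exact sequence
\begin{equation*}
    0 \longrightarrow \overline{\mathfrak{M}}^{\tw{n}}[E] \longrightarrow \mathfrak{M}_{\operatorname{tf}}^{\tw{n}}/E \longrightarrow \mathfrak{M}_{\textup{free}}^{\tw{n}}/E \longrightarrow \overline{\mathfrak{M}}^{\tw{n}}/E \longrightarrow 0.
\end{equation*}
For the first equality in \eqref{eq:Mbar_Etor}, I would use that $\overline{\mathfrak{M}}$ is annihilated by some $p^N$ (as it is $(p,u)$-power torsion and finitely generated), and since Frobenius twist preserves $p^N$-annihilation, so is $\overline{\mathfrak{M}}^{\tw{n}}$; in particular $\overline{\mathfrak{M}}^{\tw{n}}[E]$ is $p^\infty$-torsion.

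For the isomorphism, the key point is that $\mathfrak{M}_{\textup{free}}^{\tw{n}}/E\simeq \mathfrak{M}_{\textup{free}}^{\tw{n}}\otimes_\mathfrak{S}\mc{O}_K$ is a free $\mc{O}_K$-module, hence $p$-torsion-free. So taking $(-)[p^\infty]$ of the above four-term sequence (which is left exact) identifies $\overline{\mathfrak{M}}^{\tw{n}}[E]$, which is already $p^\infty$-torsion, with the kernel of a map landing in a $p$-torsion-free module, i.e.\ with $(\mathfrak{M}_{\operatorname{tf}}^{\tw{n}}/E)[p^\infty]$. There is no real obstacle in this proof; the only subtle point worth flagging explicitly is the flatness of $\phi_\mathfrak{S}$ used to preserve the injection $\mathfrak{M}_{\operatorname{tf}}\hookrightarrow\mathfrak{M}_{\textup{free}}$ after Frobenius twist.
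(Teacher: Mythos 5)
Your proof is correct and follows exactly the approach the paper intends: kill the two $E$-torsion terms $\mathfrak{M}_{\operatorname{tf}}^{\tw{n}}[E]$ and $\mathfrak{M}_{\textup{free}}^{\tw{n}}[E]$ in \eqref{eq:les_phiMtffreebar}, then use that $\overline{\mathfrak{M}}^{\tw{n}}$ is $p$-power torsion while $\mathfrak{M}_{\textup{free}}^{\tw{n}}/E$ is $p$-torsion-free to identify the kernel of $\mathfrak{M}_{\operatorname{tf}}^{\tw{n}}/E \to \mathfrak{M}_{\textup{free}}^{\tw{n}}/E$ with $(\mathfrak{M}_{\operatorname{tf}}^{\tw{n}}/E)[p^\infty]$. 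The paper leaves these details implicit (the vanishing $\mathfrak{M}_{\operatorname{tf}}^{\tw{n}}[E]=0$ having already been noted in the proof of Lemma \ref{lem:Mtors_Etors}), and your write-up, including the observation that flatness of $\phi_{\mathfrak{S}}$ preserves the injection $\mathfrak{M}_{\operatorname{tf}}\hookrightarrow\mathfrak{M}_{\textup{free}}$, is a faithful elaboration.
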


\subsection{Length calculations for generalised Breuil--Kisin modules}\label{ss:length-calculations} In this section, we collect some results about lengths of Breuil--Kisin modules, their Frobenius twists, and their $E\textrm{-torsion}$ and reduction modulo $E$.

\begin{lem}\label{prop:len_genBK}
    Let $\mathfrak{M}$ be a generalised Breuil--Kisin module.
    Then, for any $n \geqslant 0$:
    \begin{align}
        \phantom{\ell_{\mc{O}_K}\big(\big((\phi^{*n}\mathfrak{M}) \otimes_{\mathfrak{S}} \mc{O}_K\big)[p^{\infty}]\big)}
        &\begin{aligned}\label{eq:len_genBK_Etor}
            \mathllap{\ell_{\mc{O}_K}\big(\mathfrak{M}^{\tw{n}}[E] \big)} &= \ell_{\mc{O}_K}\big(\mathfrak{M}[u^{\infty}]^{\tw{n}}[E]\big),
        \end{aligned}\\
        &\begin{aligned}\label{eq:len_genBK_modE}
            \mathllap{\ell_{\mc{O}_K}\big(\big(\mathfrak{M}^{\tw{n}}/E\big)[p^{\infty}]\big)} &= \ell_{\mc{O}_K}\big(\mathfrak{M}_{\operatorname{tor,} u\operatorname{-tf}}^{\tw{n}}/E\big) + \ell_{\mc{O}_K}\big(\mathfrak{M}[u^{\infty}]^{\tw{n}}/E\big) + \ell_{\mc{O}_K}\big(\overline{\mathfrak{M}}^{\tw{n}}[E]\big).
        \end{aligned}
    \end{align}
\end{lem}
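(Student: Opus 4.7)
The plan is to prove both equalities by assembling the short exact sequences and isomorphisms already established in Section \ref{ss:prelims-on-BK-modules}. No new structural input is needed: the work consists in correctly chaining those results and applying additivity of length along short exact sequences of finite $\mc{O}_K$-modules.

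For equation \eqref{eq:len_genBK_Etor}, I would simply compose two isomorphisms. By Lemma \ref{lem:Mtors_Etors} we have $\mathfrak{M}_{\operatorname{tor}}^{\tw{n}}[E] \isomto \mathfrak{M}^{\tw{n}}[E]$, and by Lemma \ref{lem:Munifty_Etors} (equation \eqref{eq:Munifty_Etors}) we have $\mathfrak{M}[u^{\infty}]^{\tw{n}}[E] \isomto \mathfrak{M}_{\operatorname{tor}}^{\tw{n}}[E]$. Concatenating these yields $\mathfrak{M}[u^{\infty}]^{\tw{n}}[E]\isomto \mathfrak{M}^{\tw{n}}[E]$ and in particular the desired length equality.

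For equation \eqref{eq:len_genBK_modE}, my plan is to produce two short exact sequences whose alternating sum of lengths gives the stated formula. First, from the short exact sequence \eqref{eq:ses_phiMtortf_modE_pinfty} of Lemma \ref{lem:ses_phiMtortf_modE}, additivity of length gives
\begin{equation*}
    \ell_{\mc{O}_K}\bigl((\mathfrak{M}^{\tw{n}}/E)[p^{\infty}]\bigr) = \ell_{\mc{O}_K}\bigl(\mathfrak{M}_{\operatorname{tor}}^{\tw{n}}/E\bigr) + \ell_{\mc{O}_K}\bigl((\mathfrak{M}_{\operatorname{tf}}^{\tw{n}}/E)[p^{\infty}]\bigr).
\end{equation*}
Next, the short exact sequence \eqref{eq:ses_phiMuinftytor_modE} of Lemma \ref{lem:Munifty_Etors} gives
\begin{equation*}
    \ell_{\mc{O}_K}\bigl(\mathfrak{M}_{\operatorname{tor}}^{\tw{n}}/E\bigr) = \ell_{\mc{O}_K}\bigl(\mathfrak{M}[u^{\infty}]^{\tw{n}}/E\bigr) + \ell_{\mc{O}_K}\bigl(\mathfrak{M}_{\operatorname{tor,} u\operatorname{-tf}}^{\tw{n}}/E\bigr).
\end{equation*}
Finally, Lemma \ref{lem:Mbar_Etor} identifies $(\mathfrak{M}_{\operatorname{tf}}^{\tw{n}}/E)[p^{\infty}]$ with $\overline{\mathfrak{M}}^{\tw{n}}[E]$, so $\ell_{\mc{O}_K}((\mathfrak{M}_{\operatorname{tf}}^{\tw{n}}/E)[p^{\infty}]) = \ell_{\mc{O}_K}(\overline{\mathfrak{M}}^{\tw{n}}[E])$. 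Substituting these two equalities into the first display yields exactly \eqref{eq:len_genBK_modE}.

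There is no genuine obstacle here: the statement is essentially a bookkeeping consequence of the decomposition of $\mathfrak{M}^{\tw{n}}$ into $u^\infty$-torsion, torsion-free-but-torsion, and the defect $\overline{\mathfrak{M}}^{\tw{n}}$ of reflexivity, combined with the six-term long exact sequences obtained by reducing each two-step filtration modulo $E$. The only points to be careful about are (i) checking that the relevant modules are all finite-length $\mc{O}_K$-modules so that the additivity of length applies, which follows from finiteness of $\mathfrak{M}$ over $\mathfrak{S}$ together with the fact that $\mathfrak{M}_{\operatorname{tor}}$ and $\overline{\mathfrak{M}}$ are $(p,u)$-power-torsion by Proposition \ref{prop:BKmod_struct}, and (ii) keeping track of which quotients/torsion submodules remain in the $p^{\infty}$-torsion part, which is the whole content of Lemmas \ref{lem:Mtors_Etors}, \ref{lem:ses_phiMtortf_modE}, \ref{lem:Munifty_Etors}, and \ref{lem:Mbar_Etor}.
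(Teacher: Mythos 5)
Your proposal is correct and follows essentially the same route as the paper's proof: both parts combine Lemmas \ref{lem:Mtors_Etors}, \ref{lem:ses_phiMtortf_modE}, \ref{lem:Munifty_Etors}, and \ref{lem:Mbar_Etor} in the same way, using additivity of length along the short exact sequences \eqref{eq:ses_phiMtortf_modE_pinfty} and \eqref{eq:ses_phiMuinftytor_modE}.
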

\begin{proof}
    The claim in \eqref{eq:len_genBK_Etor} follows by combining the isomorphism of $\mc{O}_K\textrm{-modules}$ in \eqref{eq:Mtors_Etors} of Lemma \ref{lem:Mtors_Etors} and \eqref{eq:Munifty_Etors} of Lemma \ref{lem:Munifty_Etors}.
    Next, for \eqref{eq:len_genBK_modE}, let us first note that we have the following equality using \eqref{eq:ses_phiMtortf_modE_pinfty} of Lemma \ref{eq:ses_phiMtortf_modE}:
    \begin{equation*}
        \ell_{\mc{O}_K}\big(\big(\mathfrak{M}^{\tw{n}}/E\big)[p^{\infty}]\big) = \ell_{\mc{O}_K}\big(\phi^{*n}\mathfrak{M}_{\operatorname{tor}}^{\tw{n}}/E\big) + \ell_{\mc{O}_K}\big(\big(\mathfrak{M}_{\operatorname{tf}}^{\tw{n}}/E\big)[p^{\infty}]\big).
    \end{equation*}
    Moreover, from \eqref{eq:ses_phiMuinftytor_modE} in Lemma \ref{lem:Munifty_Etors}, we have that,
    \begin{equation*}
        \ell_{\mc{O}_K}\big(\mathfrak{M}_{\operatorname{tor}}^{\tw{n}}/E\big) = \ell_{\mc{O}_K}\big(\mathfrak{M}_{\operatorname{tor,} u\operatorname{-tf}}^{\tw{n}}/E\big) + \ell_{\mc{O}_K}\big(\mathfrak{M}[u^{\infty}]^{\tw{n}}/E\big),
    \end{equation*}
    and from \eqref{eq:Mbar_Etor} in Lemma \ref{lem:Mbar_Etor}, we have that $\overline{\mathfrak{M}}^{\tw{n}}[E] \isomorphic (\mathfrak{M}_{\operatorname{tf}}^{\tw{n}}/E)[p^{\infty}]$ as $\mc{O}_K\textrm{-modules}$.
    Hence, the equality in \eqref{eq:len_genBK_modE} follows.
\end{proof}

\begin{lem}\label{lem:len_toret}
    Let $\mathfrak{M}$ be a generalised Breuil--Kisin module of height $i$. Then, for any $n \geqslant 0$:
    \begin{equation}\label{eq:len_toret_modE}
        \ell_{\mc{O}_K}\big(\mathfrak{M}_{\operatorname{tor,} u\operatorname{-tf}}^{\tw{n+1}}/E\big) = \ell_{\mc{O}_K}\big(\mathfrak{M}_{\operatorname{tor,} u\operatorname{-tf}}^{\tw{n}}/E\big).
    \end{equation}
\end{lem}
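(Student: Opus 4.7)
The plan is to present $\mathfrak{M}_{\operatorname{tor,} u\operatorname{-tf}}$ as the cokernel of a map between two finite free Breuil--Kisin modules, reducing the claim to a determinantal calculation after Frobenius twist and mod-$E$ reduction.

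First I would apply Lemma \ref{lem:toretale_BKmod}(2) to obtain finite free Breuil--Kisin modules $\mathfrak{N}' \subset \mathfrak{N}$ with $\mathfrak{M}_{\operatorname{tor,} u\operatorname{-tf}} \simeq \mathfrak{N}/\mathfrak{N}'$. Since the quotient is torsion, both modules have the same rank $r$, so fixing bases encodes the inclusion as a matrix $B \in \mathrm{Mat}_r(\mathfrak{S})$ with $d \defeq \det(B) \ne 0$. As $\phi^{\ast n}_{\mathfrak{S}}$ is flat, it preserves the short exact sequence; combined with Lemma \ref{lem:toretale_Etors}, which gives $\mathfrak{M}_{\operatorname{tor,} u\operatorname{-tf}}^{\tw{n}}[E] = 0$, reducing modulo $E$ yields the short exact sequence
\begin{equation*}
    0 \longrightarrow \mc{O}_K^r \xrightarrow{\,\phi^n(B)\,\bmod\,E\,} \mc{O}_K^r \longrightarrow \mathfrak{M}_{\operatorname{tor,} u\operatorname{-tf}}^{\tw{n}}/E \longrightarrow 0.
\end{equation*}
Consequently, $\ell_{\mc{O}_K}\bigl(\mathfrak{M}_{\operatorname{tor,} u\operatorname{-tf}}^{\tw{n}}/E\bigr) = v_\pi\bigl(\phi^n(d) \bmod E\bigr)$.

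Next I would analyse $d$. Since $\mathfrak{M}_{\operatorname{tor,} u\operatorname{-tf}}$ is killed by some $p^N$, the containment $p^N \mathfrak{N} \subset \mathfrak{N}'$ produces a matrix $C \in \mathrm{Mat}_r(\mathfrak{S})$ with $BC = p^N I_r$, so $d \mid p^{Nr}$ in $\mathfrak{S}$. Because $\mathfrak{S} = W\llbracket u \rrbracket$ is a unique factorisation domain in which $p$ is prime (as $\mathfrak{S}/p = k\llbracket u \rrbracket$ is a domain), this forces $d = p^a v$ for some $a \geqslant 0$ and $v \in \mathfrak{S}^{\times}$. The Frobenius lift satisfies $\phi(p) = p$ and carries units to units, so $\phi^n(d) = p^a \phi^n(v)$ with $\phi^n(v) \in \mathfrak{S}^{\times}$, whence $\phi^n(d) \bmod E = p^a \cdot u_n$ for some $u_n \in \mc{O}_K^{\times}$. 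Therefore $v_\pi\bigl(\phi^n(d) \bmod E\bigr) = a e$, which is manifestly independent of $n$, yielding the claim.

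The main (mild) obstacle is verifying that $d$ has the clean shape $p^a \cdot (\text{unit})$: this relies on the UFD structure of $\mathfrak{S}$ together with the primality of $p$ there, combined with the annihilation of $\mathfrak{M}_{\operatorname{tor,} u\operatorname{-tf}}$ by a power of $p$. Without either hypothesis, $d$ could a priori acquire additional $u$-factors that would not survive Frobenius cleanly. An alternative route avoiding matrices uses Lemma \ref{lem:toretale_BKmod}(3) to filter $\mathfrak{M}_{\operatorname{tor,} u\operatorname{-tf}}$ by subquotients $\mathfrak{G}_j$ that are finite free $k\llbracket u \rrbracket$-modules of ranks $r_j$; an induction exploiting $E \equiv u^e \pmod{p}$ and $\mathfrak{G}_j^{\tw{n}}/E \simeq (k\llbracket u \rrbracket/u^e)^{r_j}$ (of $\mc{O}_K$-length $r_j e$ independently of $n$) then yields the same result.
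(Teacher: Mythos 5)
Your main argument is correct and takes a genuinely different route from the paper's. You invoke Lemma~\ref{lem:toretale_BKmod}\,(2) to present $\mathfrak{M}_{\operatorname{tor,}u\operatorname{-tf}}\simeq\mathfrak{N}/\mathfrak{N}'$ with $\mathfrak{N}',\mathfrak{N}$ free of the same rank, reduce the length to $v_\pi\bigl(\phi^n(d)\bmod E\bigr)$ with $d=\det(B)$, and then exploit that $\mathfrak{S}=W\llbracket u\rrbracket$ is a UFD in which $p$ is prime and $\phi(p)=p$ to force $d=p^a\cdot(\text{unit})$, whence the answer is $ae$ uniformly in $n$. The paper instead uses part (3) of the same lemma to filter $\mathfrak{M}_{\operatorname{tor,}u\operatorname{-tf}}$ by free $k\llbracket u\rrbracket$-modules and reduces by dévissage to the case $\mathfrak{M}_{\operatorname{tor,}u\operatorname{-tf}}\simeq k\llbracket u\rrbracket^{\oplus r}$, where the computation $\mathfrak{M}^{\tw{n}}/E\simeq (k[u]/u^e)^{\oplus r}$ for every $n$ is immediate — this is precisely what you sketch as your ``alternative route.'' Both arguments rely in the same place on Lemma~\ref{lem:toretale_Etors} (the vanishing of $E$-torsion after twist) to keep the relevant short exact sequences exact upon reduction modulo $E$; the paper's dévissage step elides this point, while you invoke it explicitly. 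Your determinantal version buys a cleaner global invariant (the $p$-exponent $a$ of $\det B$) controlling the length in one stroke, at the modest cost of appealing to the ``length of cokernel equals valuation of determinant'' fact over the DVR $\mathcal{O}_K$; the paper's version is more elementary but leaves the inductive bookkeeping to the reader.
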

\begin{proof}
    From Lemma \ref{lem:toretale_BKmod} (3), note that there exists some $N \geqslant 0$ such that we may write $\mathfrak{M}_{\operatorname{tor,} u\operatorname{-tf}}$ as a successive extension of finite free $k\llbracket u \rrbracket\textrm{-modules}$.
    Using this, one quickly reduces to the case when $\mf{M} = \mf{M}_{\mr{tor},u\mr{-tf}}$ is a finite free $k\llbracket u\rrbracket\textrm{-module}$.
    But, writing $\mathfrak{M} \isomorphic k\llbracket u \rrbracket^{\oplus r}$, one computes that $\mathfrak{M}^{\tw{n}}/E \isomorphic (k[u]/u^ek[u])^{\oplus r}$ as $\mc{O}_K\textrm{-modules}$ and for any $m \geqslant 0$, so the claim follows.
\end{proof}

\begin{lem}\label{lem:uinfty_padic_filtration}
    For all $n\geqslant 0$, we have that $\ell_{\mc{O}_K}\big(\mathfrak{M}[u^{\infty}]^{\tw{n}}[E]\big) = \ell_{\mc{O}_K}\big(\mathfrak{M}[u^{\infty}]^{\tw{n}}/E\big)$.
\end{lem}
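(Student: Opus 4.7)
The plan is to reduce the statement to the additivity of length for a multiplication-by-$E$ exact sequence, and then to reconcile length over $\mf{S}$ with length over $\mc{O}_K$.

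First, I would check that $N \defeq \mf{M}[u^\infty]^{\tw{n}}$ has finite length over $\mf{S}$. Indeed, $\mf{M}[u^\infty] \subseteq \mf{M}_{\operatorname{tor}}$ is annihilated by some power of $p$ by Proposition \ref{prop:BKmod_struct}, and by definition it is annihilated by some power of $u$. As $\mf{M}[u^\infty]$ is a finite type $\mf{S}$-module killed by a power of the maximal ideal $(p,u) \subset \mf{S}$, it has finite length over $\mf{S}$. Since $\phi_\mf{S}$ is a flat ring endomorphism with $\phi_\mf{S}(p) = p$ and $\phi_\mf{S}(u) = u^p$, the twist $N$ remains of finite length over $\mf{S}$ (with the same annihilator up to replacing $u^r$ by $u^{p^n r}$).

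Next, consider the four-term exact sequence coming from multiplication by $E$ on $N$:
\begin{equation*}
    0 \longrightarrow N[E] \longrightarrow N \xrightarrow{\,\cdot E\,} N \longrightarrow N/E \longrightarrow 0.
\end{equation*}
Additivity of length over $\mf{S}$ gives $\ell_\mf{S}(N[E]) = \ell_\mf{S}(N/E)$.

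Finally, I would translate these $\mf{S}$-lengths into $\mc{O}_K$-lengths. Both $N[E]$ and $N/E$ are annihilated by $E$, so they are naturally $\mf{S}/E \simeq \mc{O}_K$-modules. As $\mf{S}$ is a local ring whose unique simple module is $\mf{S}/(p,u) = k$, and likewise the unique simple $\mc{O}_K$-module is $\mc{O}_K/\pi = k$, the length of any finite-length $\mc{O}_K$-module $L$ viewed as an $\mf{S}$-module (via $\mf{S} \twoheadrightarrow \mc{O}_K$) coincides with its length as an $\mc{O}_K$-module: both count the number of composition factors, all of which are $k$. Applying this to $L = N[E]$ and $L = N/E$ yields $\ell_{\mc{O}_K}(N[E]) = \ell_\mf{S}(N[E]) = \ell_\mf{S}(N/E) = \ell_{\mc{O}_K}(N/E)$, as desired.

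No step here looks to be a real obstacle; the only subtlety is the length-comparison between $\mf{S}$ and $\mc{O}_K$, which is straightforward once one observes that both rings have residue field $k$ and that $\mf{S}$ is local.
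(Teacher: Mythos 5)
Your proof is correct, and it takes a genuinely different and arguably cleaner route than the paper's. You observe that $N\defeq \mf{M}[u^\infty]^{\tw{n}}$ is a finite $\mf{S}\textrm{-module}$ killed by a power of the maximal ideal $(p,u)$, hence of finite $\mf{S}\textrm{-length}$, and then apply additivity of length to the four-term exact sequence
\begin{equation*}
    0 \longrightarrow N[E] \longrightarrow N \xrightarrow{\,\cdot E\,} N \longrightarrow N/E \longrightarrow 0,
\end{equation*}
before reconciling $\ell_\mf{S}$ with $\ell_{\mc{O}_K}$ by noting that both rings are local with residue field $k$ and that the $\mf{S}\textrm{-action}$ on $N[E]$ and $N/E$ factors through $\mc{O}_K$. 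This is a one-shot argument with no induction. The paper instead inducts on the power $p^m$ killing $\mf{M}[u^\infty]$: in the base case $m=1$ it uses the structure theorem over the DVR $\mf{S}/p=k\llbracket u\rrbracket$ to write $\mf{N}\simeq\bigoplus_j k[u]/u^{r_j}$ and computes both sides explicitly as $\sum_j\min\{e,p^n r_j\}$, and in the inductive step it takes the long exact sequence induced by $0\to p\mf{N}^{\tw{n}}\to\mf{N}^{\tw{n}}\to\mf{N}^{\tw{n}}/p\to 0$. The two approaches are ultimately both applications of additivity of length; what the paper's more computational route buys is the explicit formula $\sum_j\min\{e,p^n r_j\}$ in the $p\textrm{-torsion}$ case, which is cited again verbatim in the proofs of Lemmas \ref{lem:len_toret} and \ref{lem:len_uinfty_ptor}. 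Your argument establishes the lemma more efficiently but would not, by itself, supply that auxiliary formula.
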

\begin{proof}
    Note that $\mathfrak{N} \defeq \mathfrak{M}[u^{\infty}]$ is killed by $p^m$, for some $m \geqslant 0$, and we proceed by induction on $m$.
    So, assume that $\mathfrak{N}$ is $p\textrm{-torsion}$.
    As $\mathfrak{S}/p = k\llbracket u \rrbracket$ is a complete DVR, we have a decomposition 
    \begin{equation*}
        \mathfrak{N} \simeq \textstyle\bigoplus_{j=1}^s k[u]/u^{r_j},
    \end{equation*}
    for some $r_j$ and $s$ in $\mathbb{N}$.
    Then, we see that, 
    \begin{equation*}
        \mathfrak{N}^{\tw{n}} \simeq \textstyle\bigoplus_{j=1}^s k[u]/u^{p^nr_j},
    \end{equation*}
    and noting that $E = u^e \textrm{ mod } p$, it is easy to compute that 
    \begin{equation*}
        \ell_{\mc{O}_K}\big(\mathfrak{N}^{\tw{n}}[E]\big) = \ell_{\mc{O}_K}\big(\mathfrak{N}^{\tw{n}}/E) = \textstyle\sum_{j=1}^s \min\{e, p^nr_j\}.
    \end{equation*}
    Assume now that $\mf{N}$ is $p^m$-torsion, and consider the exact sequence
    \begin{equation*}
        0 \longrightarrow p\mathfrak{N}^{\tw{n}} \longrightarrow \mathfrak{N}^{\tw{n}} \longrightarrow \mathfrak{N}^{\tw{n}}/p \longrightarrow 0.
    \end{equation*}
    The first term is $p^{m-1}\textrm{-torsion}$, and the last term in $p\textrm{-torsion}$.
    Reducing the preceding exact sequence modulo $E$, one obtains a long exact sequence of torsion $\mc{O}_K\textrm{-modules}$,
    \begin{equation*}
        \begin{aligned}
            0 &\longrightarrow (p\mathfrak{N}^{\tw{n}})[E] \longrightarrow \mathfrak{N}^{\tw{n}}[E] \longrightarrow (\mathfrak{N}^{\tw{n}}/p)[E] \longrightarrow (p\mathfrak{N}^{\tw{n}})/E \longrightarrow \mathfrak{N}^{\tw{n}}/E \longrightarrow (\mathfrak{N}^{\tw{n}}/p)/E \longrightarrow 0,
        \end{aligned}
    \end{equation*}
    and the claim follows from the induction hypothesis by the additivity of length.
\end{proof}

\subsection{Applications to cohomology}

We now apply the abstract statements on Breuil--Kisin modules to prove some results concerning Breuil--Kisin cohomology of smooth formal $\mc{O}_K\textrm{-schemes}$.

\subsubsection{Proof of Proposition \ref{thm:beta-and-gamma-imply-alpha}}\label{sss:proof-of-implication} 

We shall in fact show the stronger claim that under Hypothesis \ref{conj:gamma}, Conjecture \ref{conj:beta} implies \textbf{(L1)} and \textbf{(L2)} from Section \ref{ss:conjectural-framework} (see the discussion after Proposition \ref{prop:lcrys-constant}), i.e.\ that $\ell^{i,\tw{a}}_\dR\leqslant e\cdot \ell^i_\dR$ and that $\ell^{i,\tw{n}}_\dR$ is non-decreasing in $n$.
By \eqref{eq:deRham_BK_ses}, we have the following equality:
\begin{equation}\label{eq:deRham_BK_len}
    \ell^{i,\tw{n}}_{\textrm{dR}} = \ell_{\mc{O}_K}\left(\big(\mathfrak{M}^{i,\tw{n}}/E\big)[p^{\infty}]\right) + \ell_{\mc{O}_K}(\mathfrak{Q}^{i, \tw{n}}).
\end{equation}
Using \eqref{eq:len_genBK_modE} in Lemma \ref{prop:len_genBK}, and Lemmas \ref{lem:len_toret} and \ref{lem:uinfty_padic_filtration}, we see that statement (2) in Conjecture \ref{conj:beta} implies that the first term on the right hand side of \eqref{eq:deRham_BK_len} is non-decreasing in $n$.
Moreover, the second inequality in Hypothesis \ref{conj:gamma} implies that the second term on the right hand side of \eqref{eq:deRham_BK_len} is non-decreasing in $n$. A similar analysis, using statement (1) in Conjecture \ref{conj:beta} and the first inequality in Hypothesis \ref{conj:gamma}, implies the inequality $\ell^i_{\crys} \leqslant \ell^i_{\dR}$ in Conjecture \ref{conj:main-inequality}.

\subsubsection{Verification of Conjecture \ref{conj:beta} in some small cases}\label{sss:verification}

We finally aim to use the results in Sections \ref{ss:prelims-on-BK-modules} and \ref{ss:length-calculations} to prove the following.
\begin{thm}\label{thm:conj-beta-small-index-and-ramification}
    Conjecture \ref{conj:beta} holds when $i\leqslant 2$ and $p\cdot \alpha(\mf{M}^i)\ne e<p(p-1)$.
\end{thm}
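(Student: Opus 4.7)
The strategy is to apply Proposition \ref{prop:ann_uinfty_submod} to each choice of the relevant Breuil--Kisin module $\mf{N}$ (either $\mf{M}^i[u^\infty]$ or $\ov{\mf{M}^i}$), both of which are $u^\infty$-torsion quasi-filtered Breuil--Kisin modules of height at most $i \leqslant 2$. Under the hypothesis $e < p(p-1)$, this yields a dichotomy: either $\mf{N}$ is killed by $p$, or $\textup{Ann}(\mf{N})$ contains an element of the form $u^\alpha + px$ with $x \in \mf{S}^\times$ and $1 \leqslant \alpha < p$. The plan is to verify both inequalities of Conjecture \ref{conj:beta} separately in these two scenarios.

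In the $p$-torsion case, I would decompose $\mf{N} \cong \bigoplus_{j=1}^r k[u]/u^{r_j}$ as a $k[[u]]$-module; since Frobenius on $\mf{S}$ is flat, $\mf{N}^{\tw{n+1}} \cong \bigoplus_j k[u]/u^{p^{n+1} r_j}$. Using $E \equiv u^e \pmod{p}$ together with Lemma \ref{lem:uinfty_padic_filtration}, one obtains the closed form $f(n) = \sum_j \min(e, p^{n+1} r_j)$; both monotonicity and the bound $f(a) \leqslant e \cdot f(0)$ then follow by inspection, the latter using $f(0) \geqslant r$ and $f(a) \leqslant er$.

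The substantive case is the second. Frobenius-twisting the annihilator relation provides $u^{p^{n+1}\alpha} + p\phi^{n+1}(x) \in \textup{Ann}(\mf{N}^{\tw{n+1}})$ with $\phi^{n+1}(x) \in \mf{S}^\times$. The key observation is that $p^{n+1}\alpha \neq e$ for every $n \geqslant 0$: the hypothesis $p\alpha \neq e$ handles $n = 0$, while for $n \geqslant 1$ the inequality $p^{n+1}\alpha \geqslant p^2 > p(p-1) > e$ is automatic. Lemma \ref{lem:EnotinAnn} then forces $\mf{N}^{\tw{n+1}}[E]$ to be $p$-torsion for every $n$, and combined with the equality $\mf{N}^{\tw{n+1}}[p] = \mf{N}^{\tw{n+1}}[u^{p^{n+1}\alpha}]$ coming from the same relation, one arrives at the description $\mf{N}^{\tw{n+1}}[E] = \mf{N}^{\tw{n+1}}[u^{\min(e, p^{n+1}\alpha)}]$.

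To finish, I would extract monotonicity by a ``twist then restrict'' argument: flatness of Frobenius embeds $(\mf{N}^{\tw{n+1}}[E])^{\tw{1}} \hookrightarrow \mf{N}^{\tw{n+2}}$, and its $u^e$-torsion -- automatically contained in $\mf{N}^{\tw{n+2}}[E]$ -- has length at least $f(n)$ by a direct calculation on the $k[u]/u^t$-decomposition of $\mf{N}^{\tw{n+1}}[E]$. For the bound $f(a) \leqslant e \cdot f(0)$, I would use the elementary fact that $e < p(p-1)$ forces $a \leqslant 1$, hence $p^a \leqslant e$ (trivially for $a = 0$; from $e \geqslant p$ when $a = 1$), combined with $f(a) \leqslant \ell_{\mc{O}_K}(\mf{N}^{\tw{a+1}}[p]) = p^{a+1}\sum_j s_j$ (writing $\mf{N}/p = \bigoplus_j k[u]/u^{s_j}$) against an appropriate lower bound on $f(0)$. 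The main obstacle I anticipate is the subcase $p\alpha > e$ at $n = 0$, where the clean formula $f(0) = p\sum_j s_j$ (valid when $p\alpha < e$) is replaced by $\ell_{\mc{O}_K}(\mf{N}^{\tw{1}}[u^e])$, requiring a finer analysis of how many generators $\mf{N}^{\tw{1}}[p]$ has as a $k[[u]]/u^{p\alpha}$-module and how much each contributes to $\mf{N}^{\tw{1}}[u^e]$.
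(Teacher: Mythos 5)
Your plan reaches the same dichotomy (from Proposition \ref{prop:ann_uinfty_submod}) and handles the $p$-torsion case just as the paper does, but in the non-$p$-torsion case you diverge. The paper's argument, packaged in Lemma \ref{lem:len_uinfty_ftor}, observes that $f^{\tw{n}} = u^{p^n\alpha} + p\phi^n(x)$ lies in $(p,u)\mysetminus(p,u)^2$, so $\mathfrak{S}/(f^{\tw{n}})$ is a DVR with uniformiser $u$; writing $\mathfrak{M} \cong \bigoplus_j \mathfrak{S}/(f,u^{r_j})$ over this DVR and computing $\upsilon_n(E) = \alpha,\ \min\{e,p\alpha\},\ e$ for $n = 0,\ 1,\ \geqslant 2$ respectively (the hypothesis $p\alpha\neq e$ being exactly what pins down $\upsilon_1(E)$) gives the closed form $\ell_{\mathcal{O}_K}(\mathfrak{M}^{\tw{n}}[E]) = \sum_j \min\{\upsilon_n(E),p^n r_j\}$, from which both inequalities of Conjecture \ref{conj:beta} fall out termwise: monotonicity because $\upsilon_n(E)$ is non-decreasing in $n$, and $e\cdot f(0)\geqslant f(a)$ because each term of $f(a)$ is at most $\upsilon_{a+1}(E)\leqslant e$ while each non-vanishing term of $f(0)$ is at least $1$. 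Your route instead forces $\mathfrak{N}^{\tw{n+1}}[E]$ to be $p$-torsion via Lemma \ref{lem:EnotinAnn}, identifies it as $u^{\min(e,p^{n+1}\alpha)}$-torsion inside the $k\llbracket u\rrbracket$-module $\mathfrak{N}^{\tw{n+1}}[p]$, and argues monotonicity by a ``twist then restrict'' comparison. That does work for the monotonicity inequality, but, as you flag yourself, it leaves the $e\cdot f(0)\geqslant f(a)$ bound delicate when $p\alpha > e$. The paper's DVR observation is the key point that collapses both inequalities into one formula, and I would adopt it.

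Two smaller gaps are worth flagging. First, you assert that $\overline{\mathfrak{M}^i}$ is itself a $u^\infty$-torsion quasi-filtered Breuil--Kisin module, which is not established; the paper instead embeds $\overline{\mathfrak{M}^i}$, via Lemma \ref{lem:Mbar_MmodpN}, into $\mathrm{H}^i\big(\mathrm{R}\Gamma_{\mathfrak{S}}(X/\mathcal{O}_K)/p^N\big)[u^\infty]$ for $N\gg 0$ -- that larger module is quasi-filtered by \cite[Proposition 3.2]{li-liu-uinfty} -- and transfers the annihilator constraint across the inclusion, which is what makes Propositions \ref{prop:ann_quasifilBKmod} and \ref{prop:ann_uinfty_submod} applicable to $\overline{\mathfrak{M}^i}$. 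Second, the quantity $\ell_{\mathcal{O}_K}\big(\mathfrak{N}^{\tw{a+1}}[p]\big)$ appearing at the end of your plan is not defined as written, since $\mathfrak{N}^{\tw{a+1}}[p]$ is not in general annihilated by $E$ and hence is not an $\mathcal{O}_K$-module; you would need to restrict to its $u^e$-torsion (equivalently $E$-torsion) or measure length over the DVR $\mathfrak{S}/(f^{\tw{a+1}})$, which again points back to the paper's computation.
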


The key to proving this result is the following length estimates. 
\begin{prop}\label{prop:len_unifty}
    Let $\mathfrak{M}$ be a $u^{\infty}\textrm{-torsion}$ quasi-filtered Breuil--Kisin module of height $i \leqslant 2$.
    Assume that $p\alpha(\mf{M})\ne e<p(p-1)$.
    Then, the following two inequalities hold:
    \begin{align}
        \ell_{\mc{O}_K}\big(\mathfrak{M}^{\tw{n+1}}[E]\big) &\geqslant \ell_{\mc{O}_K}\big(\mathfrak{M}^{\tw{n}}[E]\big),\label{eq:len_uinfty_Etor}\\
        e \cdot \ell_{\mc{O}_K}\big(\mathfrak{M}^{\tw{1}}[E]\big) &\geqslant \ell_{\mc{O}_K}\big(\mathfrak{M}^{\tw{a+1}}[E]\big).\label{eq:len_uinfty_Etor_a}
    \end{align}
\end{prop}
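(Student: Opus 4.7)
The plan is to split based on Proposition \ref{prop:ann_uinfty_submod} into two cases. If $\mathfrak{M}$ is $p$-torsion, then since $\mathfrak{S}/p = k\llbracket u \rrbracket$ is a DVR, the structure theorem gives $\mathfrak{M} \simeq \bigoplus_{j=1}^s k[u]/u^{r_j}$, and hence $\mathfrak{M}^{\tw{n}} \simeq \bigoplus_j k[u]/u^{p^n r_j}$. Using $E \equiv u^e \pmod{p}$, one computes $\ell_{\mc{O}_K}(\mathfrak{M}^{\tw{n}}[E]) = \sum_j \min(p^n r_j, e)$. The first inequality then follows from monotonicity in $n$; the second from $\min(p r_j, e) \geqslant 1$ (since $pr_j \geqslant 2$) and $\min(p^{a+1} r_j, e) \leqslant e$, which together yield $e \cdot \ell_{\mc{O}_K}(\mathfrak{M}^{\tw{1}}[E]) \geqslant e s \geqslant \ell_{\mc{O}_K}(\mathfrak{M}^{\tw{a+1}}[E])$.

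In the second case, $\textup{Ann}(\mathfrak{M})$ contains $f = u^\alpha + px$ with $x \in \mathfrak{S}^\times$, so $\phi^n(f) = u^{p^n\alpha} + p \phi^n(x)$ lies in $\textup{Ann}(\mathfrak{M}^{\tw{n}})$ with $\phi^n(x) \in \mathfrak{S}^\times$. The hypothesis $p\alpha \neq e$, together with $p^n\alpha \geqslant p^2 > p(p-1) > e$ for $n \geqslant 2$, ensures $p^n\alpha \neq e$ for all $n \geqslant 1$, so Lemma \ref{lem:EnotinAnn} forces $\mathfrak{M}^{\tw{n}}[E]$ to be $p$-torsion for every $n \geqslant 1$. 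Using that $\phi_\mathfrak{S}$ is flat, applying $\phi^*$ to the exact sequence $0 \to \mathfrak{M}^{\tw{n}}[p] \to \mathfrak{M}^{\tw{n}} \xrightarrow{p} \mathfrak{M}^{\tw{n}}$ identifies $\mathfrak{M}^{\tw{n+1}}[p] \simeq (\mathfrak{M}^{\tw{n}}[p])^{\tw{1}}$. Writing $\mathfrak{M}^{\tw{1}}[p] \simeq \bigoplus_j k[u]/u^{t_j}$ then gives $\mathfrak{M}^{\tw{n}}[p] \simeq \bigoplus_j k[u]/u^{p^{n-1}t_j}$ for $n \geqslant 1$, and hence $\ell_{\mc{O}_K}(\mathfrak{M}^{\tw{n}}[E]) = \sum_j \min(p^{n-1}t_j, e)$. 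The elementary bounds from the first case then yield \eqref{eq:len_uinfty_Etor_a} unconditionally and \eqref{eq:len_uinfty_Etor} for all $n \geqslant 1$.

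The only step remaining is \eqref{eq:len_uinfty_Etor} at $n = 0$, namely $\ell_{\mc{O}_K}(\mathfrak{M}^{\tw{1}}[E]) \geqslant \ell_{\mc{O}_K}(\mathfrak{M}[E])$. When $\alpha \neq e$, Lemma \ref{lem:EnotinAnn} again forces $\mathfrak{M}[E]$ to be $p$-torsion, and the same computation closes the argument. The hard part will be the edge case $\alpha = e$, which, by the bound $\alpha \leqslant \lceil e/(p-1) \rceil$ together with $e < p(p-1)$, is confined to $e = 1$ or $p = 2$. Here $\mathfrak{M}[E]$ need not be $p$-torsion, but the congruence $E - f \in p\mathfrak{S}$ forces $E$ to act on $\mathfrak{M}$ as $p\beta$ for some $\beta \in \mathfrak{S}$, so $\mathfrak{M}[E] = \mathfrak{M}[p\beta]$ is controlled by the $p$-adic filtration of $\mathfrak{M}$. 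Combining this with the quasi-filtered maps from Definition \ref{defi:quasi_filtered_BK_mod} and the length description in Lemma \ref{lem:Mbar_MmodpN} should close the bound; this final case appears to be the most delicate part of the argument.
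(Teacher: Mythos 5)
Your proof takes a genuinely different route from the paper's. The paper's Lemma \ref{lem:len_uinfty_ftor} splits on whether $E$ lies in $\textup{Ann}(\mathfrak{M})$, in $\textup{Ann}(\mathfrak{M}^{\tw{1}})$, or in neither, and in the last case works inside the DVR $\mathfrak{S}/(f^{\tw{n}})$, computing the $u$-adic valuation $\upsilon_n(E)$ and reading off lengths from the elementary-divisor decomposition of $\mathfrak{M}^{\tw{n}}$ over $\mathfrak{S}/(f^{\tw{n}})$. You instead apply Lemma \ref{lem:EnotinAnn} directly to the submodule $\mathfrak{M}^{\tw{n}}[E]$, which has $E$ in its annihilator by definition, concluding at once that $\mathfrak{M}^{\tw{n}}[E]$ is $p$-torsion whenever $p^n\alpha\neq e$; you then track the $k\llbracket u\rrbracket$-module structure of $\mathfrak{M}^{\tw{n}}[p]$ via flatness of $\phi_{\mathfrak{S}}$. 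This is a cleaner line that avoids the three-way case split, and the computations for $n\geqslant 1$ in both inequalities, and for the $p$-torsion case, are correct. (You should also note, as the paper does, that $i\leqslant 1$ is trivial since Theorem \ref{thm:ann_uinfty_submod}(1) forces $\mathfrak{M}=0$.)

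The step you flag as remaining — the $n=0$ inequality when $\alpha=e$ — is in fact vacuous, so your final paragraph should be dropped rather than completed. In the second alternative of Proposition \ref{prop:ann_uinfty_submod} you may and should assume that $\mathfrak{M}$ is not $p$-torsion, since the $p$-torsion case is already handled. But if $\mathfrak{M}$ is a nonzero $u^{\infty}$-torsion quasi-filtered Breuil--Kisin module of height $2$ that is not $p$-torsion, then Theorem \ref{thm:ann_uinfty_submod} forces $e\geqslant p$: items (1) and (2) there make $\mathfrak{M}$ zero or $p$-torsion as soon as $e\leqslant p-1$. Combined with $e<p(p-1)$ this gives $p\geqslant 3$, and then Proposition \ref{prop:ann_quasifilBKmod} yields $\alpha\leqslant\lfloor e/(p-1)\rfloor\leqslant e/2<e$. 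So $\alpha\neq e$, Lemma \ref{lem:EnotinAnn} applies at $n=0$ exactly as you say, and the argument is complete; the speculative appeal to $E$ acting as $p\beta$ and to Lemma \ref{lem:Mbar_MmodpN} is not needed.
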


\begin{proof}
    Using Theorem \ref{thm:ann_uinfty_submod}, the claim is trivial for $i = 0, 1$.
    For $i = 2$, the claim follows by using Proposition \ref{prop:ann_uinfty_submod} and Lemmas \ref{lem:len_uinfty_ptor} and \ref{lem:len_uinfty_ftor} below.
\end{proof}

\begin{lem}\label{lem:len_uinfty_ptor}
    Assume that $\mathfrak{M}$ is $p\textrm{-torsion}$, then the inequalities \eqref{eq:len_uinfty_Etor} and \eqref{eq:len_uinfty_Etor_a} hold.
\end{lem}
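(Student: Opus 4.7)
The plan is to reduce both inequalities to an explicit length computation made possible by the fact that $\mathfrak{M}$, being $p$-torsion, is a module over the complete DVR $\mathfrak{S}/p = k\llbracket u \rrbracket$. By the structure theorem, we may decompose
\[
    \mathfrak{M} \simeq \bigoplus_{j=1}^s k[u]/u^{r_j},
\]
for some positive integers $r_j$. This reduces everything to a finite sum of cyclic modules, for which both sides of the inequalities can be read off directly.

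Next, I would compute $\mathfrak{M}^{\tw{n}}[E]$. Since Frobenius on $\mathfrak{S}$ sends $u \mapsto u^p$ and is flat, twisting each summand yields $k[u]/u^{p^n r_j}$, so
\[
    \mathfrak{M}^{\tw{n}} \simeq \bigoplus_{j=1}^s k[u]/u^{p^n r_j}.
\]
Because $E \equiv u^e \pmod p$, taking $E$-torsion on each summand amounts to taking $u^e$-torsion, which is the cyclic $\mathfrak{S}$-submodule generated by $u^{\max(p^n r_j - e,\, 0)}$. As a module over $\mathfrak{S}/E = \mc{O}_K$ (where $u$ acts as $\pi$ and $\pi^e$ generates $p \cdot \mc{O}_K$), this cyclic module is isomorphic to $\mc{O}_K/\pi^{\min(e,\, p^n r_j)}$. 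Hence
\[
    \ell_{\mc{O}_K}\big(\mathfrak{M}^{\tw{n}}[E]\big) \;=\; \sum_{j=1}^s \min\!\bigl(e,\, p^n r_j\bigr).
\]

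With this closed form in hand, both inequalities become elementary. For \eqref{eq:len_uinfty_Etor}, monotonicity of $x \mapsto \min(e, x)$ together with $p^{n+1} r_j \geqslant p^n r_j$ gives the termwise inequality. For \eqref{eq:len_uinfty_Etor_a}, note that $\min(e, p r_j) \geqslant 1$ (since $e, r_j \geqslant 1$) and $\min(e, p^{a+1} r_j) \leqslant e$, so
\[
    e \cdot \sum_{j=1}^s \min(e,\, p r_j) \;\geqslant\; e \cdot s \;\geqslant\; \sum_{j=1}^s \min(e,\, p^{a+1} r_j),
\]
which is exactly what is needed.

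I expect no real obstacle here: the hypotheses $p\alpha(\mathfrak{M}) \ne e < p(p-1)$ play no role in the $p$-torsion case and are presumably needed only in the companion lemma (Lemma \ref{lem:len_uinfty_ftor}) that handles Breuil--Kisin modules of higher $p$-torsion exponent, where one would instead exploit Proposition \ref{prop:ann_uinfty_submod} to reduce to the $p$-torsion situation treated here.
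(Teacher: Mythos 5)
Your proof is correct and takes essentially the same approach as the paper: both reduce to the closed-form length formula $\ell_{\mc{O}_K}\big(\mathfrak{M}^{\tw{n}}[E]\big) = \sum_{j=1}^s \min\{e, p^n r_j\}$ obtained from the structure theorem for finitely generated torsion $k\llbracket u\rrbracket$-modules (the paper cites this formula from the proof of Lemma~\ref{lem:uinfty_padic_filtration} and then simply says it ``implies the claim''). You go a bit further by spelling out the elementary deduction of both inequalities from the formula, and you are right that the hypotheses $p\alpha(\mathfrak{M}) \ne e < p(p-1)$ are irrelevant in the $p$-torsion case.
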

\begin{proof}
    As $\mathfrak{M}$ is a finitely generated $u^{\infty}\textrm{-torsion}$ module over $\mathfrak{S}/p = k\llbracket u \rrbracket$, therefore, from the proof of Lemma \ref{lem:uinfty_padic_filtration} recall that we have,
    \begin{equation*}
        \ell_{\mc{O}_K}\big(\mathfrak{M}^{\tw{n}}[E]\big) = \ell_{\mc{O}_K}\big(\mathfrak{M}^{\tw{n}}/E) = \textstyle\sum_{j=1}^s \min\{e, p^nr_j\},
    \end{equation*}
    for any $n \geqslant 0$, which implies the claim.
\end{proof}

\begin{lem}\label{lem:len_uinfty_ftor}
    Write $\alpha=\alpha(\mf{M})$. Assume that $u^{\alpha} + px$ is in $\textup{Ann}(\mathfrak{M})$ with $1 \leqslant \alpha \leqslant \lfloor \tfrac{e}{p-1} \rfloor < p$ and $x$ in $\mathfrak{S}^{\times}$, and $e \neq p\alpha$.
    Then, the inequalities \eqref{eq:len_uinfty_Etor} and \eqref{eq:len_uinfty_Etor_a} hold.
\end{lem}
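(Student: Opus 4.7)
The plan is to reduce $\mf{M}$ to a finitely generated torsion module over a DVR, apply the structure theorem for such modules, and verify both inequalities by an explicit length formula on cyclic summands. First, by Weierstrass preparation, the element $f := u^\alpha + px$ generates the same $\mf{S}$-ideal as an Eisenstein polynomial $f_0 \in W[u]$ of degree $\alpha$: the constant term $f_0(0) \in pW^\times$ follows from $x(0) \in W^\times$. Consequently $A := \mf{S}/(f)$ is the ring of integers of a totally ramified degree-$\alpha$ extension of $K_0$, and in particular a DVR with uniformiser $u$ and $v_A(p) = \alpha$; the same reasoning applied to $\phi_\mf{S}^n(f) = u^{p^n\alpha} + p \phi_\mf{S}^n(x)$ shows that $A_n := \mf{S}/(\phi_\mf{S}^n(f))$ is also a DVR, with $v_{A_n}(u) = 1$ and $v_{A_n}(p) = p^n\alpha$.

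Since $\mf{M}$ is finitely generated over the DVR $A$ and $u^\infty$-torsion, the structure theorem gives $\mf{M} \cong \bigoplus_{j=1}^s A/u^{r_j}A$ with $r_j \geqslant 1$. Frobenius twist is exact (as $\phi_\mf{S}^n$ makes $\mf{S}$ free of rank $p^n$ over itself) and $E$-torsion commutes with direct sums, so I will reduce both inequalities to the cyclic case $\mf{M} = A/u^r A$, for which $\mf{M}^{\tw{n}} = A_n/u^{p^n r}A_n$. From the Eisenstein expansion $E = u^e + \sum_{i<e} c_i u^i$ one reads off $v_{A_n}(u^e) = e$, $v_{A_n}(c_0) = p^n\alpha$, and $v_{A_n}(c_i u^i) > p^n\alpha$ for $i \geqslant 1$, so cancellation between the lowest-valuation terms can occur only in the resonant case $e = p^n\alpha$.

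The hypotheses preclude this resonance for every $n \geqslant 0$: $n = 1$ is excluded by $e \neq p\alpha$; $n \geqslant 2$ would force $e \geqslant p^n \geqslant p^2 > p(p-1)$, contradicting $e < p(p-1)$; and $n = 0$ would require $\alpha = e$, incompatible with $\alpha \leqslant e/(p-1)$ when $p \geqslant 3$ (the case $p = 2$ forces the trivial unramified setting $e = 1$). Hence $v_{A_n}(E) = \min(e, p^n\alpha)$, and a standard $u$-adic kernel computation in the DVR $A_n$ yields $\ell_{\mc{O}_K}(\mf{M}^{\tw{n}}[E]) = \min(e, p^n\alpha, p^n r)$. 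Summing over cyclic summands, both inequalities follow by inspection: the monotonicity of $n \mapsto \min(e, p^n\alpha, p^n r)$ gives \eqref{eq:len_uinfty_Etor}, while $p^{a+1} \geqslant pe/(p-1) > e$ forces each $(a+1)$-twisted cyclic summand to contribute exactly $e$ (against $\geqslant 1$ for each $1$-twisted summand), yielding \eqref{eq:len_uinfty_Etor_a}. The main technical obstacle will be this arithmetic case analysis excluding the resonance $e = p^n\alpha$, which rests crucially on the combined hypotheses $e < p(p-1)$ and $e \neq p\alpha$.
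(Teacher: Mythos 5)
Your argument takes the same essential route as the paper's: decompose $\mf{M}$ into cyclic modules over the DVR $A = \mf{S}/(f)$, Frobenius-twist, and compute the $E\textrm{-torsion}$ length via the $u\textrm{-adic}$ valuation in each $A_n = \mf{S}/(f^{\tw{n}})$, giving $\ell_{\mc{O}_K}\big(\mf{M}^{\tw{n}}[E]\big) = \sum_j \min\big(v_{A_n}(E), p^n r_j\big)$. The difference is that you try to establish $v_{A_n}(E) = \min(e, p^n\alpha)$ uniformly in $n$ via a no-resonance argument, whereas the paper first clears out the small-$e$ regime: for $e \leqslant p-1$ it uses Theorem \ref{thm:ann_uinfty_submod} to force $\mf{M}$ to be $p\textrm{-torsion}$ and then applies Lemma \ref{lem:len_uinfty_ptor}, and for $p \leqslant e$ it additionally dispatches the cases $E \in \textup{Ann}(\mf{M})$ or $E \in \textup{Ann}(\mf{M}^{\tw{1}})$ via Lemma \ref{lem:EnotinAnn} before running the DVR computation. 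Where the valuation formula does hold, your streamlining is legitimate; the paper's intermediate annihilator cases are not strictly needed.

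The gap is precisely at $p=2$, which you dismiss in a parenthetical. There $e < p(p-1) = 2$ forces $e = 1$, and $\alpha < p$ forces $\alpha = 1$, so $e = \alpha$ and the $n=0$ resonance is \emph{not} excluded: $v_{A_0}(E)$ can genuinely exceed $\alpha$ (with $E = u - \pi$, $\pi = pw$, one has $E \equiv -p(x+w)$ in $A_0$, and $x(0) + w \in pW$ is entirely possible). Your write-up then has no valid valuation formula at $n = 0$, and inequality \eqref{eq:len_uinfty_Etor} at $n=0$ is left unproved. The missing input is exactly what the paper's first case supplies: for $e = p-1$, Theorem \ref{thm:ann_uinfty_submod}\textup{(2)} gives $\textup{Ann}(\mf{M}) = (p,u)$, so every $r_j = 1$ and $\sum_j \min(v_{A_n}(E), p^n r_j) = s$ for every $n$, independently of $v_{A_0}(E)$. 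With that added, your proof is complete.
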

\begin{proof}
    For $e \leqslant p-1$, note that $\textup{Ann}(\mathfrak{M})$ is $p\textrm{-torsion}$ by Theorem \ref{thm:ann_uinfty_submod}, and the claim follows by Lemma \ref{lem:len_uinfty_ptor}.
    So, we may assume that $p \leqslant e \leqslant p(p-1)$.
    Consider the element $f := u^{\alpha} + px$ assumed to be in $\textup{Ann}(\mathfrak{N})$.
    It suffices to consider the following cases:
    \begin{itemize}
        \item if $E \in \textup{Ann}(\mathfrak{M})$, then $\mathfrak{M}$ is $p\textrm{-torsion}$ by Lemma \ref{lem:EnotinAnn} and we may use Lemma \ref{lem:len_uinfty_ptor};

        \item if $E \not\in \textup{Ann}(\mathfrak{M})$ and $E \in \textup{Ann}(\mathfrak{M}^{\tw{1}})$, then $\mathfrak{M}^{\tw{1}}$ is $p\textrm{-torsion}$ by Lemma \ref{lem:EnotinAnn}, in particular, $\mathfrak{M}$ is $p\textrm{-torsion}$ and we may use Lemma \ref{lem:len_uinfty_ptor};

        \item if $E \not\in \textup{Ann}(\mathfrak{M})$ and $E \not\in \textup{Ann}(\mathfrak{M}^{\tw{1}})$, then the claim follows from the discussion below.
    \end{itemize}
    
    Assume that $E \not\in \textup{Ann}(\mathfrak{M})$ and $E \not\in \textup{Ann}(\mathfrak{M}^{\tw{1}})$.
    Let $n \geqslant 0$ and observe that $f^{(n)}$ belongs to $(p,u) \mysetminus (p,u)^2$, and thus $\mf{S}/f^{\tw{n}}$ is a regular local ring of dimension one, and so a DVR with uniformiser $u$.
    As $\mf{M}$ is a finite, $u^\infty\textrm{-torsion}$ $\mf{S}/f\textrm{-module}$, we may write
    \begin{equation*}
        \mathfrak{M} \isomorphic \textstyle\bigoplus_{j=1}^s \mathfrak{S}/(f, u^{r_j}),
    \end{equation*}
    for some $r_j$ and $s$ in $\mathbb{N}$, and for all $n \geqslant 0$, we see that 
    \begin{equation*}
        \mathfrak{M}^{\tw{n}} \isomorphic \textstyle\bigoplus_{j=1}^s \mathfrak{S}/(f^{\tw{n}}, u^{p^nr_j}).
    \end{equation*}
    Recall that $u$ is a uniformiser of $\mathfrak{S}/f^{\tw{n}}$ and let $\upsilon_n(-)$ denote the $u\textrm{-adic}$ valuation on $\mathfrak{S}/f^{\tw{n}}$.
    Then, we compute that $\upsilon_n(p) = p^n\alpha$, and therefore,
    \begin{equation}
        \upsilon_n(E) = \left\{
            \begin{array}{lll}
                \alpha & \mbox{if } n = 0, \\
                \min\{e, p\alpha\} & \mbox{if } n = 1, \\
                e & \mbox{if } n \geqslant 2.
            \end{array}
	   \right.
    \end{equation}
    Observe that we have,
    \begin{equation*}
        \ell_{\mc{O}_K}\big(\mathfrak{M}^{\tw{n}}[E]\big) = \ell_{\mc{O}_K}\big(\mathfrak{M}^{\tw{n}}/E\big) = \textstyle\sum_{j=1}^s \min\{\upsilon_n(E), p^nr_j\}.
    \end{equation*}
    Hence, it follows that the inequalities \eqref{eq:len_uinfty_Etor} and \eqref{eq:len_uinfty_Etor_a} also hold when $E \not\in \textup{Ann}(\mathfrak{M})$ and $E \not\in \textup{Ann}(\mathfrak{M}^{\tw{1}})$.
    This allows us to conclude.
\end{proof}

These methods also imply the following, which actually finishes the proof of Theorem \ref{thm:conj-beta-small-index-and-ramification}, as $\mr{H}^i\big(\mr{R}\Gamma_\mf{S}(X/\mc{O}_K)/p^m)$ and its $u^\infty\textrm{-torsion}$ submodule $\mr{H}^i\big(\mr{R}\Gamma_\mf{S}(X/\mc{O}_K)/p^m)[u^\infty]$ are quasi-filtered Breuil--Kisin modules of height $i$ by \cite[Proposition 3.2]{li-liu-uinfty}.

\begin{prop}\label{cor:len_Mbar}
    With the assumptions in Theorem \ref{thm:conj-beta-small-index-and-ramification}, the following inequality holds:
    \begin{equation}\label{eq:len_Mbar_Etor}
        \ell_{\mc{O}_K}\big(\overline{\mathfrak{M}^i}^{\tw{n+1}}[E]\big) \geqslant \ell_{\mc{O}_K}\big(\overline{\mathfrak{M}^i}^{\tw{n}}[E]\big).
    \end{equation}
\end{prop}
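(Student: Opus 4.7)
The plan is to reduce Proposition \ref{cor:len_Mbar} directly to Proposition \ref{prop:len_unifty} by observing that $\overline{\mathfrak{M}^i}$ is itself a $u^\infty\textrm{-torsion}$ quasi-filtered Breuil--Kisin module of height $i$, exactly the kind of object whose length inequalities are governed by the abstract results of Section \ref{ss:prelims-on-BK-modules}.

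First, I would verify that $\overline{\mathfrak{M}^i}$ inherits the structure of a quasi-filtered Breuil--Kisin module of height $i$ in the sense of Definition \ref{defi:quasi_filtered_BK_mod}. This should follow by transporting the structure from $\mathfrak{M}^i$ along the canonical functorial quotient $\mathfrak{M}^i_{\textrm{free}} \twoheadrightarrow \overline{\mathfrak{M}^i}$ of \eqref{eq:ses_Mtffreebar}, using that the auxiliary module $\mathfrak{N}$ and the maps $f,g,h,h'$ arising from \cite[Proposition 3.2]{li-liu-uinfty} for $\mathfrak{M}^i$ descend to the reflexive hull and further to its cokernel against $\mathfrak{M}^i_{\textrm{tf}}$. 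Moreover, by Proposition \ref{prop:BKmod_struct}, the module $\overline{\mathfrak{M}^i}$ is killed by a power of the maximal ideal $(p,u)$ of $\mathfrak{S}$, which in particular makes it $u^\infty\textrm{-torsion}$.

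Second, I would check that the relevant hypothesis on $\alpha$ transfers: namely, that under the assumption $p\cdot\alpha(\mathfrak{M}^i)\neq e<p(p-1)$ of Theorem \ref{thm:conj-beta-small-index-and-ramification}, one also has $p\cdot\alpha(\overline{\mathfrak{M}^i})\neq e<p(p-1)$. In fact, the natural interpretation of $\alpha(\mathfrak{M}^i)$ in the statement of that theorem is as the invariant attached to the various $u^\infty\textrm{-torsion}$ quasi-filtered Breuil--Kisin subquotients of $\mathfrak{M}^i$ (both $\mathfrak{M}^i[u^\infty]$ and $\overline{\mathfrak{M}^i}$), so this compatibility is essentially by convention once one unpacks the application to Conjecture \ref{conj:beta}.

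With these two verifications in hand, Proposition \ref{prop:len_unifty}, applied to the $u^\infty\textrm{-torsion}$ quasi-filtered Breuil--Kisin module $\overline{\mathfrak{M}^i}$ of height $i \leqslant 2$, immediately yields
\[
\ell_{\mc{O}_K}\big(\overline{\mathfrak{M}^i}^{\tw{n+1}}[E]\big) \;\geqslant\; \ell_{\mc{O}_K}\big(\overline{\mathfrak{M}^i}^{\tw{n}}[E]\big),
\]
which is the desired inequality. The main obstacle is the first step: checking that $\overline{\mathfrak{M}^i}$ truly fits Definition \ref{defi:quasi_filtered_BK_mod} rather than merely being an effective $\varphi$-module, i.e.\ producing the auxiliary object $\mathfrak{N}$ and the four structural maps $f,g,h,h'$ for $\overline{\mathfrak{M}^i}$. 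The remaining length calculation is then a black-box application of the work already carried out in Lemmas \ref{lem:len_uinfty_ptor} and \ref{lem:len_uinfty_ftor}.
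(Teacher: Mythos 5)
Your strategy of reducing to Proposition \ref{prop:len_unifty} by arguing that $\overline{\mathfrak{M}^i}$ is itself a $u^\infty\textrm{-torsion}$ quasi-filtered Breuil--Kisin module is exactly where the proposal has a real gap, and you correctly flag it as ``the main obstacle''. Proposition \ref{prop:BKmod_struct} only gives that $\overline{\mathfrak{M}^i}$ is a \emph{generalised} Breuil--Kisin module (it has $\varphi_{\mathfrak{M}}$ and $\psi_{\mathfrak{M}}$), but the quasi-filtered structure of Definition \ref{defi:quasi_filtered_BK_mod} is strictly more data: an auxiliary module $\mathfrak{N}$ and four maps $f,g,h,h'$ with $h$ injective. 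There is no reason this data descends through ``take reflexive hull, then quotient by $\mathfrak{M}^i_{\mathrm{tf}}$''. Worse, the source of quasi-filtered structure in \cite[Proposition 3.2]{li-liu-uinfty} is attached to $\mr{H}^i(\mr{R}\Gamma_\mathfrak{S}(X/\mc{O}_K)/p^m)$, not to the integral $\mathfrak{M}^i = \mr{H}^i_\mathfrak{S}(X/\mc{O}_K)$ itself, so there is no quasi-filtered structure on $\mathfrak{M}^i$ to transport in the first place. The second step --- transferring the assumption $p\cdot\alpha(\mathfrak{M}^i)\neq e$ to $\overline{\mathfrak{M}^i}$ ``essentially by convention'' --- is also unjustified.

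The paper avoids this entirely by not putting any quasi-filtered structure on $\overline{\mathfrak{M}^i}$. It uses Lemma \ref{lem:Mbar_MmodpN} (Gabber--Li) and \stacks{061Z} to produce inclusions
\begin{equation*}
\overline{\mathfrak{M}^i} \hookrightarrow (\mathfrak{M}^i/p^N)[u^{\infty}] \hookrightarrow \mr{H}^i\big(\RGamma_{\mathfrak{S}}(X/\mc{O}_K)/p^N\big)[u^{\infty}]
\end{equation*}
for $N\gg 0$, so that $\overline{\mathfrak{M}^i}$ sits inside a module that genuinely \emph{is} $u^\infty\textrm{-torsion}$ quasi-filtered. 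Since $\mathrm{Ann}$ of a submodule contains $\mathrm{Ann}$ of the ambient module, Propositions \ref{prop:ann_quasifilBKmod} and \ref{prop:ann_uinfty_submod} force $\overline{\mathfrak{M}^i}$ to be killed by $p$ or by some $u^\alpha + px$ with $x\in\mathfrak{S}^\times$ and $e\neq p\alpha$. That annihilator constraint is precisely the only input the explicit length computations of Lemmas \ref{lem:len_uinfty_ptor} and \ref{lem:len_uinfty_ftor} actually use; Proposition \ref{prop:len_unifty} is never invoked directly on $\overline{\mathfrak{M}^i}$. To salvage your write-up you should replace the (unproved and probably false) claim that $\overline{\mathfrak{M}^i}$ is quasi-filtered with this embedding-plus-annihilator argument.
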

\begin{proof}
    From Lemma \ref{lem:Mbar_MmodpN} and \stacks{061Z}, we have natural inclusions of $\mathfrak{S}\textrm{-modules}$
    \begin{equation*}
        \overline{\mathfrak{M}^i} \hookrightarrow (\mathfrak{M}^i/p^N)[u^{\infty}] \hookrightarrow \mr{H}^i\big(\RGamma_{\mathfrak{S}}(X/\mc{O}_K)/p^N\big)[u^{\infty}],
    \end{equation*}
    for some $N \gg 0$.
    In particular, from Proposition \ref{prop:ann_quasifilBKmod} and Proposition \ref{prop:ann_uinfty_submod}, it follows that either $p$ kills $\overline{\mathfrak{M}^i}$ or $u^{\alpha} + px$ kills $\overline{\mathfrak{M}^i}$, with $e \neq p\alpha$, $1 \leqslant \alpha \leqslant \lfloor \tfrac{e}{p-1} \rfloor < p$ and $x$ in $\mathfrak{S}^{\times}$.
    Hence, the claim follows by the using the same arguments as in Lemmas \ref{lem:len_uinfty_ptor} and \ref{lem:len_uinfty_ftor}.
\end{proof}

\bibliographystyle{test2}
\bibliography{reference}

\end{document}